\theoremstyle{plain}\newtheorem{Theorem}{Theorem}[section]
\theoremstyle{plain}
\theoremstyle{plain}\newtheorem{Corollary}[Theorem]{Corollary}
\theoremstyle{plain}\newtheorem{Lemma}[Theorem]{Lemma}
\theoremstyle{plain}\newtheorem{Proposition}[Theorem]{Proposition}
\theoremstyle{definition}
\theoremstyle{definition}
\theoremstyle{definition}
\theoremstyle{definition}\newtheorem{Remark}[Theorem]{Remark}
    \def\OG{{\mathcal{O}G}}  \def\OGb{{\mathcal{O}Gb}}
    \def\OH{{\mathcal{O}H}}  \def\OHc{{\mathcal{O}Hc}}
    \def\OP{{\mathcal{O}P}}
\def\CE{{\mathcal{E}}}    
    \def\OT{{\mathcal{O}T}}
\def\CO{{\mathcal{O}}}
\def\F{{\mathbb F}}
\def\Q{{\mathbb Q}}
\def\Z{{\mathbb Z}}
\def\Gb{{\bf G}} 
\def\Cb{{\bf C}} 
\def\Lb{{\bf L}} 
\def\Zb{{\bf Z}}
\def\Xb{{\bf X}}
\def\Yb{{\bf Y}}
\def\Aut{\mathrm{Aut}}           \def\tenk{\otimes_k}     
\def\Br{\mathrm{Br}}             \def\ten{\otimes}
\def\End{\mathrm{End}}
\def\Id{\mathrm{Id}}             \def\tenA{\otimes_A}
             \def\tenB{\otimes_B}
\def\Ind{\mathrm{Ind}}           
\def\Inndiag{\mathrm{Inndiag}}
\def\Irr{\mathrm{Irr}}           
           \def\tenO{\otimes_{\mathcal{O}}}
\def\mod{\mathrm{mod}}
           \def\tenOP{\otimes_{\mathcal{O}P}}
         \def\tenOQ{\otimes_{\mathcal{O}Q}}
\newcommand{\POmega}{\operatorname{P\Omega}}
\newcommand{\PSp}{\operatorname{PSp}}
\newcommand{\PSL}{\operatorname{PSL}}
\newcommand{\SL}{\operatorname{SL}}
\newcommand{\PGL}{\operatorname{PGL}}
\newcommand{\GL}{\operatorname{GL}}
\newcommand{\PSU}{\operatorname{PSU}}
\newcommand{\SU}{\operatorname{SU}}
\newcommand{\PGU}{\operatorname{PGU}}
\newcommand{\GU}{\operatorname{GU}}
\newcommand{\Sp}{\operatorname{Sp}}
\newcommand{\SO}{\operatorname{SO}}
\newcommand{\GO}{\operatorname{O}}
\title{Conjectures of Alperin and Brou\'e for $2$-blocks
with elementary abelian defect groups of order $8$}
\author{Radha Kessar} 
\address{ University of Aberdeen, Institute of Mathematics,
          Fraser Noble Building, Aberdeen, AB24 3UE, United Kingdom
        }
\author{Shigeo Koshitani} 
\address{ Department of Mathematics, Graduate School of Science,
          Chiba University, 1-33 Yayoi-cho, Inage-ku, Chiba,
          263-8522, Japan
        }
\author{Markus Linckelmann} 
\address{ University of Aberdeen, Institute of Mathematics,
          Fraser Noble Building, Aberdeen, AB24 3UE, United Kingdom
        }
\date{}
\begin{document}

\begin{abstract}
Using the classification of finite simple groups, we prove 
Alperin's weight conjecture and the character theoretic version 
of Brou\'e's abelian defect conjecture for $2$-blocks of finite 
groups with an elementary abelian defect group of order $8$.
\end{abstract}

\maketitle

\section{Introduction}

Throughout this paper $p$ is a prime and $\CO$ a complete discrete
valuation ring having an algebraically closed residue field $k$ of
characteristic $p$ and a quotient field $K$ of characteristic $0$, which
is always assumed to be large enough for the finite groups under
consideration.  For $G$ a finite group, a {\it block of} $\OG$ or $kG$ is
a primitive idempotent $b$ in $Z(\OG)$ or $Z(kG)$, respectively. The
canonical map $\OG\to$ $kG$ induces a defect group preserving bijection 
between the sets of blocks of $\OG$ and $kG$. By Brauer's First Main
Theorem there is a canonical bijection between the set of blocks of
$kG$ with a fixed defect group $P$ and the set of blocks of $kN_G(P)$ with
$P$ as a defect group. Alperin's weight conjecture predicts that the
number $\ell(b)$ of isomorphism classes of simple $kGb$-modules 
is an invariant of the local structure of $b$. For $b$ a block of $kG$ 
with an abelian defect group $P$, denoting by $c$ the block of 
$kN_G(P)$ corresponding to $b$,  Alperin's weight conjecture holds 
if and only if the block algebras of $b$ and $c$ have the same number 
of isomorphism classes of simple modules, or equivalently, the same 
number of ordinary irreducible characters. Thus, for blocks with 
abelian defect groups,  Alperin's weight conjecture would be 
implied by any of the versions of Brou\'e's Abelian Defect Conjecture, 
predicting that there should be a perfect isometry, or isotypy, or even 
a (splendid) derived equivalence between the block algebras, over 
$\CO$, of $b$ and $c$. 
Alperin announced the weight conjecture in \cite{Alperin87}.
At that time, the conjecture was known to hold for all
blocks of finite groups with cyclic defect groups (by work
of Brauer and Dade), dihedral, generalised quaternion, semidihedral
defect groups (by work of Brauer and Olsson), and all defect 
groups admitting only the trivial fusion system (by the work
of Brou\'e and Puig on nilpotent blocks). Since then many 
authors have contributed to proving Alperin's weight conjecture 
for various classes of finite groups - such as finite 
$p$-solvable groups (Okuyama), finite groups of Lie type in
defining characteristic (Cabanes), symmetric and general linear 
groups (Alperin, Fong, An) and some sporadic simple groups (An). 

\begin{Theorem} \label{c2c2c2}
Suppose $p=2$.
Let $G$ be a finite group and let $b$ be a block of $kG$ with
an elementary abelian defect group $P$ of order $8$. Denote
by $c$ the block of $kN_G(P)$ corresponding to $b$. Then
$b$ and $c$ have eight ordinary  irreducible characters and
there is an isotypy between $b$ and $c$; in particular, Alperin's
weight conjecture holds for all blocks of finite groups with
an elementary abelian defect group $P$ of order $8$. 
\end{Theorem}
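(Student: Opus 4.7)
The plan is to separate the argument into a local analysis of $c$ and a global analysis of $b$, use the classification of finite simple groups to reduce the global side to quasi-simple groups, and then in each resulting case match the two sides by producing an isotypy, ideally as the character-theoretic shadow of a splendid Rickard equivalence.

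On the local side, the saturated fusion system of $b$ on $P$ is controlled by its inertial quotient $E=N_G(P,e_P)/C_G(P)$, a $2'$-subgroup of $\Aut(P)=\GL_3(2)$; up to conjugacy, the possibilities are $1$, $C_3$, $C_7$, and the Frobenius group $F_{21}=C_7\rtimes C_3$. Since $H^2(E,k^\times)=0$ for any $p'$-group $E$ with coefficients in $k^\times$, the standard block-theoretic reductions (Brauer's second main theorem, Fong--Reynolds, and K\"ulshammer's description of blocks with normal defect group) imply that $kN_G(P)c$ is Morita equivalent to the untwisted group algebra $k(P\rtimes E)$. A direct count of $E$-orbits on $\Irr(P)$, using that $C_3$ stabilises a unique non-trivial linear character of $P$ while $C_7$ and $F_{21}$ act freely on $\Irr(P)\setminus\{1\}$, shows via Clifford theory that $k(P\rtimes E)$ has exactly $8$ ordinary irreducible characters in each of the four cases; hence $k(c)=8$ regardless of which inertial quotient occurs.

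On the global side, the Fong--Reynolds reduction together with a standard analysis of the normal subgroup structure of $G$ reduces the theorem to the case where $G$ is quasi-simple modulo a central $2'$-subgroup, and the classification of finite simple groups then furnishes a short explicit list of quasi-simple $L$ possessing a $2$-block $B$ with elementary abelian defect group of order $8$. This list splits naturally into the symmetric and alternating groups $A_n$, $S_n$ for a small finite collection of $n$, for which Chuang--Rouquier's $\mathfrak{sl}_2$-categorification proof of Brou\'e's conjecture supplies the isotypies; the groups of Lie type in defining characteristic $2$, most prominently $\SL_2(8)$ (for which $P$ itself is the Sylow $2$-subgroup), handled by explicit tilting complexes due to Okuyama, Rouquier, and Rickard; the groups of Lie type in odd characteristic, which via Bonnaf\'e--Rouquier's Jordan decomposition of blocks reduce to quasi-isolated and unipotent blocks whose isotypies appear (or can be constructed) in the literature; and the sporadic groups, the key case being the Janko group $J_1$, whose Sylow $2$-subgroup is $(C_2)^3$ and whose principal $2$-block is known to be splendidly Rickard equivalent to its Brauer correspondent.

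The main obstacle will be twofold: first, ensuring that this case-list is genuinely exhaustive, which requires a careful uniform $2$-local analysis of each family in the classification of finite simple groups and in particular an explicit enumeration of the blocks of classical groups in odd characteristic whose defect groups are elementary abelian of order $8$; and second, checking that in each individual case the available equivalence descends through the reduction steps to an isotypy between the original blocks $b$ and $c$, rather than merely between the quasi-simple block $B$ and its Brauer correspondent. The most delicate subcases are likely to be certain non-principal $2$-blocks of classical groups of Lie type in odd characteristic, where isolating the blocks with defect $(C_2)^3$ requires a subtle examination of centralisers of semisimple $2'$-elements, and for which producing an isotypy may require a direct comparison of generalised decomposition matrices rather than an appeal to a pre-existing derived equivalence.
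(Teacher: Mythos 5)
Your overall architecture — local analysis of $c$, classification-based reduction to quasi-simple groups, then case analysis — is the same as the paper's, but you are missing the central structural device that makes the paper's argument tractable, and as a result your plan would be substantially harder to carry out and contains a few factual errors.

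The key idea you are missing is the paper's Theorem~\ref{AlperinimpliesBroue}: using Rouquier's stable equivalence of Morita type between $\OGb$ and $\OHc$ (a two-term complex with diagonal, trivial-source components), the induced isometry $L^0(P\rtimes E)\cong L^0(G,b)$ on generalised characters vanishing on $p$-regular elements can always be extended to a full isometry $\Z\Irr_K(P\rtimes E)\cong\Z\Irr_K(G,b)$ \emph{once one knows} $|\Irr_K(G,b)|=8$, and any such extension is automatically a $p$-permutation equivalence, hence an isotypy. This means the global task collapses to a pure counting problem: one only needs to show $|\Irr_K(G,b)|=8$ for the quasi-simple cases, which by Landrock is equivalent to every character in $b$ having height zero. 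The paper never constructs an isotypy case by case, nor descends isotypies through Fong--Reynolds/K\"ulshammer reductions; it tracks only the character count through reductions and produces the isotypy uniformly at the very end. Your plan of producing an isotypy or splendid equivalence in each case and then worrying about whether it ``descends through the reduction steps'' is exactly the much harder route the paper is designed to avoid. In particular, you would need to know that Bonnaf\'e--Rouquier's Morita equivalences preserve local structure — but as the paper's Remark after Proposition~\ref{defect4} notes, there is no written reference for this, and the paper circumvents it (Propositions~\ref{localstructurepreserved} and~\ref{defect4}) precisely because it only needs to transport the character count and the isomorphism type of the defect group, which are controlled by Morita-invariant data for this specific defect group.

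Your case list also has errors. Alternating groups (Proposition~\ref{alt}) have \emph{no} $2$-blocks with elementary abelian defect group of order $2^r$ for $r\geq 3$, so there is nothing for Chuang--Rouquier to do there, and symmetric groups are irrelevant since the reduction is to quasi-simple groups. Among sporadics you single out $J_1$, but a non-principal block of $Co_3$ also occurs. And the reduction to quasi-simple groups is not ``standard'': the paper's Theorem~\ref{c2c2c2quasisimple} must be checked carefully within the class of defect groups of order at most $8$, since the Berger--Kn\"orr reduction as written can increase the order of the defect group; this requires the inertial quotient to be $C_7$ or $F_{21}$ (hence transitive on $P\setminus\{1\}$) to rule out the problematic configurations, not merely Fong--Reynolds plus generalities about normal subgroups.
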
 

For principal blocks, Theorem \ref{c2c2c2} follows from work of 
Landrock \cite{Landrock81} and Fong and Harris \cite{FongHarris}. 
Theorem \ref{c2c2c2} implies Alperin's weight conjecture 
for all blocks with a defect group of order at most $8$. Indeed, 
the groups $C_2$, $C_4$, $C_8$ and $C_2\times C_4$ 
admit no automorphisms of odd order, hence arise as defect 
groups only of nilpotent blocks, and by work of Brauer 
\cite{Brauer71}, \cite{Brauer74}
and Olsson \cite{Olsson75}, Alperin's weight conjecture is known 
in the case of $C_2\times C_2$, $D_8$ and $Q_8$.
Using a stable equivalence due to Rouquier we show in
Theorem \ref{AlperinimpliesBroue} that for blocks with
an elementary abelian defect group of order $8$ Alperin's weight 
conjecture implies Brou\'e's isotypy conjecture. 

The proof of Theorem \ref{c2c2c2} uses the classification of finite simple
groups. By the work of Landrock already 
mentioned, Alperin's weight conjecture holds for blocks with an elementary
abelian defect group of order $8$ if and only if all irreducible characters 
in the block have height zero (this is the `if' part of 
Brauer's height zero conjecture, which predicts that all characters in 
a block have height zero if and only if the defect groups are abelian).
This part of Brauer's height zero conjecture has been reduced to quasi-simple
finite groups by Berger and Kn\"orr \cite{BeKn}; we verify in
Theorem \ref{c2c2c2quasisimple} that this reduction works within the
realm of blocks with an elementary abelian defect group of order at 
most $8$ and certain fusion patterns. 
We finally prove Alperin's weight conjecture for blocks with an
elementary abelian defect group of order $8$ for quasi-simple groups
in the remaining sections. For certain classes, such as central
extensions of alternating groups, sporadic groups or finite groups
of Lie type defined over a field of characteristic $2$, this is a simple
inspection (based on calculations and well-known results by many
authors) and yields results for higher rank defect groups as well:

\begin{Theorem} \label{quasisimplealtsporLietwo}
Suppose $p=2$. Let $G$ be a quasi-simple finite group
such that $Z(G)$ has odd order. 

\smallskip\noindent (i)
If $G/Z(G)$ is isomorphic to an alternating group $A_n$, $n\geq 5$, 
then $kG$ has no block with an elementary abelian defect group of 
order $2^r$, where $r\geq 3$.

\smallskip\noindent (ii)
If $G/Z(G)$ is a sporadic simple group or
a finite group of Lie type defined over a field of
characteristic $2$ and if $kG$ has a block $b$ with an
elementary abelian defect group of order $2^r$, where $r\geq 3$,
then either $b$ is the principal block of $\PSL_2(2^r)$
or $r=3$ and $b$ is the principal block of $J_1$, or $b$ is a 
non-principal block of $Co_3$,
and Alperin's weight conjecture holds in these cases.
\end{Theorem}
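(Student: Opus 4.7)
The approach is to reduce to finitely many classes of quasi-simple groups using the constraint that $Z(G)$ has odd order, and then to use known structural results on $2$-blocks within each class. Since $|Z(G)|$ is coprime to $2$, the $2$-blocks of $kG$ are in a defect-preserving bijection with those of $k(G/Z(G))$ via the primitive idempotents of $kZ(G)$, so the question reduces to the simple quotient together with the finitely many odd-order central extensions permitted by the known Schur multipliers. In case (i), this leaves $A_n$ together with the triple covers $3.A_6$ and $3.A_7$; in case (ii), for each sporadic or Lie-type-in-characteristic-$2$ simple group, a short list read from the atlas.

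For (i), I would invoke the Nakayama combinatorics: a $2$-block of $S_n$ is parametrized by a $2$-core $\mu$ and has weight $w=(n-|\mu|)/2$, with defect group a Sylow $2$-subgroup $P$ of the Young subgroup $S_{2w}$. A standard Clifford-theoretic fact for a block of $kA_n$ covered by this block of $kS_n$ then gives its defect group as $P\cap A_n$, isomorphic to a Sylow $2$-subgroup of $A_{2w}$. Inspection for small $w$ settles the claim: Sylow $2$-subgroups of $A_{2w}$ are trivial for $w\leq 1$, the Klein four-group for $w=2$, dihedral of order $8$ for $w=3$, and for $w\geq 4$ they contain a copy of $D_8$ and are hence non-abelian. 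In particular, none is elementary abelian of order $2^r$ with $r\geq 3$, and the triple covers $3.A_6$, $3.A_7$ inherit this defect structure.

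For (ii) in the Lie-type case, I would use the classical result of Humphreys that for a finite group of Lie type in defining characteristic $p$ the only $p$-block of non-zero defect is the principal block. Hence any $b$ as in the statement must be principal and its defect group a Sylow $2$-subgroup of $G/Z(G)$. Since the Sylow $2$-subgroup of a simple group of Lie type in characteristic $2$ is the unipotent radical of a Borel, a root-system calculation (or a direct inspection of the rank-$1$ cases) shows it is elementary abelian only for $\PSL_2(2^r)$---groups of Lie rank $\geq 2$ have non-trivial root commutators, and the Suzuki groups have Sylow $2$-subgroups of nilpotency class $2$. Since the Schur multiplier of $\PSL_2(2^r)$ is trivial for $r\geq 3$, the group $G$ is $\PSL_2(2^r)$ itself. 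The sporadic case is handled by inspection through all $26$ sporadic simple groups and their odd-center covers, using the modular atlas to read off defect groups of $2$-blocks of positive defect and picking out those that are elementary abelian of rank $\geq 3$: only the principal block of $J_1$ (whose Sylow $2$-subgroup is $C_2^3$) and a single non-principal $2$-block of $Co_3$ with defect $C_2^3$ survive. Alperin's weight conjecture in the three exceptional cases is known: for $\PSL_2(2^r)$ one verifies directly that both sides have $2^r-1$ simple modules (the normaliser of $P$ being the Borel $P\rtimes C_{2^r-1}$); for $J_1$ this is due to An; for the $Co_3$ block it follows from the decomposition data available in the modular atlas. The main obstacle is the sporadic-group inspection: it is routine but lengthy, since one must scan each of the $26$ sporadics (and each odd-center cover) for blocks with an elementary abelian defect group of order $\geq 8$.
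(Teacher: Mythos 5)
Your part (i) is correct and takes a somewhat different route from the paper: you use the Nakayama parametrisation directly (the defect group of a weight-$w$ block of $S_n$ is a Sylow $2$-subgroup of $S_{2w}$, so the corresponding $A_n$-block has a Sylow $2$-subgroup of $A_{2w}$ as defect group), whereas the paper invokes Kessar's Scopes-reduction result \cite[1.2-1.7]{Kesalt}, which gives a source-algebra equivalence down to a block with full defect. Your route is more elementary and self-contained, but slightly longer; it also needs a word of care when passing from $S_n$ to $A_n$ (Kn\"orr's covering theorem gives $D\cap A_n$ as a defect group, and since every weight-$\geq 1$ $2$-block defect group of $S_n$ contains a transposition this is the index-$2$ intersection). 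For the triple covers $3.A_6$, $3.A_7$ you should just say directly that their Sylow $2$-subgroups are $D_8$ rather than gesturing at the covers ``inheriting the defect structure'', since the blocks of an odd-centre cover are blocks of a twisted group algebra of the quotient, not of the ordinary one.

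There is a genuine gap in part (ii), in the Lie-type step. You appeal to ``Humphreys' theorem'' to conclude that every $2$-block of positive defect is principal, but that theorem is stated for universal Chevalley (or Steinberg) groups $\mathbf{G}_{\mathrm{sc}}^F$ and their central quotients. Two classes of groups in the theorem's hypotheses fall outside this scope: the Tits group ${}^2F_4(2)'$, which is not of the form $\mathbf{G}^F$ at all; and the groups $\PSp_4(2)'\cong A_6$ and $G_2(2)'\cong \PSU_3(3)$, where the simple group is only the derived subgroup of $\mathbf{G}^F$, and for $A_6$ the odd part of the Schur multiplier is exceptional (not coming from $Z(\mathbf{G}_{\mathrm{sc}}^F)$), so $3.A_6$ is not a central quotient of a universal Chevalley group and in fact \emph{does} have a non-principal $2$-block of full defect (with dihedral defect groups). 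The conclusion you want still holds --- none of these exceptional groups or their covers has a block with elementary abelian defect group of rank $\geq 3$ --- but that must be verified separately. The paper does this by citing \cite[Proposition 8.7]{CEKL}, which is formulated for quasi-simple groups with odd centre and explicitly excludes exactly those three cases, and then dispatches the exceptions by inspection of the Modular Atlas. Without that supplement your argument does not cover all the groups in the statement. (A minor citation point: the paper attributes AWC for the principal block of $J_1$ to Landrock \cite[Theorem 3.8]{Landrock81}, not to An.)
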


This follows from combining \ref{Sporadic}, \ref{alt},
\ref{altagain} and \ref{Lietwo} below.
Before embarking on the verification for finite groups
of Lie type defined over a field of odd characteristic, we need
further background material on these groups and their local
structure, collected in the sections \S \ref{redback}, \S \ref{2local}.
Assembling these parts yields the proof of \ref{c2c2c2}
in \S \ref{proofsection}. It is noticeable how few blocks     
of quasi-simple groups have an elementary abelian defect group -
and when they do, many of them are nilpotent:

\begin{Theorem} \label{quasisimpleLieodd}
Suppose $p=2$.
Let $q$ be an odd prime power and $G$ a finite quasi-simple
group. Suppose that $Z(G)$ has odd order.

\smallskip\noindent (i)
If $G/Z(G)$ is a simple group of Lie type $A_n(q)$ or
${^2{A}}(q)$ and if $b$ is a block of $kG$ with an elementary
abelian defect group of order $2^r$ for some $r\geq 3$ 
then $r$ is even and $b$ satisfies Alperin's weight
conjecture.

\smallskip\noindent (ii)
If $G/Z(G)$ is a simple group of Lie type $B_n(q)$, $C_n(q)$, 
$D_n(q)$ then every block of $kG$ with an elementary abelian
defect group of order $2^r$ for some integer $r\geq 3$ is nilpotent.

\smallskip\noindent (iii)
If $G/Z(G)$ is simple of type $G_2(q)$ or ${^3D_4(q)}$
then $kG$ has no block with an elementary
abelian defect group of order $2^r$, where $r\geq 3$.

\smallskip\noindent (iv)
If $G/Z(G)$ is simple of type ${^2G}_2(q)$ and if $b$ is
a block of $kG$ with an elementary
abelian defect group of order $2^r$, where $r\geq 3$,
then $r=3$ and $b$ is the principal block of ${^2G_2(q)}$,
and Alperin's weight conjecture holds in this case.
\end{Theorem}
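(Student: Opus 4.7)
The plan is to treat each Lie type family separately, using Jordan decomposition of $2$-blocks together with the Bonnaf\'e--Rouquier Morita equivalence to reduce the analysis to quasi-isolated blocks on centralizers of odd-order semisimple elements. Writing $G/Z(G) = \Gb^F/Z(\Gb^F)$ for a connected reductive algebraic group $\Gb$ with Frobenius endomorphism $F$, a $2$-block $b$ of $kG$ with abelian defect group $P$ is, up to Morita equivalence, determined by a unipotent $2$-block of $C_\Gb(s)^F$ for some $2$-regular semisimple element $s$. The Puig--Brou\'e theory of fusion for abelian defect blocks then translates the condition ``$P$ elementary abelian of rank $\geq 3$'' into strong arithmetic constraints on $s$ and on the $e$-cuspidal pair parametrizing $b$.

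For part (i), types $A_n(q)$ and ${^2A}_n(q)$: the centralizer $C_\Gb(s)^F$ is a product of general linear and unitary groups over extension fields, and the $2$-blocks of $\GL_n(q^d)$ and $\GU_n(q^d)$ are described explicitly by Fong--Srinivasan. A defect group, viewed inside such a centralizer via Carter--Fong, has as its basic building blocks cyclic Sylow $2$-subgroups of rank-one tori, assembled by wreath products with symmetric-group factors. An elementary abelian defect group can arise only when these symmetric factors act as permutations on a product of copies of $C_2$; a count of the $2$-adic valuations of $q^d\pm 1$ then forces the total rank $r$ to be even. Alperin's conjecture in the resulting cases follows by comparing $\ell(b)$ with $\ell(c)$ via the same parametrization.

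For parts (ii)--(iv): for types $B_n$, $C_n$, $D_n$, the Cabanes--Enguehard parametrization of $2$-blocks reduces to isolated semisimple elements, and when the defect group is elementary abelian of rank $\geq 3$ the possible centralizer structures are so restricted that the inertial quotient must act trivially on the defect group, yielding nilpotence by the Brou\'e--Puig theorem. For $G_2(q)$ and ${^3D}_4(q)$, the explicit $2$-block distributions (Deriziotis--Michler, Hiss) show that no $2$-block has an elementary abelian defect group of rank $\geq 3$. For ${^2G}_2(q)$, the Sylow $2$-subgroup is itself elementary abelian of order $8$, so only the principal block qualifies; Alperin's conjecture here follows from the Frobenius-type structure of $N_G(P)$ in ${^2G}_2(q)$, in which $P$ is a Sylow $2$-subgroup with inertial quotient $C_7 \rtimes C_3$ acting freely on $P \setminus \{1\}$.

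The main obstacle will be the case analysis for type $A$: extracting the parity constraint on $r$ and verifying Alperin's conjecture for non-principal blocks demand careful bookkeeping of the $2$-adic valuations of $q^d\pm 1$ across the various Sylow tori appearing in centralizers of $2$-regular semisimple elements, together with compatibility with the projection $\GL \to \PGL$ modulo the odd-order centre. A secondary difficulty is ensuring that the nilpotent conclusion in (ii) is not circumvented by unusual fusion coming from non-connected centralizers in types $B$, $C$, $D$, which requires invoking the full Cabanes--Enguehard theory of quasi-isolated blocks rather than only the Brou\'e--Michel Jordan decomposition of blocks.
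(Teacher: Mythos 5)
Your overall strategy --- Jordan decomposition, Bonnaf\'e--Rouquier, Fong--Srinivasan/Carter--Fong for classical types, Cabanes--Enguehard for the unique unipotent $2$-block, and the explicit block distributions of Deriziotis--Michler and Hiss for ${^3D}_4$ and $G_2$ --- matches the paper's. But there are concrete misdescriptions that would need repair before this plan could become a proof.

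For type $A$, your picture of "wreath products with symmetric-group factors" acting on copies of $C_2$ does not occur in the elementary abelian case. Brou\'e's result places the defect group $T$ of a block of $\GL_n(q)$ as a Sylow $2$-subgroup of a centralizer $C_{\GL_n(q)}(s)\cong\prod_{i=1}^t\GL_{m_i}(q^{d_i})$. The moment any $m_i\geq 2$, the corresponding factor already has a non-abelian Sylow $2$-subgroup (at least after quotienting appropriately), and the wreath product structure is precisely what forces non-abelianness. The actual argument shows that when $P$ is elementary abelian and $t\geq 3$, every $m_i=1$, so the centralizer is a torus, $T$ is a direct product of cyclic groups $C_{(q^{d_i}-1)_+}$, and the constraints $q\equiv 3\ (\mathrm{mod}\ 4)$ and $d_i$ odd make each factor $C_2$; no symmetric-group permutation action enters. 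The parity of $r$ then comes from the combinatorics $n=\sum d_i$ with each $d_i$ odd (forcing $t\equiv n\ (\mathrm{mod}\ 2)$), combined with the loss of one factor of $2$ passing to $\SL_n$ and one more passing to $P=\tilde P/Z_+$ when $n$ is even, so $|P|=2^{t-1}$ or $2^{t-2}$. The cases $t\leq 2$ give $|P|\leq 4$. Moreover, Alperin's conjecture in these cases is not established by "comparing $\ell(b)$ with $\ell(c)$ via the same parametrization"; the paper's mechanism is that the block of $\GL_n(q)$ is nilpotent (torus centralizer, abelian defect), and Puig's nilpotent-extension result (Proposition \ref{nilextremark}) transfers this down the chain $\GL\to\SL\to G$.

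For ${^2G}_2(q)$, your claim that the inertial quotient $C_7\rtimes C_3$ "acts freely on $P\setminus\{1\}$" is false: $|C_7\rtimes C_3|=21>7=|P\setminus\{1\}|$, so the action cannot be free (the $C_3$ subgroup fixes an involution; only the $C_7$ acts regularly). What one actually uses is that $N_G(P)\cong P\rtimes E$ with $E\cong C_7\rtimes C_3$ acting faithfully, so by Brauer's First Main Theorem the principal block is the unique block with $P$ as a defect group; the count $|\Irr_K(G,b_0)|=8$ then comes from Ward's explicit calculations or Landrock's results, not from a Frobenius structure of $P\rtimes E$. These corrections don't affect the overall viability of the approach, but as written the sketch contains steps that would fail.
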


This follows from combining
Corollary \ref{jordandecomabel}, the Theorems \ref{typeA}, 
\ref{type2A}, \ref{classical} and the Propositions \ref{typeG2},
\ref{type2G2}, \ref{type3D4} below. 
Note the absence of the exceptional types $F$ and $E$ in the
above result - we do not know whether these groups actually
have blocks with elementary abelian defect groups of order
$8$. What we show is that if they do then these blocks satisfy 
Alperin's weight conjecture. 
At present there seems to be no general reduction for blocks with 
elementary abelian defect groups of order $2^r$ for $r\geq 4$, 
but the above results for quasi-simple groups
would allow us to settle Alperin's weight conjecture
in infinitely many cases if we did have a satisfactory
reduction. More precisely, using the above results 
and the fact (see \cite[Corollary]{CoSe})
that the $2$-rank of exceptional finite simple groups of Lie 
type in odd characteristic is at most $9$ we obtain the following.

\begin{Corollary} \label{quasisimplecheckcor}
Suppose $p=2$.
Let be $G$ a finite quasi-simple group such that $Z(G)$ has
odd order. Every block of $kG$ with
an elementary abelian defect group of order $2^r$ for some 
integer $r\geq 10$ satisfies Alperin's weight conjecture.
\end{Corollary}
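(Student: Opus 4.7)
The plan is a case analysis on the isomorphism type of the simple quotient $G/Z(G)$ via the classification of finite simple groups, using the preceding theorems to dispatch each family. As a preliminary observation: since $|Z(G)|$ is odd, the quotient map $G\to G/Z(G)$ is injective on $2$-subgroups, so $G$ and $G/Z(G)$ have the same $2$-rank; in particular, the existence of an elementary abelian $2$-subgroup of $G$ of rank $r$ forces $G/Z(G)$ to have $2$-rank at least $r$.

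If $G/Z(G)$ is alternating, sporadic, or a finite group of Lie type in characteristic $2$, then Theorem \ref{quasisimplealtsporLietwo} restricts the blocks of $kG$ with an elementary abelian defect group of rank $r\geq 3$ to the principal blocks of $\PSL_2(2^r)$ together with two exceptions in rank $3$ (the principal block of $J_1$ and a non-principal block of $Co_3$). The exceptions are outside the range $r\geq 10$, and for the $\PSL_2(2^r)$ case Alperin's weight conjecture is included in the statement of the theorem.

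If $G/Z(G)$ is a classical simple group of Lie type in odd characteristic, we invoke Theorem \ref{quasisimpleLieodd}: part (i) asserts Alperin's weight conjecture directly for types $A_n$ and $^2A_n$; part (ii) states that in types $B_n$, $C_n$, $D_n$ the blocks in question are nilpotent, whence Alperin's weight conjecture follows from the theorem of Brou\'e and Puig on nilpotent blocks. The small exceptional types $G_2$, $^3D_4$, $^2G_2$ are dispatched by parts (iii) and (iv), which leave at most blocks of rank $r=3$, again outside our range.

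The only remaining possibilities are $G/Z(G)\cong F_4(q)$, $E_6(q)$, $^2E_6(q)$, $E_7(q)$, or $E_8(q)$ with $q$ odd. For these, the result of Cohen and Seitz \cite{CoSe} cited in the introduction bounds the $2$-rank of the simple quotient by $9$, and by the preliminary observation $G$ itself has no elementary abelian $2$-subgroup of rank $r\geq 10$. Hence no block of $kG$ can have an elementary abelian defect group of order $2^r$ with $r\geq 10$, and the statement holds vacuously for these groups. The only real work is the CFSG case bookkeeping together with the verification that the $2$-rank bound transfers from $G/Z(G)$ to the quasi-simple cover; after that the corollary is a direct application of the previously stated results.
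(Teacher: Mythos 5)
Your proof is correct and follows the same route the paper intends: the paper presents the corollary as an immediate consequence of Theorems \ref{quasisimplealtsporLietwo} and \ref{quasisimpleLieodd} together with the Cohen–Seitz $2$-rank bound for exceptional groups, which is exactly the case analysis you carry out. The one point you make explicit that the paper leaves implicit — that the odd order of $Z(G)$ forces the $2$-rank of $G$ to equal that of $G/Z(G)$, so the Cohen–Seitz bound transfers from the simple quotient to the quasi-simple cover — is correct and is a genuine (if minor) piece of bookkeeping worth writing down.
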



\section{Reduction techniques}

For general background on block theory we refer to \cite{Thev}.
Given a finite group $G$ and a block $b$ of $\OG$ or of $kG$, 
we denote by $\Irr_K(G,b)$ the set of ordinary irreducible $K$-valued
characters of $G$ associated with $b$ and by $\Irr_k(G,b)$ the
set of irreducible Brauer characters of $G$ associated with $b$.
We set $\ell(b)=$ $|\Irr_k(G,b)|$.
We denote by $\Z\Irr_K(G,b)$ the group of class functions on
$G$ generated by $\Irr_K(G,b)$, and by $\Z\Irr_k(G,b)$ the
corresponding group of class functions on the set of $p$-regular
elements in $G$. By a classical result of Brauer, 
the {\it decomposition map} $\Z\Irr_K(G,b)\to$ $\Z\Irr_k(G,b)$ 
induced by restriction of class functions to $p$-regular elements 
is surjective. The kernel of the decomposition map, denoted by 
$L^0(G,b)$, consists of all class functions associated with $b$ 
which vanish on the set of $p$-regular elements of $G$, or 
equivalently, all generalised characters in $\Z\Irr_K(G,b)$ 
which are perpendicular to the characters of the projective 
indecomposable $\OG$-modules associated with $b$.
We write $L^0(G)$ instead of $L^0(G,1)$ if $\OG$ is indecomposable
as an $\CO$-algebra. 
We denote by $C_n$ a cyclic group of order $n$.
For $G$ a finite group and $\alpha\in$ $H^2(G;k^\times)$ we denote
by $k_\alpha G$ the {\it twisted group algebra} which is equal to
$kG$ as a $k$-vector space, endowed with the bilinear multiplication
$x\cdot y=$ $\alpha(x,y) (xy)$, for $x$, $y\in$ $G$,  where $\alpha$ denotes 
abusively a $2$-cocycle representing the class $\alpha$ (and one verifies that
this construction is, up to isomorphism, independent of the choice of this
$2$-cocycle). If $b$ is a block of $kG$ with a defect group $P$ and if
$c$ is the corresponding block of $kN_G(P)$ then $c$ is a sum of
$N_G(P)$-conjugate block idempotents of $kC_G(P)$; for any choice
$e$ of such a block idempotent of $kC_G(P)$ the group 
$E=$ $N_G(P,e)/PC_G(P)$ is called the {\it inertial quotient of}
$b$. This is a $p'$-subgroup of the outer automorphism group of $P$,
unique up to conjugacy by an element in $N_G(P)$, 
hence lifts uniquely, up to conjugacy, to a $p'$-subgroup of 
$\Aut(P)$, still abusively denoted $E$ and called inertial quotient.
By there is $\alpha\in$ $H^2(E;k^\times)$ such that 
$kN_G(P)c$ is Morita equivalent to the twisted group algebra
$k_\alpha(P\rtimes E)$, where $\alpha$ is extended trivially from
$E$ to $P\rtimes E$. 
We review some of the standard reduction techniques due to 
Dade, Fong, K\"ulshammer, Puig, Reynolds.
The reduction techniques work irrespective of the characteristic,
so for now, $p$ is an arbitrary prime.  

\begin{Proposition}[Fong-Reynolds reduction \cite{Fongred}, 
\cite{Reynolds63}] 
\label{Fongreduction}
Let $G$ be a finite group, $N$ a normal subgroup of $G$, $c$ a $G$-stable
block of $kN$ and $b$ a block of $kG$ such that $bc = b$. Let $P$
be a defect group of $b$. If $P\cap N = 1$ then $kNc$ is a block
of defect zero and we have a canonical  isomorphism
$$kGc \cong kNc \tenk k_\alpha(G/N)$$
for some $\alpha\in H^2(G/N;k^\times)$ such that $kGb$ is Morita
equivalent to a block $\hat b$ of a finite central $p'$-extension
of $G/N$ via a bimodule with diagonal vertex $\Delta P$ and endo-permutation
source.
\end{Proposition}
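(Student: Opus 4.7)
The plan begins with pinning down the defect of $c$. Since $c$ is a $G$-stable block of $kN$ covered by $b$, a standard argument (see for example \cite{Thev}) shows that a defect group of $c$ is $N$-conjugate to $P\cap N$; the hypothesis $P\cap N=1$ thus forces $c$ to have trivial defect group, so $kNc$ is a simple $k$-algebra. Since $k$ is algebraically closed, $kNc\cong \End_k(V)$ for the unique simple $kNc$-module $V$, and in particular $A:=kNc$ is central simple over $k$.

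Next I realise $kGc$ as a crossed product of $A$ by $G/N$. The $G$-stability of $c$ gives an action of $G$ on $A$ by conjugation, and Skolem--Noether produces for each $g\in G$ a unit $u_g\in A^\times$, unique up to $k^\times$, such that conjugation by $g$ on $A$ agrees with conjugation by $u_g$; for $n\in N$ one can take $u_n = nc$, which is invertible in $A$ with inverse $n^{-1}c$. The elements $\tilde g := g u_g^{-1}$ then commute with $A$ and satisfy $\tilde g\tilde{g'} = \alpha(\bar g,\bar{g'})^{-1}\tilde{gg'}$ for a $2$-cocycle $\alpha$ inflated from $G/N$. The assignment $a\otimes\bar g\mapsto a\cdot\tilde g$ extends to an algebra map $\Phi: A\tenk k_\alpha(G/N)\to kGc$, which is an isomorphism by a dimension count, both sides having dimension $|G/N|\cdot\dim_k(A)$.

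The third step is to trivialise $\alpha$ using a central $p'$-extension. Since $\mathrm{char}(k)=p$, the group $H^2(G/N;k^\times)$ is $p'$-torsion, so $\alpha$ has finite order $n$ coprime to $p$. Lifting $\alpha$ to $H^2(G/N;\mu_n)$ --- with $\mu_n\subset k^\times$ the group of $n$-th roots of unity --- classifies a central extension $1\to\mu_n\to \hat G\to G/N\to 1$ on which the inflation of $\alpha$ splits. The central idempotent $e\in k\mu_n$ corresponding to the inclusion $\mu_n\hookrightarrow k^\times$ cuts out a summand $k\hat G e$ isomorphic to $k_\alpha(G/N)$, and the block $\hat b$ of $k\hat G$ corresponding to $b$ under this identification supplies the required central $p'$-extension. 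Combined with the Morita equivalence $A\sim k$, one obtains the asserted equivalence between $kGb$ and $k\hat G\hat b$.

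The main obstacle is the vertex/source assertion for the implementing bimodule. Because $P\cap N=1$, the map $P\to G/N$ is injective; since $\mu_n$ is a $p'$-group, the image lifts to a copy of $P$ inside $\hat G$, so $\Delta P\leq G\times \hat G$ is well-defined. The equivalence bimodule is obtained by composing the bimodule $V$ for $A\sim k$ with $k\hat Ge$ implementing $k_\alpha(G/N)\sim k\hat G\hat b$, and its restriction to $\Delta P$ can be read off from the units $u_g$ for $g$ in a chosen lift of $P$. Showing that this $\Delta P$-source is an endo-permutation module requires choosing the Skolem--Noether units $u_g$ compatibly across the $p$-subgroup $P$; this is exactly what Puig's source-algebra formalism guarantees, identifying the resulting module as a Dade endo-permutation $kP$-module. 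This compatibility bookkeeping is the delicate point of the argument.
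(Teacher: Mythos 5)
The paper itself gives no proof of this proposition: it quotes the statement and refers the reader to Fong \cite{Fongred}, Reynolds \cite{Reynolds63} and to \cite[4.4]{HaLi} for an explicit description of the isomorphism. So there is no "paper proof" to compare against; what follows is an assessment of your argument on its own terms.

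Your outline is the standard Fong--Reynolds reduction and is essentially correct. The first three steps are fine: the defect-zero claim for $c$ follows from Kn\"orr's covering theorem together with normality of $N$ (if $P\cap N=1$ for one defect group of $b$ then the same holds for all, so the defect group of $c$ is trivial); the crossed-product decomposition $kGc\cong kNc\otimes_k k_\alpha(G/N)$ via Skolem--Noether units is the right mechanism; and the replacement of $k_\alpha(G/N)$ by a block of $k\hat G$ for a central $p'$-extension $\hat G$ uses correctly that $H^2(G/N;k^\times)$ is $p'$-torsion and that an extension of $P$ by a central $p'$-group splits.

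The place where you are vague, and where you over-complicate, is the endo-permutation source. You do not need to invoke Puig's source-algebra formalism as a black box, nor any delicate "compatibility bookkeeping" of Skolem--Noether units beyond what you already have. Having chosen a lift $\hat P\cong P$ inside $\hat G$, the cocycle $\alpha$ restricted to $P$ is trivial (as $H^2(P;k^\times)=1$ for a $p$-group in characteristic $p$), so $V$ becomes an honest $kP$-module via the units $u_g$ for $g\in P$. The $kP$-module structure on $\End_k(V)\cong V\otimes_k V^*$ agrees with the conjugation action of $P$ on $kNc$. Now $kN$, under $P$-conjugation, is a permutation $kP$-module (with $P$-set basis $N$), and $kNc$ is a direct summand of it; hence $\End_k(V)$ is a $p$-permutation $kP$-module, and since $P$ is a $p$-group every $p$-permutation $kP$-module is a permutation $kP$-module. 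This is precisely the statement that $V|_P$ is endo-permutation. The vertex assertion then follows in the usual way: the bimodule is relatively $\Delta P$-projective because $b$ has defect group $P$, and it cannot have a smaller vertex because a Morita equivalence preserves defect, forcing $\hat b$ also to have defect group (a conjugate of) $\hat P$.

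So: correct outline, standard route, with one genuinely fuzzy step that actually has a clean two-line argument you should supply rather than defer to a reference.
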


See for instance \cite[4.4]{HaLi} for 
an explicit description of this isomorphism.
The previous result has been generalised by K\"ulshammer as follows:

\begin{Proposition} [{K\"ulshammer \cite[Proposition 5, 
Theorem 7]{Kuelsred}}] 
\label{Kuelsreduction}
Let $G$ be a finite group, $N$ a normal subgroup of $G$, $c$ a $G$-stable
block of $kN$ and $b$ a block of $kG$ such that $bc = b$. Suppose that 
for any $x\in G$ conjugation by $x$ induces an inner automorphism of 
$kNc$. Then there is a canonical isomorphism 
$$kGc \cong kNc\ \ten_{Z(kNc)}\ Z(kNc)_\alpha(G/N)$$
for some $\alpha\in H^2(G/N; (Z(kNc))^\times)$. Moreover, if
$N$ contains a defect group $P$ of $b$ then the blocks $kGb$ and 
$kNc$ are source algebra equivalent.
\end{Proposition}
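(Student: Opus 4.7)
The plan is to realise $kGc$ explicitly as a balanced tensor product of $kNc$ with the centraliser of $kNc$ inside $kGc$, the latter being identifiable as a twisted group algebra of $G/N$ over $Z(kNc)$. The source algebra assertion will then follow by transferring this decomposition through a source idempotent of $c$.

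First I would extract inner implementers. For each $x\in G$ the hypothesis provides a unit $u_x\in(kNc)^\times$ whose conjugation action on $kNc$ coincides with that of $x$; $u_x$ is uniquely determined modulo $Z(kNc)^\times$. Normalising $u_n=nc$ for $n\in N$ and setting $v_x=u_x^{-1}\cdot xc\in(kGc)^\times$, a direct calculation shows that $v_x\, y=y\, v_x$ for every $y\in kNc$, so $v_x$ lies in the centraliser $C:=C_{kGc}(kNc)$ and $v_n=c$ for $n\in N$. Writing $u_xu_y=\alpha(x,y)u_{xy}$ with $\alpha(x,y)\in Z(kNc)^\times$ -- forced because both sides implement conjugation by $xy$ on $kNc$ -- defines a $2$-cocycle on $G/N$ with values in $Z(kNc)^\times$, and translates into $v_xv_y=\alpha(x,y)^{-1}v_{xy}$.

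Next I would identify $C$ as a twisted group algebra and deduce the isomorphism. For a transversal $T$ of $N$ in $G$, imposing the commutation relation on a general element of $kGc$ written as $\sum_{t\in T} a_t\cdot tc$ with $a_t\in kN$ forces $a_tu_t\in Z(kNc)$, which yields $C=\bigoplus_{t\in T}Z(kNc)\,v_t\cong Z(kNc)_{\alpha^{-1}}(G/N)$. Since $C$ centralises $kNc$, the multiplication map $\mu\colon kNc\otimes_{Z(kNc)} C\to kGc$ is a well-defined $k$-algebra homomorphism; its surjectivity is immediate from the decomposition $kGc=\bigoplus_{t\in T}kNc\cdot v_t$ -- each $v_t$ is a unit of $kGc$ -- and a $k$-dimension count completes the isomorphism.

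For the source algebra statement, assume $P\le N$ and fix a source idempotent $j\in(kNc)^P$ of $c$. Since $P\le N$, the action of $P$ on $kNc$ is inner within $kNc$ itself, so $P$ fixes $Z(kNc)$ pointwise and the $v_t$'s may be chosen $P$-equivariantly modulo $Z(kNc)^\times$. I would then build a source idempotent $i$ of $b$ from $j$ by picking an appropriate primitive summand of $j$ in $(kGb)^P$ using the $P$-action on $C$, check $\Br_P(i)\ne 0$, and use the decomposition of the previous paragraph together with a rigidity argument on the primitive idempotents $i$ and $j$ to obtain an isomorphism $ikGi\cong jkNj$ of interior $P$-algebras, which is the required source algebra equivalence. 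The main obstacle will be this final step: tracking the interior $P$-structure through the tensor product decomposition and pinning down the correspondence between source idempotents of $b$ and $c$ demands careful bookkeeping of the $P$-action on $C$ and uses the hypothesis $P\le N$ essentially; the first assertion, by contrast, is a routine computation once the $v_x$ have been introduced.
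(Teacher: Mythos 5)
The paper does not actually prove this result; it cites K\"ulshammer \cite{Kuelsred} and points to \cite[2.1]{CEKL} for an explicit form of the isomorphism, so there is no in-text argument to compare against. Evaluating your proposal on its own terms: the proof of the first assertion is correct and is the standard one. The normalisation $u_n=nc$ for $n\in N$, the centralising units $v_x=u_x^{-1}\cdot xc$, the relation $v_xv_y=\alpha(x,y)^{-1}v_{xy}$, the identification $C_{kGc}(kNc)=\bigoplus_{t\in T}Z(kNc)v_t$, and the surjectivity-plus-dimension-count argument for the multiplication map are all right. One small sharpening: because each $v_t$ centralises $kNc$ and conjugation by $p\in P\leq N$ on $kGc$ coincides with conjugation by $pc\in kNc$, the $v_t$ are literally fixed by $P$, not merely fixed up to central units; consequently $(kGb)^P=(kNc)^P\otimes_{Z(kNc)}Cb$, which is the cleanest launching point for the second assertion.

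The second assertion, however, is only gestured at, and the sketch as written does not close. Picking a source idempotent $j$ of $c$ and a primitive summand $i$ of $jb$ in $(kGb)^P$ with $\Br_P(i)\neq 0$ gives $i\,kG\,i\subseteq (jb)\,kG\,(jb)= jkNcj\otimes_{Z(kNc)}Cb$, but nothing you have said forces this corner to collapse to $jkNcj$. What is really needed, and what the hypothesis that a defect group $P$ of $b$ lies in $N$ is doing, is a structural fact about the factor $Cb$: one must show that $Cb$ is Morita-trivial over (a copy of) $Z(kNc)$ — in effect a matrix algebra over the local ring $Z(kNc)$ — so that $jkNcj\otimes_{Z(kNc)}Cb$ is a matrix ring over $jkNcj$ and cutting by a primitive idempotent recovers $jkNcj$ as an interior $P$-algebra. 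Proving that $Cb$ has this form is precisely where the defect-group condition (equivalently, relative $N\times N$-projectivity of $kGb$ as a bimodule) must be invoked, and it is the main content of K\"ulshammer's Theorem 7; calling the remaining work ``careful bookkeeping'' does not identify this step, let alone carry it out. Everything else in your outline is fine, but without this ingredient the source algebra isomorphism does not follow.
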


As before, this isomorphism can be described explicitly; see e.g.
\cite[2.1]{CEKL}.

\begin{Proposition} [{Dade \cite[3.5, 7.7]{Dadeext}}] 
\label{Dadereduction}
Let $G$ be a finite group, $N$ a normal subgroup of $G$, $c$ a $G$-stable
block of $kN$ and $b$ a block of $kG$ such that $bc = b$. Suppose
that no element $x\in G-N$ acts as inner automorphism on $kNc$.
Then $b=c$ and $G/N$ has order prime to $p$.
\end{Proposition}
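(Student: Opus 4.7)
The plan is to exploit the $G/N$-graded structure of $kGc$ over $A := kNc$, using the non-inner hypothesis to force the center of $kGc$ to be local. First, I would observe that $c$ is central in $kG$: it lies in $Z(kN)$ and is $G$-stable, so every element of $G$ commutes with $c$. Hence $kGc$ is a direct summand of $kG$ with identity $c$, and picking a transversal $T$ of $N$ in $G$ yields a $G/N$-grading
$$kGc \;=\; \bigoplus_{g \in T} (kNc)\, g$$
with identity component $A$. Each graded piece $(kNc)\, g$ is naturally an $(A,A)$-bimodule, identified via $x g \mapsto x$ with the twisted bimodule ${}_1 A_{\alpha_g}$, where $\alpha_g \in \Aut(A)$ denotes conjugation by $g$. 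The hypothesis says $\alpha_g \notin \Inn(A)$ for every $g \in G \setminus N$.

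The key technical input I would appeal to is: because $c$ is a block of $kN$ and $k$ is algebraically closed, $A$ is an indecomposable symmetric $k$-algebra, hence indecomposable as an $A \otimes_k A^{\mathrm{op}}$-module (its endomorphism ring is the local ring $Z(A)$). It follows that for any $\phi \in \Aut(A)$ one has ${}_1 A_\phi \cong A$ as bimodules iff $\phi \in \Inn(A)$, and more generally $\Hom_{A \otimes_k A^{\mathrm{op}}}(A, {}_1 A_\phi) = 0$ whenever $\phi \notin \Inn(A)$. I would then write $z = \sum_{g \in T} z_g g \in Z(kGc)$ and expand the relation $az = za$ for $a \in A$ to get $a z_g = z_g \alpha_g(a)$, which exhibits $z_g$ as a bimodule morphism $A \to {}_1 A_{\alpha_g}$. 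For $g \in G \setminus N$ the hypothesis combined with the lemma forces $z_g = 0$, so $Z(kGc) \subseteq A$, and in fact $Z(kGc) = Z(A)^{G/N}$. This is a subring of the local commutative ring $Z(A)$ sharing its identity $c$, so it is itself local, and its primitive idempotent $b$ must equal $c$.

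For the second assertion, that $|G/N|$ is coprime to $p$, I would use that $b = c$ is now a block of $kG$, letting $P$ be a defect group of $c$ in $kN$ and choosing a defect group $D$ of $c$ in $kG$ with $D \cap N = P$, so that $D/P$ embeds as a $p$-subgroup of $G/N$. The aim is to force $D = P$. To do so, I would translate the non-inner hypothesis into a constraint on the Brauer pair $(D, e_D)$: any $g \in D \setminus P \subseteq G \setminus N$ would implement conjugation on $A$ via units from the source algebra of $(P, e_P)$, realising $\alpha_g \in \Inn(A)$ and contradicting the hypothesis. Hence $D = P \subseteq N$, and a relative-trace argument expressing $c$ as $\tr_P^G(x)$ for suitable $x \in (kN)^P$ and comparing with $\tr_P^N$ shows that $[G:N]$ must be invertible in $k$.

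The main obstacle is this second half: the algebra-level non-inner hypothesis has to be bridged to the defect-theoretic equality $D = P$, and doing this cleanly requires careful manipulation of Brauer pairs and source algebras as carried out in Dade's paper \cite{Dadeext}. Once that bridge is in place, the rest is bookkeeping via standard relative-trace formulas.
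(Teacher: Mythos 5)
The paper does not give a proof of this proposition; it is quoted directly from Dade's paper, so there is no argument in the text to compare yours against. Assessed on its own, your proposal contains a genuine gap in the first half.

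The gap is in the asserted lemma that $\Hom_{A \otimes_k A^{\mathrm{op}}}(A, {}_1 A_\phi) = 0$ whenever $\phi \notin \Inn(A)$. This does \emph{not} follow from the indecomposability of $A$ as a bimodule; indecomposability only yields the weaker (and correct) statement that ${}_1 A_\phi \cong A$ iff $\phi \in \Inn(A)$, because a bimodule \emph{isomorphism} $A \to {}_1 A_\phi$ is given by left multiplication by a unit $u$ with $\phi(a) = u^{-1}au$. A general nonzero bimodule map corresponds to an element $u$ satisfying $au = u\phi(a)$ without being a unit, and such elements can exist for non-inner $\phi$. Concretely, take $A = k[x]/(x^2)$ (which is $kC_2$ in characteristic $2$, so a genuine block algebra) and $\phi(x) = \lambda x$ with $\lambda \in k^\times$, $\lambda \ne 1$: here $\Inn(A)$ is trivial, yet every $u \in xA$ satisfies $au = u\phi(a)$, so $\Hom_{A\otimes_k A^{\mathrm{op}}}(A, {}_1 A_\phi) = xA \ne 0$. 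Consequently, the step ``$z_g$ is a bimodule morphism $A \to {}_1 A_{\alpha_g}$, hence $z_g = 0$'' does not go through: you can only conclude that $z_g$ is a non-unit, not that it vanishes. The conclusion $Z(kGc) = Z(A)^{G/N}$ is therefore unjustified. Some further input is needed, and indeed Dade's treatment (via block extensions and twisted group algebras over $G[c]/N$) is substantially more involved than the linear-algebraic argument you sketch.

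The second half, concerning $|G/N|$ being prime to $p$, you explicitly leave as an acknowledged obstacle, and I agree it is the harder point. Note in addition that even establishing $D = P$ would not in itself yield $p \nmid |G/N|$ by a naive relative-trace comparison; one still has to invoke the non-inner hypothesis through a Brauer-pair or source-algebra analysis to rule out a $p$-element of $G/N$, which again is where Dade's machinery carries the load. The overall framework (view $kGc$ as a $G/N$-crossed product over $kNc$, exploit the non-inner hypothesis) is the right one, but as it stands both halves of the argument are incomplete.
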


\begin{Proposition}[{Puig \cite[4.3]{Puignilext}}] \label{nilextremark}
Let $G$ be a finite group, $N$ a normal subgroup of $G$, $c$ a $G$-stable
block of $kN$ and $b$ a block of $kG$ such that $bc = b$. Suppose
that $b$ is nilpotent. Then the block algebra of $c$ is Morita equivalent 
to its Brauer correspondent 
via a Morita equivalence induced by a bimodule with endo-permutation 
source; in particular, $c$ satisfies Alperin's weight conjecture.
\end{Proposition}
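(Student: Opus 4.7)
The plan is to reduce the statement to Puig's structure theorem for nilpotent blocks, applied not to $b$ but to $c$. This requires first establishing that the hypothesis forces $c$ itself to be nilpotent. Choose a maximal $b$-Brauer pair $(P,e_P)$ in $G$ and set $Q = P \cap N$; since $c$ is $G$-stable and $bc = b$, standard block theory gives a unique $c$-Brauer pair $(Q,f_Q)$ contained in $(P,e_P)$ under the natural compatibility between $b$- and $c$-Brauer pairs, and this pair is maximal in $N$, so $Q$ is a defect group of $c$. The fusion system of $c$ on $Q$ is generated by those conjugations by elements of $N$ that permute the $c$-Brauer pairs contained in $(Q,f_Q)$; since every such conjugation is a fortiori a $G$-conjugation of the associated $b$-Brauer pairs, the fusion system $\mathcal{F}_{(Q,f_Q)}(N,c)$ embeds into the restriction to $Q$ of $\mathcal{F}_{(P,e_P)}(G,b)$. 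Nilpotence of $b$ means the latter is the fusion system $\mathcal{F}_P(P)$ of $P$ on itself; its restriction to $Q$ is therefore $\mathcal{F}_Q(Q)$, and hence $c$ is nilpotent.

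With $c$ nilpotent, Puig's theorem on nilpotent blocks applied to $c$ yields a Morita equivalence between $kNc$ and $kQ$ induced by a bimodule with endo-permutation source. The Brauer correspondent $\tilde{c}$ of $c$ in $kN_N(Q)$ is itself a nilpotent block with defect group $Q$ and trivial inertial quotient, so the same theorem produces a Morita equivalence between $kN_N(Q)\tilde{c}$ and $kQ$ via an endo-permutation source bimodule. Composing these two equivalences through a tensor product over $kQ$ gives the asserted Morita equivalence between $kNc$ and $kN_N(Q)\tilde{c}$ induced by a bimodule with endo-permutation source. Alperin's weight conjecture for $c$ follows at once: as a nilpotent block with trivial inertial quotient, $\tilde{c}$ satisfies $\ell(\tilde{c}) = 1$, and since Morita equivalence preserves $\ell$ we conclude $\ell(c) = \ell(\tilde{c}) = 1$.

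The main obstacle is the descent of nilpotence from $b$ to $c$: one must set up the Brauer-pair and fusion-system formalism over the normal subgroup $N$ with care, in particular verifying that every morphism of $c$-Brauer pairs is realised by an $N$-conjugation that is compatible with the ambient $b$-Brauer pair structure, so that the embedding $\mathcal{F}_{(Q,f_Q)}(N,c) \hookrightarrow \mathcal{F}_{(P,e_P)}(G,b)|_Q$ is genuinely a functor of fusion systems. Once this local descent is in place, the remainder of the argument is a direct two-fold invocation of Puig's nilpotent block theorem.
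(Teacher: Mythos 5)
The paper gives no proof of this proposition; it is quoted as a result of Puig \cite[4.3]{Puignilext}, so there is no argument in the paper to compare against, and your proposal must stand on its own.

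Your proof reduces the statement to the assertion that $c$ is nilpotent, which you derive from an embedding of fusion systems $\CF_{(Q,f_Q)}(N,c)\hookrightarrow \CF_{(P,e_P)}(G,b)|_Q$. You yourself flag this embedding as ``the main obstacle,'' but you then treat it as immediate (``a fortiori''), and this is where the gap lies. Concretely: a morphism in $\CF_{(Q,f_Q)}(N,c)$ from $R$ to $R'={}^nR$ is realised by some $n\in N$ with ${}^n(R,f_R)\leq (Q,f_Q)$. For this to be a morphism in $\CF_{(P,e_P)}(G,b)$ via the same conjugation, you need ${}^n(R,e_R)\leq (P,e_P)$, i.e.\ ${}^n e_R = e_{R'}$, where $(R,e_R)$ and $(R',e_{R'})$ are the unique $b$-subpairs below $(P,e_P)$. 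A priori ${}^n e_R$ is merely some block of $kC_G(R')$; the $b$-subpair idempotent $e_{R'}$ is determined by containment in $(P,e_P)$, and without a compatibility theorem relating the chain $(R,e_R)\leq(Q,e_Q)\leq (P,e_P)$ of $b$-subpairs to the chain $(R,f_R)\leq (Q,f_Q)$ of $c$-subpairs (in a way that is preserved by $N$-conjugation), there is no reason these coincide. That compatibility is a genuine theorem about Brauer pairs over normal subgroups, not an observation, and your write-up does not supply it.

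Two further remarks. First, even granting the embedding, your statement that the restriction of $\CF_P(P)$ to $Q$ ``is therefore $\CF_Q(Q)$'' is not literally correct: since $Q=P\cap N$ is normal in $P$, the restriction is $\CF_{P}(Q)$, with automorphism groups $N_P(R)/C_P(R)$ which can be strictly larger than $N_Q(R)/C_Q(R)$. What saves your conclusion is that all these automorphism groups are $p$-groups, so any \emph{saturated} fusion subsystem on $Q$ contained in the restriction must be $\CF_Q(Q)$ by Alperin's fusion theorem (no essential subgroups, and $\mathrm{Aut}_{\CF_c}(Q)=\mathrm{Aut}_Q(Q)$ since the latter is its own Sylow $p$-subgroup); you should say this rather than claim the restriction itself is $\CF_Q(Q)$. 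Second, the very form of Puig's conclusion --- Morita equivalence of $c$ to its Brauer correspondent via an endo-permutation-source bimodule, rather than the stronger and simpler assertion ``$c$ is nilpotent'' --- should make you uneasy about whether nilpotence of $c$ can always be extracted from nilpotence of $b$ by such an elementary argument. Until the Brauer-pair compatibility over the normal subgroup is actually proved (or a reference is given that establishes $\CF_c$ as a subsystem of $\CF_b$ in this setting), the proof is incomplete at its central step.
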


\section{Background material on blocks with defect group 
$C_2\times C_2\times C_2$}

We assume from now on that $p=2$. Let $P\cong C_2\times C_2\times C_2$
be an elementary abelian group of order $8$. The order of $\GL_3(2)$
is $8\cdot 21$, from which one easily deduces that a non-trivial 
subgroup $E$ of $\Aut(P)$ of odd order has either order $3$ or $7$, or
is a Frobenius group of order $21$. In all cases, $E$ has a trivial
Schur multiplier, and hence any block with a normal defect group $P$
has as source algebra $k(P\rtimes E)$. What is unusual in this case
is that the number of characters at the local level does not depend
on fusion (this is well-known; we include a sketch of a proof for
the convenience of the reader):

\begin{Proposition}  \label{c2c2c2local}
Let $P$ be an elementary abelian group of order $8$ and
$E$ a subgroup of $\Aut(P)$ of odd order. The group 
$P\rtimes E$ has $8$ ordinary irreducible characters.
\end{Proposition}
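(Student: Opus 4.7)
The plan is to apply Clifford theory with respect to the abelian normal subgroup $P$ of $G=P\rtimes E$. Since the extension $G=P\rtimes E$ is split and $P$ is abelian, every linear character $\chi$ of $P$ extends to its inertial subgroup $P\rtimes E_\chi$, where $E_\chi$ denotes the stabilizer of $\chi$ under the natural action of $E$ on $\Irr(P)$. The standard Clifford correspondence then gives a bijection between $\Irr(G)$ and the set of pairs $([\chi],\psi)$ with $[\chi]$ an $E$-orbit on $\Irr(P)$ and $\psi\in\Irr(E_\chi)$, yielding
$$|\Irr(G)|\ =\ \sum_{[\chi]\,\in\,\Irr(P)/E}\ |\Irr(E_\chi)|.$$
Since $P$ is an elementary abelian $2$-group, $\Irr(P)\cong P$ as $E$-sets, so we may count the right-hand side by examining the $E$-orbits on $P$ itself.

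The next step is to enumerate the possibilities for $E$. As noted in the paragraph preceding the statement, $|\GL_3(2)|=2^3\cdot 3\cdot 7$, and inspection of subgroup structure shows that an odd-order subgroup $E\leq\Aut(P)$ is either trivial, cyclic of order $3$, cyclic of order $7$ (a Singer cycle), or a Frobenius group of order $21$. I would then go through these four cases one by one, in each case identifying the $E$-module structure of $P$, reading off the orbit decomposition of $P$, and computing the stabilizers.

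In the trivial case the count is immediate. For $E=C_3$, by Maschke's theorem $P$ decomposes as $\F_2[E]$-module into a trivial summand of dimension $1$ and a $2$-dimensional simple summand on which $C_3$ permutes the three nontrivial vectors cyclically, giving four $E$-orbits on $P$ (two of size $1$ and two of size $3$), and hence $3+3+1+1=8$. For $E=C_7$ the Singer cycle acts regularly on $P^\#$, producing two orbits with stabilizers of orders $7$ and $1$, hence $7+1=8$. For the Frobenius case $E=C_7\rtimes C_3$, the $C_7$ already acts transitively on $P^\#$, so $E$ has two orbits with stabilizers of orders $21$ and $3$; using that the Frobenius group of order $21$ has five irreducible characters (three linear through $C_3$ and two of degree $3$ induced from $C_7$), we again obtain $5+3=8$.

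This is essentially a bookkeeping argument, so there is no substantial obstacle; the one point that deserves attention is the justification that no twisted group algebra ever genuinely appears in the Clifford reduction. This is handled by the hypothesis (recalled just before the proposition) that each of the possible $E$'s has trivial Schur multiplier, ensuring that every stabilizer $E_\chi$ is itself of this form and that the relevant $2$-cocycle class in $H^2(E_\chi;k^\times)$ is trivial, so that $\psi$ ranges over the ordinary characters of $E_\chi$ as used above.
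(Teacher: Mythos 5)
Your proof is correct, and it takes a genuinely different route from the paper. The paper handles the four cases with ad hoc arguments: for $|E|=3$ it identifies $P\rtimes E\cong C_2\times(V_4\rtimes C_3)$ directly; for $|E|=7$ it invokes the structure of a Frobenius group with $E$ acting transitively on the involutions of $P$; and for $|E|=21$ it counts the five irreducible characters of $E$ and then uses the degree constraints (character degrees divide $|E|$, squares sum to $168$) to find three remaining characters of degree $7$. You instead apply Clifford theory uniformly, reducing everything to the orbit-stabilizer count $|\Irr(G)|=\sum_{[\chi]}|\Irr(E_\chi)|$ over $E$-orbits on $\Irr(P)\cong P$, with the split extension guaranteeing that linear characters of $P$ extend to their inertia groups so no projective representations of the stabilizers are needed. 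Your method is more systematic and makes the mechanism behind the count transparent; the paper's method in the $|E|=21$ case delivers the degrees of the characters as a byproduct, which is not strictly needed here. One small remark: your closing sentence invoking triviality of the Schur multipliers of the stabilizers is redundant (and slightly misdirected) — the earlier observation that $\chi$ extends across the split extension $P\rtimes E_\chi$ already forces the relevant $2$-cocycle to be trivial, so the Schur multiplier of $E_\chi$ never enters.
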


\begin{proof}
This is trivial if $E = 1$. Suppose that $|E| = 3$. Then
$E$ fixes an involution in $P$, hence $P\rtimes E\cong$
$C_2\times (V_4\rtimes C_3)$, from which the statement follows.
If $|E| = 7$ then $P\rtimes E$ is a Frobenius group with $E$ acting 
transitively on the involutions in $P$, which implies the result.
The only remaining case is where $E$ is a Frobenius group $C_7\rtimes C_3$.
In that case, $E$ has $5$ characters, three of degree $1$
with $C_7$ in the kernel, and two more of degree $3$ corresponding 
to the non-trivial $C_3$-orbits in the character group of $C_7$. 
Using the fact that the degrees of irreducible characters of
$P\rtimes E$ divide $|E|$ and that the square of their
degrees sums up to $|P\rtimes E|=$  $168$ one finds that there
are three further irreducible characters of degree $7$.
\end{proof}
 
Landrock showed the inequality $\ell(b)\leq 8$ of Alperin's weight 
conjecture for $2$-blocks $b$ with an elementary abelian defect
group $P$ of order $8$ without the 
classification of finite simple groups. Landrock's results are more 
precise in  that they also  include information about defects and 
heights of characters; we briefly recall these notions. 
Given a block $b$ of $\OG$ or of $kG$ with a defect group $P$ of order
$p^d$, the {\it defect} of a character $\chi$ in the set $\Irr_K(G,b)$
of irreducible $K$-valued characters associated with $b$ is the
integer $d(\chi)$ such that $p^{d(\chi)}$ is the largest power of
$p$ dividing the rational integer $\frac{|G|}{\chi(1)}$. It is
well-known that $d(\chi)\leq d$; the integer $h(\chi)=$ $d-d(\chi)$
is called the {\it height} of $\chi$. There is always at least
one character in $\Irr_K(G,b)$ having height zero, and it has
been conjectured by Brauer that $P$ is abelian if and only if all
characters in $\Irr_K(G,b)$ have height zero. The following 
summary of some of Landrock's results in \cite{Landrock81} implies
in particular that Alperin's weight conjecture holds for
blocks with an elementary abelian defect group of order $8$ if
and only if all characters in those blocks have height zero.

\begin{Proposition}[{Landrock, \cite[2.1, 2.2, 2.3]{Landrock81}}]
\label{leqeight}
Let $G$ be a finite group and $b$ a block of $kG$ with an elementary
abelian defect group $P$ of order $8$ and inertial quotient 
$E\leq \Aut(P)$. Then $5\leq$ $|\Irr_K(G,b)| \leq 8$. If 
$|\Irr_K(G,b)|=8$ then all characters in $\Irr_K(G,b)$ have 
height zero. If $|\Irr_K(G,b)|<8$ then exactly four characters
in $\Irr_K(G,b)$ have height zero, the remaining characters
have height one, and $\ell(b) =4$. Moreover, the following hold.

\smallskip\noindent (i)
If $E$ has order $1$ then $|\Irr_K(G,b)| = 8$ and $\ell(b) =1$.

\smallskip\noindent (ii)
If $E$ has order $3$ then $|\Irr_K(G,b)| = 8$ and $\ell(b)=3$.

\smallskip\noindent (iii)
If $E$ has order $7$ then either $|\Irr_K(G,b)|=5$, $\ell(b)=4$ or
$|\Irr_K(G,b)|=8$, $\ell(b)=7$.

\smallskip\noindent (iv)
If $E$ has order $21$ then either $|\Irr_K(G,b)|=7$, $\ell(b)=4$ or 
$|\Irr_K(G,b)|=8$, $\ell(b)=5$.
\end{Proposition}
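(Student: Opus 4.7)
The plan is to combine Brauer's second main theorem with the local character count of Proposition~\ref{c2c2c2local}, inducting on $|G|$. First I would write the subsection formula
$$k(b)\;=\;\sum_{(u,b_u)}\ell(b_u),$$
where $(u,b_u)$ runs over $G$-conjugacy classes of $b$-subsections. Since $P$ is abelian, these correspond to $E$-orbits on $P$, and for a non-trivial orbit representative $u$ the block $b_u$ of $kC_G(u)$ has defect group $P$ and inertial quotient $\mathrm{Stab}_E(u)$.

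Enumerating $E$-orbits on $P\setminus\{1\}$ yields: seven singletons if $E=1$; one fixed involution $u_0$ plus two length-three orbits if $|E|=3$; one regular orbit if $|E|=7$; one length-seven orbit with each stabiliser of order three if $|E|=21$. Any orbit representative with trivial stabiliser gives a nilpotent $b_u$, contributing $\ell(b_u)=1$ by Puig's theorem on nilpotent blocks. Substituting into the subsection formula yields
$$k(b)-\ell(b)\;=\;7,\;\;\ell(b_{u_0})+2,\;\;1,\;\;\ell(b_u)$$
for $|E|=1,3,7,21$ respectively, with $b_u$ of inertial quotient $C_3$ in the last two cases. The case $E=1$ is nilpotent, so $\ell(b)=1$ and $k(b)=8$ at once. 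For the residual stabiliser $C_3$ (cases $|E|\in\{3,21\}$), I would bound $\ell(b_u)$ by induction on $|G|$: if $C_G(u)\lneq G$ then the Proposition applied to $b_u$ gives $\ell(b_u)\in\{3,4\}$; if $C_G(u)=G$ then $u$ is central, and after factoring out $\langle u\rangle$ one descends to a block with Klein four defect group, where Erdmann's explicit classification forces $\ell(b_u)=3$ with all characters of height zero.

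The hard part will be the upper bound $\ell(b)\leq k(E)$ that pins the pair $(k(b),\ell(b))$ down to the options listed. I would extract it from the elementary-divisor structure of the Cartan matrix of $kGb$: its determinant equals $|P|=8$, its entries are non-negative integers, and its columns are constrained by the generalised decomposition numbers at the subsections computed above. Combining with the subsection sums yields, case by case, that the only integer pairs $(k(b),\ell(b))$ compatible with these data are the ones stated, with the height-failure alternative for $|E|\in\{7,21\}$ arising precisely when a subsection block itself realises its own height-failure alternative.
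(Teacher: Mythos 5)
Your outline goes by a genuinely different route than the paper, which simply cites Landrock and then sketches modern alternatives via stable equivalences (Puig for $|E|=7$, Rouquier for $|E|=21$, Watanabe for $|E|=3$, Brou\'e--Puig for $|E|=1$) and partial isometries on $L^0$. Your plan is instead the classical subsection-plus-Cartan-matrix route, closer in spirit to Landrock's original argument. The first half of your plan is sound: the $E$-orbit count on $P\setminus\{1\}$ is correct in all four cases, the subsection formula $k(b)=\sum_{(u,b_u)}\ell(b_u)$ is correctly invoked, and the identification of the inertial quotient of $b_u$ as $\mathrm{Stab}_E(u)$ is right for abelian defect. Two small slips: in the inductive step $\ell(b_u)\in\{3,4\}$ should be $\ell(b_u)=3$, since $b_u$ has inertial quotient $C_3$ and part (ii) of the very statement being proved (applied to the smaller group $C_G(u)$) gives no other option; and the Klein-four case is Brauer's theory (or, in modern dress, \cite{CEKL}), not Erdmann's tame-block classification.

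The serious gap is exactly where you flag the hard part. You propose to deduce the upper bound from the assertion that the Cartan matrix of $kGb$ has determinant $|P|=8$. This is false. The correct fact is that the \emph{largest elementary divisor} of the Cartan matrix of a block equals the order of a defect group, and it occurs with multiplicity one; the determinant is the product of all elementary divisors and is in general strictly larger. A concrete counterexample is the Brauer correspondent itself: for $b$ with inertial quotient $C_3$ the local block algebra is $k(C_2\times A_4)$, whose Cartan matrix is $2\bigl(\begin{smallmatrix}2&1&1\\1&2&1\\1&1&2\end{smallmatrix}\bigr)$, with elementary divisors $2,2,8$ and determinant $32$, not $8$. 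Since the whole point of the upper-bound step is to rule out $k(b)>8$ and to force $\ell(b)$ into the short list $\{1,3,4,5,7\}$, basing it on a false arithmetic identity leaves the argument without a working mechanism. The actual constraints one has (positivity and integrality of the generalised decomposition matrix, the block decomposition $D^*D$ into the Cartan matrices of the $b_u$, and the largest-elementary-divisor fact) are all weaker than what you assumed, and you would need to redo Landrock's careful bookkeeping with the contribution matrices at each subsection to close this. As written, the core claim of the proposition remains unproven.
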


Besides Landrock's original proof it is also possible to prove this as 
a consequence of stronger results obtained later: the case $|E|=1$ is a 
particular case of nilpotent blocks \cite{BrPunil}, the case $|E|=3$ 
follows from Watanabe \cite[Theorem 1]{Wat91}. In the case $|E|=7$, the
group $E$ acts regularly on $P-\{1\}$, and by a result of Puig in 
\cite{Puabelian} there is a {\it stable equivalence of Morita type}
(cf. \cite[\S 5]{Broue1994}) between 
$\OGb$ and $\CO(P\rtimes E)$. Any such stable equivalence induces an 
isometry $L^0(G,b)\cong L^0(P\rtimes E)$ between the generalised character
groups which vanish on $p$-regular elements; in this case, these groups
have rank one and are generated by an element of norm $8$, whence the
inequality $|\Irr_K(G,b)|\leq$ $8$. If $|E|=21$ then there is again a stable 
equivalence of Morita  type, by a result of Rouquier in \cite{Rouqb}. Again 
by calculating a basis 
of $L^0(P\rtimes E)$ - which in this case has rank $3$ with a basis 
consisting of three elements of norm four, one also gets this inequality. 
See for instance \cite{KeLionesimple} for an exposition of the well-known 
technique exploiting partial isometries induced by stable equivalences of 
Morita type; this will be used in the proof of Theorem 
\ref{AlperinimpliesBroue}. 
The following observation is a slight refinement of 
Proposition \ref{leqeight}(iii)
in the case where the inertial quotient has order $7$ and
$|\Irr_K(G,b)|<8$.

\begin{Proposition}\label{punchheight}
Let $G$ be a finite group, $b$ a block of $kG$ with an elementary 
abelian defect group $P$ of order $8$ and inertial quotient $E$ of 
order $7$. Suppose that $b$ does not satisfy Alperin's weight 
conjecture. Then there is a labelling $\Irr_K(G,b)=$ 
$\{\chi_i\ |\ 1\leq i\leq 5\}$ with the following properties:

\smallskip\noindent (i)
$\chi_1$ has height one and $\chi_i$ has height zero, 
for $2\leq i\leq 5$.

\smallskip\noindent (ii)
The group $L^0(G,b)$ has rank $1$ and a basis element of the
form $2\chi_1-\sum_{i=2}^5 \delta_i\chi_i$ for some signs
$\delta_i\in\{\pm 1\}$, $2\leq i\leq 5$; moreover, at least one of 
the $\delta_i$ is positive.

\smallskip\noindent (iii)
If $i,j\in\{2,3,4,5\}$ such that $\chi_i(1)=$ $\chi_j(1)$ then
$\delta_i=$ $\delta_j$.
\end{Proposition}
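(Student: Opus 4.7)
The plan is to derive the proposition from Landrock's \ref{leqeight}(iii) together with an analysis of $L^0(G,b)$ through the Puig stable equivalence of Morita type between $\CO Gb$ and $\CO(P\rtimes E)$ recalled in the excerpt. The hypothesis that $b$ fails Alperin's weight conjecture, combined with \ref{leqeight}(iii), forces $|\Irr_K(G,b)|=5$, $\ell(b)=4$, and the height distribution asserted in (i); in particular, $L^0(G,b)$ has rank one.

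To pin down a generator, I would compute $L^0(P\rtimes E)$ explicitly. The Frobenius group $P\rtimes E$ of order $56$ has seven linear characters $\mu_0,\dots,\mu_6$ inflated from $E$ and one induced character $\psi=\Ind_P^{P\rtimes E}(\lambda)$ of degree seven, for any $\lambda\in\Irr_K(P)\setminus\{1\}$. A direct computation shows $\psi(1)=7$, $\psi(p)=-1$ for $p\in P\setminus\{1\}$, $\psi$ vanishes off $P$, and $\sum_{i=0}^6\mu_i$ equals $7$ on $P$ and $0$ on $G\setminus P$; hence $\xi:=\psi-\sum_{i=0}^6\mu_i$ is a primitive generator of $L^0(P\rtimes E)$ of squared norm $8$. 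Transporting $\xi$ through the isometry $L^0(G,b)\cong L^0(P\rtimes E)$ yields a primitive generator $\eta=\sum_{i=1}^5a_i\chi_i$ of $L^0(G,b)$ with $\sum a_i^2=8$. The only integer patterns in five variables are $(2,2,0,0,0)$ and $(2,1,1,1,1)$ up to signs and reordering; the former is excluded by the primitivity of $\eta$ (since $L^0(G,b)$ is saturated in $\Z\Irr_K(G,b)$). Combining $\eta(1)=0$ with the parities $\chi_1(1)\equiv 2\pmod 4$ and $\chi_j(1)$ odd for $j\ge 2$ forces the coefficient $\pm 2$ to sit on $\chi_1$, since otherwise $\eta(1)$ would be odd. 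Replacing $\eta$ by $-\eta$ if needed, I can write $\eta=2\chi_1-\sum_{j=2}^5\delta_j\chi_j$ with $\delta_j\in\{\pm 1\}$, proving (i) in this labelling and the first part of (ii). The identity $\eta(1)=0$ then reads $4m_1=\sum_{j=2}^5\delta_jm_j$ with positive odd integers $m_1=\chi_1(1)/2$ and $m_j=\chi_j(1)$; positivity of the left-hand side forces at least one $\delta_j$ to be $+1$, completing (ii).

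The main obstacle is (iii), since the lattice-theoretic data so far do not force $\delta_i=\delta_j$ when $\chi_i(1)=\chi_j(1)$. My proposed route is to exploit that on the local side all seven equal-degree characters $\mu_0,\dots,\mu_6$ enter $\xi$ with a common sign, and to transfer this rigidity via the generalised decomposition maps $d^u$ for $u\in P\setminus\{1\}$. The vanishing $\eta(ug)=0$ for $g$ a $2$-regular element of $C_G(u)$ produces additional linear relations among the $\delta_j$; combined with the $\Gal(K/\Q)$-action on $\Irr_K(G,b)$ (which sends the primitive generator $\eta$ to $\pm\eta$) and the equal-degree hypothesis, these relations should rule out mixed signs among equal-degree height-zero characters. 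Controlling signs at the level of individual characters rather than merely the rank-one lattice $L^0(G,b)$ is the delicate step.
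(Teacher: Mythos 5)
Parts (i) and (ii) of your proposal follow essentially the same route as the paper: quote Landrock for the height distribution, use Puig's stable equivalence to identify $L^0(G,b)$ with $L^0(P\rtimes E)$, compute the norm-$8$ generator on the local side, analyse the possible coefficient patterns, and use the vanishing at $1$ together with the $2$-adic valuations of character degrees to force the coefficient $\pm 2$ onto the height-one character. (Minor caveat: you wrote $\chi_1(1)\equiv 2\pmod 4$ and $\chi_j(1)$ odd, which is only literally true when $P$ is a full Sylow $2$-subgroup of $G$; the correct statement is that $\chi_1(1)_2=2\chi_j(1)_2$ and you should divide through by the common $2$-part of the height-zero degrees before arguing parity. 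The same correction is needed when you claim ``$\eta(1)$ would be odd.'')

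The real problem is (iii), and you have already flagged it yourself. Your proposed route is not a proof: the claim ``$\eta(ug)=0$ for $g$ a $2$-regular element of $C_G(u)$'' is false in general, since $L^0(G,b)$ is the kernel of the \emph{ordinary} decomposition map and its elements only vanish on $p$-regular elements (that is, on $1\cdot g$ with $g$ odd), not on non-trivial $p$-sections $ug$; and the Galois action fixing $\eta$ up to sign imposes no constraint linking $\delta_i$ to $\delta_j$ unless $\chi_i$ and $\chi_j$ happen to be Galois-conjugate, which the hypothesis $\chi_i(1)=\chi_j(1)$ does not give you. As written, (iii) is not established, and this is precisely the delicate step you identified.

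The paper's argument for (iii) is short and worth recording. For $i,j\in\{2,\dots,5\}$ one checks directly that $\delta_i\chi_i-\delta_j\chi_j$ is orthogonal to the generator $\eta=2\chi_1-\sum_{k=2}^5\delta_k\chi_k$, hence orthogonal to $L^0(G,b)$; since the orthogonal complement of $L^0(G,b)$ inside $\Z\Irr_K(G,b)$ is the lattice of generalised projective characters, $\delta_i\chi_i(1)-\delta_j\chi_j(1)$ must be divisible by $|G|_2$. If $\chi_i(1)=\chi_j(1)$ and $\delta_i\neq\delta_j$ this value is $\pm 2\chi_i(1)$, whose $2$-part is $2\cdot\frac{|G|_2}{8}=\frac{|G|_2}{4}$ because $\chi_i$ has height zero; this is strictly less than $|G|_2$, a contradiction. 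You should replace the speculative paragraph on generalised decomposition numbers with this lattice-orthogonality argument; no information beyond the rank-one structure of $L^0(G,b)$ and the interpretation of its orthogonal complement as projective characters is needed.
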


\begin{proof}
Statement (i) is just a reformulation of \ref{leqeight} (iii).
Using Puig's stable equivalence of Morita type 
\cite[6.8]{Puabelian}
we get that $L^0(G,b)\cong$ $L^0(P\rtimes E)$, which is a free abelian
group of rank one with a basis element of norm $8$. The only way to
write a norm $8$ element in $L^0(G,b)$ with less than $8$ characters
is with five characters, exactly one of which shows up with 
multiplicity $2$, and then this character must have height one, 
as follows from comparing character degrees in conjunction with 
the fact that every generalised character in $L^0(G,b)$ vanishes 
at $1$. The signs $\delta_i$ cannot all be negative because the group
$L^0(G,b)$ does not contain any actual non-zero character (again because
its elements vanish at $1$). This proves (ii). If $i, j\in$ 
$\{2,3,4,5\}$ then
$\delta_i\chi_i-\delta_j\chi_j$ is orthogonal to $L^0(G,b)$, hence
a generalised projective character. In particular, at
$1$, its value is divisible by the order of a Sylow $2$-subgroup
of $G$. But if $\chi_i(1)=$ $\chi_j(1)$ and $\delta_i\neq$ $\delta_j$,
this value is $\pm 2\chi_i(1)$, which cannot be divisible by the 
order of a Sylow $2$-subgroup of $G$ as $\chi_i$ has height zero.
The result follows.
\end{proof}

It is not known in general whether a Morita 
equivalence between two block algebras preserves their local structures,
but some easy standard block theoretic arguments show that this is  
true, even for stable equivalences of Morita type, if one of the two 
blocks has an elementary abelian defect group of order $8$.

\begin{Proposition} \label{localstructurepreserved}
Let $G$, $H$ be finite groups, $b$ a block of $kG$ with an
elementary abelian defect group $P$ of order $8$ and $c$ a block
of $kH$ with a defect group $Q$. If there is a stable equivalence
of Morita type between $kGb$ and $kHc$ then $Q\cong P$ and the
blocks $b$ and $c$ have isomorphic inertial quotients
(or equivalently, isomorphic fusion systems).
\end{Proposition}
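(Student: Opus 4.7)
The plan is to extract the isomorphism type of $Q$ and the inertial quotient $F$ of $c$ from the numerical invariants preserved by the stable equivalence of Morita type, combined with Landrock's tabulation in Proposition~\ref{leqeight}. The stable equivalence preserves (i) the order of the defect group, via the $p$-part of the Cartan determinant, (ii) the number $\ell$ of simple modules, since both blocks have non-trivial defect and hence no projective simples, and (iii) the isometry class of the group $L^0$ of generalised characters vanishing on $p$-regular elements, as recalled in the discussion preceding Proposition~\ref{punchheight}. Thus $|Q|=|P|=8$, $\ell(c)=\ell(b)$, and $|\Irr_K(H,c)|=\ell(c)+\mathrm{rank}(L^0(H,c))=|\Irr_K(G,b)|$.

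Granted these invariants I would split into the non-nilpotent and nilpotent cases. In the non-nilpotent case, Proposition~\ref{leqeight} restricts the pair $(\ell(b),|\Irr_K(G,b)|)$ to $\{(3,8),(4,5),(4,7),(5,8),(7,8)\}$, and a direct check using the cyclic defect theory of Brauer and Dade together with the known $(\ell,|\Irr_K|)$-data for blocks with defect group $D_8$ or $Q_8$ shows that no such pair is realised by a block with defect group of order $8$ other than $(C_2)^3$. In the nilpotent case $(\ell(b),|\Irr_K(G,b)|)=(1,8)$, the value $|\Irr_K(H,c)|=8$ forces $Q$ to be abelian, since a nilpotent block with $D_8$ or $Q_8$ defect has $|\Irr_K|=5$; the triviality of the inertial quotient for any abelian group of order $8$ with $\ell=1$ then forces $c$ itself to be nilpotent. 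By Proposition~\ref{nilextremark}, $kGb$ and $kHc$ are Morita equivalent to $kP$ and $kQ$ respectively, so the stable equivalence of Morita type descends to one between $kP$ and $kQ$. The complexity of the unique simple module is a stable invariant equal to the $2$-rank of the defect group, and since $(C_2)^3$ is the only group of order $8$ of $2$-rank $3$, we obtain $Q\cong P$.

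With $Q\cong P$ in hand, both $E$ and $F$ are odd-order subgroups of $\GL_3(2)$ with order in $\{1,3,7,21\}$. Reading Proposition~\ref{leqeight} as a table, the pair $(\ell,|\Irr_K|)$ determines $|E|$ and $|F|$ unambiguously, so $|F|=|E|$; since $\GL_3(2)$ has a unique conjugacy class of subgroups of each odd order, this yields $F\cong E$, which for abelian defect is equivalent to $b$ and $c$ having isomorphic fusion systems. The subtlest step is the nilpotent case, where the pair $(1,8)$ is shared by the three abelian groups of order $8$ and the argument must invoke a genuinely non-numerical stable invariant --- the complexity of the simple module, or equivalently the Krull dimension of the Tate cohomology ring $\hat H^*(Q,k)$ --- to distinguish them.
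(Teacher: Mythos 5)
Your proposal gets the right answer via the right ingredients (Cartan invariants, complexity, Landrock's table), but it contains a genuine gap in step (ii): the claim that a stable equivalence of Morita type preserves the number $\ell$ of simple modules. This is precisely the Auslander--Reiten conjecture for blocks without projective simples, and it is not a theorem; "no projective simples" removes an obvious obstruction but does not yield preservation of $\ell$. Both your non-nilpotent case (which compares pairs $(\ell,|\Irr_K|)$ against $D_8$, $Q_8$ and the nilpotent abelian groups of order~$8$) and your final determination of $|F|=|E|$ rest on this, and neither can be salvaged from knowing only $\mathrm{rank}\,L^0=|\Irr_K|-\ell$: for example, rank~$3$ occurs both for a $C_2^3$-block with $|E|=21$ and for a $D_8$-block with $\ell=2$, and rank~$7$ occurs both for a nilpotent $C_2^3$-block and for a nilpotent $C_8$- or $C_4\times C_2$-block, so the rank of $L^0$ alone does not separate defect group types.

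The paper avoids this entirely. It first pins down $Q\cong P$ without any character counting: since a stable equivalence of Morita type preserves complexity of modules (Proposition~\ref{localstructurepreserved} cites \cite[5.3.4]{BensonII}) and the largest complexity occurring in a block equals the rank of a defect group, one gets $\mathrm{rank}\,Q=3$ directly, and combined with $|Q|=8$ this forces $Q\cong C_2^3$ with no case split at all. Only then does it invoke the rank of $L^0$, which by Proposition~\ref{leqeight} takes the distinct values $7,5,1,3$ exactly according to $|E|\in\{1,3,7,21\}$, so this single numerical invariant already determines the inertial quotient once $Q\cong C_2^3$ is known. You do in fact reach for the complexity argument, but only as a last resort in the nilpotent subcase after a detour through Brou\'e--Puig (and the citation there should be to the nilpotent block theorem of Brou\'e and Puig \cite{BrPunil}, not to Proposition~\ref{nilextremark}, which concerns Puig's result on nilpotent extensions). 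Deploying complexity up front, as the paper does, collapses the case analysis and removes the dependence on~$\ell$ altogether.
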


\begin{proof} 
A stable equivalence of Morita type preserves the
largest elementary divisors of the Cartan matrices of the blocks, 
and these are equal to the orders of the defect groups, 
whence $|Q|=$ $|P|$. A stable equivalence 
of Morita type preserves also the complexity (cf. \cite[5.3.4]{BensonII})
of modules; since the largest
complexity of a module in a block is the rank of a defect group,
we get that $Q$ has rank $3$, and thus $Q\cong$ $P$. Alternatively,
a stable equivalence of Morita type preserves the Krull dimension
of the Hochschild cohomology rings, which are also known to
be equal to the ranks of the defect groups. (This part of the argument
is well-known to remain valid for blocks with arbitrary elementary abelian
defect groups, but we do not need this here.) Finally, a stable
equivalence of Morita type preserves the rank of $L^0(G,b)$, which
is equal to $|\Irr_K(G,b)|-\ell(b)$, or also equal to 
$\sum_{(u,e_u)}\ \ell(e_u)$, where $(u,e_u)$ runs over a 
set of representatives of the conjugacy classes of non-trivial 
$(G,b)$-Brauer elements. It happens so that this number
determines the structure of the inertial quotient $E$ of $b$. 
Indeed, by Proposition \ref{leqeight}, this number is equal to
$7$ if and only if $|E|=1$, equal to $5$ if and only if $|E|=3$,
equal to $1$ if and only if $|E|=7$, and equal to $3$ if and only
if $|E|=21$, whence the result.
\end{proof}
 
When dealing with the exceptional groups of type $E_7(q)$ 
we will need a refinement of the preceding result because 
the finite group of Lie type $E_7(q)$ is a central extension 
of the simple group of type $E_7(q)$ by an involution, and 
so Bonnaf\'e-Rouquier's Jordan decomposition 
\cite[\S 11, Th\'eor\`eme B']{BonRou} 
will have to be applied to blocks with a defect group of 
order $2^4$. 

\begin{Proposition} \label{defect4}
Let $G$, $H$ be finite groups, $b$ a block of $G$ with
a defect group $P$ and $c$ a block of $H$ with a defect group $Q$.
Suppose that there are central involutions $s\in Z(G)$ and
$t\in Z(H)$ such that $P/\langle s\rangle$ is
elementary abelian of order $8$. Denote by $\bar b$ and
$\bar c$ the images of $b$ and $c$ in $kG/\langle s\rangle$ and
$kH/\langle t\rangle$, respectively. Suppose that the block
algebras $kGb$ and $kHc$ are Morita equivalent. If $\bar b$
 does not satisfy Alperin's weight conjecture then  neither  
does $\bar c$ and  the defect groups  of  $\bar c$  
are elementary abelian of order $8$. 
\end{Proposition}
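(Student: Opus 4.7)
The overall plan is to transfer the invariant $\ell(\bar b) = 4$ to $\bar c$ via the Morita equivalence, and then use the classification of $2$-blocks with defect groups of order $8$ to force $Q/\langle t\rangle$ to be elementary abelian. Morita equivalence preserves the Cartan matrix up to permutation, hence the order of a defect group (which is the largest elementary divisor of the Cartan matrix), so $|Q| = |P| = 16$. Since $t$ is a central $2$-element of $H$, it lies in every defect group of every $2$-block of $H$; thus $t \in Q$ and the defect group of $\bar c$ is $Q/\langle t \rangle$, of order $8$. In characteristic $2$ a central involution acts as the identity on every simple module, so simple $kGb$-modules coincide with simple $k(G/\langle s\rangle)\bar b$-modules; hence $\ell(b) = \ell(\bar b)$, and analogously $\ell(c) = \ell(\bar c)$. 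Combined with $\ell(b) = \ell(c)$ from the Morita equivalence, this gives $\ell(\bar b) = \ell(\bar c)$.

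Since $\bar b$ fails Alperin's weight conjecture and has elementary abelian defect group of order $8$, Proposition \ref{leqeight} forces $|\Irr_K(G/\langle s\rangle, \bar b)| < 8$, which in cases (iii) and (iv) of that proposition means $\ell(\bar b) = 4$; consequently $\ell(\bar c) = 4$ as well. The critical step is then to show that the defect group $Q/\langle t\rangle$ of $\bar c$, of order $8$, is elementary abelian. Among the five isomorphism types of groups of order $8$, the abelian groups $C_8$ and $C_4 \times C_2$ both have $2$-group automorphism groups, so any block with such a defect group has trivial inertial quotient and is therefore nilpotent by \cite{BrPunil}, giving $\ell = 1$; for defect groups $D_8$ and $Q_8$, the results of Brauer \cite{Brauer71}, \cite{Brauer74} and Olsson \cite{Olsson75} yield $\ell \leq 3$. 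So $\ell(\bar c) = 4$ is only compatible with $Q/\langle t\rangle \cong C_2 \times C_2 \times C_2$.

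A final application of Proposition \ref{leqeight} to $\bar c$, which now has elementary abelian defect group of order $8$ and $\ell(\bar c) = 4$, yields $|\Irr_K(H/\langle t\rangle, \bar c)| < 8$, so $\bar c$ fails Alperin's weight conjecture. The subtlety avoided by this approach is that the given Morita equivalence $kGb \simeq kHc$ need not send $sb \in Z(kGb)$ to $tc \in Z(kHc)$: under the induced isomorphism of centres, $sb$ maps to some $v \in Z(kHc)$ with $v^2 = c$ and $v - c$ central of square zero, but $v$ is not forced to equal $tc$, so the Morita equivalence does not a priori descend to one between $k(G/\langle s\rangle)\bar b$ and $k(H/\langle t\rangle)\bar c$. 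The hard part of the argument is therefore replaced by the classification-based rigidity for defect groups of order $8$: by working only with the Morita-invariant numerical data $|P|$ and $\ell$, together with Landrock's analysis for $\bar b$, we never need the Morita equivalence to be compatible with the choice of central involutions.
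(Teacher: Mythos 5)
Your proof is correct and follows essentially the same route as the paper: reduce to the Morita-invariant data $|P|=|Q|$ and $\ell(\bar b)=\ell(\bar c)=4$, then use the classification of blocks with order-$8$ defect groups to rule out every isomorphism type except $C_2\times C_2\times C_2$. Your closing remark explaining why the Morita equivalence cannot be assumed to descend through the central involutions is a useful clarification of the motivation for this numerical detour, but the underlying argument is the one in the paper.
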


\begin{proof}
Suppose that $\bar b$ does not satisfy Alperin's weight 
conjecture. Since the defect group $P/\langle s\rangle$ of
$\bar b$ is elementary abelian of order $8$ we have 
$\ell(\bar b) =4$  by Proposition \ref{leqeight}. 
Using that the number
of isomorphism classes of simple modules is invariant under
central $2$-extensions and Morita equivalences we get that
$\ell(\bar c) =4$. Note that $P$ and $Q$ have the same order
since $b$ and $c$ are Morita equivalent, and hence $\bar c$
has a defect group $R=$ $Q/\langle t\rangle$ of order $8$. 
If $R$ is cyclic or abelian of rank $2$ or isomorphic to one
of the non-abelian groups of order $8$ then
then $\ell(\bar c)\in$ $\{1,3\}$, a contradiction. So,  
$R$ is elementary abelian. Now it is immediate from Proposition 
\ref{leqeight} that   $\bar c$ does not satisfy the weight conjecture.
\end{proof}

\begin{Remark}
Experts seem to agree that the Morita equivalences 
from Bonnaf\'e-Rouquier's Jordan decomposition 
\cite[\S 10, \S 11]{BonRou}
should preserve the local structure of the blocks, but at
present there is no written reference for this fact. 
The two propositions \ref{localstructurepreserved} and 
\ref{defect4} circumvent this issue by adhoc methods.
\end{Remark}

\section{Reduction to quasi-simple groups}

The part of Brauer's height zero conjecture predicting that all characters
in a block with an abelian defect group have height zero
has been reduced to
blocks of quasi-simple finite groups in work of Berger and Kn\"orr
\cite{BeKn}. We need to make sure that in the reduction 
we can indeed restrict the problem to checking only defect groups of order at
most $8$; this is not entirely obvious since in Step 6 of the proof
of \cite[Theorem]{BeKn} the order of the defect group may possibly go up,
an issue which arises also in the alternative proof given by Murai in 
\cite[\S 6]{Murai94}. 

\begin{Theorem} \label{c2c2c2quasisimple}
Let $G$ be a finite group and $b$ a block of $kG$ with an elementary
abelian defect group $P$ of order $8$. Suppose that $|G/Z(G)|$ is minimal
such that $|\Irr_K(G,b)| < 8$. Then $Z(G)$ has odd order and $G/Z(G)$ 
is simple. If moreover we also choose $|Z(G)|$ minimal then $G$ is 
quasi-simple. In addition, the inertial quotient of $b$ is either
cyclic of order $7$ or a Frobenius group of order $21$.
\end{Theorem}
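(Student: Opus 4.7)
The assertion on the inertial quotient is immediate from Landrock's Proposition~\ref{leqeight}: the hypothesis $|\Irr_K(G,b)|<8$ rules out $|E|\in\{1,3\}$ (both force $|\Irr_K(G,b)|=8$), leaving only $|E|=7$ or $|E|=21$. The same proposition also gives $\ell(b)=4$, which will be a key numerical invariant throughout.

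For the structural claims I would follow the pattern of the Berger--Kn\"orr reduction \cite{BeKn}, assembling the reduction results \ref{Fongreduction}--\ref{nilextremark} into a ladder. Let $M$ be a normal subgroup of $G$ with $Z(G)\leq M\lneq G$, and let $c$ be a block of $kM$ covered by $b$. If $c$ is not $G$-stable, the classical Fong--Reynolds stabilizer argument underlying Proposition~\ref{Fongreduction} replaces $G$ by the stabilizer of $c$ in $G$, which is a proper subgroup containing $Z(G)$ with the same defect group $P$; this gives a strictly smaller counterexample, contradicting the minimality of $|G/Z(G)|$. If $c$ is $G$-stable and every element of $G$ acts on $kMc$ as an inner automorphism, Proposition~\ref{Kuelsreduction} yields a source-algebra equivalence between $b$ and $c$ whenever $P\leq M$, again against minimality. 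If the action is not all inner, Proposition~\ref{Dadereduction} forces $G/M$ to be a $p'$-group with $b=c$, and Proposition~\ref{nilextremark} eliminates the nilpotent subcase (which would satisfy AWC). This leaves $G/Z(G)$ with no proper non-trivial normal subgroup; since $\ell(b)=4$ precludes $G$ from being abelian, $G/Z(G)$ must be non-cyclic, hence non-abelian simple.

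To show $Z(G)$ has odd order, suppose for contradiction that it contains an involution $z$. Then $z\in P$, as $P$ is an abelian defect group and hence contains every central $2$-element of $G$. The canonical surjection $kG\to k(G/\langle z\rangle)$ has kernel $(z-1)kG$; since $(z-1)^2=0$ in characteristic $2$, this kernel is nilpotent, so the surjection induces bijections on blocks and on simple modules. The induced block $\bar{b}$ therefore has defect group $P/\langle z\rangle\cong C_2\times C_2$ and $\ell(\bar{b})=\ell(b)=4$, contradicting the classical results of Brauer \cite{Brauer71}, \cite{Brauer74} and Olsson \cite{Olsson75} that $\ell\in\{1,3\}$ for blocks with Klein-four defect group. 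Assuming further that $|Z(G)|$ is minimal, if $H=[G,G]\lneq G$ then $G=HZ(G)$ (since $G/Z(G)$ is non-abelian simple) gives $Z(H)=Z(G)\cap H\lneq Z(G)$ and $|H/Z(H)|=|G/Z(G)|$; any block $c$ of $kH$ covered by $b$ is $G$-stable, $G$ acts on $kHc$ by inner automorphisms (via $H$, with $Z(G)$ trivial), and $P\leq H$ because $G/H$ has odd order. Proposition~\ref{Kuelsreduction} then produces a source-algebra equivalence between $b$ and $c$, hence a counterexample with strictly smaller centre, contradicting minimality; therefore $G=[G,G]$ is quasi-simple.

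The principal technical obstacle---the reason this is not a formal corollary of \cite{BeKn}---is that in the reduction of \cite{BeKn}, particularly its Step~6, the defect group of the reduced block may enlarge, which is incompatible with our hypothesis of a defect group of order exactly $8$. The plan circumvents this because Propositions~\ref{Fongreduction} and \ref{Kuelsreduction} preserve the defect group exactly (through Morita and source-algebra equivalences with vertex $\Delta P$), and the sharp numerical constraints of Proposition~\ref{leqeight} ($\ell(b)=4$ and $|E|\in\{7,21\}$) provide the rigidity needed to keep the minimal counterexample intact along the induction.
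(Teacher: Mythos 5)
Your arguments that $|Z(G)|$ has odd order, that the inertial quotient has order $7$ or $21$, and that $G$ may be taken perfect once $G/Z(G)$ is known to be simple are all essentially correct (your parity argument, comparing $\ell(\bar b)=\ell(b)=4$ with $\ell\in\{1,3\}$ for Klein-four blocks, is a valid minor variant of the paper's, which instead invokes Alperin's weight conjecture for Klein four directly). However, the step showing $G/Z(G)$ is simple has two genuine gaps.

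First, your reduction ladder omits the case $P\cap M=1$. Since the inertial quotient acts transitively on $P\setminus\{1\}$, a normal subgroup $M$ with $Z(G)\leq M$ satisfies either $P\leq M$ or $P\cap M=1$; in the latter case $kMc$ is a matrix algebra, so every element of $G$ automatically acts by an inner automorphism, but Proposition~\ref{Kuelsreduction} only yields a source-algebra equivalence when $P\leq M$, so that branch of your argument is silent. One must instead invoke Proposition~\ref{Fongreduction}, which gives a Morita equivalence with a block of a central $2'$-extension of $G/M$; taking $M$ to be a \emph{maximal} normal subgroup containing $Z(G)$ (maximality is also what gives the dichotomy $G[c]\in\{M,G\}$ that you implicitly use before applying Dade) makes $G/M$ simple, and when $M\supsetneq Z(G)$ the resulting group is strictly smaller, closing this case. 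Second, and more seriously, your treatment of the Dade case ($G[c]=M$, hence $b=c$ and $G/M$ of odd order) does not work. Proposition~\ref{nilextremark} concerns a \emph{nilpotent} $b$; here $b$ has inertial quotient of order $7$ or $21$ and is not nilpotent, so the proposition is simply inapplicable, and even when it applies its conclusion is about the subgroup block $c$, not about $b$. What is actually required here is the Clifford-theoretic count the paper carries out: with $M$ maximal normal, $G/M$ is simple of odd order, hence cyclic of odd prime order $\ell$ by Feit--Thompson; writing $m$ for the number of $G/M$-fixed characters in $\Irr_K(M,c)$ and $r$ for the number of nontrivial $G/M$-orbits, one has $|\Irr_K(G,b)|=\ell m+r<8$ while minimality forces $|\Irr_K(M,c)|=\ell r+m=8$, and running through the possible $(\ell,m,r)$ yields $\ell m+r\geq 8$, a contradiction. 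Without this counting argument the reduction does not close.
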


\begin{proof}
If $|Z(G)|$ is even then $P\cap Z(G)$ is non-trivial, hence contains a
subgroup $Z$ of order $2$. The image $\bar b$ of $b$ in $kG/Z$ is then 
a block of $kG/Z$ with a Klein four defect group $P/Z$. Thus, since $\bar b$
satisfies Alperin's weight conjecture, so does $b$, a contradiction to the
assumption. Thus $Z(G)$ has odd order. 
Let $(P,e)$ be a maximal $(G,b)$-Brauer pair and set $E = N_G(P,e)/C_G(P)$. 
By Proposition \ref{leqeight}, the order of $E$ is either $7$ or $21$. 
In both cases, $E$ acts transitively on $P-\{1\}$. Thus, if $N$ is a normal 
subgroup of $G$ then either $N\cap P = \{1\}$ or $P\leq N$. Moreover, the 
minimality of $|G/Z(G)|$ implies that if $N$ is a normal subgroup of $G$ 
containing $Z(G)$ then there is a unique block $c$ of $kN$ covered by $b$; 
that is, $bc = b$. Let now $N$ be a maximal
normal subgroup of $G$ containing $Z(G)$ and let $c$ be the block of $kN$
satisfying $bc = b$. Consider first the case $P\cap N = \{1\}$.
In that case, by Proposition \ref{Fongreduction}, we have an isomorphism
$$kGc \cong kNc \tenk k_\alpha(G/N)$$
for some $\alpha\in H^2(G/N;k^\times)$ such that $kGb$ is Morita
equivalent to a block $\hat b$ of a finite central $2'$-extension
$H$ of the simple group $G/N$. Consider next the case $P\leq N$.
Let $G[c]$ be the subgroup of $G$ consisting of all $x\in G$ such that
conjugation by $x$ induces an inner automorphism of $kNc$. Since $N$ is
maximal normal in $G$ we have either $N = G[c]$ or $G[c] = G$.
Suppose first that $G[c] = G$. Then, by Proposition \ref{Kuelsreduction}, 
we have an isomorphism
$$kGc \cong kNc\ \ten_{Z(kNc)}\ Z(kNc)_\alpha(G/N)$$
for some $\alpha\in H^2(G/N; (Z(kNc))^\times)$, and the blocks 
$kGb$ and $kNc$
are source algebra equivalent - but this contradicts the minimality of
$|G|$ since source algebra equivalent blocks have in particular the
same number of ordinary irreducible characters.
Thus we have $N = G[c]$. Then, by Proposition \ref{Dadereduction}, we have
$b=c$ and $G/N$ has odd order. Since also $G/N$ is simple, this implies that
$G/N$ is cyclic of odd prime order $\ell$, by Feit-Thompson's Odd Order
theorem. By standard results in Clifford
theory, any $\eta\in\Irr_K(N,c)$ is either $G/N$-stable, in which case it 
extends to exactly $\ell$ different characters in $\Irr_K(G,b)$, or 
$\Ind_N^G(\eta)\in\Irr_K(G,b)$. It follows that
$$|\Irr_K(G,b)| = \ell\cdot m + r < 8$$
where $m$ is the number of characters in $\Irr_K(N,c)$ fixed by $G/N$ and
where $r$ is the number of non-trivial $G/N$-orbits in $\Irr_K(N,c)$.
Since $\ell\geq 3$ we have $m\leq 2$.
Using induction, we have 
$$8 = |\Irr_K(N,c)| = \ell\cdot r + m$$
In all possible choices of $\ell$, $m$, $r$ satisfying
this equality we get the contradiction $\ell\cdot m + r\geq 8$.
Thus the assumption $P\leq N$ is not possible, and therefore
the above implies that $G/Z(G)$ is simple. Finally, since $G/Z(G)$ is simple
we have $G = Z(G)[G,G]$, so $G/[G,G]$ acts trivially on all characters
of $[G,G]$, and so $|\Irr_K(G,b)| = |\Irr_K([G,G], d)|$, where $d$ is a
block of $[G,G]$ satisfying $bd = b$. After repeating this, if
necessary, we also may assume that $G$ is perfect, hence quasi-simple.
\end{proof}

\section{Perfect isometries}

Using Rouquier's stable equivalence for blocks with an elementary
abelian defect group of order $8$, described in the Appendix below,
we show that Alperin's weight conjecture implies the character 
theoretic version of Brou\'e's abelian defect conjecture for
these blocks:

\begin{Theorem} \label{AlperinimpliesBroue}
Let $G$ be a finite group and let $b$ be a block of $\OG$ with
an elementary abelian defect group $P$ of order $8$. Set
$H = N_G(P)$ and denote by $c$ the block of $\OH$ with defect
group $P$ corresponding to $b$ via the Brauer correspondence.
Suppose that $K$ is large enough for $b$ and $c$.
If $|\Irr_K(G,b)| =$ $|\Irr_K(H,c)|$ then the blocks $b$ and
$c$ are isotypic; in particular, there is a perfect
isometry $\Z\Irr_K(G,b)\cong$ $\Z\Irr_K(H,c)$.
In particular we have $Z(\OG b)\cong$ $Z(\OH c)$.
\end{Theorem}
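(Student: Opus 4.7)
The plan is to derive the isotypy from Rouquier's splendid stable equivalence of Morita type between $\OG b$ and $\OH c$ constructed in the Appendix. First I reduce the hypothesis to the statement that Alperin's weight conjecture holds for $b$: since the inertial quotient $E$ of $b$ is an odd-order subgroup of $\Aut(P) = \GL_3(2)$, it has trivial Schur multiplier, so $\OH c$ is source algebra equivalent to $\CO(P\rtimes E)$, and Proposition~\ref{c2c2c2local} gives $|\Irr_K(H,c)| = 8$. The hypothesis therefore forces $|\Irr_K(G,b)| = 8$, which by Proposition~\ref{leqeight} is equivalent to Alperin's weight conjecture for $b$ and guarantees $\ell(b) = \ell(c)$.

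Next I invoke the Appendix, which produces an indecomposable $\CO(G\times H)$-module $M$ with vertex $\Delta P$ and trivial source inducing a splendid stable equivalence of Morita type between $\OG b$ and $\OH c$. The standard partial-isometry technique (cf.~\cite[\S 5]{Broue1994} and \cite{KeLionesimple}) shows that the virtual character of $M$ induces an isometry $L^0(G,b) \xrightarrow{\sim} L^0(H,c)$. Using $\ell(b) = \ell(c)$ together with the bijection between simple $kGb$- and $kHc$-modules induced by the stable equivalence, combined with the matching of total character counts, I promote this to a $\Z$-linear isometry on the full character lattices; any such isometry must send $\Irr_K(G,b)$ bijectively to $\pm\Irr_K(H,c)$, and the integrality and $p$-regularity conditions for a perfect isometry then follow from the $p$-permutation nature of $M$.

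For isotypy I apply the Brauer construction $M(\Delta\langle u\rangle)$ at each $u\in P\setminus\{1\}$: this yields a $p$-permutation $k(C_G(u)\times C_H(u))$-bimodule inducing a splendid stable equivalence between the Brauer-correspondent blocks $kC_G(u)b_u$ and $kC_H(u)c_u$, both of which have $P$ as defect group (since $P$ is abelian) and inertial quotient $C_E(u)$. Running the previous step at each such $u$ produces perfect isometries that commute with the generalized decomposition maps by construction, which is precisely the data of an isotypy; the final isomorphism $Z(\OG b)\cong Z(\OH c)$ follows from the classical fact that perfect isometries preserve centers.

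The hardest step I anticipate is the upgrade from the $L^0$-isometry to an honest perfect isometry. An $L^0$-isometry alone leaves the signs on the orthogonal complement undetermined, and pinning them down requires combining the $p$-permutation rigidity of $M$ with the explicit low-rank description of $L^0(G,b)$ from Proposition~\ref{punchheight} (for $|E|=7$) and its analogue for $|E|=21$. In the cases $|E|\in\{1,3\}$ the difficulty is moot because an actual Morita equivalence is already known.
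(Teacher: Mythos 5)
Your overall strategy—use Rouquier's stable equivalence from the Appendix, obtain an isometry $L^0(H,c)\cong L^0(G,b)$, extend it to a perfect isometry, and then promote that to an isotypy—is the same as the paper's, but there is a genuine gap in the central step, and a few substantive inaccuracies.

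First, the Appendix produces a bounded \emph{complex} $X$ of $\OG b$-$\OH c$-bimodules whose components are direct sums of trivial-source bimodules with vertices of the form $\Delta Q$, $Q\leq P$; it does not in general produce a single indecomposable bimodule $M$ with vertex $\Delta P$ and trivial source. Only in the cases $E=\{1\}$ and $|E|=7$ does such a single bimodule exist (this is the content of the closing remark of the Appendix, and rests on Puig's result \cite{Puabelian}). For $|E|\in\{3,21\}$ the stable equivalence genuinely lives on the level of a two-term complex, so all your later appeals to ``the $p$-permutation nature of $M$'' need to be reformulated for the complex.

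Second, the crucial step—extending the $L^0$-isometry to a $\Z$-linear isometry $\Z\Irr_K(P\rtimes E)\cong\Z\Irr_K(G,b)$—is precisely where you wave your hands. You write that this ``requires combining the $p$-permutation rigidity of $M$ with the explicit low-rank description of $L^0(G,b)$ from Proposition~\ref{punchheight}'', but Proposition~\ref{punchheight} is stated under the hypothesis that $b$ \emph{fails} Alperin's weight conjecture, which is the opposite of your situation; it cannot be invoked here. What the paper actually does is a case-by-case analysis over $E\in\{1,C_3,C_7,C_7\rtimes C_3\}$: for each $E$ it writes down an explicit $\Z$-basis of $L^0(P\rtimes E)$ (a list of short vectors of norm $2$ and $4$, or a single vector of norm $8$), analyses the combinatorial constraints the isometry places on its image in $L^0(G,b)$ given $|\Irr_K(G,b)|=8$, and reads off a consistent labelling and choice of signs. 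This combinatorial bookkeeping is the real content of the proof, and your proposal does not supply it. Note also that the claim ``any such isometry must send $\Irr_K(G,b)$ bijectively to $\pm\Irr_K(H,c)$'' is an a posteriori feature of the extensions constructed, not an a priori reason an extension exists.

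Finally, your route to the isotypy via Brauer constructions $M(\Delta\langle u\rangle)$ is a legitimate alternative (this is the standard way splendid complexes induce isotypies), but you don't verify the compatibility with generalized decomposition maps, and the paper avoids this work entirely: once an extension $\Z\Irr_K(P\rtimes E)\cong\Z\Irr_K(G,b)$ of the $L^0$-isometry exists, \cite[3.3]{KeLionesimple} says it is automatically a $p$-permutation equivalence, and \cite[Theorem 1.4]{Linpperm} upgrades that directly to an isotypy, with no further computation at Brauer pairs. Your reduction to the case $|\Irr_K(G,b)|=8$ via Proposition~\ref{c2c2c2local}, and the final observation that a perfect isometry induces an isomorphism of centres, are both correct.
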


See \cite[6.1, 6.2]{Broue}, \cite{Brouecara}, \cite{Broue1994}
for more precise versions of Brou\'e's abelian defect conjecture, 
as well as background material on perfect isometries and isotypies.

\begin{proof}[Proof of Theorem \ref{AlperinimpliesBroue}]
We refer to \cite[\S\S 2, 3]{KeLionesimple} for notation and an 
expository account of the standard techniques on extending partial 
isometries induced by stable equivalences of Morita type. 
By a result of Rouquier in \cite{Rouqb}, there is a stable equivalence
of Morita type between the block algebras of $b$ and of $c$ over
$\CO$ (a proof of this result is given in 
Theorem \ref{c2c2c2stableequivalence} below) 
given by a bounded complex of bimodules whose indecomposable
summands all have diagonal vertices and trivial source. 
Denote by $E$ the inertial quotient of $b$. 
Since the block algebra $\OHc$  is Morita equivalent to 
$\CO(P\rtimes E)$ via a bimodule with diagonal vertex and trivial source
this implies that there is a stable equivalence of Morita type
between $\OGb$ and $\CO(P\rtimes E)$, induced by a bounded complex
of bimodules with diagonal vertices and trivial source. It is 
well-known (see e.g.  \cite[3.1]{KeLionesimple}) that any such 
stable equivalence induces an isometry $L^0(P\rtimes E)\cong L^0(G,b)$.
It suffices to show that this partial isometry extends to an isometry 
$\Z\Irr_K(P\rtimes E) \cong \Z\Irr_K(G,b)$ because any such extension 
is then a $p$-permutation equivalence by \cite[3.3]{KeLionesimple},
hence induces an isotypy by \cite[Theorem 1.4]{Linpperm}.
We do this by running through all possible inertial
quotients $E$. 

If $E = \{1\}$ the block $b$ is nilpotent, hence
Morita equivalent to $\OP$, and so the result holds trivially in this 
case. Assume that $|E|=3$. Then $P\rtimes E\cong C_2\times A_4$, and
hence we can list the eight ordinary irreducible characters of 
$P\rtimes E$ in such a way that the three characters of the projective
indecomposable $\CO(P\rtimes E)$-modules are of the form
$\chi_i+\chi_{i+3}+\chi_7+\chi_8$
where $1\leq i\leq 3$. Thus a basis of $L^0(P\rtimes E)$ is of the form
$$\{\chi_1-\chi_4, \chi_2-\chi_5, \chi_3-\chi_6, \chi_7-\chi_8,
\chi_1+\chi_2+\chi_3-\chi_7\}$$
The four elements of norm $2$ in this basis must be sent to norm
$2$ elements under the isometry $L^0(P\rtimes E)\cong L^0(G,b)$
no two of which involve a common irreducible character in $\Irr_K(G,b)$,
and hence are mapped to elements of the form
$\delta_1(\eta_1-\eta_4)$, $\delta_2(\eta_2-\eta_5)$, 
$\delta_3(\eta_3-\eta_6)$, $\delta_7(\eta_7-\eta_8)$, for some
labelling $\Irr_K(G,b) = \{\eta_i\ | 1\leq i\leq 8\}$ and some signs
$\delta_i$. We may then choose notation (after possibly exchanging
$\eta_1$ and $\eta_4$ etc.) in such a way that the image in 
$L^0(G,b)$ of the norm four element $\chi_1+\chi_2+\chi_3-\chi_7$
is equal to 
$\delta_1\eta_1+\delta_2\eta_2+\delta_3\eta_3-\delta_7\eta_7$.
Setting $\delta_{i+3}= \delta_i$ for $1\leq i\leq 3$, and $\delta_8
= \delta_7$ it follows that the map sending $\chi_i$ to $\delta_i\eta_i$
induces an isometry $\Z\Irr_K(P\rtimes E) \cong \Z\Irr_K(G,b)$ 
extending the isometry $L^0(P\rtimes E)\cong L^0(G,b)$ as
required. 

Assume next that $|E| = 7$. Then $P\rtimes E$ is a Frobenius
group, whose seven characters of the projective indecomposable modules
are of the form $\chi_i + \chi_8$
with $1\leq i\leq 7$, for some labelling $\Irr_K(P\rtimes E) =
\{\chi_i\ |\ 1\leq i\leq 8\}$; the characters $\chi_i$, $1\leq i\leq 7$
have $P$ in their kernel, and $\chi_8$ is induced from a nontrivial
character of $P$ to $P\rtimes E$. The group $L^0(P\rtimes E)$ has
rank $1$, with a basis element $\sum_{i=1}^{7} \chi_i\ -\ \chi_8$.
This element has norm $8$, hence its image in $L^0(G,b)$ has norm
$8$ as well. Moreover, all irreducible characters in $\Irr_K(G,b)$
have to be involved in this element. Since we assume that
Alperin's weight conjecture holds for $b$, we have $|\Irr_K(G,b)| = 8$,
and so the image of this element in $L^0(G,b)$ is of the form
$\sum_{i=1}^{7} \delta_i\eta_i - \delta_8\eta_8$ for some labelling
$\Irr_K(G,b)=\{\eta_i\ |\ 1\leq i\leq 8\}$ and some signs $\delta_i$.
Again, the map sending $\chi_i$ to $\delta_i\eta_i$ induces the 
required isometry $\Z\Irr_K(P\rtimes E) \cong \Z\Irr_K(G,b)$.

Finally, assume that $|E|=21$. Then $E$ is itself a Frobenius
group, isomorphic to $C_7\rtimes C_3$ with the obvious nontrivial
action of $C_3$ on $C_7$. The group $E$ has $5$ ordinary irreducible
characters, hence $\CO(P\rtimes E)$ has five isomorphism classes
of projective indecomposable modules, and thus $L^0(G,b)$ has rank
$3$. We can label $\Irr_K(P\rtimes E) = \{\chi_i\ |\ 1\leq i\leq 8\}$
in such a way that $\chi_1$, $\chi_2$, $\chi_3$ have degree $1$, 
the characters $\chi_4$, $\chi_5$ have degree $3$ and $\chi_6$, $\chi_7$, 
$\chi_8$ have degree $7$. An easy calculation shows that $L^0(P\rtimes E)$ 
has a basis of the form
$$\{\chi_6-\chi_4-\chi_5-\chi_1,\ \chi_7-\chi_4-\chi_5-\chi_2,\ 
\chi_8-\chi_4-\chi_5-\chi_3\}$$ 
consisting of three elements of norm $4$, such that any two 
different of these basis elements involve two common irreducible characters.
Thus the same is true for $L^0(G,b)$. Using again the hypothesis 
$|\Irr_K(G,b)| = 8$ one deduces that  $L^0(G,b)$  has a basis of the form
$$\{\delta_6\eta_6-\delta_4\eta_4-\delta_5\eta_5-\delta_1\eta_1,\ 
\delta_7\eta_7-\delta_4\eta_4-\delta_5\eta_5-\delta_2\eta_2,\ 
\delta_8\eta_8-\delta_4\eta_4-\delta_5\eta_5-\delta_3\eta_3\}$$
for some labelling $\Irr_K(G,b)=\{\eta_i\ |\ 1\leq i\leq 8\}$ and
some signs $\delta_i$. As before, the map sending $\chi_i$ to $\delta_i\eta_i$
induces the required isometry $\Z\Irr_K(P\rtimes E) \cong \Z\Irr_K(G,b)$.
Since a perfect isometry induces an isomorphism between centers, the
result follows.
\end{proof}

\section{Sporadic finite simple groups}

Let $G$ be a finite group and $b$ a block
of $\OG$. The {\it kernel} ${\mathrm{Ker}}_G(b)$ of $b$ is
defined by
$$
    {\mathrm{Ker}}_G(b) \ = \
    {\bigcap_{\chi \in {\mathrm{Irr}}_K(G,b)}} 
    {\mathrm{Ker}}(\chi),
$$
see \cite[\S 3]{Brauer71}. By  \cite[Proposition (3B)]{Brauer71} we have 
${\mathrm{Ker}}_G(b) = O_{p'}(G) \cap {\mathrm{Ker}}(\chi)$
for  any $\chi \in {\mathrm{Irr}}_K(G,b)$.
Hence, ${\mathrm{Ker}}_G(b)$ is a  normal $p'$-subgroup of $G$,
See \cite[Chap.5, Theorem 8.1]{NagaoTsushima} for an exposition
of this material. We say that $b$ is {\it faithful} if
${\mathrm{Ker}}_G(b) = 1$.

\begin{Proposition} \label{KernelOfBlock}
Let $G$ be a quasi-simple finite group such that $p$ divides $|G|$.

\smallskip\noindent (i) 
We have $O_{p'}(Z(G)) = O_{p'}(G)$ and
${\mathrm{Ker}}_G(b) = O_{p'}(Z(G)) \cap {\mathrm{Ker}}(\chi)$
for any $\chi \in {\mathrm{Irr}}_K(G,b)$.

\smallskip\noindent (ii) 
Set $\bar G=$ $G/{\mathrm{Ker}}_G(b)$ and denote by $\bar b$ the image
of $b$ in $\CO\bar G$. Then $\bar b$ is a faithful block of $\CO\bar G$
and the canonical map $G\to$ $\bar G$ induces an $\CO$-algebra
isomorphism $\OG b\cong \CO\bar G\bar b$.
\end{Proposition}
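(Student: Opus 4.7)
The plan for part (i) is entirely group theoretic, given that the second equality is obtained from the quoted result of Brauer once the first equality is known. The inclusion $O_{p'}(Z(G)) \subseteq O_{p'}(G)$ is immediate, because $O_{p'}(Z(G))$ is characteristic in $Z(G)$, hence normal in $G$, and of order prime to $p$. For the reverse inclusion, I would use that $G$ is quasi-simple: the perfect hypothesis forces $G/Z(G)$ to be non-abelian simple (if $G/Z(G)$ were abelian, then $G=[G,G]\subseteq Z(G)$ and $G/Z(G) = 1$, contradicting simplicity). The image of $O_{p'}(G)$ in $G/Z(G)$ is a normal subgroup, hence either trivial or all of $G/Z(G)$; in the latter case $G = O_{p'}(G) Z(G)$, so $G=[G,G]\subseteq O_{p'}(G)$, contradicting $p\mid |G|$. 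Thus $O_{p'}(G)\subseteq Z(G)$, and as $O_{p'}(G)$ is a $p'$-group this gives $O_{p'}(G)\subseteq O_{p'}(Z(G))$.

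For part (ii), I would set $N={\mathrm{Ker}}_G(b)$, which by (i) is a central $p'$-subgroup of $G$, and exploit the standard decomposition coming from a central $p'$-subgroup. Since $|N|$ is a unit in $\CO$ and $N$ is central, the idempotents $e_\lambda = \frac{1}{|N|}\sum_{n\in N} \lambda(n^{-1})n$, for $\lambda\in\Irr_K(N)$, lie in $Z(\OG)$, are pairwise orthogonal, and sum to $1$. The trivial-character component satisfies $\OG e_1 \cong \CO\bar G$ via the canonical surjection $\OG\to \CO\bar G$.

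The key observation is that every $\chi\in\Irr_K(G,b)$ has $N\subseteq \ker(\chi)$ by definition of ${\mathrm{Ker}}_G(b)$, so the restriction $\chi|_N$ is a multiple of the trivial character of $N$. Writing the primitive central idempotent as $b = \sum_{\lambda} e_\lambda b$, each summand $e_\lambda b$ is an idempotent of $\OG$ lying in the unital algebra $b\OG b$; primitivity of $b$ forces each $e_\lambda b$ to be either $0$ or $b$, and since exactly one such summand is non-zero, the character-theoretic consideration above identifies it as $e_1 b = b$. Hence $\OG b = \OG e_1 b$, so the composite
\[
\OG b \hookrightarrow \OG e_1 \cong \CO\bar G
\]
is an injective $\CO$-algebra homomorphism sending $b$ to $\bar b$, and its image is $\CO\bar G\bar b$; in particular $\bar b$ is a (primitive central) idempotent of $\CO\bar G$, that is, a block.

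Faithfulness of $\bar b$ is then formal: every $\bar\chi\in\Irr_K(\bar G,\bar b)$ lifts uniquely to some $\chi\in\Irr_K(G,b)$ with $N\subseteq\ker(\chi)$, and $\ker(\bar\chi)=\ker(\chi)/N$. Intersecting over all such characters yields ${\mathrm{Ker}}_{\bar G}(\bar b) = {\mathrm{Ker}}_G(b)/N = 1$. The only step requiring genuine argument is the group-theoretic identity $O_{p'}(G)=O_{p'}(Z(G))$ in (i); the rest is a routine application of the idempotent decomposition available for central $p'$-extensions, together with the definition of ${\mathrm{Ker}}_G(b)$.
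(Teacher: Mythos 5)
Your proof is correct and follows essentially the same route as the paper, which simply cites \cite[Proposition (3B)]{Brauer71} for the second equality in (i) and \cite[Chap.5, Theorem 8.8]{NagaoTsushima} for (ii); you have unpacked those citations by supplying the routine group-theoretic argument for $O_{p'}(G)=O_{p'}(Z(G))$ and the standard central-$p'$-idempotent decomposition argument behind the Nagao--Tsushima result. One small imprecision: the relevant primitivity is that of $b$ in $Z(\OG)$ (the $e_\lambda b$ are orthogonal idempotents in $Z(\OG)$ below $b$), so the reference to the unital algebra $b\OG b$ is a slight distraction, though it does not affect the validity of the argument.
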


\begin{proof}
Statement (i) follows from Brauer's result mentioned  
above, and (ii) is an easy consequence of
\cite[Chap.5, Theorem 8.8]{NagaoTsushima}.
\end{proof}

The following table is due to Noeske \cite{Noeske}.
By Proposition \ref{KernelOfBlock}(ii)
it is enough to consider faithful blocks.

\begin{Proposition} [{\cite{Noeske}}] \label{Noeske}
The following is a list of all faithful non-principal 
$2$-blocks with non-cyclic abelian defect groups of sporadic 
simple groups and their covers.
Each number in the $2$nd column corresponds to the number
attached to each block in the Modular Atlas \cite{ModAtlas}.

{\rm
\begin{center}
\begin{tabular}{l|l|l|l|l}
\hline
{group} & {blocks $b$} & defect groups & $k(b)$ & $\ell(b)$ \\
\hline
$M_{12}$    & $2$       & $C_2 \times C_2$ & $4$ & $3$ \\
$12.M_{22}$ & $4$, \, $5$ 
   & $C_2 \times C_2$, \, $C_2 \times C_2$ & $4$, \, $4$ & $1$, \, $1$ \\
$J_2$ & $2$ & $C_2 \times C_2$ & $4$ & $3$ \\
$HS$ & $2$ & $C_2 \times C_2$ & $4$ & $3$ \\
$Ru$ & $2$ & $C_2 \times C_2$ & $4$ & $3$ \\
$Co_3$ & $2$ & $C_2 \times C_2 \times C_2$ & $8$ & $5$ \\
$2.Fi_{22}$ & $3$ & $C_2 \times C_2$ & $4$ & $1$ \\
$Fi{_{24}}'$ & $2$ & $C_2 \times C_2$ & $4$ & $3$ \\
\end{tabular} 
\end{center}
}
\end{Proposition}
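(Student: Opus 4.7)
The plan is to proceed by a systematic inspection of the $2$-modular data of each sporadic simple group and each of its central covering groups, as compiled in the Modular Atlas \cite{ModAtlas}. For each such group $G$, one first enumerates the $2$-blocks of $\OG$; the defect of a block $b$ equals the $2$-defect of any $\chi\in\Irr_K(G,b)$, namely the $2$-part of $|G|/\chi(1)$, and this information is readable from the ordinary character table together with the block distribution of characters. By Proposition \ref{KernelOfBlock}(ii) it suffices to treat faithful blocks of covering groups, so each genuine block is recorded once, at the smallest cover on which it lifts faithfully.

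Second, I would discard the principal block, blocks of cyclic defect (already controlled by Brauer--Dade theory), and blocks of defect zero, retaining only candidates whose defect group is non-cyclic abelian. For each retained block the isomorphism type of the defect group is identified by comparing its order with the abelian subgroups of a Sylow $2$-subgroup of $G$, whose structure is tabulated in the ATLAS; typically one can also cross-check using Brauer's first main theorem by locating the Brauer correspondent in the normalizer of the candidate abelian $2$-subgroup. The invariants $k(b)$ and $\ell(b)$ are then read off directly from the Modular Atlas: $k(b)$ is the number of ordinary characters of $G$ lying in $b$, and $\ell(b)$ is the number of irreducible Brauer characters in $b$, equivalently the number of columns of the decomposition matrix of $b$.

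The main obstacle is the computational scale and, for the largest groups such as the Monster, the Baby Monster, and $Fi_{24}'$ and its triple cover, the fact that the full $2$-modular character tables are not completely determined in the literature. What rescues the argument is that only blocks of very small defect occur on the list, and for such blocks the decomposition matrices, defect groups, and character distributions have been computed case by case. This inspection has been carried out in \cite{Noeske}, and the table in the statement simply records the outcome.
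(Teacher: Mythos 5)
Your proposal is correct and matches the paper's treatment: the paper states Proposition~\ref{Noeske} with no proof at all, attributing it entirely to \cite{Noeske}, and your sketch accurately describes the kind of block-by-block inspection of the Modular Atlas data (enumerating $2$-blocks, computing defects from character degrees, using Proposition~\ref{KernelOfBlock}(ii) to restrict to faithful blocks, identifying defect-group isomorphism types, and reading off $k(b)$ and $\ell(b)$) that Noeske carried out. You correctly identify that the actual verification lives in the cited reference rather than in the paper itself.
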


\begin{Proposition} \label{Sporadic}
Let $G$ be a quasi-simple finite group 
such that $G/Z(G)$ is a sporadic simple group,
and let $b$ be a block of $kG$ 
with an elementary abelian defect group of order $2^r$ for
some integer $r\geq 3$. Then $r=3$ and either $b$ is
the principal block of $kJ_1$ or a non-principal block
of $kCo_3$. In both cases we have 
$|{\mathrm{Irr}}_K(G,b)| = $ $8$; in particular,
Alperin's weight conjecture holds for $b$.
\end{Proposition}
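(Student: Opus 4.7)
The plan is a case-by-case verification over the sporadic simple groups and their quasi-simple covers, leveraging two pieces of catalogued data. Using Proposition~\ref{KernelOfBlock}(ii), I would first replace $G$ by $G/\mathrm{Ker}_G(b)$ and $b$ by its image, reducing to the case where $b$ is a faithful block of its group; this preserves $G/Z(G)$, the defect group and $|\mathrm{Irr}_K(G,b)|$, and hence reduces the statement to a question about faithful blocks of covers of sporadic simple groups.

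For the non-principal blocks I would then appeal directly to Noeske's classification recorded in Proposition~\ref{Noeske}, which lists \emph{all} faithful non-principal $2$-blocks with non-cyclic abelian defect group arising in covers of sporadic simple groups. The only entry whose defect group has order at least $2^3$ is block $2$ of $Co_3$: its defect group is elementary abelian of order $8$ and $|\mathrm{Irr}_K(G,b)| = k(b) = 8$, whence Alperin's weight conjecture by Proposition~\ref{leqeight}(iv).

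For the principal blocks the defect group equals a Sylow $2$-subgroup $P$ of $G$. Since the $2$-part $O_2(Z(G))$ of $Z(G)$ is contained in $P$, if $P$ is elementary abelian then so is a Sylow $2$-subgroup of $S := G/Z(G)$. Inspecting the Sylow $2$-subgroups of the $26$ sporadic simple groups (via the Atlas), only $J_1$ has an elementary abelian Sylow $2$-subgroup, of order exactly $8$; and since the Schur multiplier of $J_1$ is trivial, this forces $G = J_1$ and $r = 3$. The normalizer $N_{J_1}(P)$ is a Frobenius group of shape $2^3{:}(7{:}3)$, so the inertial quotient has order $21$; reading off the ordinary character degrees of $J_1$ from the Atlas gives $|\mathrm{Irr}_K(J_1, b_0)| = 8$, and Proposition~\ref{leqeight}(iv) then yields the weight conjecture.

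The main obstacle is not conceptual but organisational: the heavy lifting for non-principal blocks is absorbed into the exhaustiveness of Noeske's table, while for principal blocks it comes down to correctly identifying the Sylow $2$-structures and Schur multipliers across the sporadic list, and checking that no cover can re-introduce a higher-rank elementary abelian defect group beyond the two cases above.
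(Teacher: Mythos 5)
Your proposal is correct and follows essentially the same structure as the paper's proof: reduce to faithful blocks via Proposition~\ref{KernelOfBlock}, then dispatch non-principal blocks by Noeske's table (Proposition~\ref{Noeske}) and principal blocks by noting that $J_1$ is the only sporadic simple group with an abelian Sylow $2$-subgroup. The only cosmetic differences are that the paper cites \cite[Theorem~3.8]{Landrock81} for the conclusion $|\Irr_K(J_1,b_0)|=8$ where you read it off the Atlas directly, and that you spell out the Sylow $2$-subgroup/Schur multiplier argument that the paper compresses into the single assertion ``then $r=3$ and $G=J_1$''. One small citation nitpick: deriving Alperin's weight conjecture from $|\Irr_K(G,b)|=8$ is really the combination of Proposition~\ref{c2c2c2local} (the local side always has eight characters) with the discussion around Proposition~\ref{leqeight}, rather than clause (iv) of the latter alone.
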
 

\begin{proof}
If $b$ is a principal block then $r=3$ and $G=J_1$, hence
the result follows from \cite[Theorem 3.8]{Landrock81}.
Suppose that $b$ is a non-principal block; by
Proposition \ref{KernelOfBlock} we may assume that $b$ is faithful.
Proposition \ref{Noeske} implies that $r=3$, 
$G =$ $Co_3$ and $|{\mathrm{Irr}}_K(G,b)| =$ $8$.
\end{proof}

\section{Finite simple groups of Lie type with exceptional 
Schur multipliers}
\label{excmultsection}

The Schur multipliers of finite groups
of Lie type tend to be `generic' (that is, dependent only on the
series to which the group belongs) except in a few cases of low
rank where they are larger; see \cite[Definition 6.1.3]{GorLyoSol3}. 
We consider in this section 
the groups of Lie type from \cite[Table 6.1.3]{GorLyoSol3} defined 
over a field of odd characteristic.

\begin{Proposition} 
[{\cite[Table 6.1.3, p.313]{GorLyoSol3}}, {\cite{ModAtlas}}]
\label{TableExeptionalSchurMult}
The finite simple group $G$ of Lie type defined over a field
of odd characteristic with exceptional Schur multipliers are
as follows:

{\rm
\begin{center}
\begin{tabular}{c|c|c|c|c}
$G$ & $A_1(9) \cong A_6$ & $^2A_3(3) \cong\PSU_4(3)$ 
    & $B_3(3) \cong P\Omega_7(3)$ & $G_2(3)$ \\
\hline
${\mathcal M}e$  & $3$  & $3$, \ $3$ & $3$ & $3$ \\
$M(G)$           & $6$  & $12$       & $6$ & $3$ \\
\end{tabular} 
\end{center}
}
\noindent
where ${\mathcal{M}e}$ denotes the elementary divisors of the
exceptional parts of the Schur multipliers $M(G)$ of $G$.
\end{Proposition}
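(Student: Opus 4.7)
The statement is a tabular assertion whose entries are already recorded in the two references cited in its hypothesis, so my proof plan is essentially a verification/compilation rather than the discovery of new mathematics.

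The plan is to start from the generic Schur multipliers of finite simple groups of Lie type, which for each series admit an explicit formula depending only on the rank and $q$: for instance $\gcd(n+1,q-1)$ for type $A_n(q)$, $\gcd(n+1,q+1)$ for type ${}^2A_n(q)$, $\gcd(2,q-1)$ for types $B_n(q)$, $C_n(q)$ (with $n\geq 2$) and $D_n(q)$, and trivial for $G_2(q)$ when $q$ is odd and $q\neq 3$. These formulas, together with the list of those small-rank groups where the actual Schur multiplier is strictly larger than the generic one, constitute the content of \cite[Table 6.1.3]{GorLyoSol3}. I would recall both pieces of data.

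Next I would restrict the master list of exceptional multipliers to the sub-list of groups defined in odd characteristic. Scanning \cite[Table 6.1.3]{GorLyoSol3} under that restriction isolates precisely the four entries $A_1(9)\cong A_6$, ${}^2A_3(3)\cong\PSU_4(3)$, $B_3(3)\cong P\Omega_7(3)$ and $G_2(3)$; all remaining exceptional cases (such as $B_2(2)$, $F_4(2)$, $G_2(4)$, ${}^2E_6(2)$, ${}^2B_2(8)$ etc.) lie in even characteristic and are therefore discarded. For each surviving entry I would read off the order $M(G)$ of the full Schur multiplier from \cite[Table 6.1.3]{GorLyoSol3}, compute the generic part from the formula recalled in the first step, and identify the exceptional quotient. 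The elementary divisors ${\mathcal M}e$ of that exceptional quotient are given explicitly in the same table, and can be cross-checked against the detailed character-theoretic information in \cite{ModAtlas}.

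There is no real obstacle: the only potentially confusing step is reconciling the notational conventions in the two sources (in particular the interplay between the order of the multiplier, the order of the exceptional part, and the elementary divisor description), but once one fixes conventions the four entries of the table are immediate. The proposition is therefore best presented as a citation with a short remark indicating how the list is extracted from the two reference works.
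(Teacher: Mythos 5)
Your proposal is correct and matches what the paper does: the Proposition is stated as a citation to \cite[Table 6.1.3]{GorLyoSol3} and \cite{ModAtlas}, with no proof supplied beyond reading off the relevant rows of those tables, which is exactly the compilation procedure you describe.
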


\begin{Proposition} \label{SimpleGroupExSchurMult}
If $G$ is a quasi-simple finite group such that
$Z(G)$ has odd order and
$G/Z(G)$ is of Lie type either $A_1(9)$, $^2A_3(3)$, 
$B_3(3)$ or $G_2(3)$,
then $G$ has no $2$-blocks $b$ 
with elementary abelian defect group of order $2^r$,
where $r\geq 3$.
\end{Proposition}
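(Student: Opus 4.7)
The plan is a finite case-by-case inspection, modelled on the argument used for sporadic groups in Proposition \ref{Sporadic}. First, using Proposition \ref{KernelOfBlock}(ii), I reduce to the situation where $b$ is faithful, so that $Z(G)$ embeds into the odd part of the Schur multiplier of $G/Z(G)$. Combining Proposition \ref{TableExeptionalSchurMult} with the ATLAS, this leaves only the following short list of groups to consider: $A_6$ and $3.A_6$ (lying above $A_1(9)\cong A_6$); $\PSU_4(3)$, $3_1.\PSU_4(3)$, $3_2.\PSU_4(3)$ and $3^2.\PSU_4(3)$ (above $^2A_3(3)$); $P\Omega_7(3)$ and $3.P\Omega_7(3)$ (above $B_3(3)$); and $G_2(3)$ and $3.G_2(3)$ (above $G_2(3)$).

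Next I use the fact that $|Z(G)|$ is odd, so the natural projection $G\to G/Z(G)$ induces an isomorphism between Sylow $2$-subgroups of $G$ and of $G/Z(G)$. Since every defect group of a $2$-block of $G$ embeds into a Sylow $2$-subgroup, it is enough to rule out the existence of a defect group that is elementary abelian of order $\geq 8$ inside a Sylow $2$-subgroup of $G/Z(G)$. For $A_1(9)\cong A_6$ this is immediate: a Sylow $2$-subgroup is dihedral of order $8$, hence has $2$-rank $2$, and so no defect group can be elementary abelian of order $\geq 8$.

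For the remaining three simple groups the $2$-rank is at least $3$, so the argument cannot be concluded purely from a rank count and one has to examine the $2$-blocks individually. I read off the $2$-block decompositions and the defect groups of the groups on the above list from the Modular Atlas \cite{ModAtlas}, supplemented by the standard information on the $2$-local structure (e.g.\ from \cite{GorLyoSol3} and \cite{CoSe}). In each case one checks that the principal block has a non-abelian Sylow $2$-subgroup as defect group, and that every non-principal block has a defect group either of order $\leq 4$ or non-abelian; in particular none is elementary abelian of order $\geq 8$.

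The main obstacle is the case of $\PSU_4(3)$, which has the richest $2$-modular block structure of the four groups and also the largest odd central cover $3^2.\PSU_4(3)$; here one must verify that the defect groups of all non-principal $2$-blocks (of the base group and of each of its three odd central covers) are accounted for and are not elementary abelian of order $\geq 8$. Once this is done for $\PSU_4(3)$, the analogous inspection for $P\Omega_7(3)$ and $G_2(3)$ and their $3$-fold covers is shorter because each has only a single nontrivial odd central extension, and the $2$-block data in the Modular Atlas can be applied verbatim.
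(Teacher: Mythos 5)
Your proof is correct and follows essentially the same route as the paper's: both reduce to a finite list of quasi-simple groups with odd centre covering the four simple groups, dispose of the $A_1(9)\cong A_6$ case via the dihedral Sylow $2$-subgroup (equivalently, $2$-rank $2$), and handle $\PSU_4(3)$, $P\Omega_7(3)$ and $G_2(3)$ by reading off block and defect-group data from the Modular Atlas \cite{ModAtlas}. You spell out the faithful-block reduction via Proposition \ref{KernelOfBlock}(ii) and describe what the Atlas check entails, where the paper's proof is a terse citation, but the underlying verification is the same.
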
 

\begin{proof}
For the isomorphisms
$A_1(9) \cong $ $A_6$ and $^2A_3(3) \cong$ $\PSU_4(3)$ and
$B_3(3) \cong$ $P\Omega_7(3)$ (also denoted $O_7(3)$ in
the Atlas \cite[p. 106]{Atlas}), used already in the
previous Proposition, see \cite[p.8, Table I]{GorLyoSol1}.
If $G/Z(G) \cong$ $A_6$ then $G$ is isomorphic to
$A_6$ or $3.A_6$; in both cases
$G$ has no $2$-blocks with an elementary abelian
defect group of order $2^r$, $r\geq 3$ by \cite{ModAtlas},
or by \ref{altagain} below.
If $G/Z(G)$ is isomorphic to one of
$\PSU_4(3)$, $P\Omega_7(3)$, $G_2(3)$ then again
$G$ has no $2$-blocks with elementary abelian
defect group of order $2^r$, $r\geq 3$, by \cite{ModAtlas}.
\end{proof}

\section{Alternating groups} \label{altsection}

We denote in this section  by $A_n$ the 
alternating group of degree $n$, where $n$ is a positive integer. 

\begin{Proposition} \label{alt}
If $G\cong A_n$ for some $n\geq 5$ then $G$ has no $2$-blocks
with an elementary abelian defect group of order $2^r$,
where $r\geq 3$.
\end{Proposition}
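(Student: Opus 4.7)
The plan is to combine the classical description of $2$-blocks of symmetric groups (in terms of $2$-cores and $2$-weights) with standard Clifford theory for the index-two subgroup $A_n \trianglelefteq S_n$. Recall that a $2$-block $B$ of $S_n$ of $2$-weight $w$ has defect group isomorphic to a Sylow $2$-subgroup of the Young subgroup $S_{2w} \leq S_n$. Since the sign character is trivial modulo $2$, each $2$-block $B$ of $S_n$ covers a unique $2$-block $b$ of $A_n$, and an analysis due to Olsson identifies a defect group of $b$ (up to conjugacy in $S_n$) with a Sylow $2$-subgroup of $A_{2w}$; equivalently, with the intersection of a defect group of $B$ with $A_n$.

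It therefore suffices to verify that a Sylow $2$-subgroup of $A_{2w}$ is never elementary abelian of order $2^r$ with $r \geq 3$. For $w \leq 1$ the group $A_{2w}$ is trivial, so the defect group of $b$ is trivial. For $w = 2$, a Sylow $2$-subgroup of $A_4$ is the Klein four group, which has rank $2 < 3$. For $w \geq 3$ one has $A_6 \leq A_{2w}$, so by Sylow's theorem any Sylow $2$-subgroup of $A_{2w}$ contains (up to conjugacy) a Sylow $2$-subgroup of $A_6$; a direct computation, for example using the order-$4$ element $(1\,2\,3\,4)(5\,6) \in A_6$, shows that a Sylow $2$-subgroup of $A_6$ is isomorphic to the dihedral group $D_8$. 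Hence a Sylow $2$-subgroup of $A_{2w}$ is non-abelian for $w \geq 3$, and in particular cannot be elementary abelian of order $\geq 8$.

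The only delicate point in this plan is the identification of the defect group of an $A_n$-block of weight $w$ with a Sylow $2$-subgroup of $A_{2w}$, which I regard as the main obstacle; this rests on the block-theoretic framework for alternating groups in characteristic $2$ developed by Olsson (combined with the observation that for $p = 2$ the sign twist is trivial, so no block of $S_n$ splits upon restriction to $A_n$). Granted this identification, the remainder of the argument is an elementary verification of the structure of Sylow $2$-subgroups of small alternating groups.
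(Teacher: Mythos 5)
Your argument is correct and lands on the same final observation as the paper, but reaches it by a different route. The paper invokes Kessar's Scopes reduction for blocks of alternating groups \cite{Kesalt}, which gives at a stroke that any $2$-block of $A_n$ with defect group $P$ is source algebra equivalent to a block of some $A_m$ having $P$ as a Sylow $2$-subgroup; after that it only remains to note that no alternating group has an elementary abelian Sylow $2$-subgroup of order at least $8$. You instead work through the classical $2$-core/$2$-weight description of $S_n$-blocks, the fact that at $p=2$ the sign twist is trivial (so each $S_n$-block covers a unique $A_n$-block), and Kn\"orr's theorem on defect groups under block covering, to identify a defect group of the $A_n$-block with $D\cap A_n$, where $D$ is a Sylow $2$-subgroup of the Young subgroup $S_{2w}$ — i.e.\ with a Sylow $2$-subgroup of $A_{2w}$. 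That identification is correct and is indeed what Olsson's framework supplies; your flagging of it as the one non-elementary input is fair. The two approaches then finish identically: the Sylow $2$-subgroup of $A_{2w}$ is trivial for $w\le 1$, the Klein four group for $w=2$, and contains the dihedral group $D_8=\langle(1\,2\,3\,4)(5\,6),(1\,3)(5\,6)\rangle\le A_6$ for $w\ge 3$, so is never elementary abelian of order $\ge 8$. The trade-off is that your route avoids the heavier source algebra machinery and uses only the defect-group conclusion, while the paper's citation of \cite{Kesalt} is a more compact appeal to a stronger result than is strictly needed here.
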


\begin{proof}
By \cite[1.2, 1.3, 1.4, 1.7]{Kesalt}, a $2$-block of an 
alternating group $A_n$ with defect group $P$ is source algebra 
equivalent to a block of an alternating group $A_m$ for some 
$m\leq n$ having $P$ as Sylow $2$-subgroup. But there is no 
alternating group with an elementary abelian Sylow $2$-subgroup 
of order $2^r$, $r\geq 3$.
\end{proof}

\begin{Proposition}
\label{altagain}
If $G$ is $3.A_6$ or $3.A_7$, then $G$ has no $2$-block
with an elementary abelian defect group of order $2^r$,
where $r\geq 3$.
\end{Proposition}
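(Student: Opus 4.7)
The plan is a short order-and-structure argument; no real block theory is needed beyond the fact that a defect group of a $2$-block of $G$ is (up to conjugacy) a subgroup of a Sylow $2$-subgroup of $G$. First I would observe that $|A_6| = 360$ and $|A_7| = 2520$ both have $2$-part equal to $8$, and that the kernel $Z(3.A_n) \cong C_3$ of the projection $3.A_n \to A_n$ is of odd order. Hence $|3.A_n|_2 = 8$ for $n \in \{6,7\}$, so every $2$-block of $G = 3.A_n$ has a defect group of order dividing $8$; there is nothing to check unless some defect group has order exactly $8$ and is elementary abelian.

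Next I would identify the isomorphism type of a Sylow $2$-subgroup of $G$. Since $Z(G) \cong C_3$ is a $2'$-group, the projection $3.A_n \to A_n$ restricts to an isomorphism on any $2$-subgroup of $G$, and so the Sylow $2$-subgroups of $3.A_n$ are isomorphic to those of $A_n$. It therefore suffices to check that the Sylow $2$-subgroups of $A_6$ (and hence of $A_7$, since $A_6 \leq A_7$ and both have $2$-part $8$) are dihedral of order $8$. This is a standard computation: the elements $r = (1234)(56)$ and $s = (13)(56)$ both lie in $A_6$, and one verifies at once that $r^4 = s^2 = 1$ and $srs = r^{-1}$, so $\langle r,s \rangle \cong D_8$ is a subgroup of $A_6$ of the required $2$-power order.

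To conclude, I would note that $D_8$ has no elementary abelian subgroup of order $8$, since its only subgroup of order $8$ is $D_8$ itself. Therefore no subgroup of a Sylow $2$-subgroup of $G$ is elementary abelian of order $2^r$ with $r \geq 3$, and so no $2$-block of $G$ can have such a defect group. I do not foresee any obstacle here: the entire argument reduces to a Sylow calculation in $A_6$, lifted trivially through a central $2'$-extension.
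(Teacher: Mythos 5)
Your argument is correct and takes the same route as the paper, which simply observes that the Sylow $2$-subgroups of $3.A_6$ and $3.A_7$ are dihedral of order $8$ (citing the Atlas). You have merely spelled out the standard Sylow computation that the paper delegates to a reference.
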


\begin{proof}
This is clear since Sylow $2$-subgroups of $G$ are
dihedral of order $8$, see \cite[p.4, p.10]{Atlas}.
\end{proof}

\section{Finite groups of Lie type in characteristic $2$}

\begin{Proposition} \label{Lietwo}
Let $G$ be a quasi-simple group such that $G/Z(G)$ is a finite
group of Lie type in characteristic $2$. Suppose that $Z(G)$ has
odd order. Let $b$ be a block of $G$ having an elementary
abelian defect group of order $2^r$ for some integer $r\geq 3$. 
Then $G\cong \PSL_2(2^r)$, 
the block $b$ is the principal block of $G$, and 
Alperin's weight conjecture holds for $b$.
\end{Proposition}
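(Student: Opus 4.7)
The plan is to first establish that the defect group $P$ must be a Sylow $2$-subgroup of $G$, and then use the classification of simple groups of Lie type in characteristic $2$ with abelian Sylow $2$-subgroups to pin down $G$. First I would note that since $|Z(G)|$ is odd, $P \cap Z(G) = 1$ and $P$ maps isomorphically onto a subgroup of $\bar G := G/Z(G)$. I would then invoke the defining-characteristic block-theoretic result due to Humphreys: every $2$-block of the simple group $\bar G$ (a finite group of Lie type in characteristic $2$) has defect group either trivial or a full Sylow $2$-subgroup. To transfer this dichotomy to the quasi-simple cover $G$, I would apply the Fong-Reynolds reduction of Proposition \ref{Fongreduction} with $N = Z(G)$: the hypothesis $P\cap N = 1$ is automatic because $|Z(G)|$ is coprime to $2$, and the resulting Morita equivalence between $kGb$ and a block of a finite central $2'$-extension of $\bar G$ preserves defect groups. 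Hence $P$ is a Sylow $2$-subgroup of $G$, and correspondingly $PZ(G)/Z(G)$ is a Sylow $2$-subgroup of $\bar G$.

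Next I would inspect the Sylow $2$-subgroups of the finite simple groups of Lie type in characteristic $2$: up to conjugacy these are the unipotent radical $U$ of a Borel subgroup. A direct reading of the Chevalley commutator relations, together with the analogous relations for the twisted types ($^2A_n$, $^2B_2$, $^2D_n$, $^3D_4$, $^2F_4$), shows that $U$ is abelian precisely for the rank-one untwisted case, i.e.\ when $\bar G \cong \PSL_2(2^n)$, in which case $U$ is elementary abelian of order $2^n$. The hypothesis $r \geq 3$ together with $|P|=2^r$ therefore forces $\bar G \cong \PSL_2(2^r)$. Since the Schur multiplier of $\PSL_2(2^n)$ is trivial for all $n \geq 3$ (the only small exception in this family being $\PSL_2(4) = A_5$), I conclude $Z(G) = 1$ and $G = \PSL_2(2^r)$. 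Moreover $b$ must be the principal block, as all other $2$-blocks of $G$ have defect zero by the Humphreys dichotomy, and Alperin's weight conjecture for this block is the classical result of Fong-Harris \cite{FongHarris} (see also Landrock \cite{Landrock81}) already cited in the introduction for the principal-block case of Theorem \ref{c2c2c2}.

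The main obstacle is the very first step, namely rigorously passing from Humphreys' defect-group dichotomy, typically stated for the simple quotient $\bar G$, to the corresponding statement for the quasi-simple $2'$-cover $G$. The Fong-Reynolds reduction bypasses this cleanly because the kernel $Z(G)$ of the covering is a $2'$-subgroup meeting $P$ trivially, so the block-theoretic invariants we care about are preserved intact; once this is granted, the remaining ingredients (the commutator-relation inspection for Lie-type Sylow $2$-subgroups and the Schur multiplier check for $\PSL_2(2^n)$) are straightforward.
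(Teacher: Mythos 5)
Your proof follows essentially the same route as the paper — reduce to the simple quotient using Fong--Reynolds (harmless since $Z(G)$ is a $2'$-group), apply the defining-characteristic dichotomy to get $P$ Sylow, observe that $\PSL_2(2^r)$ is the only option with abelian Sylow $2$-subgroup, check the Schur multiplier to get $Z(G)=1$, and cite known results (Fong--Harris/Landrock or Brou\'e) for the principal block. But there is a genuine gap: the ``Humphreys dichotomy'' (blocks have defect zero or full in defining characteristic) is proved for groups of the form $\Gb^F$ with $\Gb$ connected reductive, and for their quasi-simple central quotients. It does \emph{not} automatically cover the three derived subgroups $\PSp_4(2)'\cong A_6$, $G_2(2)'\cong\PSU_3(3)$, and ${^2F_4(2)'}$, which are proper normal subgroups of $\Gb^F$ (the ambient groups $S_6$, $G_2(2)$, ${^2F_4(2)}$ are not perfect). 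These are precisely the simple groups of Lie type in characteristic $2$ to which the result you are invoking does not apply; the paper's reference \cite[Proposition 8.7]{CEKL} explicitly excludes them, and the paper disposes of them separately by consulting the Modular Atlas: none of them has a block with elementary abelian defect group of order $2^r$ for $r\geq 3$. Your blanket appeal to the dichotomy over ``every simple group of Lie type in characteristic $2$'' silently overruns these exceptions.

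Two smaller points. First, the Fong--Reynolds step produces a Morita equivalence between $kGb$ and a block of \emph{some} central $2'$-extension of $\bar G$, not of $\bar G$ itself; you then still need to know the dichotomy holds for that extension, so it is cleaner to quote a version of the dichotomy stated for quasi-simple covers directly (as \cite[Proposition 8.7]{CEKL} does). Second, if one wants to avoid the Atlas look-up for two of the three exceptional cases, one can alternatively observe, as the paper remarks, that the orders of ${^2F_4(2)'}$ and $A_6$ are not divisible by $7$, so any block of them with an elementary abelian defect group of order $8$ would have inertial quotient of order $1$ or $3$ and thus satisfy Alperin's weight conjecture by Proposition \ref{leqeight}; but this shortcut does not handle $\PSU_3(3)$ (whose order is divisible by $7$), nor does it immediately exclude elementary abelian defect groups of order $2^r$ with $r>3$, so the Atlas check is still needed.
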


\begin{proof}
Consider first the case where $G/Z(G)$ is not isomorphic to
the Tits simple group ${^2{F_4(2)'}}$, $\PSp_4(2)'\cong A_6$,
or $G_2(2)'\cong \PSU_3(3)$. Then, by \cite[Proposition 8.7]{CEKL},
the defect groups of any $2$-block of $G$ are either trivial or
the Sylow $2$-subgroups of $G$. It is well-known that the only
finite groups of Lie type in characteristic $2$ having abelian
Sylow $2$-subgroups are the groups $\PSL_2(2^r)$, and hence
$G\cong \PSL_2(2^r)$, where we use that the Schur multiplier of
$\PSL_2(2^r)$ is trivial. By \cite[A 1.3]{Broue},
the principal block is isotypic to its Brauer correspondent; in
particular, Alperin's weight conjecture holds.
The group ${^2{F_4(2)'}}$ has trivial Schur multiplier
and by the pages on decomposition numbers in the Modular Atlas \cite{ModAtlas},
${^2{F_4(2)'}}$ has three $2$-blocks: the
principal block (of defect $11$) and two defect zero blocks;
${^2{F_4(2)'}}$ has no block with an elementary abelian defect group
of order $2^r$, $r\geq 3$.
The case $\PSp_4(2)\cong A_6$ is already checked in 
\S\ref{excmultsection} and in Proposition \ref{altagain}. Finally, 
$\PSU_3(3)$ has trivial Schur multiplier
and again  by the pages on decomposition numbers in the Modular Atlas 
\cite{ModAtlas}, $\PSU_3(3)$ has 
besides the principal block (of defect $5$) two blocks of
defect zero; in particular, $\PSU_3(3)$ has
no blocks with an elementary abelian defect group of order $2^r$,
$r\geq 3$.
\end{proof}

\begin{Remark}
For the purpose of the proof of \ref{c2c2c2}
one could have excluded the Tits simple group ${^2{F_4(2)'}}$  and $A_6$ 
also by observing
that its order is not divisible by $7$, and hence the inertial
quotient of a hypothetical block with elementary abelian defect group 
of order $8$ can only be trivial or cyclic of order $3$, in which case 
Alperin's weight conjecture holds by Proposition \ref{leqeight} above.
\end{Remark}

\section{Further background results on finite reductive groups} 
\label{redback}

The book \cite{Digne/Michel:1991}, especially Chapters 13 and 14 
is a  useful reference for    the first part of this section.
The following notation will be in effect for this section.
Let $r$ and $\ell$ be disctinct primes and let $q$ be a power of $r$.
Let $\Gb$ a connected reductive 
group over $\bar{\mathbb F}_q$ and $F: \Gb \to$ $\Gb$ a Frobenius 
morphism with respect to an ${\mathbb F}_q$-structure on $\Gb$.  
Let $(\Gb^*, F^*)$ be a pair in duality with $(\Gb, F)$ with respect 
to some choice of an $F$-stable (respectively $F^*$-stable) maximal 
torus  of $\Gb $ (respectively $\Gb^* $) and  with respect to a 
fixed isomorphism  $\bar{\mathbb F}_q ^{\times} \cong$ 
$({\mathbb Q}/{\mathbb Z})_{r'}$ and a fixed embedding   
$\bar{\mathbb F}_q ^{\times} \hookrightarrow$ 
$\bar{\mathbb Q}_\ell^{\times}$. For an $F^*$-stable semi-simple element  
$s$ of $\Gb^*$, we denote by $\CE(\Gb^F, (s) ) \subseteq$ 
$\Irr_{\bar{\mathbb Q}_\ell}(G)$ the subset of characters corresponding  
to the geometric conjugacy class $(s)$ and by $\CE(\Gb^F, [s])$ the  
subset of characters corresponding to the rational conjugacy class  
$[s]$; the   geometric (and  rational) class of the trivial element 
will be just denoted $\CE(\Gb^F, 1)$.   The elements of   
$\CE(\Gb^F, 1)$ are called the unipotent characters of  $\Gb^F$. 
We set $\Cb(s)=$ $C_{\Gb^*}(s)$, $\Cb^{\circ}(s)= $ 
$C_{\Gb^*}(s)^{\circ}$, the connected component of $\Cb(s)$,  
$\bar {\Cb}^{\circ}(s)$ $=\Cb^{\circ}(s)/ Z(\Cb^{\circ}(s))$, 
the quotient of the connected centraliser by its centre, and 
$\Cb^{\circ }(s)' =$ $[\Cb^{\circ}(s), \Cb^{\circ}(s)]$ the derived 
subgroup of the connected centraliser. 
Set $a_s=$ $\frac{ |\Cb(s)^{F^*}|}{|\Cb^{\circ}(s)^{F^*}|}$, 
$\Zb^{\circ}(s)=$ $Z(\Cb^{\circ}(s))$ and 
$z(s)=$ $|\Zb^{\circ}(s)^{F^*}|$. 
For any positive integer $m$ we denote
by $m_+$ the highest power of $2$ dividing $m$.
 
\subsection{Jordan decomposition of characters} 
By the work of Lusztig (\cite[Theorem 4.23]{Luszconn}, 
\cite[Proposition 5.1]{Luszdisconn}, 
see also  \cite[13.24]{Digne/Michel:1991}), there is a  bijection 
between $\CE(\Gb^F, [s])$ and the set 
$\CE(\Cb(s)^{F^*}, 1) $ of unipotent characters of $\Cb(s)^{F^*} $ 
such that if $\chi \in  \CE(\Gb^F, [s])$ corresponds to  $\tau  \in 
\CE(\Cb(s)^{F^*}, 1) $, then

\begin{equation} \label{matchchardeg}
\chi(1)=  \frac {|\Gb^F|_{r'}}{|\Cb(s)^{F^*}|_{r'}}  \tau(1). 
\end{equation}

Here we note that if $\Cb(s) $ is not connected, then 
$\CE(\Cb(s)^{F^*}, 1) $  is  defined to  be   the set  of  irreducible 
characters   of $\Cb (s)^{F^*}$  covering the set 
$\CE(\Cb^{\circ}(s)^{F^*}, 1) $    
of unipotent characters of $\Cb^{\circ}(s)^{F^*} $.  By standard 
Clifford theory, if $\tau$ is an irreducible character of  $\Cb(s)^{F^*}$  
covering  an irreducible character $\lambda$ of $\Cb^{\circ}(s)^{F^*} $, 
then  $\tau(1) =$ $a \lambda (1)$, for an integer $a$ dividing $a(s)$. 
Thus, to each element of $\CE(\Gb^F,(s))$ is associated a
$\Cb(s)^{F^*}$-orbit of  $\CE(\Cb^{\circ}(s)^{F^*}, 1) $ 
 such that 
if $\chi \in $ $\CE(\Gb, (s))$ corresponds to the orbit of $\lambda\in$ 
$\CE(\Cb^{\circ}(s)^{F^*}, 1)$, then

\begin{equation}\label{mathchcharnonconn} 
\chi(1)=\frac {|\Gb^F|_{r'}}{a_{\chi}|\Cb^{\circ}(s)^{F^*}|_{r'}}\lambda(1) 
\end{equation}
for some  integer $a_{\chi}$ dividing  $a_s$.

Restriction induces a 
degree preserving bijection $\lambda \mapsto$ $\lambda'$ between 
the sets $\CE(\Cb^{\circ}(s)^{F^*}, 1) $ 
and $\CE ({{\Cb}^{\circ}(s)'}^{F^*}, 1)$ 
and there is also a degree preserving bijection 
$ \lambda \to \bar \lambda$ between  $ \CE(\Cb^{\circ}(s)^{F^*}, 1) $ 
and $\CE(\bar {\Cb}^{\circ}(s)^{F^*}, 1)$ 
(cf. \cite[Proposition 3.1]{Cabanes/Enguehard:1994}).     
Further, the  group  $\bar{\Cb}^{\circ}(s)^{F^*}  = $
$\prod_{\omega}\bar {\Cb^{\circ}}_{\omega}^{F^*},$
where $\omega $ runs through the  $F$-orbits  of the Dynkin  diagram  
$\Delta_s$ of $\bar{\Cb}^{\circ}(s)$, and for each  $ \omega $,  
$\bar{\Cb}^{\circ}_{\omega }(s) $  is the direct product 
of   subgroups of   $\bar {\Cb}^{\circ}(s) $ corresponding 
to the elements of $\omega $. The elements of 
$\CE(\bar {\Cb}^{\circ}(s)^{F^*}, 1)$ are  products 
$\prod_{\omega}\phi_{\omega}$ 
where  $\phi_{\omega} $ is a unipotent character of  
${\bar\Cb^{\circ}}(s)_{\omega}^{F^*}$  for each $\omega $.
Tracing through the  above  bijections, and noting that  

\begin{equation} \nonumber
|\Cb^{\circ}(s)^{F^*}| =   z(s)
| {\Cb}^{\circ}(s)^{'F^*}|  = z(s)   
|\bar {\Cb}^{\circ}(s)^{F^*}|  \end{equation}  

\noindent
it follows that  if $\chi \in $ $\CE(\Gb, (s))$ corresponds to the orbit of 
$\lambda\in$ 
$\CE(\Cb^{\circ}(s)^{F^*}, 1)$,  and  $\lambda $ corresponds to  
 to the  unipotent character $\lambda' $ of $
{{\Cb}^{\circ}(s)'}^{F^*}$ and to 
the unipotent 
character 
$\bar\lambda:= \prod_{\omega} \phi_{\omega} $ of  
$ \bar {\Cb}^{\circ}(s)^{F^*}$, 
   then

\begin{eqnarray} \label{matchchardegfinal} 
\chi(1) & =  & \frac {|\Gb^F|_{r'}}{z(s)a_{\chi}|{\Cb}^{\circ }(s)'^{F^*}|_{r'}}
\lambda'(1) \nonumber \\
&=& \frac {|\Gb^F|_{r'}}{z(s)a_{\chi}}
\prod_{\omega} \frac{\phi_{\omega}(1)}{|\bar\Cb^{\circ F^*}_{\omega}(s)|_{r'}}
.
\end{eqnarray}

The above correspondences have the following consequence, which we 
record for use in later sections. If $r$ is odd, then 

\begin{eqnarray} \label{matchtwodefect}  2\text{-defect of } \chi &=
&\alpha_{\chi}+\zeta_s + 2\text{-defect of }\lambda' \\
&=&\alpha_{\chi} + \zeta_s +  \sum_{\omega}  (2\text{-defect of }
\phi_{\omega})\end{eqnarray} 

\noindent
where  $ 2^{\zeta_s}= z(s)_+$ and $ 2^{\alpha_{\chi}} =$ $|{a_{\chi}}|_+$.
We note that we get an analogous formula for $\ell$-defects, for any prime 
$\ell$ different from  $r$. 

\subsection{Jordan decomposition of blocks}
As is the case of many sources cited below, we divert in this section
from our previous notation and use 
the prime $\ell$, instead of $p$, for the characteristic of $k$,
which is assumed to be different from the defining characteristic $r$.
We identify without further comment the sets  
$\Irr_{\bar {\Q}_\ell}(\Gb^F) $ and $\Irr_{K} (\Gb^F)$.
Let $t$ be a semi-simple element of $\Gb^{*F^*}$ of order prime to $\ell$
and let  $\CE_\ell(\Gb^F,[t])=\cup_{u} \CE(\Gb^F,[tu]) $, where $u$ runs 
over the $\ell$-elements of $C_{\Gb^*}^{\circ} (t)^{F^*}$.
By \cite[Th\'eor\`eme 2.2]{BroMic89}, $\CE_\ell(\Gb^F,[t])=\cup_{u} \CE(\Gb^F,[tu]) $, 
is a union of  $\ell$-blocks of $\Gb $; a block $b$ of $\Gb^F$ 
is in this union if and only if  
$\CE(\Gb, [t]) \cap  \Irr_K(\Gb^F, b) \ne \emptyset$ 
(cf. \cite[Theorem 3.1]{Hissbasic}). 
In this case, we say that $b$ is in the series $[t]$ or that     
$[t]$ is the semi-simple label of $b$. Blocks with semi-simple label 
$[1]$ are called {\it unipotent}.
Let $\Lb ^*(t)$ be the  (necessarily  $F^*$-stable)
minimal standard  Levi subgroup of $\Gb^* $-containing $\Cb(t)$ (if $\Cb(t)$ 
is not contained in any proper Levi subgroup of  $\Gb^*$, we take for
$\Lb ^*(t)$  the group  $\Gb$ itself) and let $\Lb$   
be an $F$-stable Levi subgroup of $\Gb $ dual to $ \Lb ^*(t)$. 
Let $e_t^{\Gb^F} $  and $e_t^{\Lb^F} $  be the sum of block idempotents of  
$\CO\Gb^F$ and  $\CO\Lb^F$  in the series $[t]$.
Then by Theorem B' of \cite{BonRou},  the algebras
$\CO G^Fe_t^{\Gb^F}$ and $\CO \Lb^F e_t^{\Lb^F}$  
are Morita equivalent.
Further, if $\Cb(t)$ is itself a Levi subgroup of $\Gb^*$, ie. if 
$\Cb(t)=  \Lb ^*(t)$, then  by Theorem 11.8 of \cite{BonRou},   
$\CO \Gb^F e_t^{\Gb^F}$ is Morita equivalent to the sum of unipotent 
blocks  $\CO \Lb^Fe_1^{\Lb^F}$ of $\CO \Lb^F$.  
We record a consequence of these results for classical groups for $\ell=2$.

\begin{Theorem} \label{jordandecom0}   
Suppose that $\ell=2 $ and either $\Gb =\GL_n(\bar{\mathbb F}_q)$ or that 
$\Gb$ is simple of classical type $B$, $C$ or $D$. 
Let $t$ be an odd order 
semi-simple element of $\Gb ^{*F^*}$. Then, $\Cb(t)$ is a Levi subgroup 
of $\Gb^*$.   
Let $\Lb$ be an $F$-stable Levi subgroup of $\Gb$ in duality with $\Cb(t)$
as above. Then $e_t^{\Gb^F}$ is a block of $\CO\Gb^F$, and $e_1^{\Lb^F}$ 
is the principal block of $\CO\Lb^F$. The block algebras
$\CO\Gb^F e_t^{\Gb^F}$ and $\CO\Lb^Fe_1^{\Lb^F}$ are Morita 
equivalent and a Sylow $2$-subgroup of $\Lb^F$ is a defect group 
of $\CO \Gb^Fe_t^{\Gb^F}$.
\end{Theorem}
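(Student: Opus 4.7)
The plan is to chain together the Bonnaf\'e-Rouquier Jordan decomposition of blocks recalled just above with the Fong-Srinivasan description of unipotent $2$-blocks of finite classical groups in odd defining characteristic; I proceed in three steps.

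First, I verify that $\Cb(t)$ is a Levi subgroup of $\Gb^*$, so that $\Cb(t)=\Lb^*(t)=\Lb^*$. For $\Gb=\GL_n(\bar{\mathbb F}_q)$ every centralizer of a semi-simple element is a Levi subgroup and there is nothing to prove. For $\Gb$ of simple classical type $B$, $C$ or $D$, the odd order of $t$ rules out $-1$ as an eigenvalue of $t$ on the natural module of $\Gb^*$, so the remaining non-trivial eigenvalues pair up as $\{\lambda,\lambda^{-1}\}$ with $\lambda\neq\pm 1$. Decomposing the natural module according to the $1$-eigenspace of $t$ together with the eigenspace pairs $V_\lambda\oplus V_{\lambda^{-1}}$ exhibits $\Cb(t)$ as the direct product of general linear factors acting on the paired eigenspaces with a classical group of the same type as $\Gb^*$ acting on the $1$-eigenspace; this is the shape of a standard Levi subgroup.

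Second, since $\Cb(t)=\Lb^*$, the Bonnaf\'e-Rouquier result recalled above (\cite[Theorem 11.8]{BonRou}) applies and produces a Morita equivalence
\[
\CO\Gb^F e_t^{\Gb^F}\ \simeq\ \CO\Lb^F e_1^{\Lb^F},
\]
induced by an explicit bimodule. Morita equivalence preserves primitivity of central idempotents, so to establish that $e_t^{\Gb^F}$ is a single block it suffices to show the same for $e_1^{\Lb^F}$. Moreover, the underlying Bonnaf\'e-Rouquier bimodule has diagonal vertex, which transports defect groups: up to $\Gb^F$-conjugacy, a defect group of $\CO\Gb^F e_t^{\Gb^F}$ coincides with a defect group of $\CO\Lb^F e_1^{\Lb^F}$.

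Third, I identify $e_1^{\Lb^F}$ as the principal block. From the description of $\Lb^*$ in step one, the finite group $\Lb^F$ is a direct product of finite general linear and unitary groups (arising from the general linear factors of $\Lb^*$, paired or not under $F$) with a single classical group of the same type as $\Gb^F$ and smaller rank, possibly together with a central torus part, all over extensions of $\mathbb F_q$ and hence in odd defining characteristic. By the results of Fong-Srinivasan on $2$-blocks of finite classical groups in odd defining characteristic, each such factor has a unique unipotent $2$-block, namely its principal block; multiplying over the factors yields that $e_1^{\Lb^F}$ is the principal block idempotent of $\CO\Lb^F$, whose defect group is a Sylow $2$-subgroup of $\Lb^F$. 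Combined with step two this gives all remaining assertions. The main non-trivial ingredient, and hence the principal obstacle, is the Fong-Srinivasan classification of unipotent $2$-blocks of the classical factors of $\Lb^F$; once this is at hand, the rest of the argument is a direct chaining of Bonnaf\'e-Rouquier with the classical description of centralizers of odd-order semi-simple elements in classical groups.
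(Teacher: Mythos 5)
Your overall strategy---show $\Cb(t)$ is a Levi and combine the Bonnaf\'e--Rouquier Jordan decomposition of blocks with the fact that the classical factors of $\Lb^F$ have a unique unipotent $2$-block---is exactly the route the paper takes. Your step one proceeds by eigenvalue analysis on the natural module; the paper instead gives a shorter general argument: $\Cb(t)$ is connected because $Z(\Gb)/Z^{\circ}(\Gb)$ is a $2$-group and $t$ has odd order, and $\Cb^{\circ}(t)$ is a Levi subgroup because the only bad prime for $\Gb$ is $2$ (all primes are good for $\GL_n$), so the order of $t$ is coprime to every bad prime. Your eigenvalue argument is essentially sound for the isometry group but requires more care for an adjoint group $\Gb^*$, where elements do not act directly on the natural module; the bad-prime criterion sidesteps this. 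For the uniqueness of the unipotent $2$-block you cite Fong--Srinivasan; the paper cites \cite[Theorem 13]{CabEng93}, which is a more direct reference, but both are acceptable.

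The genuine gap is in your step two, in the sentence asserting that the Bonnaf\'e--Rouquier bimodule ``has diagonal vertex, which transports defect groups.'' The paper explicitly flags this as unavailable in the remark following Proposition \ref{defect4}: while experts expect the Bonnaf\'e--Rouquier Morita equivalences to preserve the local structure of blocks, there was no written reference for this fact at the time, and Propositions \ref{localstructurepreserved} and \ref{defect4} were introduced precisely to circumvent this issue by ad hoc methods. Accordingly, for the defect group statement in Theorem \ref{jordandecom0} the paper does \emph{not} attempt to transport defect groups along the Morita equivalence; it instead cites Enguehard's direct determination \cite[Proposition 1.5(ii), (iii)]{Enguehard-pre}, which identifies the defect groups of $e_t^{\Gb^F}$ with the Sylow $2$-subgroups of $\Lb^F$ by an independent argument. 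You should repair your proof at this point, either by citing Enguehard's result or by some other route that does not presuppose that the Bonnaf\'e--Rouquier bimodule is a $p$-permutation bimodule with diagonal vertices.
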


\begin{proof} The element $t$ has odd order, whereas 
$Z(\Gb)/Z^{\circ}(\Gb) $ is a $2$-group,  
hence  $\Cb(t)$ is   connected.  The prime $2$ is the only bad prime  for $\Gb$
(if $\Gb$ is   a general linear group, then  all primes are good for  $\Gb$).
Thus,  the order of $t$  is not divisble by any bad prime,  which means that 
$\Cb^{\circ}(t)$ is a Levi subgroup of $\Gb^*$.  
Thus  by the Bonnaf{\'e}-Rouquier 
theorem, $\CO\Gb^F e_t^{\Gb^F}$ and $\CO\Lb^Fe_1^{\Lb^F}$ 
are Morita equivalent.
Now the components of $\Lb$ are all of classical types $A$, $B$, $C$ or $D$.   
Hence by \cite[Theorem 13]{CabEng93}, the principal block of $\CO\Lb^F$ is 
the unique unipotent $2$-block of $\CO\Lb^F$, and the Morita equivalence  
implies that $e_t^{\Gb^F} $ is a block of $\CO\Gb^F$. The assertion on 
defect groups is in \cite[Proposition 1.5(ii), (iii)]{Enguehard-pre}.  
\end{proof}

\begin{Corollary}\label{jordandecomabel}   
Suppose that $\ell=2$ and either $\Gb =$ $\GL_n(\bar{\mathbb F}_q)$  
or that $\Gb$ is simple of classical type $B$, $C$ or $D$.  Let 
$b$ be a block of $\CO \Gb^F$. If $b$ has abelian defect groups, 
then $\CO \Gb^Fb$ is nilpotent.
\end{Corollary}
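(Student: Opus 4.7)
The plan is to apply Theorem \ref{jordandecom0} to reduce $b$ to the principal $2$-block of an $F$-stable Levi subgroup $\Lb$ of $\Gb$, then show that the abelian defect assumption forces $\Lb$ to be a torus, and deduce from this that $b$ is nilpotent.

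First I would write $b = e_t^{\Gb^F}$ for some odd-order semi-simple element $t \in \Gb^{*F^*}$, which is possible since $\ell = 2$ and every $2$-block of $\CO\Gb^F$ has such a semi-simple label. By Theorem \ref{jordandecom0} there is an $F$-stable Levi subgroup $\Lb$ of $\Gb$ in duality with $\Cb(t)=\Cb^\circ(t)$ such that $\CO\Gb^F b$ is Morita equivalent to the principal $2$-block $\CO\Lb^F e_1^{\Lb^F}$ and a Sylow $2$-subgroup of $\Lb^F$ is a defect group of $b$. The abelian defect hypothesis therefore forces $\Lb^F$ to have abelian Sylow $2$-subgroups.

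Next I would show that $\Lb$ is a torus. The derived subgroup $[\Lb,\Lb]$ is semi-simple and, since $\Gb$ is of classical type $A$, $B$, $C$ or $D$, each of its simple components is classical as well. A case-by-case inspection shows that every non-trivial finite classical group over a field $\mathbb F_{q^d}$ of odd characteristic arising as a component of the derived subgroup of an $F$-stable Levi of $\Gb$ has non-abelian Sylow $2$-subgroups (generalised quaternion, dihedral, semidihedral, or wreath products thereof). Hence if $[\Lb,\Lb]$ were non-trivial, then the non-trivial subgroup $[\Lb,\Lb]^F$ of $\Lb^F$ would have non-abelian Sylow $2$-subgroups, contradicting the previous step. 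Thus $[\Lb,\Lb]=1$ and $\Lb$ is a torus; in particular $\Lb^F$ is abelian, and its principal $2$-block is isomorphic to $\CO P$ with $P$ the Sylow $2$-subgroup of $\Lb^F$, which is a nilpotent block.

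The remaining and main obstacle is to transport nilpotency back to $\CO\Gb^F b$ through the Morita equivalence of Theorem \ref{jordandecom0}. As the Remark following Proposition \ref{defect4} acknowledges, preservation of local structure by the Bonnaf\'e-Rouquier Morita equivalence is not presently documented in the literature. I would either invoke the expected fact that this equivalence is splendid, so that it preserves the fusion system on defect groups and hence nilpotency, or else argue directly in this narrower setup using that the Morita equivalence already yields $\ell(b)=1$ and identifies the defect group, and that the Deligne-Lusztig description of $e_t^{\Gb^F}$ in the case where $\Cb(t)$ is a maximal torus pins down the inertial quotient of $b$ as trivial.
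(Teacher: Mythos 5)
Your proof follows the same route as the paper: write $b = e_t^{\Gb^F}$, invoke Theorem \ref{jordandecom0} to get a Morita equivalence with the principal block $\CO\Lb^Fe_1^{\Lb^F}$, show that abelian defect forces $\Lb$ to be a torus, and conclude. Steps one and two are essentially right. The argument that $[\Lb,\Lb]=1$ is slightly looser than the paper's (which cleanly splits into the case where some component is not of type $A_1$, giving an $\SL_2(q')$ subquotient by \cite[Theorem 3.2.8]{GorLyoSol3}, and the case where all components are of type $A_1$, handled via $\SL_2$ and $\PGL_2$), but the spirit is the same.

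The genuine gap is in the final step, which you correctly flag as the main obstacle but do not close. Neither of your two alternatives is sound as stated: splendidness of the Bonnaf\'e--Rouquier equivalence is not available in the literature, as the paper's own Remark after Proposition \ref{defect4} notes, and knowing $\ell(b)=1$ together with the isomorphism type of the defect group does not by itself pin down the inertial quotient or the fusion system. What actually closes the argument, and what the paper uses, is either of the following. First, nilpotent blocks with abelian defect groups are characterised as precisely those blocks whose centre is a symmetric algebra (Okuyama--Tsushima \cite[Theorems 3 and 5]{OkuTsu}, M\"uller \cite{MuellerW}); since the centre is a Morita invariant, the Morita-equivalent block $\CO\Gb^Fb$ also has symmetric centre and hence is nilpotent with abelian defect group. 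Second, and more directly, Morita equivalences of blocks preserve nilpotence by \cite[Theorem 8.2]{Puigbook}. Either of these replaces your speculative final paragraph with an actual proof.
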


\begin{proof}  Let $t$ be the semi-simple label of $b$ and suppose that   
$b$ has abelian defect groups. By the theorem, $ b=e_t^{\Gb^F}$ and 
the Sylow $2$-subgroups of   $\Lb^F$ are abelian. 
Suppose that  $[\Lb, \Lb] \ne 1 $. If $\Lb$ has a component of type 
  different from 
$A_1$, then $[\Lb, \Lb]^F $ contains a subquotient  isomorphic to  
$\SL_2(q')$  for some power $q'$ of $q$ 
(see Theorem 3.2.8 of \cite{GorLyoSol3}).
If  all components of  $\Lb $  are of type $A_1$, then $[\Lb, \Lb]^F $ is 
a commuting product of finite  special linear  and projective 
general linear groups of degree $2$. But the Sylow $2$-subgroups
of $\SL_2(q')$ are quaternion  and those of $\PGL_2(q')$  are dihedral  of 
order at least $8$ for any odd prime power $q'$ (see \ref{classical2sylow}
below), a contradiction.
Hence, $\Lb$ and therefore $\Lb^F$ is an abelian group. In particular, 
any block of $\CO\Lb ^F$ is nilpotent. 
Since nilpotent blocks with abelian defect groups 
are precisely the blocks with a symmetric centre 
(cf. \cite[Theorems 3 and 5]{OkuTsu} and \cite{MuellerW}), 
any block Morita equivalent to a block of 
$\CO\Lb^F$ is nilpotent with an abelian defect group, whence the result. 
Alternatively, Morita equivalences of blocks preserve nilpotence by 
\cite[Theorem 8.2]{Puigbook}.
\end{proof}

\section{On the $2$-local structure  of finite  classical groups}\label{2local}

Let $n$ be a natural number, $q$ an odd prime power and let $L$   
denote one of the groups  $\GL_n(q)$, $\GU_n(q)$, $\GO_{2n+1}(q)$,  
$\Sp_{2n}(q)$,  $\GO_{2n}^+(q)$,  or $\GO_{2n}^{-}(q)$.   
Let $Z$ be a central subgroup of $L$ contained in $[L,L]$ and 
set $G=$ $[L,L]/Z $. 
We gather together a few well-known facts on the Sylow $2$-structure of  
the groups $L$ and $G$.

\begin{Lemma}  \label{classical2sylow} With the notation above,  

\smallskip \noindent
(i) If $n \geq 3 $, then   
the Sylow $2$-subgroups of $G$  (and hence of $[L,L]$ and $L$) 
are non-abelian.
 
\smallskip\noindent
(ii) If  $n =2 $, the Sylow $2$-subgroups of $[L,L] $ are non-abelian.
Further, if $n=2 $ and   $ L$ is not one of  $\GL_2(q)$,  
$ q\equiv \pm 3 \ (\mod \ 8 )$, $\GU_2(q) $, $q\equiv   \pm 3 \ (\mod  \ 8)$,  
or $ \GO_{4}^+(q)$, 
$ q \equiv \pm 3 \ (\mod \ 8 )$, then  the Sylow 
$2$-subgroups of $G$ are non-abelian.  

\smallskip\noindent
(iii)  If $L$   is one of $\GL_2(q) $,    $ \GU_2(q)$  or  $ \Sp_2(q) $
then the Sylow $2$-subgroups of $[L, L]$ 
are generalised quaternion groups. They have order at least $16$ if 
$ q \equiv \pm 1 \, (\mod\ 8 )$ and in this case the Sylow 
$2$-subgroups of $G$  are non-abelian. If 
$q \equiv \pm 3\, (\mod \  8) $, then the Sylow $2$-subgroups  of $[L,L]$
have order $8$ and the Sylow $2$-subgroups of $G$  are Klein $4$-groups.
The Sylow $2$-subgroups of $\PGL_2(q)$ are dihedral of order at least $8$.

\smallskip\noindent
(iv)  If $L$ is one of $\GO_{2}^+(q) $, respectively  
$\GO_{2}^-(q) $, then $L$  is a dihedral group of order $2(q-1)$, 
respectively $2(q+1)$.

\smallskip\noindent
(v)   If $L =\GO_3(q)$ and if $q \equiv 1 \ (\mod \ 4) $   then  the 
Sylow $2$-subgroups of $L$  are isomorphic 
to the  direct product of  a cyclic group of order $2$  with a   
Sylow $2$-subgroup of $\GO_{2}^+(q) $ 
and  the Sylow $2$-subgroups of $\SO_3(q)$ are isomorphic to the 
Sylow $2$-subgroups of  $\GO_{2}^+(q) $. If $L =\GO_3(q)$ and 
$q \equiv 3 \ (\mod \ 4) $   then  the 
Sylow $2$-subgroups of $L$  are isomorphic 
to the  direct product of  a cyclic group of order $2$  with a   
Sylow $2$-subgroup of $\GO_{2}^-(q) $ 
and  the Sylow $2$-subgroups of $\SO_3(q)$ are isomorphic to the 
Sylow $2$-subgroups of  $\GO_{2}^-(q) $.
\end{Lemma}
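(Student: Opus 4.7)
The plan is to reduce the entire lemma to the base case (iii) on $\SL_2(q)$ together with the elementary description of $\GO_2^\pm(q)$ in (iv), and then propagate these facts to higher rank via subsystem embeddings. I would handle the parts in the order (iv), (iii), (v), (ii), (i). The core computational input is classical and can be extracted from the Sylow tables of Gorenstein--Lyons--Solomon; the real work is combinatorial bookkeeping of low-rank exceptional behaviour.

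For (iv), the group $\GO_2^+(q)$ preserves the split form $xy$, so it is generated by the split torus $\{\mathrm{diag}(\lambda,\lambda^{-1})\}$ of order $q-1$ and the reflection swapping coordinates, which inverts the torus; one thus obtains a dihedral group of order $2(q-1)$. The anisotropic case for $\GO_2^-(q)$ is analogous with a cyclic torus of order $q+1$ inverted by a reflection. For (iii), I would invoke the classical fact that for odd $q$ the Sylow $2$-subgroups of $\SL_2(q)$ are generalized quaternion of order $(q^2-1)_+$, proved by exhibiting a $2$-power element of maximal order in a maximal torus containing $-I$ together with a Weyl element inverting it. Elementary $2$-adic arithmetic then gives $(q^2-1)_+ \geq 16$ when $q \equiv \pm 1 \pmod{8}$ and $(q^2-1)_+ = 8$ when $q \equiv \pm 3 \pmod{8}$. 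Since $\Sp_2(q) = \SL_2(q)$ and $[\GU_2(q),\GU_2(q)] = \SU_2(q) \cong \SL_2(q)$, all three cases in (iii) are handled simultaneously; the assertion that the Sylow of $\PGL_2(q)$ is dihedral of order at least $8$ follows from $\PGL_2(q)/\PSL_2(q) \cong C_2$ together with the identifications $Q_8/Z(Q_8) \cong V_4$ and $Q_{2^n}/Z(Q_{2^n})$ dihedral of order $2^{n-1}$ for $n \geq 4$. For (v), I would use the decomposition $\GO_3(q) = \langle -I\rangle \times \SO_3(q)$, valid in odd dimension since $\det(-I) = -1$, combined with the exceptional isomorphism $\SO_3(q) \cong \PGL_2(q)$; matching $2$-parts of orders then shows that the dihedral Sylow of $\PGL_2(q)$ has the same order as that of $\GO_2^+(q)$ when $q \equiv 1 \pmod{4}$ and as that of $\GO_2^-(q)$ when $q \equiv 3 \pmod{4}$.

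For (i) and (ii), I would locate suitable $\SL_2(q)$-subgroups inside $[L,L]$ via root subgroups: every classical $[L,L]$ with $n \geq 2$ contains a copy of $\SL_2(q)$ (or of $\SU_2(q) \cong \SL_2(q)$ in the unitary case), which by (iii) already has non-abelian Sylow, yielding the first assertion of (ii). For $n \geq 3$ one can find two commuting $\SL_2(q)$'s via an $A_1 \times A_1$-subsystem, whose Sylow contains a product of two generalized quaternion groups; the image in $G = [L,L]/Z$ remains non-abelian because $Z$ lies in the centre and cannot collapse both quaternion factors to abelian quotients simultaneously. The delicate case is $n=2$ with $q \equiv \pm 3 \pmod{8}$, where a single $Q_8$ collapses to $V_4$ modulo its centre; here I would inspect each type individually, using $\POmega_4^-(q) \cong \PSL_2(q^2)$ (whose Sylow is again quaternion since $q^2 \equiv 1 \pmod{8}$) and the non-abelian Sylow structure of $\PSp_4(q)$ to isolate exactly the three exceptions $\GL_2(q)$, $\GU_2(q)$, $\GO_4^+(q)$ that appear in (ii).

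The main obstacle is not conceptual but the case analysis in (i) and (ii), where one must simultaneously track the order of the central subgroup $Z$, the sporadic low-rank isomorphisms (in particular the fact that $\Omega_4^+(q)$ is the central product of two copies of $\SL_2(q)$) which cause the Sylow to collapse to an abelian quotient precisely for $q \equiv \pm 3 \pmod{8}$, and the distinction between plus and minus orthogonal types. Once this bookkeeping is completed against the standard Sylow tables in Gorenstein--Lyons--Solomon, the lemma assembles directly from (iii), (iv), and the subsystem embedding arguments.
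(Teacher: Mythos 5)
Your proposal takes a genuinely different route from the paper. The paper handles (i) and the second assertion of (ii) in one stroke by observing that (outside the four degenerate cases $\GO_4^+(q)$, $\GL_2(2)$, $\GL_2(3)$, $\GU_2(2)$) the groups $G$ are quasi-simple, and then invoking the Walter--Bender classification of simple groups with abelian Sylow $2$-subgroups: the only ones are $\PSL_2(q')$ with $q'\equiv\pm3\pmod 8$, $\PSL_2(2^a)$, ${^2G_2(q')}$ and $J_1$. Checking which $G/Z(G)$ land on that short list instantly produces exactly the exceptions in (ii). Your approach instead tries to exhibit non-abelian $2$-subgroups directly by subsystem embeddings, which is more self-contained (Walter--Bender is a substantial theorem, albeit pre-CFSG) but also much more bookkeeping-heavy, as you acknowledge.

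There are, however, a few places where the sketch as written would fail and needs repair. First, the claimed $A_1\times A_1$ subsystem does not exist in type $A_2$: for $L=\GL_3(q)$ or $\GU_3(q)$ (which fall under $n\geq 3$ in this indexing) there are no two commuting $\SL_2$'s arising from a subsystem. The correct patch there is cheap — for these groups $Z\leq Z(\SL_3(q))$ or $Z(\SU_3(q))$ has odd order, so a single $Q_8\leq\SL_2(q)\leq[L,L]$ already survives intact in $G$ — but it is a different argument than the one you give. Second, in the orthogonal and symplectic cases your "product of two generalised quaternion groups" is typically a \emph{central} product inside $[L,L]$, not a direct product (e.g.\ inside $\Omega_6^+(q)\cong\SL_4(q)/\{\pm I\}$ the two block $\SL_2$'s already share their centre), so the slogan "$Z$ cannot collapse both quaternion factors simultaneously" is not self-evidently true: $Q_8\circ Q_8$ modulo its own centre \emph{is} abelian ($V_4\times V_4$). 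The conclusion is still correct because in each case the remaining central subgroup is too small to effect this collapse, but that has to be checked type by type and is not a consequence of the reason you state. Third, the claim that every classical $[L,L]$ with $n\geq 2$ contains a copy of $\SL_2(q)$ fails for $L=\GO_4^-(q)$, where $\Omega_4^-(q)\cong\PSL_2(q^2)$; you need instead to use directly that its dihedral Sylow $2$-subgroup has order at least $8$ because $q^2\equiv 1\pmod 8$. Your treatment of (iii), (iv), (v) and the exception analysis for $n=2$ is fine and in the same spirit as the paper's citation of Carter--Fong.
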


\begin{proof}  Statements (iii), (iv), (v) can be found in 
\cite{CarterFong}.  
 The only simple groups with abelian Sylow $2$-subgroups are 
$\PSL_2(q')$, $q' \cong \pm 3 \   (\mod\ 8)$, $\PSL_2(2^a)$, 
$\,^2G_2(q')$, $q'= 3^{2u+1}$, $u \geq 1 $, $\,^2G_2(3)' \cong$ $\PSL_2(8)$   
or $J_1$ (cf. \cite{Walter}, \cite{Bender}).
Also, if $n\geq 2 $, then unless $L$ is one of 
$\GO_{4}^+(q)$, $\GL_2(2)$, $\GL_2(3)$ or $\GU_2(2) $,
the groups $G$ are all quasi-simple.  Statement (i) 
and the second assertion of (ii) are immediate 
from this. If $L=$ $\GL_2(q)$, or 
$L =$ $\GU_2(q)$, the second assertion of (ii) is immediate from (iii).
Now consider $L=\GO_{4}^+(q)$. Then $L$ contains a subgroup isomorphic to  
$\GL_2(q)$ or to $\GU_2(q)$ (as a centraliser of a semi-simple element), 
hence  $[L,L]=$ $\Omega_{4}^+(q)$ 
contains a subgroup isomorphic to $\SL_2(q)$ or to $\SU_2(q)$ and 
$\SL_2(q)$ and $\SU_2(q)$ have non-abelian Sylow $2$-subgroups.
\end{proof}

In the next sections, we will analyse closely the structure of   
centralisers of semi-simple elements in classical groups. The following 
elementary lemma  will be useful in this context.
In  what follows,  $\GO^{\pm}_0(q) $ are to 
be interpreted as the  trivial group,   and $\GO_1(q) $  as a cyclic group of 
order $2$. Also note that the center of $L$ is a $2$-group.

\begin{Lemma} \label{classicalcent2sylow}  
Let $t\geq 0$, let $d_i $, $m_i$, $1\leq i \leq t$, be positive 
integers and let  $m_0$ be a non-negative integer.  Let
$$H = \prod_{0\leq i\leq  t} H_i $$ be a subgroup of $L$ such that 
$H_0 $ is one of the groups $\Sp_{2m_0}(q)$,  $\GO_{2m_0+1}(q)$ or  
$\GO_{2m_0}^{\pm}(q)$ and $H_i $ is isomorphic to $\GL_{m_i}(q^{d_i})$ or  
$\GU_{m_i}(q^{d_i})$ for $ 1\leq i \leq t $. Let $Z$ be a central 
subgroup of $L$ contained in $H$ such that if  the above decomposition 
of $H$  has more than one  non-trivial factor, then $Z \cap H_i =$ $1$ 
for all $i$, $0\leq i \leq t $. Let $T$ be  a Sylow $2$-subgroup of $H$ 
and set $P=$ $(T \cap [L, L]Z)/Z$. Suppose that  $P$ is abelian. Then,

\smallskip\noindent
(i) $m_i \leq 2 $, $  0 \leq i \leq t $.

\smallskip\noindent
(ii)   If $m_0 = 2 $, then  $H=H_0 = \GO_{4}^{+} (q) $.

\smallskip\noindent 
(iii) If $m_i =2 $ for some $i\geq 1$, 
then  $t=1 $ and $m=0 $, that is  $H=H_1$.
Further,  $d_1$ is odd and   
$ q\equiv  \pm 3   \  (\mod \ 8) $. 

\smallskip\noindent
(iv) If $H_0=\Sp_{2n}(q)$ and if $m_0 \ne 0 $, then  $t=0$,  that is $H=H_0$.
\end{Lemma}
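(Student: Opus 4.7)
The plan is to analyse the hypothesis that $P = (T \cap [L,L]Z)/Z$ is abelian by decomposing $T$ according to the product decomposition of $H$ and invoking Lemma \ref{classical2sylow}, which records exactly when Sylow $2$-subgroups of the classical groups and their derived subgroups modulo the centre are abelian.

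First I would write $T = \prod_{i=0}^{t} T_i$ with $T_i$ a Sylow $2$-subgroup of $H_i$, noting that $Z$ is a $2$-group contained in $[L,L]$. When the decomposition of $H$ has more than one non-trivial factor, the hypothesis $Z \cap H_i = 1$ ensures that the canonical map from each $T_i \cap [L,L]$ into $P$ is injective on each factor, so each such $T_i \cap [L,L]$ must itself be abelian. In the single-factor case, $Z$ may meet $H_i$ non-trivially, but the relevant derived subgroup modulo its centre, and hence modulo $Z \cap H_i$, still embeds into $P$. For (i), suppose $m_i \geq 3$. If $i \geq 1$, then $[H_i,H_i]$ is $\SL_{m_i}(q^{d_i})$ or $\SU_{m_i}(q^{d_i})$, whose image in the corresponding simple group is non-abelian by Lemma \ref{classical2sylow}(i), so $T_i$ cannot embed abelianly into $P$. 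Similarly for $i=0$, $H_0$ is a classical group of rank at least $3$, so again its derived subgroup has non-abelian Sylow $2$-subgroups modulo the centre, contradicting the analysis above.

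For (ii) and (iii), I would apply the $m=2$ portion of Lemma \ref{classical2sylow}. By (ii), the only way the quasi-simple quotient of $H_0$ (for $m_0 = 2$) has abelian Sylow $2$-subgroups is if $H_0 = \GO_4^+(q)$ with $q \equiv \pm 3 \pmod 8$; this identifies $H_0$. By (iii), for $H_i \cong \GL_2(q^{d_i})$ or $\GU_2(q^{d_i})$ with $i \geq 1$, the Sylow $2$-subgroups of $[H_i,H_i]$ are generalised quaternion, of order $8$ (so giving a Klein-four quotient modulo the centre) if and only if $q^{d_i} \equiv \pm 3 \pmod 8$; since $q$ is odd, $q^{d_i} \equiv 1 \pmod 8$ whenever $d_i$ is even, forcing $d_i$ odd. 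The delicate part is ruling out additional factors: any supplementary $H_j$ with $m_j \geq 1$ contributes, via the hypothesis $Z \cap H_j = 1$, a non-trivial $2$-element of $T_j$ that survives into $P$, and combined with the non-trivial (Klein-four) image already coming from the $m=2$ factor, one can produce a non-abelian subgroup of $P$ unless $t$ is as small as claimed. Tracking these contributions correctly, in tandem with the fact that $H_0 = \GO_4^+(q)$ is its own $[L,L]$-part with non-trivial contribution, is what forces $H = H_0$ in (ii) and $H = H_1$ with $m_0 = 0$ in (iii).

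For (iv), suppose $H_0 = \Sp_{2m_0}(q)$ with $m_0 \geq 1$ and $t \geq 1$. Then $H$ has more than one non-trivial factor, so $Z \cap H_0 = 1$. Since $\Sp_{2m_0}(q)$ is perfect (for the ranks in question) and hence contained in $[L,L]$, and since it has non-abelian Sylow $2$-subgroups (quaternion for $m_0=1$ by Lemma \ref{classical2sylow}(iii), and non-abelian also for $m_0 \geq 2$ by (i) and (ii)), its Sylow $2$-subgroup $T_0$ embeds unchanged into $P$, contradicting the abelianness of $P$.

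The main obstacle I anticipate is in parts (ii) and (iii): one must simultaneously handle two sources of possible $2$-torsion contributing to $P$—namely $Z$ absorbing part of the non-abelian structure of a single quaternion-type Sylow $2$-subgroup, and the direct-product contribution of additional factors $H_j$. The bookkeeping of which elements of $T_i$ actually lie in $[L,L]$ (rather than merely $[L,L]Z$) in the classical groups and orthogonal ambient cases is where care is most needed.
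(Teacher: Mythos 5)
You take essentially the same route as the paper: split according to whether $H$ has one or several non-trivial factors, use the hypothesis $Z \cap H_i = 1$ in the multi-factor case to embed (Sylow $2$-subgroups of) the factors $[H_i,H_i]$ into $P$, and invoke Lemma~\ref{classical2sylow}. Parts (i) and (iv) are fine in substance (though for (iv) note that $\Sp_2(3)$ is not perfect; what one actually needs is that the derived subgroup $[H_0,H_0]$, which is quaternion, lies in $[L,L]$ and injects into $P$).

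The gap is in your treatment of the ``delicate part'' of (ii)/(iii). You say that a supplementary factor $H_j$ contributes a $2$-element that, ``combined with the non-trivial (Klein-four) image already coming from the $m=2$ factor,'' produces a non-abelian subgroup of $P$. This cannot work: elements coming from distinct direct factors of $H$ commute in $P$, so a cyclic group times a Klein-four image is still abelian. In fact the Klein-four quotient is a red herring in the multi-factor case. There, $Z \cap H_i = 1$, so the generalised quaternion Sylow $2$-subgroup of $[H_i,H_i]$ embeds into $P$ without any centre being killed; this is already non-abelian and gives the contradiction on its own, exactly as in your own (i) argument for $m_i \geq 3$. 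The passage to the Klein-four quotient (equivalently to $\PSL_2$ or $\PSU_2$) is only relevant in the single-factor case, where $Z$ may meet $Z([H_1,H_1])$. You should also be aware that the lemma's conclusion in (ii) is only that $H_0 = \GO_4^{+}(q)$; it does not assert $q \equiv \pm 3 \pmod 8$, so you are proving something the statement does not claim.
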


\begin{proof}   For $ 0 \leq i \leq t $,
$[H_i, H_i]  \leq   H \cap [L,L] $, and hence
$$ [H_i, H_i]/[H_i, H_i]\cap Z \cong    [H_i, H_i]Z/Z 
\leq  (H \cap [L,L]Z)/Z . $$
Suppose  first that   two of the  factors of $H$ are non-trivial, say 
$H_i $ and $ H_j $, $ i\ne j$. Then,
 by assumption $[H_i, H_i] \cap Z =1 $. 
It follows from the above 
that  $[H_i, H_i] $  is a subgroup of $(H \cap [L,L])/Z $.   
In particular,  the  Sylow $2$-subgroups of $[H_i, H_i] $   are abelian.
But by Lemma \ref{classical2sylow}, $[H_i, H_i] $ has non-abelian Sylow 
$2$-subgroups if $m_i \geq 2 $. This proves  the first assertions  of (ii) and 
(iii).

Now supppose that some $m_i \geq 2 $. By what has just been proved,   
either $t=0 $  or $t= 1 $ and  $m_0=0 $. If $t=0 $, then $H=H_0$ is 
an orthogonal or symplectic group of dimension $m_0$.  By Lemma
\ref{classical2sylow}, it follows that   $ H_0= \GO_4^+(q)$, proving (ii). 
If $t=1$ and $m_0=0$, then  $[H, H]/Z([H, H])$ 
is a projective special linear or unitary group, 
and by Lemma \ref{classical2sylow} (i), $ m_1 =2 $, 
$H \cong \GL_{2}(q^{d_1})  $,  and $ q^{d_1} \equiv \pm 3 \ (\mod  \ 8 ) $. 
The last can only hold if $d_1 $ is odd and  $ q  \equiv \pm 3 \ (\mod \ 8 ) $.
This proves (iii).
The proof of (iv) is similar, using the fact that   symplectic groups  
$\Sp_{2n}(q)$ are perfect for $ n\geq 2 $.
\end{proof}

\section{Type A in odd characteristic}

By results of Blau and Ellers in \cite{BlEl}, Brauer's height
zero conjecture holds for all blocks in non-defining characteristic
of quasi-simple groups of type $A$ and ${^2{A}}$, and hence
so does Alperin's weight conjecture for all blocks of these
groups in odd characteristic with an elementary abelian defect
group of order $8$, by Landrock's results quoted in Proposition
\ref{leqeight}.
This proves in particular Theorem \ref{c2c2c2} for these groups.
For future reference, and using methods similar to those
in \cite{BlEl}, we prove in this and the following section that
elementary abelian $2$-defect groups of odd rank at least three
do not occur in type $A$ and type ${^2{A}}$, and that blocks
with an elementary abelian $2$-defect group of even rank
at least four of these groups satisfy Alperin's weight conjecture. 
For the remainder of the paper, we assume $p=2$.

\begin{Theorem} \label{typeA}
Let $G$ be a quasi-simple finite group such that
$Z(G)$ has odd order and such that $G/Z(G)\cong \PSL_n(q)$ for
some positive integer $n$ and some odd prime power $q$. 
Let $b$ be a block of $kG$ with an elementary abelian defect
group of order $2^r$ for some integer $r\geq 3$. 
Then $r$ is even and $b$ satisfies Alperin's weight conjecture.
\end{Theorem}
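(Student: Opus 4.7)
The plan is to lift $b$ to a block $\tilde b$ of $\tilde H = \GL_n(q)$
and apply Bonnaf\'e-Rouquier's Jordan decomposition
(Theorem~\ref{jordandecom0}) to Morita-reduce $\tilde b$ to the
principal block of an $F$-stable Levi subgroup; both the parity of
$r$ and Alperin's weight conjecture then follow from the resulting
arithmetic. Since $Z(G)$ has odd order, we may assume $G = H/Z_2$,
where $H = \SL_n(q)$ and $Z_2 = O_2(Z(H))$, as any further odd-order
central quotient preserves the $2$-block theory. The block $b$ lifts
to a block $b_H$ of $kH$ whose defect group $Q$ contains $Z_2$ with
$Q/Z_2 \cong P$. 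Standard Clifford theory for $\tilde H/H$ cyclic of
order $q-1$ provides a block $\tilde b$ of $k\tilde H$ covering
$b_H$; after replacing $\tilde H$ by the stabiliser of $b_H$ if
necessary, we may assume $b_H$ is $\tilde H$-stable and that
$Q = D \cap H$ for a defect group $D$ of $\tilde b$.

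Applying Theorem~\ref{jordandecom0} with $\Gb = \GL_n(\bar\F_q)$
yields $\tilde b = e_t^{\tilde H}$ for an odd-order semi-simple $t$,
and a Morita equivalence
$\CO\tilde H \tilde b \sim \CO\Lb^F e_1^{\Lb^F}$, where
$\Lb^F \cong \prod_{j=1}^s \GL_{m_j}(q^{d_j})$ is dual to
$C_{\Gb^*}(t)$ with $\sum_j m_j d_j = n$, and $D$ is a Sylow
$2$-subgroup of $\Lb^F$. By Lemma~\ref{classicalcent2sylow} applied
with $Z = Z_2$, the abelianness of $P = Q/Z_2$ forces $m_j \leq 2$
for all $j$. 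The case $m_j = 2$ is then ruled out for $r \geq 3$,
since the Sylow $2$-subgroup of $\SL_2(q^{d_j})$ is generalised
quaternion, and its central quotient -- which controls the rank of
$P$ in this situation -- is elementary abelian of rank at most $2$.
Thus $m_j = 1$ for every $j$, and
$\Lb^F = \prod_{j=1}^s \F_{q^{d_j}}^\times$. A further elementary
computation (looking at generators of cyclic factors of
$D$ and their images in $Q/Z_2$) forces $(q^{d_j}-1)_2 = 2$ for
each $j$, hence $q \equiv 3 \pmod 4$ and every $d_j$ odd; in
particular $D$ is elementary abelian of rank $s$.

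Under the Levi embedding
$\F_{q^{d_j}}^\times \hookrightarrow \GL_{d_j}(q)$ by multiplication,
the element of order $2$ has determinant
$N_{\F_{q^{d_j}}/\F_q}(-1) = (-1)^{d_j} = -1$, so $Q = D \cap H$ is
the kernel of the parity map $D \cong (\Z/2)^s \to \Z/2$,
$(\epsilon_j) \mapsto \sum_j \epsilon_j$, of rank $s-1$. The
subgroup $Z_2 = \{\pm I\}\cap H$ is trivial when $n$ is odd, and
equals $\{\pm I\}$, embedded diagonally in $\Lb^F$ as
$(-1,\ldots,-1)$, when $n$ is even. For $n$ odd, the identity
$s \equiv \sum_j d_j = n \equiv 1 \pmod 2$ makes $r = s - 1$ even;
for $n$ even, the containment $Z_2 \leq Q$ forces $s$ even
(consistent with $s \equiv n \pmod 2$), and $r = s - 2$ is even. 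In
both cases $r$ is even.

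For Alperin's weight conjecture, the principal block of the abelian
group $\Lb^F$ is nilpotent, and Morita equivalence preserves
nilpotence, so $\tilde b$ is nilpotent.
Proposition~\ref{nilextremark} applied to the normal inclusion
$H \triangleleft \tilde H$ and the $\tilde H$-stable block $b_H$
then yields that $b_H$ is Morita equivalent to its Brauer
correspondent and in particular satisfies Alperin's weight
conjecture; since $Z_2$ is contained in the kernel of $b_H$, this
descends to Alperin's weight conjecture for $b$ on $G = H/Z_2$. The
main obstacle will be the Clifford-theoretic bookkeeping needed to
reduce to the $\tilde H$-stable case and to pass between defect
groups in $\tilde H$, $H$ and $G$, together with the explicit
verification that each $(q^{d_j}-1)_2$ must equal $2$ and each
$d_j$ must be odd.
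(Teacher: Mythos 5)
Your overall strategy matches the paper's proof exactly: lift $b$ through $\SL_n(q)$ to a covering block of $\GL_n(q)$, apply the Bonnaf\'e--Rouquier Jordan decomposition (Theorem~\ref{jordandecom0}/Corollary~\ref{jordandecomabel}) to realise the defect group as a Sylow $2$-subgroup of a Levi $\prod_j \GL_{m_j}(q^{d_j})$, use Lemma~\ref{classicalcent2sylow} to force $m_j\leq 1$, derive $q\equiv 3\pmod 4$ and all $d_j$ odd, read off the parity of $r$ from the determinant map, and finish via Proposition~\ref{nilextremark}. So the proposal is not a different route; it is the paper's argument in compressed form. However, there are three places where the compression hides a real issue.

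First, the instruction ``after replacing $\tilde H$ by the stabiliser of $b_H$'' conflicts with the very next step, where you invoke Theorem~\ref{jordandecom0} with $\Gb^F=\GL_n(q)$: that theorem is about $\GL_n(q)$, not a proper subgroup. You should instead keep $\tilde H=\GL_n(q)$ throughout, choose a covering block $\tilde b$ of $\GL_n(q)$ with a defect group $D$ satisfying $D\cap \SL_n(q)=Q$, and use \cite[Proposition 6.3]{CEKL} to see that $D/Q$ is cyclic of order $(q-1)_+$; the stabiliser enters (with odd index) only at the very end, when transferring nilpotency of $\tilde b$ to the $\SL_n(q)$-level via Proposition~\ref{nilextremark}.

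Second, ``a further elementary computation forces $(q^{d_j}-1)_2=2$ for each $j$'' quietly absorbs the hardest part of the argument, which in the paper occupies Lemmas~\ref{determinantsurjective}, \ref{orderpreserve}, \ref{tgeqthree}, \ref{teqtwo}, and \ref{teqone}. In particular one needs the case split on $s$: for $s\leq 2$ the claim $(q^{d_j}-1)_2=2$ need not hold at all, and the conclusion of the theorem is vacuous instead (one shows $|P|\leq 4$ directly); it is only for $s\geq 3$ that an order-$4$ element obstruction (built from two factors using the determinant trick) gives the constraint. As written, the proof asserts the constraint unconditionally.

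Third, ``since $Z_2$ is contained in the kernel of $b_H$'' is false: the kernel ${\mathrm{Ker}}_G(b)$ of a $2$-block is always a $2'$-group (see \S\,Sporadic), so the $2$-group $Z_2$ is never in it unless trivial. The correct descent from $\SL_n(q)$ to $G=\SL_n(q)/Z_+$ uses that the quotient by a central $p$-subgroup induces a bijection on blocks preserving the weight-counting data, a standard but different fact, and you should say so.

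None of these gaps requires a new idea --- they are repaired precisely by the case analysis and auxiliary lemmas in the paper --- but as stated the proposal does not yet constitute a complete proof.
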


\begin{Lemma} \label{determinantsurjective}
Let $n$, $m$, $d$ be positive integers such that $n=md$.
Consider $\GL_m(q^d)$ as subgroup of $\GL_n(q)$ through
some $\F_q$-decomposition $(\F_q)^n\cong (\F_{q^d})^m$.
Denote by $\det$ the determinant function on $\GL_n(q)$.
Then there is an element $x\in \GL_m(q^d)$ such that $\det(x)$ 
has order $q-1$.
\end{Lemma}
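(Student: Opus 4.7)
The plan is to identify the $\F_q$-determinant of an element $x \in \GL_m(q^d) \leq \GL_n(q)$ with the norm down to $\F_q$ of its $\F_{q^d}$-determinant, and then invoke surjectivity of the norm map.

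More precisely, if we view an $\F_{q^d}$-vector space $V$ of dimension $m$ as an $\F_q$-vector space of dimension $n = md$, then for any $\F_{q^d}$-linear automorphism $T$ of $V$ the standard identity
\[
\det\nolimits_{\F_q}(T) \;=\; N_{\F_{q^d}/\F_q}\bigl(\det\nolimits_{\F_{q^d}}(T)\bigr)
\]
holds. (This is seen, for instance, by reducing to the case $m=1$ via block triangular matrices, or by noting that the two maps $\GL_m(q^d) \to \F_q^\times$ obtained from the two sides are homomorphisms and both agree with the norm when restricted to the center $\F_{q^d}^\times \cdot I_m$ and are trivial on $\SL_m(q^d)$.)

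Next I would recall that the norm map $N_{\F_{q^d}/\F_q} : \F_{q^d}^\times \to \F_q^\times$ is surjective; indeed $\F_{q^d}^\times$ is cyclic of order $q^d-1$, and the norm map sends a generator to an element of order $(q-1)$, since its kernel has order $(q^d-1)/(q-1)$. Thus there exists $\alpha \in \F_{q^d}^\times$ with $N_{\F_{q^d}/\F_q}(\alpha)$ a generator of $\F_q^\times$.

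Finally, I would take $x = \mathrm{diag}(\alpha, 1, \ldots, 1) \in \GL_m(q^d)$. Then $\det_{\F_{q^d}}(x) = \alpha$, so by the identity above, $\det_{\F_q}(x) = N_{\F_{q^d}/\F_q}(\alpha)$ has order $q-1$, as required. There is no serious obstacle here; the only point worth verifying carefully is the determinant-norm identity, which is a standard fact about restriction of scalars.
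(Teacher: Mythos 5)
Your proof is correct and is essentially the same argument as the paper's: the paper picks a generator $\lambda$ of $\F_{q^d}^\times$, realizes multiplication by $\lambda$ as a $d\times d$ matrix $y$ over $\F_q$, and computes $\det(y)=\lambda\cdot\lambda^q\cdots\lambda^{q^{d-1}}=\lambda^{(q^d-1)/(q-1)}$, which is precisely $N_{\F_{q^d}/\F_q}(\lambda)$, so both proofs reduce to surjectivity of the norm via the determinant--norm compatibility. One small caveat: your second suggested route for the identity $\det_{\F_q}=N\circ\det_{\F_{q^d}}$ (comparing the two homomorphisms on the centre and on $\SL_m(q^d)$) only pins them down on $Z(\GL_m(q^d))\cdot\SL_m(q^d)$, which equals $\GL_m(q^d)$ only when $\gcd(m,q^d-1)=1$; the block-triangular reduction to $m=1$ that you offer first is the one that works in general.
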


\begin{proof}
Let $\lambda$ be a generator of $\F_{q^d}^\times$ and $f\in\F_q[X]$ the
minimal polynomial of $\lambda$ over $\F_q$. Then $f$ has degree $d$,
and the roots of $f$ are $\lambda$, $\lambda^q$, $\lambda^{q2}$,...,
$\lambda^{q^{d-1}}$.  Let $y\in \GL_d(q)$ with minimal polyomial $f$.
Define $x\in\GL_n(q)$ via $d\times d$-block diagonal matrices
where the first block is $y$ and the remaining $m$ blocks are the
identity matrices $\Id_d$. 
Then, in $\GL_n(\bar\F_q)$, the element $x$ is conjugate to a diagonal
matrix whose diagonal entries are
$\lambda$, $\lambda^q$, $\lambda^{q2}$,...,$\lambda^{q^{d-1}}$, 
$1$, $1$, ...,$1$.
Thus the determinant of $x$ is
$\det(x) = $ $\lambda\cdot\lambda^q\cdots\lambda^{q^{d-1}} =$ 
$\lambda^{\frac{q^d-1}{q-1}}$.
Since $\lambda$ has order $q^d-1$ it follows that $\det(x)$ has order $q-1$.
\end{proof}

\begin{Lemma} \label{orderpreserve}
Let $n$, $m$, $d$ be positive integers such that $n=md$.
Consider $\GL_m(q^d)$ as subgroup of $\GL_n(q)$ through
some $\F_q$-decomposition $(\F_q)^n\cong (\F_{q^d})^m$.
If $m\geq 2$ and $q^d\equiv\ 1\ (\mod\ 4)$ then
there is an element $y\in\GL_m(q^d)$ of order $4$ such that
the image of $y$ in $\PGL_n(q)$ has order $4$.
If moreover $m\geq 3$ we can choose such an element $y$ in $\SL_n(q)$.
\end{Lemma}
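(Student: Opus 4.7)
The plan is a direct construction. Since $q^d \equiv 1 \pmod{4}$, I can pick a primitive fourth root of unity $\lambda \in \F_{q^d}^\times$; in particular $\lambda^2 = -1$. The key observation I will use is that under the embedding $\GL_m(q^d) \hookrightarrow \GL_n(q)$, if $y \in \GL_m(q^d)$ is diagonal with entries $\mu_1, \ldots, \mu_m$, then viewed in $\GL_n(\bar{\F}_q)$ the matrix $y$ is diagonalisable with eigenvalue multiset $\{\mu_j^{q^i} : 1 \leq j \leq m,\ 0 \leq i \leq d-1\}$. This comes from the Galois decomposition $\F_{q^d} \otimes_{\F_q} \bar{\F}_q \cong \prod_{i=0}^{d-1} \bar{\F}_q$, and is essentially the calculation underlying Lemma \ref{determinantsurjective}.

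For the first assertion ($m \geq 2$), I would take $y = \mathrm{diag}(\lambda, 1, \ldots, 1) \in \GL_m(q^d)$, which manifestly has order $4$. To see that the image of $y$ in $\PGL_n(q)$ has order exactly $4$, it suffices to check that neither $y$ nor $y^2$ lies in $Z(\GL_n(q)) = \F_q^\times \cdot I$. By the eigenvalue description, $y$ has the eigenvalue $\lambda \neq 1$ together with the eigenvalue $1$ of multiplicity $(m-1)d \geq 1$, while $y^2$ has the distinct eigenvalues $\lambda^2 = -1$ and $1$; so neither is scalar.

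For the second assertion ($m \geq 3$), the element above need not lie in $\SL_n(q)$, so I would instead take $y = \mathrm{diag}(\lambda, \lambda^{-1}, 1, \ldots, 1) \in \GL_m(q^d)$. Applying the norm formula of Lemma \ref{determinantsurjective} to each of the two leading diagonal entries gives $\det(y) = \lambda^{s} \cdot \lambda^{-s} = 1$, where $s = (q^d-1)/(q-1)$, so $y \in \SL_n(q)$. The eigenvalue argument of the previous paragraph goes through verbatim, since the eigenvalue $1$ still appears with multiplicity $(m-2)d \geq 1$ in both $y$ and $y^2$, while $y$ still has eigenvalue $\lambda$ and $y^2$ still has eigenvalue $-1$. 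The only mildly delicate point in the whole argument is the Galois-twist description of eigenvalues under $\GL_m(q^d) \hookrightarrow \GL_n(q)$, but this is standard and already implicit in the preceding lemma; there is no real obstacle.
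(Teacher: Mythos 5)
Your proof is correct and follows essentially the same strategy as the paper's: place a primitive fourth root of unity $\lambda$ in one diagonal slot, keep a slot equal to $1$ so that no nontrivial power of $y$ can be scalar, and use the norm-determinant formula to get into $\SL_n(q)$ when $m\geq 3$. Your specific choice $y=\mathrm{diag}(\lambda,\lambda^{-1},1,\ldots,1)$ is a concrete instance of the paper's ``choose $y_3$ with $\det(y_3)=\det(y_1)^{-1}$'' via Lemma \ref{determinantsurjective}, with the minor bonus that one sees directly that $y$ has order exactly $4$.
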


\begin{proof}
Since $m\geq 2$, the group $\GL_m(q^d)$ contains a subgroup
isomorphic to $\F_{q^d}^\times\times\F_{q^d}^\times$; choose 
$y=(y_1,1)$ in this subgroup, where $y_1\in\F_{q^d}^\times$ has
order $4$. Then $y$ has an eigenvalue $1$, hence if some
power $y^r$ is a scalar multiple of $\Id_n$ then $y^r=\Id_n$, 
which shows that the order of $y$ remains unchanged upon taking
its image in $\PGL_n(q)$.
If $m\geq 3$ then $\GL_m(q^d)$ contains a subgroup
isomorphic to $(\F_{q^d}^{\times})^3$; choose $y = (y_1,1,y_3)$
with $y_3$ such that $\det(y_3)=\det(y_1)^{-1}$, which is
possible thanks to Lemma \ref{determinantsurjective},
and as before, $y$ has the required properties. 
\end{proof}

Since the case $\PSL_2(9)\cong A_6$ is dealt with in
\S\ref{excmultsection}
in order to prove Theorem \ref{typeA} we
may assume that $G = \SL_n(q)/Z_+$, where $Z_+$ is the
Sylow $2$-subgroup of $Z(\SL_n(q))$. Note that
$|Z_+|$ is equal to the $2$-part $(n,q-1)_+$ of $(n,q-1)$.

Let $b$ be a block of $kG$ and denote by $P$ a defect group
of $b$. Since $\PSL_n(q)\cong G/Z_{-}$, where $Z_{-}$ is the
complement of $Z_+$ in $Z(SL_n(q))$ identified to its image in $G$, 
the image of $P$ in $\PSL_n(q)$ is isomorphic to $P$.
Since $Z_+$ is a central $2$-subgroup, $b$ is the image
of a unique block $\tilde{b}$ of $k\SL_n(q)$, and the inverse
image $\tilde{P}$ of $P$ in $\SL_n(q)$ is a defect group of
$\tilde{b}$. Let $d$ be a block of $k\GL_n(q)$ covering $\tilde{b}$
with a defect group $T$ such that $T\cap \SL_n(q) = \tilde{P}$.
By \cite[Proposition 6.3]{CEKL} the block $\tilde{b}$ is stable
under the $2$-part of $\GL_n(q)/\SL_n(q)$, and hence $T/\tilde{P}$
is cyclic of order the $2$-part $(q-1)_+$ of $q-1$.
By \cite[Th\'eor\`eme 3.3]{brou:localgl},     
there exists a semi-simple element 
$s$ of odd order  in $\GL_n(q)$ such that $T$  is a 
Sylow $2$-subgroup of  $C_{\GL_n(q)}(s) $ (this  can also be seen 
as a consequence  of the Jordan decomposition of Theorem \ref{jordandecom0}).
Further there exist positive 
integers $ m_i, d_i $, $1\leq i\leq t $ such that   
$$n=\sum_{1\leq i\leq t}m_id_i, $$ and  setting $H_i =\GL_{m_i}(q^{d_i})$, 
there is  a decomposition
$$C_{\GL_n(q)}(s)  \equiv \prod_{1\leq i\leq t}H_i$$
corresponding to a  subspace decomposition of the  underlying  $\F_q$-vector 
vector space as  isotypic $\F_q[s]$-modules. 
In particular, $H_i = \GL_{m_i}(q^{d_i}) $ is a subgroup of  
$ \GL_{m_i d_i}(q) $ through some 
$\F_q$-decomposition $(\F_q)^{m_id_i}\cong (\F_{q^{d_i}})^{m_i}$.

\begin{Lemma} \label{tgeqthree}  With the  notation above,
suppose that $P$ is elementary abelian and that $t\geq 3$.
Then the following holds.

\smallskip\noindent
(i) $q \equiv 3\ (\mod\ 4)$.

\smallskip\noindent
(ii) $m_i = 1$ for $1\leq i\leq t$.

\smallskip\noindent
(iii) $d_i$ is odd for $1\leq i\leq t$.

\smallskip\noindent
(iv) If $t$ is even then $|P| = 2^{t-2}$,
and if $t$ is odd then 
$|P| = 2^{t-1}$; in particular, the $2$-rank of $P$ is even.

\smallskip\noindent
(v) The block $d$ of $\GL_n(q)$ is nilpotent with an
elementary abelian defect group $T$ of order $2^t$. 

\smallskip\noindent
(vi) The block $b$ of $kG$ satisfies Alperin's weight conjecture.
\end{Lemma}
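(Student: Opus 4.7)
The proof proceeds in three phases. Phase~1 pins down the centralizer datum via Lemmas~\ref{classical2sylow} and~\ref{classicalcent2sylow}, giving (ii), (i), (iii). Phase~2 is a direct computation of $|P|$ for (iv). Phase~3 applies Theorem~\ref{jordandecom0} and Puig's nilpotent block theorem to conclude (v) and (vi).

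For Phase~1, I would apply Lemma~\ref{classicalcent2sylow} (with $m_0 = 0$) to $H = C_{\GL_n(q)}(s) = \prod_{i=1}^{t} H_i$ and $Z = Z_+$: the hypothesis $Z \cap H_i = 1$ is automatic since $Z_+$ sits diagonally as scalar matrices and all $t \geq 3$ factors are nontrivial. Parts (i) and (iii) of \ref{classicalcent2sylow} jointly force $m_i = 1$ for every $i$, yielding (ii). Each $H_i = \F_{q^{d_i}}^\times$ then embeds in $\GL_{d_i}(q)$ by multiplication, so $T = \prod_i T_i$ with $T_i$ cyclic of order $(q^{d_i}-1)_+$. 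To prove (i) and (iii) I argue contrapositively, exhibiting in each ruled-out case an element $y \in \tilde P = T \cap \SL_n(q)$ of order~$4$ whose image in $P = \tilde P/Z_+$ still has order~$4$, contradicting that $P$ is elementary abelian. The guiding principle is that for $y = (y_1,\dots,y_t) \in T$ the square $y^2 = (y_1^2,\dots,y_t^2)$ lies in $Z_+ \leq Z(\GL_n(q))$ only when all blocks $y_i^2$ represent the same scalar in $\F_q^\times$, and the slack provided by $t \geq 3$ factors makes this easy to obstruct. If $q \equiv 1 \pmod 4$, pick a fourth root $\alpha \in \F_q^\times$ of unity and solve for exponents $a_i \in \Z/4$ with $a_1 = 1$, $\sum_i a_i d_i \equiv 0 \pmod 4$, and not all $a_i$ of the same parity; setting $y_i = \alpha^{a_i}\Id_{d_i}$ gives the required element. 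If $q \equiv 3 \pmod 4$ with some $d_{i_0}$ even, lifting-the-exponent gives $v_2((q^{d_{i_0}}-1)/(q-1)) \geq v_2(q+1) + v_2(d_{i_0}) - 1 \geq 2$, so any order-$4$ element $y_{i_0} \in T_{i_0}$ satisfies $y_{i_0}^{(q^{d_{i_0}}-1)/(q-1)} = 1$; placed alone in a single factor it yields $y \in \SL_n(q)$ with $y^2$ non-scalar, since the other $t-1 \geq 2$ coordinates are the identity.

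Phase~2 is arithmetic. With $q \equiv 3 \pmod 4$ and all $d_i$ odd, lifting-the-exponent gives $(q^{d_i}-1)_+ = 2$, so $T_i \cong C_2$ and $T \cong C_2^t$. The determinant restricted to $T$ is the product-of-signs map since $\det(-\Id_{d_i}) = (-1)^{d_i} = -1$, hence $\tilde P$ has index~$2$ in $T$, of order $2^{t-1}$. The $2$-part of $|Z(\SL_n(q))| = (n,q-1)$ equals $\gcd(n_+, 2)$, and $n = \sum d_i \equiv t \pmod 2$, so $|Z_+| = 2$ when $t$ is even and $|Z_+| = 1$ when $t$ is odd. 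Dividing gives $|P| = 2^{t-2}$ or $2^{t-1}$; in both cases the exponent is even, establishing (iv).

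For Phase~3, observe that $H$ is already a torus, so its dual $\Lb$ is an $F$-stable maximal torus of $\Gb$ and $\Lb^F$ is abelian with principal block nilpotent of defect group $T$. Theorem~\ref{jordandecom0} supplies a Morita equivalence between $\CO\GL_n(q)\, d$ and $\CO\Lb^F e_1^{\Lb^F}$, and nilpotence is preserved under Morita equivalence by \cite[Theorem 8.2]{Puigbook}, proving~(v). For~(vi), Puig's nilpotent block theorem gives $|\Irr_K(\GL_n(q),d)| = |T| = 2^t$, and I would then run Clifford theory for $\SL_n(q) \triangleleft \GL_n(q)$ together with descent to $G = \SL_n(q)/Z_+$ to deduce $|\Irr_K(G,b)| = |P|$; by Proposition~\ref{leqeight} this equality is precisely Alperin's weight conjecture for~$b$. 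The main obstacle is this final Clifford-theoretic bookkeeping: the block $\tilde b$ is in general not itself nilpotent, and one must carefully match $\GL_n(q)/\SL_n(q)$-orbit sizes on $\Irr_K(\SL_n(q),\tilde b)$ against the $Z_+$-kernel condition to recover exactly $|P|$ surviving characters in $G$.
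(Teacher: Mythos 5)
Parts (i)--(v) of your proposal are essentially sound and parallel the paper. For (i) and (iii), the paper argues more economically: choose $y_1 \in H_1$ of order $4$ (which exists whenever $q \equiv 1 \pmod 4$ or $d_1$ is even), then use Lemma~\ref{determinantsurjective} to pick a $2$-element $y_2 \in H_2$ with $\det(y_2) = \det(y_1)^{-1}$, so $y_1 y_2 \in \SL_n(q)$; since $t \geq 3$ gives $(H_1 \times H_2) \cap Z_+ = 1$, the image of $y_1y_2$ in $P$ has order $\geq 4$. Your explicit exponent system is correct in spirit, but the normalisation $a_1 = 1$ is not always achievable (for instance $d_1 = 1$ with every $d_i \equiv 0 \pmod 4$ for $i \geq 2$ forces $a_1 \equiv 0 \pmod 4$); you need the freedom to put the odd exponent at some index $j$ with $d_j$ odd and to use the flexibility that allows some $a_j \neq 0$ with $j \geq 3$. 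The paper's route via \ref{determinantsurjective} sidesteps these case distinctions. Parts (iv) and (v) are the same as in the paper, with (v) being precisely the content of Corollary~\ref{jordandecomabel} specialised to a torus.

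The genuine gap is in (vi). You propose to compute $|\Irr_K(\GL_n(q),d)| = |T|$ from Puig's structure theorem for nilpotent blocks, descend by Clifford theory to get $|\Irr_K(G,b)| = |P|$, and then invoke Proposition~\ref{leqeight}. But Proposition~\ref{leqeight} is stated \emph{only} for elementary abelian defect groups of order $8$, whereas here $|P| = 2^{t-2}$ or $2^{t-1}$ with $t \geq 3$, so $|P|$ ranges over $4, 16, 64, \ldots$. The equivalence ``$|\Irr_K(G,b)| = |P| \iff$ Alperin's weight conjecture'' fails in this generality: for abelian defect groups Alperin's conjecture compares $|\Irr_K(G,b)|$ with $|\Irr_K(N_G(P),c)|$, and the latter equals $|P|$ only when the inertial quotient of $b$ is trivial --- which is exactly what you have not established. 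You explicitly flag the Clifford-theoretic bookkeeping and the non-nilpotence of $\tilde b$ as an obstacle but do not close it. The paper avoids all of this through Proposition~\ref{nilextremark} (Puig's nilpotent-extension theorem): since $d$ is a nilpotent block of $\GL_n(q)$ covering the block $\tilde b$ of $\SL_n(q)$, and $\tilde b$ is stable under a subgroup of odd index (a Fong--Reynolds step handles the reduction if $\tilde b$ is not stable under the whole of $\GL_n(q)$), Puig's theorem shows $\tilde b$ is Morita equivalent to its Brauer correspondent via a bimodule with endo-permutation source, hence satisfies Alperin's weight conjecture; this passes to $b$ through the central quotient by $Z_+$. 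That single citation is the missing step that your character-counting approach cannot replace.
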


\begin{proof}
Clearly, $P = (T\cap \SL_n(q))/Z_+ $ and $Z_+ \cap H_i =1$ for any 
$i$ such that $H_i \ne C_L(s)$. Thus
Lemma \ref{classicalcent2sylow} (ii) and (iii) apply with $L=\GL_n(q)$, 
$H= C_L(s) $ and $ Z=Z_+ $, and assertion (ii) is immediate.  
If $q\equiv\ 1\ (\mod\ 4)$ or if $d_1$ is even, then 
the group $H_1=\GL_{1}(q^{d_1})$ contains an element $y_1$ of order $4$.
By Lemma \ref{determinantsurjective} the group $H_2=\GL_{1}(q^{d_2})$
contains a $2$-element $y_2$ such that $\det(y_2) = \det(y_1)^{-1}$.
Thus $x= y_1y_2 \in$ $\SL_n(q)=$ $[L,L]$.
Since $t \geq 3$,  we have $ H_1 \times H_2 \cap Z_{+} =1 $, 
and it follows that  the image  $xZ_{+}$ of $x$ in $P$ has order $4$, 
a contradiction. Thus (i) and (iii)  hold.
Hence $T$ is a Sylow $2$-subgroup of
$\prod_{i=1}^t\ \F_{q^{d_i}}^\times$. Since the $d_i$ are odd 
and $q\equiv\ 3\ (\mod\ 4)$ this implies that $T$ is elementary
abelian of rank $t$, and hence $\tilde{P}$ is elementary
abelian of rank $t-1$. If $n$ is odd then $P\cong$ $\tilde{P}$ and
since $n=\sum_{i=1}^t\ d_i$ and the $d_i$ are odd it follows that
$t$ is odd, hence $|P| = 2^{t-1}$. If $n$ is even then
$|P| = \frac{|\tilde{P}|}{2}$ and since $n=\sum_{i=1}^t\ d_i$ and 
the $d_i$ are odd it follows that $t$ is even and 
$|P|=2^{t-2}$, which proves (iv). Since $T$ is clearly abelian,
(v) is immediate from  Corollary \ref{jordandecomabel}.
Statement (vi) follows from (v) and Proposition \ref{nilextremark}.
\end{proof}

\begin{Lemma}\label{teqtwo}
Suppose that $P$ is elementary abelian and that
$t = 2$. Then $m_1 =m_2 =1 $, $|P|\leq 4 $   
and $\tilde P$ contains an element of order 
$ max((q^{d_1} -1)_+, (q^{d_2}-1)_+) $.  
\end{Lemma}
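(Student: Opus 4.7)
The proof splits naturally into the three assertions; the first two are short applications of previously established material, and the third is the substantive point.

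For $m_1 = m_2 = 1$, I would combine Lemma \ref{classicalcent2sylow}(i), which gives $m_i \leq 2$, with part (iii) of the same lemma, which forces $t = 1$ as soon as some $m_i = 2$. Since we are assuming $t = 2$, this rules out $m_i = 2$ and so $m_1 = m_2 = 1$. Consequently $H_i = \GL_1(q^{d_i}) \cong \F_{q^{d_i}}^{\times}$ is cyclic and its Sylow $2$-subgroup $T_i$ is cyclic of order $(q^{d_i}-1)_+$.

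For $|P| \leq 4$, observe that $T = T_1 \times T_2$ is abelian of $2$-rank $2$, hence so are its subgroup $\tilde P = T \cap \SL_n(q)$ and the quotient $P = \tilde P / Z_+$. Since $P$ is assumed elementary abelian, this forces $|P| \leq 4$.

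For the element of maximal order, after relabelling I may assume $(q^{d_2}-1)_+ \geq (q^{d_1}-1)_+$ and fix a generator $\mu_2$ of $T_2$. Under the identification $H_i \cong \F_{q^{d_i}}^\times$, the restriction to $H_i$ of the determinant of $\GL_n(q)$ coincides with the norm map $N_i : \F_{q^{d_i}}^\times \to \F_q^\times$, as in the proof of Lemma \ref{determinantsurjective}. A short computation with the $2$-part of $(q^{d_i}-1)/(q-1)$, distinguishing the cases $d_i$ odd (where this quotient is itself odd) and $d_i$ even (via the lifting-the-exponent formula), shows that $N_i(T_i)$ is the full Sylow $2$-subgroup of $\F_q^\times$. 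Hence I can choose $x_1 \in T_1$ with $N_1(x_1) = N_2(\mu_2)^{-1}$; then $(x_1, \mu_2) \in \tilde P$ has order divisible by $|\mu_2| = (q^{d_2}-1)_+ = \max((q^{d_1}-1)_+, (q^{d_2}-1)_+)$. The only technical point in the whole argument is this surjectivity of $N_i$ onto the Sylow $2$-part of $\F_q^\times$; everything else is an immediate consequence of the structural observations above.
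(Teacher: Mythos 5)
Your proof is correct and follows the same route as the paper: Lemma~\ref{classicalcent2sylow} for $m_1=m_2=1$, the rank bound for $|P|\leq 4$, and Lemma~\ref{determinantsurjective} (i.e.\ the norm map) to produce the element of $\tilde P$ of maximal $2$-power order. The paper states the last step as an immediate consequence of Lemma~\ref{determinantsurjective}; your explicit computation that $N_i(T_i)$ is the full Sylow $2$-subgroup of $\F_q^\times$ (which in fact needs no case split on the parity of $d_i$, since the image of a Sylow subgroup under a surjective homomorphism of finite abelian groups is again a Sylow subgroup) simply fills in the detail the paper leaves implicit.
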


\begin{proof}  
The fact that $m_1 = m_2 = 1 $ is a consequence of 
Lemma \ref{classicalcent2sylow}. So $H_i \cong \GL_1 (q^{d_i}) $. 
By Lemma \ref{determinantsurjective}, it follows that 
$ \tilde P =T\cap \SL_n(q)$ contains  elements of order 
$ max((q^{d_1} -1)_+, (q^{d_2} -1)_+) $. 
\end{proof}

\begin{Lemma}\label{teqone}
Suppose that $P$ is elementary abelian and that
$t = 1$. Then $m_1 \leq 2 $. 

If $m_1 = 2 $, then $d_1$ is odd, $n =2d_1 $,  $ |P|=4 $ and   
$ \tilde P $  is a quaternion group of order $8$.

If  $m_1=1 $, then $T$, $\tilde P$ and 
$P$ are  cyclic, $ |P|= 2 $  if and only if 
$n$ is even, and either $ q \equiv 3 \ (\mod \ 4 )$ and $n_+ \leq (q-1)_+ $
or $ q \equiv 1 \ (\mod \ 4 )$  and $n_+ = 2(q -1)_+ $.
\end{Lemma}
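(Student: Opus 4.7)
My plan is to exploit the fact that the centraliser $H_1 = \GL_{m_1}(q^{d_1})$ has transparent structure for $m_1\in\{1,2\}$: it is cyclic for $m_1=1$ and a rank-one classical group for $m_1=2$, in both cases allowing an explicit description of the Sylow $2$-subgroup and its intersection with $\SL_n(q)$. The bound $m_1\leq 2$ is immediate from Lemma \ref{classicalcent2sylow}(i) applied with $H=H_1$, since the hypothesis that $P$ is elementary abelian forces $P$ to be abelian.

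For the case $m_1=2$, Lemma \ref{classicalcent2sylow}(iii) yields that $d_1$ is odd and $q^{d_1}\equiv \pm 3\ (\mod\ 8)$; the latter forces $q\equiv \pm 3\ (\mod\ 8)$ because $d_1$ is odd. The derived subgroup $[H_1,H_1]=\SL_2(q^{d_1})$ embeds into $\SL_n(q)=\SL_{2d_1}(q)$, and by Lemma \ref{classical2sylow}(iii) its Sylow $2$-subgroup is quaternion of order $8$. Choosing $T$ so that it contains this quaternion subgroup, the inclusion $Q_8\subseteq \tilde P=T\cap\SL_n(q)$ is forced. To prove equality, I would factor the determinant $\det_q\colon T\to \F_q^{\times}$ through the norm $N_{q^{d_1}/q}\colon \F_{q^{d_1}}^{\times}\to \F_q^{\times}$; since $d_1$ is odd, $(q^{d_1}-1)/(q-1)=1+q+\cdots+q^{d_1-1}$ is odd, so $N_{q^{d_1}/q}$ restricts to an isomorphism on Sylow $2$-subgroups. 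This gives $|\det_q(T)|=(q-1)_2$ and hence $|\tilde P|=|T|/(q-1)_2=8=|Q_8|$. Finally $n=2d_1$ with $d_1$ odd yields $|Z_+|=2$, whence $\tilde P=Q_8$ and $|P|=4$.

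For the case $m_1=1$, the group $H_1=\F_{q^{d_1}}^{\times}$ is cyclic, and hence so are $T$, $\tilde P=T\cap\ker(\det_q)$ and the quotient $P=\tilde P/Z_+$. The hypothesis that $P$ is elementary abelian then forces $|P|\in\{1,2\}$. To determine when each value occurs, I would compute $|\tilde P|$ as the $2$-part of $(q^{d_1}-1)/(q-1)$, capped by $|T|$: for $d_1$ odd the sum $1+q+\cdots+q^{d_1-1}$ is odd, so $\tilde P$ is trivial and $|P|=1$; for $d_1$ even, the lifting-the-exponent identity gives $v_2((q^{d_1}-1)/(q-1))=v_2(q+1)+v_2(d_1)-1$, whence $|\tilde P|=2^{v_2(q+1)+v_2(d_1)-1}$. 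Dividing by $|Z_+|=2^{\min(v_2(d_1),v_2(q-1))}$ and separating the cases $q\equiv 1\ (\mod\ 4)$ (so $v_2(q+1)=1$) and $q\equiv 3\ (\mod\ 4)$ (so $v_2(q-1)=1$) will show that the constraint $|P|\leq 2$ translates precisely into the stated disjunction on $n_+$ and $(q-1)_+$, and that inside this regime $|P|=2$ is equivalent to $n$ being even.

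The main obstacle is the $2$-adic bookkeeping in the $m_1=1$, $d_1$ even sub-case: one must verify that the hypothesis that $P$ is elementary abelian (hence cyclic of order at most $2$) is itself strong enough to rule out the configurations that would otherwise spoil the stated equivalence, in particular those with $q\equiv 7\ (\mod\ 8)$ combined with $v_2(n)=1$, where $|P|$ would be cyclic of order at least $4$. Once those cases are excluded, matching the sub-case analysis against the two clauses of the disjunction, and checking that $Z_+$ is indeed contained in $\tilde P$ (which uses that $H_1$ is cyclic, so $T$ is the unique Sylow $2$-subgroup of $H_1$), are routine.
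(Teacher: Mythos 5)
Your proposal is correct and follows essentially the same route as the paper: $m_1\leq 2$ from Lemma~\ref{classicalcent2sylow}, then for $m_1=1$ compute $|T|=(q^n-1)_+$, pass to $\tilde P$ via the surjectivity of $\det$ on $T$ (Lemma~\ref{determinantsurjective}), divide by $|Z_+|=\min(n_+,(q-1)_+)$, and do the $2$-adic bookkeeping; for $m_1=2$ use Lemma~\ref{classicalcent2sylow}(iii) and the quaternion Sylow $2$-subgroup of $\SL_2(q^{d_1})$. You are in fact supplying the arithmetic the paper compresses into ``one sees easily'' and ``follows by an easy calculation,'' including the observation that the elementary-abelian hypothesis itself excludes configurations such as $q\equiv 7\ (\mod\ 8)$ with $n_+=2$; your citation of part (iii) (rather than the paper's (ii), evidently a typo) for the $m_1=2$ case is the correct one.
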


\begin{proof} 
By Lemma \ref{classicalcent2sylow}, 
$m_1 \leq 2 $. Suppose that  $m_1=1 $, so $T$ is a Sylow 
$2$-subgroup of $\GL_1(q^n)$ and in particular,  
$T$ is a cyclic group of order $(q^n-1)_+$.   
It follows from Lemma  \ref{determinantsurjective} that 
$ \tilde P =P \cap \SL_n(q) $  is cyclic of order 
$\frac{(q^n-1)_+}{(q-1)_+} $, and hence  $P$ is cyclic of order
$ \frac{(q^n-1)_+}{(q-1)_+ |Z_+|}$. 
Now, $|Z_+|=  min(n_{+}, (q-1)_+) $. If $n$ is odd then $(q^n-1)_+ =$
$(q-1)_{+}$ and if $n$ is even then $(q^n-1)_+ =$ 
$\frac{(q^2-1)_{+}n_+}{2}$.  The statement of the lemma for the case 
$m_1=1 $ follows by an easy calculation.
Now suppose $m_1=2 $. Then by Lemma \ref{classicalcent2sylow}, (ii)
we have $C_{\GL_n(q)}(s) =$ $\GL_2(q^{d_i})$, 
$q \equiv \pm 3 \ (\mod \ 8) $, $d_i $ is odd and $n=2d_i $.  
With this arithmetic, one sees easily that $|P|=4$ and 
$|\tilde P|=$ $8$. Finally, $\tilde P$  is quaternion since  
it contains  a subgroup isomorphic to the Sylow $2$-subgroups of 
$\SL_2(q^{d_i}) $, which by Lemma \ref{classical2sylow}  are quaternion.
\end{proof}

\begin{proof}[Proof of Theorem \ref{typeA}]
If $t\geq 3$, Lemma \ref{tgeqthree} shows that the $2$-rank of
$P$ is at least $4$ and even and that $b$ satisfies Alperin's weight
conjecture. If $t\leq 2$, Lemma \ref{teqtwo} 
and Lemma \ref{teqone} show that the rank of $P$ is at most $2$.
\end{proof}

We also note the following.

\begin{Lemma} \label{psl6cyclic} Suppose that  
$n= 2m$, $m \geq 1$,  and $P$ is elementary abelian of order $2$ or $4$. 
Then   the inverse image of  
$P$ in a non-split  central extension $2.G$ has an element of order $4$  unless
$t=4$,  $ q \equiv   3 (\mod  \ 4)$, and  $d_i$  is odd  for 
$1 \leq i \leq 4 $. In particular, 
if $P$ has order $2$, then   the inverse image has order $4$.
\end{Lemma}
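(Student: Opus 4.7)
The plan is to argue by case analysis on the invariant $t$ from the decomposition $C_{\GL_n(q)}(s) = \prod_{i=1}^t H_i$, combining Lemmas \ref{teqone}, \ref{teqtwo}, and \ref{tgeqthree}. Since $n$ is even and $q$ is odd, $|Z_+| = \min(n_+, (q-1)_+) \geq 2$, so one can take $2.G = \SL_n(q)/Z'$ for a subgroup $Z' \leq Z_+$ of index $2$; this is a non-split central extension because $\SL_n(q)$ is perfect, and the inverse image of $P$ in $2.G$ is $\tilde P/Z'$. It therefore suffices to exhibit an element of order $4$ in $\tilde P/Z'$ outside the stated exception.

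For $t \geq 3$, Lemma \ref{tgeqthree} gives $m_i = 1$, $d_i$ odd, and $q \equiv 3 \pmod{4}$; then $n = \sum d_i$ even forces $t$ even, and $|P| = 2^{t-2} \leq 4$ forces $t = 4$---precisely the stated exception. For $t = 1$ with $m_1 = 1$, Lemma \ref{teqone} gives $\tilde P$ cyclic, so $P$ is cyclic and hence elementary abelian only if $|P| = 2$, whence $\tilde P/Z'$ is cyclic of order $2|P| = 4$. For $t = 1$ with $m_1 = 2$, Lemma \ref{teqone} gives $\tilde P$ quaternion of order $8$ and $n = 2d_1$ with $d_1$ odd, so $n_+ = 2$, $|Z_+| = 2$, $Z' = 1$, and $\tilde P = \tilde P/Z'$ already contains elements of order $4$. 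For $t = 2$, Lemma \ref{teqtwo} gives $m_1 = m_2 = 1$ and an element $\xi \in \tilde P$ of order $k = \max((q^{d_1}-1)_+, (q^{d_2}-1)_+)$; the parity of $n = d_1 + d_2$ forces $d_1$ and $d_2$ to share parity. If both are odd, then $\tilde P$ is cyclic of order $(q-1)_+$, so $P$ elementary abelian forces $|P| = 2$, while $n_+ = 2$ gives $Z' = 1$ and $\tilde P/Z' = \tilde P$ is cyclic of order $4$. If both $d_i$ are even, then $k \geq (q^2 - 1)_+ = (q-1)_+(q+1)_+ \geq 2(q-1)_+ \geq 2|Z_+|$, so from $|\langle\xi\rangle \cap Z'| \leq |Z'| = |Z_+|/2$ we deduce that $\xi Z'$ has order at least $k/|Z'| \geq 4$.

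The hardest step will be the $t = 2$ subcase with both $d_i$ even: one must carefully track the diagonal embedding of $Z_+$ as scalars in $T = T_1 \times T_2$ and its intersection with the cyclic subgroup $\langle\xi\rangle \leq \tilde P$, using the explicit description of $\tilde P$ as the kernel of the determinant map on $T$, in order to guarantee that the image $\xi Z'$ retains order at least $4$. The concluding ``in particular'' assertion is then immediate: the exception requires $|P| = 2^{t-2} = 4$, so whenever $|P| = 2$ we are never in the exception, and the inverse image of order $2|P| = 4$ contains an element of order $4$, hence is cyclic of order $4$.
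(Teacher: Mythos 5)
Your proof is essentially correct and follows the same route as the paper: case analysis on $t$, using Lemmas~\ref{tgeqthree}, \ref{teqtwo}, \ref{teqone} to pin down the structure of $\tilde P$, then computing orders in the quotient $\tilde P/Z'$.

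Two small points. In the $t=2$, both-$d_i$-odd sub-case, the claim ``$n_+ = 2$'' is false in general (take $d_1 = 1$, $d_2 = 3$, giving $n_+ = 4$), so the inference ``$Z' = 1$, hence $\tilde P/Z' = \tilde P$'' does not hold as stated; the conclusion you want is nevertheless immediate and should be argued directly: $\tilde P$ is cyclic, $|\tilde P/Z'| = |P|\,|Z_+|/|Z'| = 2|P| = 4$, so $\tilde P/Z'$ is cyclic of order $4$. (You also assert that $\tilde P$ is cyclic in this case, which is not stated in Lemma~\ref{teqtwo} --- it follows from the $Q_1\times Q_2$ decomposition of $\tilde P$ that the paper works out inside its own proof of Lemma~\ref{psl6cyclic}, so a word of justification is warranted.) Secondly, your closing hedge about the ``hardest step'' is unnecessary: the bound $|\xi Z'| = k/|\langle\xi\rangle\cap Z'| \geq k/|Z'| = 2k/|Z_+| \geq 4$ you already wrote is complete, and no finer tracking of $\langle\xi\rangle\cap Z'$ is required.
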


\begin{proof}   
The central extension $2.G $ may be assumed to be a central quotient 
of $\SL_n(q)$ and  the inverse image, $P_0$ of $P$ in $2.G$ is a 
quotient of $ \tilde P $ by  a cyclic (central) group of order 
$\frac{1}{2} |Z_+|$. We will show that  unless we are in the 
exceptional case above, that  either $\tilde P$ is cyclic or that 
$\tilde P_0$ contains an element of order $2|Z|_+$.
If $t=1 $ and $m_1=1$, then $C_L(s)$, and hence $\tilde P$ is cyclic. 
So certainly  $P_0$ is cyclic.  If $t=1 $ and $m_1=2 $, then   
$[H_2,H_2] \leq $ $C_L(s) \cap [L, L]$ is a special linear group  of 
dimension $2$ and  hence $\tilde P  \cap [H_2,H_2]$ is a quaternion 
group of order $q^{2d}-1 $. In particular, $\tilde P $ contains an 
element of order $\frac{1}{2}(q^{2d}-1)$. Since $ |Z|_+ \leq$ 
$(q-1)_+$, if $d$ is even, then $\tilde P$ contains an element of 
order at least $2|Z|_+$. So we may assume that $d$ is odd. Then 
$|Z|_+ =$ $2$, and $\frac{1}{2}(q^{2d}-1) \geq$ $4 = 2 |Z|_+$.
Now suppose $t=2 $. So, $m_1=$ $m_2 =$ $1$ and $n=$ $d_1 +d_2$. Thus,
$T=$ $T_1 \times T_2$, with $H_i$ a cyclic group of order 
$(q^{d_i}-1)_+ $,  $i=1, 2$. We assume without loss 
of generality 
that $|T_1| \geq$ $|T_2|$. By  Lemma \ref{determinantsurjective},  
it is easy to see that  $\tilde P=$ $T\cap  \SL_n(q)$ is 
a direct product $Q_1 \times Q_2$ such that $Q_1$ is cyclic of order 
$|T_1|$ and $Q_2$ is cyclic of order $\frac{|T_2|}{ (q-1)_+}$. 
If $d_1$ is even, then $|T_1| \geq$ $2 (q-1)_+ \geq 2 |Z|_+$.  
If $d_1 $ is odd, then  $d_2 $ is also odd (as $n$ is even),  
hence  $T_2=$ $1$, that is $\tilde T$  is cyclic. 
Finally, suppose that $t\geq 3 $. By Lemma \ref{tgeqthree},   
$P$ is  not of order $2$, and $P$ is of order $4$ if and only  
if $t=4 $, $q \equiv 3 (\mod \ 4)$, and  $d_i$  is odd  for 
$1 \leq i \leq 4 $. Note that  in this case, $\tilde P$ is 
elementary abelian of order $8$.
\end{proof}

\section{Type $\,^2A$ in odd characteristic}

We show that type $\, ^2A$ yields no blocks
with elementary abelian defect groups of order $8$; in fact,
more generally, we have the following result:

\begin{Theorem} \label{type2A}
Let $G$ be a quasi-simple finite group such that
$Z(G)$ has odd order and such that $G/Z(G)\cong \PSU_n(q)$ for
some positive integer $n$ and some odd prime power $q$. 
Let $b$ be a block of $kG$ with an elementary abelian defect
group of order $2^r$ for some integer $r\geq 3$. 
Then $r$ is even and $b$ satisfies Alperin's weight conjecture.
\end{Theorem}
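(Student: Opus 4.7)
The plan is to mirror the argument for Theorem~\ref{typeA}, replacing $\GL_n(q)$ by $\GU_n(q)$ and $\SL_n(q)$ by $\SU_n(q)$ throughout. First I would dispose of the small cases: $\PSU_2(q)\cong\PSL_2(q)$ is already covered by Theorem~\ref{typeA}, and the exceptional-multiplier case $\PSU_4(3)$ is handled in Section~\ref{excmultsection}. For the remaining cases I would reduce to $G=\SU_n(q)/Z_+$, where $Z_+$ is the Sylow $2$-subgroup of $Z(\SU_n(q))$, a cyclic group of order $(\gcd(n,q+1))_+$, and lift $b$ to a unique block $\tilde b$ of $k\SU_n(q)$ with defect group $\tilde P$ satisfying $\tilde P/Z_+\cong P$. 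Using the unitary analog of \cite[Proposition 6.3]{CEKL} (stability of $\tilde b$ under the $2$-part of $\GU_n(q)/\SU_n(q)$, which is cyclic of order $(q+1)_+$), I would cover $\tilde b$ by a block $d$ of $k\GU_n(q)$ with defect group $T$ such that $T\cap\SU_n(q)=\tilde P$ and $T/\tilde P$ is cyclic of order $(q+1)_+$.

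Next I would apply Theorem~\ref{jordandecom0} (with $\mathbf{G}=\GL_n(\bar{\F}_q)$ endowed with the twisted Frobenius for which $\mathbf{G}^F=\GU_n(q)$) to produce an odd-order semi-simple element $s\in\GU_n(q)$ such that $T$ is a Sylow $2$-subgroup of $C_{\GU_n(q)}(s)$. The rational structure of this centraliser, governed by the Galois-orbit structure of the eigenvalues of $s$, decomposes as
\[
C_{\GU_n(q)}(s)\cong \prod_{i=1}^{t}\GL_{m_i}(q^{2d_i})\times\prod_{j=1}^{u}\GU_{m'_j}(q^{e_j}),
\]
with $n=\sum 2 m_id_i+\sum m'_je_j$, according to whether eigenvalues appear in Galois-conjugate pairs or are self-paired.

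Lemma~\ref{classicalcent2sylow}, applied with $L=\GU_n(q)$, $H=C_{\GU_n(q)}(s)$ and $Z=Z_+$, forces $m_i,m'_j\leq 2$ and gives strong structural restrictions when equality is attained. I would then run the case analysis on $t+u$ parallel to Lemmas~\ref{tgeqthree}, \ref{teqtwo}, \ref{teqone}. If $t+u\leq 2$, the unitary versions of Lemmas~\ref{teqone} and~\ref{teqtwo} would yield $|P|\leq 4$, contradicting $r\geq 3$. If $t+u\geq 3$, a determinant-based obstruction --- using the surjectivity of $\det$ on each factor $\GL_{m_i}(q^{2d_i})\to \mathrm{U}_1(q)$ and $\GU_{m'_j}(q^{e_j})\to \mathrm{U}_1(q)$ to detect order-$4$ elements in $\tilde P$ whenever some $m_i\geq 2$, some $d_i$ or $e_j$ is even, or $q\equiv 1\pmod{8}$ --- would force $m_i=m'_j=1$, all $d_i,e_j$ odd, and a tight congruence on $q$ modulo~$8$. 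Under these restrictions $T$ is elementary abelian, so $d$ has abelian defect groups; Corollary~\ref{jordandecomabel} then makes $d$ a nilpotent block, and Proposition~\ref{nilextremark} transports nilpotence through the central extension $\SU_n(q)\to G$ to give that $b$ satisfies Alperin's weight conjecture. The parity of $r$ would follow from the relation $n = 2\sum d_i+\sum e_j$ together with the count of $|T|$, $|\tilde P|=|T|/(q+1)_+$ and $|P|=|\tilde P|/|Z_+|$, as in Lemma~\ref{tgeqthree}(iv).

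The main obstacle will be the unitary analogs of Lemmas~\ref{determinantsurjective} and~\ref{orderpreserve}: now both $\GL_m(q^{2d})$- and $\GU_{m'}(q^e)$-factors contribute to $\det:\GU_n(q)\to \mathrm{U}_1(q)$, and the $2$-parts of $q^{2d}-1$ and of $q^e+1$ interact differently with $(q+1)_+$, so the construction of order-$4$ elements in $\tilde P$ ruling out each offending configuration has to be done case by case. Extracting the exact parity of $r$ from the mixed count $n=2\sum d_i+\sum e_j$ is another delicate point, and very small $n$, where $\PSU_n(q)$ has atypical Sylow $2$-structure or admits exceptional isomorphisms, must also be verified separately.
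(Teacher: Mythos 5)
Your proposal takes essentially the same route as the paper: reduce to $G=\SU_n(q)/Z_+$, pass through a covering block of $\GU_n(q)$, use the Fong--Srinivasan decomposition of the semi-simple centraliser into $\GL$- and $\GU$-factors, apply Lemma~\ref{classicalcent2sylow} to bound the ranks of the factors, run a case analysis on the number of factors with a determinant-based mechanism to manufacture order-$4$ elements in $\tilde P$, and then conclude nilpotence of the covering block via Corollary~\ref{jordandecomabel} and Proposition~\ref{nilextremark}. The one point worth correcting in your outline: the key constraint in the ``many factors'' case is not $q$ modulo $8$ but rather that \emph{every} $\GL_1(q^{2d_i})$-factor already supplies an element of order $8$ (since $8 \mid q^{2d_i}-1$ for all odd $q$), so the conclusion is that there are no $\GL$-type factors at all ($s=0$ in the paper's notation) and then $q\equiv 1\pmod 4$; this is precisely what makes the parity count on $r$ go through.
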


The proof of this  follows the same lines as the untwisted case. We give  
details for the convenience of the reader.
We single out two elementary observations which we will use
in the proof below:

\begin{Lemma} \label{2determinantsurjective}
Let $n$, $m$, $d$ be positive integers   and denote by $\det$ 
the determinant function on $\GL_n(\bar\F_q)$.

\smallskip\noindent (i) 
Suppose that  $n \geq 2d$. Consider  the inclusions 
$\GL_1(q^{2d}) \leq  \GL_d(q^2) \leq  \GU_{2d}(q) \leq \GU_{n}(q)$, 
where $\GL_1(q^{2d})$  is a subgroup of $\GL_d(q^2)$ through  
some $\F_{q^2}$-vector space isomorphism $(\F_{q^2}) ^d\cong$ $\F_{q^{2d}} $, 
$GL_d(q^2)$ is a subgroup of $\GU_{2d}(q)$ through some 
$\F_{q^2} $-vector space   embedding  $ (\F_{q^2})^d \hookrightarrow 
(\F_{q^2})^{d}  \oplus (\F_{q^2})^{d}  $ of the 
form $\lambda \to$ $\lambda + \lambda^{-q} $ and $\GU_{2d}(q) $ 
is a subgroup of $\GU_{n}(q) $  through   some decomposition 
$(\F_{q^2})^n \cong  (\F_{q^2})^{2d} \oplus  (\F_{q^2})^{n-2d} $.
There is an element $x\in \GL_1(q^{2d})$ such that $\det(x)$ 
has order $q +1$.

\smallskip\noindent (ii) 
Suppose that $d$ is odd and that $ n \geq d $. Consider the inclusions
$\GU_1(q^{d}) \leq \GU_d(q) \leq  \GU_n(q)$,
where $\GU_1(q^d)$  is a subgroup of $\GU_d(q)$ 
through an  irreducible unitary representation of   $\GU_1(q^d)$ on a 
$d$-dimensional  $\F_{q^2} $-space, and where $\GU_d(q) $ is a  subgroup of 
$ \GU_n(q) $ through   some decomposition 
$(\F_{q^2})^n \cong  (\F_{q^2})^{2d} \oplus  (\F_{q^2})^{n-2d} $.
There is an element $x\in \GL_1(q^{2d})$ such that $\det(x)$ 
has order $q +1$.
\end{Lemma}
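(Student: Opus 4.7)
The plan is to handle both statements by picking a convenient generator and computing the determinant directly, reducing each to an elementary $\gcd$ computation modulo $q+1$.

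For (i), I let $\lambda$ be a generator of $\GL_1(q^{2d}) = \F_{q^{2d}}^\times$, so $\lambda$ has order $q^{2d}-1$. Under the given $\F_{q^2}$-isomorphism $(\F_{q^2})^d \cong \F_{q^{2d}}$, multiplication by $\lambda$ realises $\lambda$ as an element of $\GL_d(q^2)$ whose determinant is the norm $N_{\F_{q^{2d}}/\F_{q^2}}(\lambda) = \lambda^{(q^{2d}-1)/(q^2-1)}$. The embedding $\mu \mapsto \mu \oplus \mu^{-q}$ of $\GL_d(q^2)$ into $\GU_{2d}(q)$ makes $\mu$ act on a hyperbolic $\F_{q^2}$-space and $\mu^{-q}$ (the entrywise $q$-th power of $\mu^{-1}$) on its dual; its determinant in $\GU_{2d}(q)$ is therefore $\det(\mu) \cdot \det(\mu)^{-q} = \det(\mu)^{1-q}$. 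The final inclusion $\GU_{2d}(q) \leq \GU_n(q)$ pads by the identity and preserves the determinant. Altogether,
$$\det(\lambda) = \lambda^{(1-q)(q^{2d}-1)/(q^2-1)} = \lambda^{-(q^{2d}-1)/(q+1)}.$$
Since $(q^{2d}-1)/(q+1)$ is an honest divisor of $q^{2d}-1$, this power of $\lambda$ has order exactly $q+1$.

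For (ii), I let $\lambda$ be a generator of $\GU_1(q^d)$, viewed as an element of $\F_{q^{2d}}^\times$ of order $q^d+1$. The irreducible unitary representation on a $d$-dimensional $\F_{q^2}$-space is realised by multiplication on $V = \F_{q^{2d}}$, unitary with respect to the Hermitian form $h(x,y) = \operatorname{Tr}_{\F_{q^{2d}}/\F_{q^2}}(x y^{q^d})$. Hence the determinant of $\lambda$ in $\GU_d(q)$, and therefore in $\GU_n(q)$, is once more $\lambda^{(q^{2d}-1)/(q^2-1)}$. Since $d$ is odd, $M := (q^d+1)/(q+1)$ is an integer and $(q^{2d}-1)/(q^2-1) = M \cdot (q^d-1)/(q-1)$. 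Writing $q^d+1 = (q+1)M$ and using $\gcd(aM,bM) = M\gcd(a,b)$, the order of $\det(\lambda)$ equals $(q^d+1)/M\gcd((q^d-1)/(q-1),\,q+1)$. The remaining $\gcd$ is computed by reducing modulo $q+1$, where $q \equiv -1$ and $d$ odd give
$$\frac{q^d-1}{q-1} = 1 + q + \cdots + q^{d-1} \equiv 1 - 1 + 1 - \cdots + 1 = 1 \pmod{q+1},$$
so the $\gcd$ is $1$ and the order of $\det(\lambda)$ is $(q^d+1)/M = q+1$.

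The main obstacle, such as it is, is nailing down the determinant formula $\det(\mu \oplus \mu^{-q}) = \det(\mu)^{1-q}$ for the embedding of $\GL_d(q^2)$ into $\GU_{2d}(q)$ in part (i); this rests on interpreting $\mu^{-q}$ as the entrywise $q$-th power of $\mu^{-1}$, whose $\GL_d(q^2)$-determinant equals $\det(\mu)^{-q}$. Once this convention is pinned down, both statements reduce to the short arithmetic above.
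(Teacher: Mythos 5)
Your proof is correct and takes essentially the same route as the paper. The paper constructs $x$ with minimal polynomial equal to the minimal polynomial of $\lambda$ over $\F_{q^2}$ and diagonalizes over $\bar\F_q$ to get $\det(x) = a\,a^{-q}$ with $a = \lambda^{(q^{2d}-1)/(q^2-1)}$; you phrase the same computation as ``determinant of multiplication-by-$\lambda$ equals the field norm'' followed by the $\mu \mapsto \mu \oplus \mu^{-q}$ padding, arriving at the identical exponent. The only substantive difference is at the end of (ii): the paper asserts that $\gcd\bigl(\tfrac{q^d-1}{q-1},\, q^d+1\bigr)=1$ when $d$ is odd, whereas you compute $\gcd\bigl(q+1,\,\tfrac{q^d-1}{q-1}\bigr)=1$ by reducing $1+q+\cdots+q^{d-1}$ modulo $q+1$. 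These are equivalent (the paper's coprimality follows since any common prime divisor must divide $2$, but $\tfrac{q^d-1}{q-1}$ is a sum of $d$ odd terms and hence odd), and your version is arguably the cleaner of the two since it isolates exactly the $\gcd$ that actually enters the order formula.
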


\begin{proof} (i)
Let $\lambda$ be a generator of $\F_{q^{2d}}^\times$ and $f\in\F_{q^2}[X]$ 
the minimal polynomial of $\lambda$ over $\F_{q^2}$. Then $f$ has degree $d$,
and the roots of $f$ are $\lambda$, $\lambda^{q^2}$, $\lambda^{q^4}$,...,
$\lambda^{q^{2(d-1)}}$.  Let $x\in \GL_d(q^2)$ with minimal polyomial $f$.
Then, in $\GL_{n}(\bar\F_q)$, the element $x$ is conjugate to a diagonal
matrix whose diagonal entries are
$\lambda$, $\lambda^{q^2}$, $\lambda^{q^4}$,...,$\lambda^{q^{2(d-1)}}$, 
$  \lambda^{-q}$, $ (\lambda^{q^2})^{-q}$, 
$(\lambda^{q^4})^{-q}$,...,$ (\lambda^{q^{2(d-1})})^{-q}$, $1 \cdots 1 $. 
Thus the determinant of $x$  is $\det(x)=$ $a a^{-q} $, where 
$a =$ $\lambda\cdot\lambda^{q^2}\cdots\lambda^{q^{2(d-1)}} =$ 
$\lambda^{\frac{q^{2d}-1}{q^2-1}}$.
Since $\lambda$ has order $q^{2d}-1$ it follows that $\det(x)$ has order $q+1$.

(ii) Let $\lambda $ be an element of  order $ q^{d+1} $ in 
$\F_{q^{2d}}^\times$  and  $f\in\F_{q^2}[X]$ 
the minimal polynomial of $\lambda$ over $\F_{q^2}$. Since $d$ is odd,  
$f$ has degree $d$.  Let $x\in \GU_d(q)$ with minimal polyomial $f$.
In   $\GL_{n}(\bar\F)$, the element $x$ is conjugate to a diagonal
matrix whose diagonal entries are
$\lambda$, $\lambda^{q^2}$, $\lambda^{q^4}$,...,$\lambda^{q^{2(d-1)}}$.
Thus the determinant of $x$  is
 $$\det(x)=   \lambda\cdot\lambda^{q^2}\cdots\lambda^{q^{2(d-1)}} = 
\lambda^{\frac{q^{2d}-1}{q^2-1}}.$$ 
Since $\lambda$ has order $q^{d} +1$  and  since  $d$ being odd, the integers, 
$ \frac{q^d-1}{q-1} $ and  $ q^d + 1 $ are relatively prime,  it follows that 
$\det(x)$ has order $q+1$.
\end{proof}

We turn now towards the proof of Theorem \ref{type2A}.
Since the case $\PSU_4(3)$ is dealt with in
\S\ref{excmultsection}
in order to prove Theorem \ref{typeA} we
may assume that $G = \SU_n(q)/Z_+$, where $Z_+$ is the
Sylow $2$-subgroup of $Z(\SU_n(q))$. Note that
$|Z_+|$ is equal to the $2$-part $(n,q+1)_+$ of $(n,q+1)$.
Let $b$ be a block of $kG$ and denote by $P$ a defect group
of $b$. Since $\PSU_n(q)\cong G/Z_{-}$, where $Z_{-}$ is the
complement of $C_+$ in $Z(SL_n(q))$ identified to its image in $G$, 
the image of $P$ in $\PSU_n(q)$ is isomorphic to $P$.
Since $Z_+$ is a central $2$-subgroup, $b$ is the image
of a unique block $\tilde{b}$ of $k\SU_n(q)$, and the inverse
image $\tilde{P}$ of $P$ in $\SU_n(q)$ is a defect group of
$\tilde{b}$. Let $d$ be a block of $k\GU_n(q)$ covering $\tilde{b}$
with a defect group $T$ such that $T\cap \SU_n(q) = \tilde{P}$.
By \cite[Proposition 6.3]{CEKL} the block $\tilde{b}$ is stable
under the $2$-part of $\GU_n(q)/\SU_n(q)$, and hence $T/\tilde{P}$
is cyclic of order the $2$-part $(q+1)_+$ of $q+1$.
By Fong-Srinivasan \cite{FoSri}, $T$ is isomorphic to a 
Sylow $2$-subgroup of
$$H=\prod_{i=1}^{s}\ \GL_{m_i}(q^{2d_i})  \times  \prod_{j=1}^{t} 
\GU_{n_j}(q^{e_j})$$
for some non-negative integers $s$, $t$, and some positive integers 
$m_i$, $n_i$ $d_i$, $e_j $ 
such that all $e_j $ are odd and satisfy
$$n = \sum_{i=1}^s\ 2m_i d_i  + \sum_{j=1}^t\ n_j e_j.$$
We keep this notation for the remainder of this section.

\begin{Lemma}\label{2sgeqextra}   

\smallskip\noindent
(i) Suppose that 
$ \sum_{1\leq i \leq s} m_i + \sum_{1\leq j \leq t}  n_j \geq 3 $.  
If   for some $ i, j $,  either $\GL_1 (q^{2d_i}) $ or $\GU_1 (q^{e_j}) $   
contains an element of order $2^a $, then  so does $P$.

\smallskip\noindent
(ii)  Suppose that   $\sum_{1\leq i \leq s} m_i + \sum_{1\leq j \leq t}  
n_j \geq 4 $ and either  $ m_i \geq 2 $ for some  
$i$, $1\leq i \leq s$ or $ n_j \geq 2 $ 
for some $ j$, $ 1\leq j \leq t $.
Then $ P $ contains an element of order $8$.

\smallskip\noindent
(iii) Suppose that $n_j \geq 2 $ for some $ j$, $1 \leq j \leq t $ and 
$\sum_{1\leq i \leq s} m_i + \sum_{1\leq j \leq t}  
n_j \geq 3 $. Then, $ \tilde P$ contains an element of order $8$.
\end{Lemma}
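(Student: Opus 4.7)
The plan is to build the required elements inside the ``basic torus''
$$A \;=\; \prod_{i=1}^s \GL_1(q^{2d_i})^{m_i} \;\times\; \prod_{j=1}^t \GU_1(q^{e_j})^{n_j}$$
sitting diagonally inside $H$, whose $2$-part may be arranged to lie in $T$; the number of its commuting factors is exactly $N := \sum_i m_i + \sum_j n_j$. Each such factor acts non-trivially on only one invariant $\F_{q^2}$-subspace of the natural module, which I will call a ``basic slot''. The crucial observation that drives all three parts is: if a product $z$ of such commuting factors acts as the identity on at least one unused basic slot, then any scalar power $z^k$ must be trivial, because a scalar element of $\GU_n(q)$ acts as the same scalar on every slot. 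Consequently the order of $z$ in $\tilde P = T \cap \SU_n(q)$ agrees with the order of its image in $P = \tilde P/Z_+$. The determinant cancellation needed to land inside $\SU_n(q)$ is supplied by Lemma \ref{2determinantsurjective}, whose surjectivity onto $C_{q+1}$ immediately restricts to surjectivity on $2$-parts onto $C_{(q+1)_+}$.

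For (i), pick $x$ of order $2^a$ in a basic factor and a $2$-element $y$ in a distinct basic factor satisfying $\det(y) = \det(x)^{-1}$; such $y$ exists by the surjectivity just mentioned. Then $z = xy$ lies in $\tilde P$ with order at least $2^a$. Since $N \geq 3$ there is a basic slot untouched by either $x$ or $y$, so by the criterion above the image of $z$ in $P$ has the same order, and a suitable $2$-power of $z$ gives an element of order exactly $2^a$. For (ii), if $m_{i_0} \geq 2$ for some $i_0$, then $\GL_1(q^{2d_{i_0}})$ is cyclic of $2$-part $(q^{2d_{i_0}}-1)_+ \geq 8$ (as $q$ is odd), so an order-$8$ element feeds directly into (i), using that $N \geq 4 > 3$. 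If instead $n_{j_0} \geq 2$, I would replace the basic factor by the Coxeter torus $\GL_1(q^{2e_{j_0}}) \subseteq \GU_2(q^{e_{j_0}}) \subseteq \GU_{n_{j_0}}(q^{e_{j_0}})$, which is cyclic of $2$-part $(q^{2e_{j_0}}-1)_+ \geq 8$. This element occupies two basic slots rather than one, but the stronger hypothesis $N \geq 4$ still leaves a free slot after accounting for a single-slot determinant correction.

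For (iii) the target is $\tilde P$ rather than $P$, so the unused-slot argument is not needed and we only have to land in $\SU_n(q)$. When an auxiliary basic factor is available, the Coxeter-torus argument of (ii) applies directly. The remaining case is $s=0$, $t=1$ and $n_1 \geq 3$, where the determinant correction must be carried out inside $\GU_{n_1}(q^{e_1})$ itself: take $x$ of order $8$ in the Coxeter torus of a $\GU_2(q^{e_1})$ occupying two basic slots of $V_{j_0}$, and correct its determinant using a $2$-element of $\GU_1(q^{e_1})$ in a third basic slot of $V_{j_0}$, which is available precisely because $n_1 \geq 3$. The main technical care lies in tracking that Coxeter-torus elements occupy two basic slots (which is why (ii) needs $N \geq 4$), in verifying that the determinant correction can always be performed by a $2$-element via Lemma \ref{2determinantsurjective}, and in using that commuting $2$-elements supported on disjoint slots multiply to an element whose order equals the least common multiple of the component orders.
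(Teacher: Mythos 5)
Your proof is correct and follows the same route as the paper's (very terse) argument: construct commuting elements in rank-one subtori, use the determinant surjectivity of Lemma~\ref{2determinantsurjective} to cancel determinants and land in $\SU_n(q)$, and use the ``untouched eigenspace'' trick of Lemma~\ref{orderpreserve} to see that passing to $P=\tilde P/Z_+$ does not lower the order; for (ii) and (iii) you invoke an order-$8$ element of the Coxeter torus $\GL_1(q^{2e_j})\leq\GU_2(q^{e_j})$, which is exactly the paper's ``$\GL_2(q')$ and $\GU_2(q')$ contain elements of order $8$''. The basic-slot bookkeeping (in particular the observation that the Coxeter element occupies two slots, explaining the bound $N\geq 4$ in (ii) versus $N\geq 3$ in (iii)) fills in precisely the detail the paper leaves implicit.
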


\begin{proof}  (i) This follows easily 
from  Proposition \ref{2determinantsurjective} using arguments  
similar to Lemma \ref{orderpreserve}. 

(ii) Similar argument to (i) using the fact that $ \GL_2(q') $ and 
$\GU_2(q') $ contain elements of order $8$ for any odd prime power $q'$.

(iii ) Again, we use the fact that $\GU_2(q^{e_i}) $  has an element of order 
$8$.
\end{proof}

\begin{Lemma} \label{2sgeqthree}
Suppose that $P$ is elementary abelian and that $s +t\geq 3$.
Then the following hold.

\smallskip\noindent 
(i) $s=0 $.

\smallskip\noindent
(ii) $n_i =1 $  for all $i $, $1\leq i \leq t $.

\smallskip\noindent
(iii) $q \equiv 1\ (\mod\ 4)$.

\smallskip \noindent
(iv) If $t$ is even then $|P| = 2^{t-2}$, and if $t$ is odd then 
$|P| = 2^{t-1}$; in particular, the $2$-rank of $P$ is even.

\smallskip\noindent
(v) The block $d$ of $\GU_n(q)$ is nilpotent with an
elementary abelian defect group $T$ of order $2^s$.

\smallskip\noindent
(vi) The block $b$ of $kG$ satisfies Alperin's weight conjecture.

\end{Lemma}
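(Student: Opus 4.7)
The plan mirrors the untwisted argument of Lemma \ref{tgeqthree}, using Lemma \ref{2sgeqextra} to translate the elementary abelian hypothesis on $P$ into combinatorial restrictions on the decomposition of $H$. For (i), $q$ odd forces $q^2\equiv 1\pmod 8$, hence $(q^{2d_i}-1)_+\geq 8$; each $\GL_1(q^{2d_i})$ thus contains an element of order $8$, and since $\sum m_i+\sum n_j\geq s+t\geq 3$, Lemma \ref{2sgeqextra}(i) would produce an element of order $8$ in $P$, contradicting that $P$ is elementary abelian. So $s=0$ and $t\geq 3$. For (ii), if some $n_{j_0}\geq 2$ then $\sum n_j\geq n_{j_0}+(t-1)\geq 4$, and Lemma \ref{2sgeqextra}(ii) again gives an element of order $8$ in $P$, impossible; hence every $n_j=1$. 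For (iii), with $H=\prod_{j=1}^t\GU_1(q^{e_j})$ and each $e_j$ odd, the assumption $q\equiv 3\pmod 4$ would yield $(q^{e_j}+1)_+=(q+1)_+\geq 4$, so each factor would carry an element of order $4$, and Lemma \ref{2sgeqextra}(i) would then place an element of order $4$ in $P$. Therefore $q\equiv 1\pmod 4$.

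Part (iv) is then a direct bookkeeping calculation. Under $q\equiv 1\pmod 4$ and each $e_j$ odd, $(q^{e_j}+1)_+=2$, so $T\cong C_2^t$ is elementary abelian of order $2^t$. Under the standard embedding $\GU_1(q^{e_j})\hookrightarrow\GU_{e_j}(q)$ coming from viewing $\F_{q^{2e_j}}$ as an $e_j$-dimensional $\F_{q^2}$-space, the unique involution $-1\in\GU_1(q^{e_j})$ acts as $-I_{e_j}$, which has determinant $(-1)^{e_j}=-1$. Hence the determinant map $T\to\{\pm 1\}$ is surjective and $|\tilde P|=2^{t-1}$. Writing $n=\sum e_j$, the parity of $n$ matches that of $t$; since $q\equiv 1\pmod 4$ gives $|Z_+|=\min(n_+,2)$, one reads off $|P|=2^{t-2}$ for $t$ even and $|P|=2^{t-1}$ for $t$ odd, both of even $2$-rank.

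For (v), since $T$ is abelian, Corollary \ref{jordandecomabel} applied to $\Gb=\GL_n(\bar\F_q)$ with the twisted Frobenius yielding $\GU_n(q)$ shows that $d$ is nilpotent with defect group $T$ (of order $2^t$ by the computation in (iv)). Then (vi) follows from Proposition \ref{nilextremark} applied to $\SU_n(q)\lhd\GU_n(q)$: $\tilde b$ is Morita equivalent to its Brauer correspondent and hence satisfies Alperin's weight conjecture; the conjecture descends to $b$ because $Z_+$ is a central $2$-subgroup of $\SU_n(q)$, so $\ell(b)=\ell(\tilde b)$ and the analogous equality holds between the Brauer correspondents. The main obstacle is step (iv): one must carefully combine the oddness of each $e_j$ in both $(q^{e_j}+1)_+=(q+1)_+$ and in $\det(-I_{e_j})=-1$, and track the parity $n\equiv t\pmod 2$ to pin down $|Z_+|$ and thus the precise order of $P$.
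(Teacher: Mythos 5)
Your proof is correct and follows essentially the same approach as the paper: parts (i)--(iii) use Lemma \ref{2sgeqextra} exactly as in the paper's argument, part (iv) uses the same parity bookkeeping (your explicit determinant computation for $|\tilde P|=2^{t-1}$ is a small elaboration of the paper's a priori fact that $T/\tilde P$ is cyclic of order $(q+1)_+=2$), and parts (v)--(vi) invoke the same results. The only minor divergence is that for (v) you cite Corollary \ref{jordandecomabel} whereas the paper observes directly that (ii) forces the labelling centraliser to be a torus; both routes yield nilpotency.
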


\begin{proof}  
Since $ \GL_1(q^{2d_1}) $ contains an element of order $8$, 
conclusion (i) is immediate from
Lemma \ref {2sgeqextra}(i).
Assume from now on that $s=0 $. If   $n_1 \geq 2 $, then  the fact that   
$s+t \geq 3 $ implies that  
$\sum_{1\leq j \leq s}n_j  \geq 4 $. 
So, by Lemma \ref {2sgeqextra}(ii), $P$ has an element of order $8$, 
a contradiction. So, (ii) holds.
If $q \equiv 3 \ (\mod\ 4)$, then $\GU_{1}(q^{e_i}) $ contains 
an element of order $4$, and again by Lemma  \ref {2sgeqextra}(i), 
$P$ has an element of order $4$. This proves (iii).   
Hence $T$ is a Sylow $2$-subgroup of
$\prod_{j=1}^t\ \F_{q^{e_j}}^\times$. Since the $e_j$ are odd 
and $q\equiv\ 1\ (\mod\ 4)$ this implies that $T$ is elementary
abelian of rank $t$, and hence $\tilde{P}$ is elementary
abelian of rank $t-1$. If $n$ is odd then $P\cong$ $\tilde{P}$ and
since $n=\sum_{j=1}^t\ e_j$ and the $e_j$ are odd it follows that
$t$ is odd, hence $|P| = 2^{t-1}$. If $n$ is even then
$|P| = \frac{|\tilde{P}|}{2}$ and since $n=\sum_{j=1}^t\ e_j$ and 
the $e_j$ are odd it follows that $t$ is even and 
$|P|=2^{t-2}$, which proves (iv). Finally, the block $d$ of $\GU_n(q)$ is
nilpotent in that case because (ii) implies that the centraliser of 
the semi-simple element labelling $d$ is a torus, whence (v).
Statement (vi) follows from (v) and Proposition \ref{nilextremark}.
\end{proof}

\begin{Lemma}\label{2sleqtwo}
Suppose that $P$ is elementary abelian and that
$s +t \leq 2$ . Then $|P|\leq 4$.
\end{Lemma}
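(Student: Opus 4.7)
The plan is to mirror the case analysis used in the type $A$ proofs of Lemmas \ref{teqone} and \ref{teqtwo}, splitting into the subcases $(s,t) \in \{(1,0),(0,1),(2,0),(1,1),(0,2)\}$. First I would apply Lemma \ref{classicalcent2sylow} with $L = \GU_n(q)$ and with the $\GL_{m_i}(q^{2d_i})$ and $\GU_{n_j}(q^{e_j})$ factors corresponding to the $H_i$ of that lemma with parameters $(m_i,2d_i)$ and $(n_j,e_j)$ respectively. Part (iii) of \ref{classicalcent2sylow}, which requires the internal degree to be odd whenever the rank equals $2$, forces $m_i = 1$ because $2d_i$ is even; and it forces any factor of rank $n_j = 2$ to be the unique factor, with $e_j$ odd and $q \equiv \pm 3 \pmod 8$. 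Thus the only way a rank-$2$ factor can appear is in the single subcase $(s,t) = (0,1)$, $n_1 = 2$.

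If $s + t = 1$ and both factors have rank $1$, then $H$ is cyclic, so $T$ is cyclic, $\tilde P$ is cyclic, and $P$ cyclic elementary abelian forces $|P| \leq 2$. In the single exceptional subcase $H = \GU_2(q^{e_1})$, I would compute the index $[T : \tilde P]$ as the $2$-part of the image of the composite determinant map $\GU_2(q^{e_1}) \to \F_{q^2}^\times$ given by $g \mapsto N_{\F_{q^{2e_1}}/\F_{q^2}}(\det_{\GU_2(q^{e_1})}(g))$. An elementary arithmetic check (using $e_1$ odd and $q \equiv \pm 3 \pmod 8$) shows this image has $2$-part $(q+1)_+$, yielding $|\tilde P| = 8$; since $\SU_2(q^{e_1}) \subseteq \SU_{2e_1}(q)$ and the Sylow $2$-subgroup of $\SU_2(q^{e_1})$ is $Q_8$ by Lemma \ref{classical2sylow}(iii), we conclude $\tilde P \cong Q_8$. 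A direct computation of $\gcd(2e_1, q+1)_+ = 2$ identifies $Z_+$ with the centre of this $Q_8$, giving $P \cong V_4$ and $|P| = 4$.

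For the three remaining cases $(s,t) \in \{(2,0),(1,1),(0,2)\}$, all factors have rank $1$, so $T = T_1 \times T_2$ is a product of two cyclic $2$-groups. The key structural observation is that $T_i \cap Z_+ = 1$: a non-trivial element of $T_i$ acts non-trivially on its own summand of the hermitian space but trivially on the complementary one, while a scalar of $\GU_n$ must act identically on both, forcing the scalar to be the identity. Hence $Z_+$ embeds diagonally in $T_1 \times T_2$. The elementary-abelian hypothesis on $P = \tilde P/Z_+$ requires $(u^i v^j)^2 = u^{2i} v^{2j} \in Z_+$ for every $u^i v^j \in \tilde P$, which in turn forces $u^{2i}$ and $v^{2j}$ to act as the same scalar, hence to lie in the $2$-part $\mu_{q+1,2}$ of the norm-one subgroup of $\F_{q^2}^\times$. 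A count of the surviving quotient then yields $|P| \leq 4$. The main obstacle is this last verification: one must show that even when $|T_i|$ is arbitrarily large (growing with $d_i$ or $e_i$), the combined constraints coming from $\ker(\det)$ and the diagonal placement of $Z_+$ collapse the rank of $P$ to at most $2$ and its order to at most $4$.
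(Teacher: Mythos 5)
Your route is genuinely different from the paper's. For Lemma \ref{2sleqtwo} the paper relies on Lemma \ref{2sgeqextra}, producing elements of order $8$ in $P$ or $\tilde P$ to exclude large ranks, whereas you apply Lemma \ref{classicalcent2sylow} --- the tool the paper uses for the analogous type-$A$ lemmas --- to constrain the ranks at the outset: part (iii) rules out $\GL$-factors of rank $2$ immediately (their internal degree $2d_i$ is even, but the lemma forces it to be odd), and confines a rank-$2$ $\GU$-factor to the single-factor case $(s,t)=(0,1)$. This is a legitimate streamlining, and your explicit identification of $\tilde P\cong Q_8$ with $Z_+ = Z(\tilde P)$ in the $\GU_2(q^{e_1})$ subcase is correct; as a bonus it shows that the $\GL_2(q^{2d})$ subcase, which the paper checks directly, cannot in fact occur under the elementary-abelian hypothesis.

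Your last paragraph, however, manufactures a difficulty that does not exist. In the cases $(s,t)\in\{(2,0),(1,1),(0,2)\}$, once all factors have rank $1$, the group $T = T_1\times T_2$ is abelian of rank at most $2$. Any subquotient of an abelian group of rank $r$ has rank at most $r$, so $P = \tilde P/Z_+$ has rank at most $2$ no matter how large $|T_1|$ and $|T_2|$ are; since $P$ is elementary abelian, $|P|\leq 4$ follows at once. The discussion of the diagonal placement of $Z_+$, the constraint $(u^iv^j)^2\in Z_+$, and the scalar matching of $u^{2i}$ with $v^{2j}$ is superfluous, and what you flag as ``the main obstacle'' is exactly the step the paper dispatches in the one sentence ``$T$ is abelian of rank $2$ whence $P$ has order at most $4$.'' Replace that final paragraph with this observation and your proof is complete.
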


\begin{proof}
Suppose $s+t = 1$; that is, $T$ is a Sylow $2$-subgroup of
$\GL_m(q^{2d})$, where $m = m_1$ and $d=d_1$, and we have $n =2md$  or
$T$ is a Sylow $2$-subgroup of
$\GU_m(q^{e})$, where $m= n_1$ and $e=e_1$, and we have $n = me$.
In the former case, 
if $m = 1$ then $T$ is cyclic, hence $|P|\leq 2$, so we may
assume $m\geq 2$. If $ m\geq 3 $, then since $\GL_1(q^{2d})$ 
contains an element of order $8$, by Lemma \ref {2sgeqextra} (i), 
$P$ has an element of order  $8$, a contradiction.
This contradiction shows $m = 2$. Thus 
$n = 4d$, and so $T$ is
a Sylow $2$-subgroup of $\GL_2(q^{2d})$, and one easily checks
that any elementary abelian subquotient of $T$ has rank at most $2$.
In  the latter case, again if $m=1$, then $T$ is cyclic. If $m \geq 4 $, 
then by Lemma   \ref {2sgeqextra} (ii), $ P$ contains an element of order 
$8$, a contradiction. 
If $m =3 $, then   by Lemma \ref {2sgeqextra} (iii),  
$ \tilde P $ has an element of 
order $8$. But  $n=3e $ is odd,   and  hence   $ P \equiv \tilde P $. This 
contradiction shows that   $m = 2$, and  $T$ is
a Sylow $2$-subgroup of $\GU_2(q^{2})$. But any elementary abelian 
subquotient of $T$ has rank at most $2$.
Now suppose that $s+t = 2$ and $ \sum_i m_i + \sum_j n_j  \geq 3 $. 
By Lemma \ref {2sgeqextra} (i),  $s =0 $, whence $t=2 $.  
By Lemma \ref {2sgeqextra} (ii), at least one of $n_1 $ or 
$n_2 $ equals $1$, say $n_2=1 $. If $n_1  \geq  3 $, then by   
Lemma   \ref {2sgeqextra} (ii), $P$ contains an element of order $8$.  I
If $n_2=2$, then by Lemma \ref{2sgeqextra} (iii), 
$ \tilde P$ contains an element of order $8$. But in this case, 
$n= 2e_1 + e_2 $ is odd, which means that $ \tilde P \equiv P $, and hence 
$P$ has an element of order $8$.   If $ n_1 =1 $, then 
$ T$ is  abelian of rank  $2$, hence $P$ has order at most $4$.
Finally  suppose that $ s+t = 2$ and $ \sum_i m_i + \sum_j n_j  = 2$. 
If $ s=0, t=2 $,  then $ n_1 = n_2 =1 $; if $ s=1, t=1$, then $m_1=n_1=1 $ and 
if  $ s=2,  t=0 $,  then $ m_1 = m_2 =1 $. In all cases   
 $ T$ is  abelian of rank  $2$ whence $P$  has order at most $4$.
\end{proof}

\begin{proof}[Proof of Theorem \ref{type2A}]  This is immediate from the 
preceding lemmas.
\end{proof}

\section{Orthogonal and symplectic groups  in odd characteristic}

We show that blocks of  orthogonal and 
symplectic groups  with elementary abelian defect groups of order at 
least $8$ are all nilpotent.

\begin{Theorem}\label{classical}
Let $G$ be a quasi-simple finite group such that
$Z(G)$ has odd order and such that $G/Z(G) $ is isomorphic to 
one of $\PSp_{2n} (q) $, $n\geq 2$, $\POmega_{2n+1} (q) $ , $n\geq 3$ 
or $ \POmega^{\pm}_{2n} (q) $, $n\geq 4$   for  some odd prime power $q$.
Let  $b$ be a block of $kG$ 
with elementary abelian defect groups of order $2^r$ for some
integer $r\geq 3$.  Then $b$ is  nilpotent.
\end{Theorem}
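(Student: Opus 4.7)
My plan is to apply Corollary~\ref{jordandecomabel}, which asserts that for $\Gb$ a simple algebraic group of classical type $B$, $C$, or $D$, every block of $\Gb^F$ with abelian defect is nilpotent. The strategy is to realise $G$ (up to a central $2$-quotient) as such a $\Gb^F$ and then transfer nilpotency.

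First I would reduce to the case that $G$ equals the simple group. The generic Schur multipliers of $\PSp_{2n}(q)$ with $n\ge 2$, $\POmega_{2n+1}(q)$ with $n\ge 3$, and $\POmega^{\pm}_{2n}(q)$ with $n\ge 4$ are $2$-groups, and the exceptional cases have been disposed of in Section~\ref{excmultsection}; combined with the hypothesis that $Z(G)$ has odd order, this forces $G$ to be the simple group itself. In the symplectic case $G=\PSp_{2n}(q)$ and the odd-orthogonal case $G=\POmega_{2n+1}(q)=\SO_{2n+1}(q)$, one has $G=\Gb^F$ with $\Gb$ the adjoint simple group of type $C_n$ or $B_n$ respectively, and Corollary~\ref{jordandecomabel} applies to $b$ directly.

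The even-orthogonal case $G=\POmega^{\pm}_{2n}(q)$ is the genuine one, since the simple group is not the set of fixed points of any connected reductive group of type $D_n$ (it has index at most $2$ in $\Gb^F$ for $\Gb$ the adjoint group of type $D_n$). I would instead write $G=\tilde G/Z$, where $\tilde G=\Spin^{\pm}_{2n}(q)=\Gb^F$ for $\Gb$ simply connected of type $D_n$ and $Z\le Z(\tilde G)$ is a central $2$-subgroup of order dividing $4$. Then $b$ lifts to a unique block $\tilde b$ of $\tilde G$ with kernel containing $Z$; its defect group $\tilde P$ satisfies $Z\le\tilde P$ and $\tilde P/Z\cong P$. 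By Theorem~\ref{jordandecom0}, the block $\tilde b$ has semisimple label $[s]$ for some odd-order element $s\in\Gb^{*F^*}$, and $\tilde P$ is conjugate to a Sylow $2$-subgroup of $C_{\Gb^*}(s)^{F^*}$, which is a product of classical factors $H_i$ of the kind described in Lemma~\ref{classicalcent2sylow}. Applying that lemma with $P=\tilde P/Z$ abelian forces $m_i\le 2$ for every factor, and the appearance of a rank-$2$ factor is permitted only in very restrictive configurations. A case-by-case inspection, sharpened by the additional hypothesis that $P$ is elementary abelian of rank $\ge 3$ --- which rules out quaternion or non-Klein-four dihedral sections of $\tilde P$ --- shows that $\tilde P$ is itself abelian. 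Corollary~\ref{jordandecomabel} then yields that $\tilde b$ is nilpotent, and nilpotency descends to $b$ because it is a property of the fusion system on the defect group, preserved by central quotients.

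The main obstacle is the case analysis in the $D_n$ setting: one must run through each allowed configuration of the centraliser factors $H_i$, take into account the combined constraints of abelianness and rank $\ge 3$ on $P=\tilde P/Z$, and verify in each case that $\tilde P$ itself is abelian. The symplectic and odd-orthogonal cases bypass this step because $G=\Gb^F$ directly.
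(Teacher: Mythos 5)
Your proposal has a genuine gap at the foundation of the "easy" cases. You assert that $\PSp_{2n}(q)=\Gb^F$ for $\Gb$ the adjoint simple group of type $C_n$ and that $\POmega_{2n+1}(q)=\SO_{2n+1}(q)=\Gb^F$ for adjoint $\Gb$ of type $B_n$, and then apply Corollary~\ref{jordandecomabel} to $G$ directly. Both identifications are false when $q$ is odd. The adjoint group of type $C_n$ has $\Gb^F=\PGSp_{2n}(q)$, which contains $\PSp_{2n}(q)=\Sp_{2n}(q)/\{\pm I\}$ as a subgroup of index $2$ (the index is governed by $H^1(F,\mu_2)\ne 1$). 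Likewise $\POmega_{2n+1}(q)=\Omega_{2n+1}(q)$ is a proper subgroup of index $2$ in $\SO_{2n+1}(q)=\Gb_{ad}^F$, and the isogeny types for $B_n$ are only $\Spin_{2n+1}$ and $\SO_{2n+1}$, so $\Omega_{2n+1}(q)$ is not $\Gb^F$ for any simple $\Gb$. Consequently Corollary~\ref{jordandecomabel} does not apply to $G$ in either of these families, and the only cases you actually argue for in detail --- the even orthogonal ones --- are not more exceptional than the others.

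Once you correct the setup by passing to the group $L_0=\Sp_{2n}(q)$ (resp.\ $\SO_{2n+1}(q)$, $\SO_{2n}^{\pm}(q)$), the second, related gap appears in all three families simultaneously: a block $b$ of $kG$ with elementary abelian defect group $P$ of order $2^r$ corresponds to a block of $kL_0$ whose defect group $T_0$ is a central or cyclic $2$-extension of $P$, and there is no a priori reason for $T_0$ to be abelian. (For instance $T_0$ could be a quaternion or dihedral extension of $P$.) Establishing that $T_0$ is in fact abelian --- so that the Corollary can finally be invoked via Lemma~\ref{jordandecom1} --- requires exactly the case-by-case inspection using Lemma~\ref{classicalcent2sylow} that you describe for the $D_n$ case, but in all three families, not just $D_n$; this is the content of Lemmas~\ref{sympexp1} and \ref{oddclassicalexp} in the paper. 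A further minor inefficiency: in the $D_n$ case you propose working in $\Spin_{2n}^{\pm}(q)$, whereas Lemma~\ref{classicalcent2sylow} is stated for centraliser decompositions inside the matrix groups $\GO_{2n}^{\pm}(q)$, so your route would force an additional descent step; the paper sidesteps this by working with $\SO_{2n}^{\pm}(q)$ from the outset.
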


{\bf Notation.} The group $G$ will denote one of the groups in the above
 theorem. Further, we define $L$, $L_0$  and $\tilde G$ as follows. 

\smallskip 
If $G/Z(G) =\PSp_{2n} (q) $, then  $ L=L_0 =\Sp_{2n} (q) $  and 
$\tilde G= \Sp_{2n}(q)$. 

\smallskip 
If  $G/Z(G) =\POmega_{2n+1} (q) $, then    
$L=\GO_{2n+1} (q)$,    $L_0= \SO_{2n+1} (q) $ and 
$\tilde G=\Omega_{2n+1} (q) $.

\smallskip 
If  $G/Z(G) =\POmega_{2n}^{+} (q) $,  then  $L =\GO^{+}_{2n} (q) $, 
$L_0= \SO^{+}_{2n} (q) $ and $ \tilde G= \Omega_{2n}^{+} (q) $.

\smallskip 
If  $G/Z(G) =\POmega_{2n}^{-} (q) $,  then  $L =\GO^{-}_{2n} (q) $, 
$L_0= \SO^{-}_{2n} (q) $ and $ \tilde G= \Omega_{2n}^{-} (q) $.

So,  $ \tilde G \lhd L_0 \lhd L$   with the indices of the inclusions 
 being $1$ or $2$ and
$ G = \tilde G/ Z $ where $ Z$ is a central subgroup of  $\tilde G$ 
of order $1$ or $2$.  
Let $b$ be a block of $kG$ and denote by $P$ a defect group
of $b$. Since $Z$ is a central $2$-subgroup, $b$ is the image
of a unique block $\tilde{b}$ of $k\tilde G$, and the inverse
image $\tilde{P}$ of $P$ in $k\tilde G$ is a defect group of
$\tilde{b}$. Let $d_0$ be a block of $kL_0$ covering $\tilde{b}$
with a defect group $T_0$ such that $T_0\cap \tilde G  = \tilde{P}$.
We note that the block $\tilde{b}$ is $L_0$-stable  
(see \cite[Corollary 6.4]{CEKL}) whence, 
$d_0= \tilde{b}$ and $T_0/\tilde{P}$ is cyclic of order $[L_0:\tilde G]$.
Let $d$ be a block of $kL$ covering $d_0 =$ $\tilde b$ and $T$ a defect 
group of $b$ containing $T_0$.   
As a byproduct of the Jordan decomposition of blocks 
(see Theorem \ref{jordandecom0}), there exists a  
semi-simple element 
$s \in L_0$ of odd order  such that $T_0$ is   a 
Sylow $2$-subgroup of $ C_{L_0}(s)$ and $T$ is a Sylow $2$-subgroup of 
$ C_{L}(s)$  (see \cite [(5A)]{Anclassodd}).

\begin{Lemma} \label{jordandecom1} If the defect groups   of  
$kL_0d_0 $ are abelian, then $ kL_0d_0 $, $kLd$, $k\tilde G \tilde b $ and 
$k G b $  are all  nilpotent blocks.
\end{Lemma}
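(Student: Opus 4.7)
The plan is to first establish nilpotence of $kL_0 d_0$ via Corollary \ref{jordandecomabel}, then to propagate this property through $k\tilde G \tilde b$, $kGb$, and $kLd$ using the fact that every inclusion or kernel in the chain $\tilde G \lhd L_0 \lhd L$ and in the central quotient $G = \tilde G/Z$ is a $2$-group.

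For nilpotence of $kL_0 d_0$: the group $L_0$ is one of $\Sp_{2n}(q)$, $\SO_{2n+1}(q)$, or $\SO^{\pm}_{2n}(q)$, each of which is the group of rational points of a connected reductive algebraic group of simple classical type $B$, $C$, or $D$. The hypothesis that the defect group $T_0$ of $d_0$ is abelian then forces $kL_0 d_0$ to be nilpotent by Corollary \ref{jordandecomabel}, so Step~1 is essentially immediate.

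For $k\tilde G \tilde b$ and $kGb$: since $\tilde b = d_0$ as central idempotents of $kL_0$ and $[L_0 : \tilde G] \leq 2$, each $\tilde b$-Brauer pair $(Q, e_Q)$ in $\tilde G$ lifts (after choosing a block $f_Q$ of $kC_{L_0}(Q)$ covering $e_Q$) to a $d_0$-Brauer pair $(Q, f_Q)$ in $L_0$, and the inertial quotient $N_{\tilde G}(Q, e_Q)/C_{\tilde G}(Q)$ appears as a subquotient of the nilpotence-given $2$-group $N_{L_0}(Q, f_Q)/C_{L_0}(Q)$. Hence $\tilde b$ is nilpotent. The central $2$-quotient $G = \tilde G/Z$ then carries Brauer pairs bijectively with inertial quotients differing only by central $2$-subgroups, giving nilpotence of $b$.

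For $kLd$: the argument mirrors that for $k\tilde G \tilde b$ but with the inclusion $L_0 \lhd L$ in place of $\tilde G \lhd L_0$. If $d_0$ is not $L$-stable then its stabilizer in $L$ is $L_0$, so by Clifford theory $kLd$ is Morita equivalent to $kL_0 d_0$ and nilpotence transfers. If $d_0$ is $L$-stable, so that $d = d_0$ as central idempotents, the same inertial quotient comparison shows that each $N_L(Q, g_Q)/C_L(Q)$ is an extension of the $2$-group $N_{L_0}(Q, f_Q)/C_{L_0}(Q)$ by a subgroup of $L/L_0 \leq C_2$, hence itself a $2$-group. Either way $d$ is nilpotent. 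The main technical point throughout is the standard but slightly delicate choice of compatible covering blocks in the various intermediate centralizers; this is the only potential obstacle to a completely clean writeup.
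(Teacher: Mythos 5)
Your proposal is correct and follows essentially the same route as the paper: establish nilpotence of $kL_0d_0$ from Corollary \ref{jordandecomabel}, then propagate nilpotence up and down the chain $\tilde G\lhd L_0\lhd L$ and across the central $2$-quotient $G=\tilde G/Z$, using that all the indices/kernels involved are $2$-groups. The paper simply asserts the propagation (as a standard consequence of $p$-power-index normal subgroups and central $p$-quotients preserving nilpotence), whereas you spell out the inertial-quotient comparisons; in particular your case split for $kLd$ (Fong--Reynolds when $d_0$ is not $L$-stable versus the Knörr fact $d=d_0$ when it is) is correct but not needed, since the covering-block propagation applies uniformly.
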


\begin{proof} 
$L_0 =\Lb_0 ^F$, where   $\Lb_0$ is   a simple algebraic  
group of type $B$, $C$ or $D$ over $\bar {\mathbb F}_q$   and 
$F:\Lb_0 \to$ $\Lb_0$ is a Frobenius  endomorphism with respect to  an 
$\F_q$-structure on $\Lb_0 $. Thus, by Corollary \ref{jordandecomabel}, 
$kL_od_0 $ is  nilpotent.  Since $\tilde G$ is a normal subgroup of  
$L_0$ of index a power of $2$ and $d_0$ covers $\tilde b$, 
$k\tilde G \tilde b$ is  nilpotent.  Since $G$ is a  quotient of 
$\tilde G$ by a central $2$-subgroup, and $\tilde b$ lifts $b$,
$k G  b$ is  nilpotent.  Finally $L_0$ is of index $1$ or $2$ in 
$L$, and $d$ covers $d_0$, hence  $kLd$ is nilpotent.
\end{proof}

For the rest of this section  $s$ will  denote
a  semi-simple element of odd order in  $L_0 $ such that $T_0$ 
is a Sylow $2$-subgroup of $C_{L_0}(s)$ and $T$ is  a Sylow 
$2$-subgroup of $C_L(s)$.

\begin{Lemma}\label{sympexp1}  
Suppose that $ G = \PSp_{2n}(q) $, $ n\geq 2 $. 
If  $P$ is  abelian, then  $b$ is nilpotent.
\end{Lemma}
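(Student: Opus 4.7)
The plan is to invoke Lemma~\ref{jordandecom1}: it suffices to show that the defect group of $kL_0d_0$, namely the Sylow $2$-subgroup $T = T_0 = \tilde P$ of $C_L(s)$, is abelian, where $L = L_0 = \tilde G = \Sp_{2n}(q)$. Since $\Sp_{2n}(q)$ is perfect for $n\geq 2$ and contains $Z$, we have $[L,L]Z = L$, so the assumed abelian defect group $P$ of $b$ coincides with $T/Z = (T\cap [L,L]Z)/Z$. We may therefore apply Lemma~\ref{classicalcent2sylow} to the decomposition $C_L(s) = H_0 \times \prod_{i\geq 1} H_i$, where $H_0 = \Sp_{2m_0}(q)$ and each $H_i$ is $\GL_{m_i}(q^{d_i})$ or $\GU_{m_i}(q^{d_i})$; the required condition $Z\cap H_i = 1$ holds because $-I\in Z$ embeds diagonally in $H\cong \prod_i H_i$.

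Next I would use parts (ii) and (iv) of Lemma~\ref{classicalcent2sylow} to pin down $m_0$. Part (ii) is vacuous, because $H_0$ is symplectic rather than $\GO_4^+(q)$, so $m_0\leq 1$; and part (iv) rules out $m_0 = 1$, since that would force $H = H_0 = \Sp_2(q)$ and hence $n = 1$, contrary to the hypothesis $n\geq 2$. Thus $m_0 = 0$ and every $H_i$ is a general linear or unitary group in degree at most $2$.

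The remaining obstruction is part~(iii), namely the case $H = H_1 \cong \GL_2(q^{d_1})$ or $\GU_2(q^{d_1})$ with $d_1$ odd and $q\equiv \pm 3\pmod{8}$. The quotient $H/Z(H)$ is then $\PGL_2(q^{d_1})$ or $\PGU_2(q^{d_1})$, whose Sylow $2$-subgroups are dihedral of order $8$ by Lemma~\ref{classical2sylow}(iii) (together with the standard isomorphism $\PGU_2(q')\cong \PGL_2(q')$ for odd $q'$). Consequently $T$ surjects onto this dihedral group, and since $Z\leq Z(H)$ the surjection factors through $T/Z$; hence $T/Z$ admits a non-abelian quotient and is itself non-abelian, contradicting the hypothesis that $P$ is abelian. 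Therefore every $m_i = 1$, so $C_L(s)$ is a direct product of cyclic groups and $T$ is abelian, and Lemma~\ref{jordandecom1} yields that $b$ is nilpotent. The main obstacle is precisely the elimination of case~(iii), which however is dispatched by the standard dihedral Sylow $2$-structure of $\PGL_2$ and $\PGU_2$ in this arithmetic range.
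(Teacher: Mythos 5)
Your proof is correct and follows essentially the same route as the paper: invoke Lemma~\ref{jordandecom1}, decompose $C_L(s)$ via Lemma~\ref{classicalcent2sylow} with the $Z\cap H_i=1$ condition verified by the diagonal embedding of $-I$, eliminate $m_0\neq 0$ via parts~(ii) and~(iv) together with $n\geq 2$, and eliminate $m_i=2$ via the dihedral Sylow $2$-structure of $\PGL_2$ and $\PGU_2$. Your rendering of the last step (surjecting $T/Z$ onto a Sylow $2$-subgroup of $H/Z(H)$) is in fact slightly more explicit than the paper's wording, which loosely describes $P$ as "the quotient of a Sylow $2$-subgroup of $H$ by $Z(H)$", but the underlying argument is identical.
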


\begin{proof}  
Note that $\tilde G = [L, L] = L_0= L $,
$G =L/Z$, $|Z| =2$, and  $ d= d_0 =\tilde b $.  Further,  
by \cite[\S 1]{FoSriclass}, and noting that $s$ has odd order,
the group $H=$ $C_L(s)$ is a direct product  of  groups $H_i $  as in  
Lemma  \ref{classicalcent2sylow} with $H_0 \cong \Sp_{2m_0}(q)$ and 
$$n = m_0 +  \sum_{1\leq i \leq t} m_id_i. $$
The above decomposition of $C_L(s)$ corresponds 
to the  orthogonal decomposition of the underlying symplectic  space as a 
direct sum  of isotypic $\F_q[s]$-modules (for instance, 
$H_0$ corresponds to the $1$-eigen space  of $s$). In particular, if 
the decomposition has more than one non-trivial factor, then 
$ Z \cap H_i=1 $ for all  $i$, $0\leq i \leq t $.
We have  $ T=$ $T_0= $ $\tilde P$ and $P = $ $T\cap [L,L]/Z=$ $T/Z $.
Suppose that  $P$ is abelian.
If $m_0 \ne$ $0 $, then by Lemma \ref{classicalcent2sylow}(ii) and (iv), 
$m_0=1$, $H=$ $H_0=$ $\Sp_{2}(q) $. In particular, $n=$ $m_0=$ $1$,
a contradiction. Thus $m_0= 0$. Suppose that $m_i =2 $ for some 
$i \geq 1$. Then by Lemma \ref{classicalcent2sylow}, $t=1$ and 
$H=$ $\GL_{2}(q^{d_1})$ or $ H_1=$ $\GU_{2}(q^{d_1})$.
But as observed above, $P$  is the quotient of  a Sylow $2$-subgroup
of $H$ by  $Z(H)$, and  $\PGL_2(q')$ and  $\PGU_2(q')$  
have  non-abelian Sylow $2$-subgroups for any odd prime power $q'$, 
a contradiction.  Thus $m_0=$ $0 $ and $m_i\leq$ $1$ for all $ i$, 
$1\leq i\leq t $. In particular, $H$ 
and therefore $T_0$ is abelian. The result follows by  
Lemma \ref{jordandecom1}.
\end{proof}

Before  we proceed, we recall the structure of $C_L(s)$    
when $L$ is an orthogonal group as described in 
\cite[\S 1]{FoSriclass}.
Let $V$ be  an underlying $\F_{q}$-vector space for $L$,
and let $\tau: V \to$ $\F_q $  be a  
non-degenerate  quadratic form underlying $L$. So $L=I(V)$, the  subgroup of 
$\GL(V)$    consisting of isometries  with respect to  $\tau$,
$L_0=I_0(V)$, the subgroup of $I(V)$ consistsing of matrices of determinant $1$
and $ \tilde G  =$ $\Omega (V) =$ $[I(V), I(V)]$.
Let $V_0$ denote the $1$-eigen space of $s$.
The space $V$ decomposes as an orthogonal direct sum
$$ V = V_0 \oplus ( \oplus_{1\leq  i \leq t}   V_i) $$
where   for $i \geq 1 $,  $V_i$, is an isotypic 
$\F_q[s] $-module,  such that $V_i$ and $V_j$ have no common   
irreducible $\F_q[s] $-summands  for $ 0\leq i \ne j \leq  t $  and  such that
$$C_L(s)   =   \prod_{i} C_L(s)  \cap \GL(V_i ). $$
Here  $\prod_{i} \GL(V_i)$   is considered as a subgroup  of   $\GL(V)$   
in  the  standard way.
Further, setting $H_i =  C_L(s)  \cap \GL(V_i )$, we have that   
\[ H_i= \left\{ \begin{array}{ccc}    L \cap  \GL(V_i)  & {\rm if}  & i=0 \\
\GL_{m_i} (\epsilon_i q^{d_i} ) &  {\rm if}   & i  \geq 1.  
\end{array} \right.
\]
Here for each $i \geq 1 $,  
$2d_im_i $ is the  $\F_q$-dimension of $V_i $ and $H_0 =I(V_0) $.
Also,  $ \epsilon_i \in \{\pm 1\} $ and $\GL_{m_i} (\epsilon_i q^{d_i} )  $ 
denotes $ \GU_{n_i}(q^{d_i}) $ if $\epsilon_i =-1$.
If $i \geq 1$, then $H_i \leq$ $L_0$. Thus,
$$C_{L_0}(s) =    (H_0 \cap L_0) \times \prod_{1 \leq i \leq s} H_i. $$
For each $i$,  $ 0\leq i \leq  t $, the restriction 
of $\tau$ to $V_i$ is non-degenerate. 
This form has maximal Witt index if $\epsilon_i^{m_i} =1$ and 
is of non-maximal Witt index if $\epsilon_i^{m_i} = -1$. 
For $1\leq i\leq t$, let $t_i$ be the unique involution in the 
centre of $H_i$. As an element of $\GL(V)$, 
$t_i$ acts as $-1 $ on  $V_i$ and as $1$ on all $V_j$, $j \ne i$.
Now $ \tilde G= [I(V), I(V)] $ is  the kernel of the   
spinorial  norm  from $I_0(V)$  to $\F_q^{\times}/ \F_q^{\times 2} $
(see  \cite[\S 2.7]{GorLyoSol3}).  From this it follows that 
$t_i \in \Omega (V) $ if and only if  
$ q^{d_im_i} \equiv \epsilon_i^{m_i}  \ (\mod\ 4)$.   
We will use this fact in the sequel.

\begin{Lemma} \label{oddclassicalexp} Suppose that $ G = \POmega_{2n+1} (q) $, 
$ n\geq$ $3$ and that $P$ is elementary abelian  with $|P| \geq$ $8$.  
Then   $b$ is nilpotent.
\end{Lemma}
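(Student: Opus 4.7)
The plan is to reduce, via Lemma \ref{jordandecom1}, to showing that the Sylow $2$-subgroup $T_0$ of $C_{L_0}(s)$ is abelian. Since $\Omega_{2n+1}(q)$ has trivial centre for $q$ odd, we have $Z = 1$ and $\tilde P = P$. I would invoke the Fong-Srinivasan decomposition $C_L(s) = \GO_{2m_0+1}(q) \times \prod_{i=1}^{t} \GL_{m_i}(\varepsilon_i q^{d_i})$, noting that $\dim V_0 = 2m_0+1$ is forced to be odd because each $V_i$ for $i\geq 1$ has even $\F_q$-dimension $2d_i m_i$ while $\dim V = 2n+1$ is odd.

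I would then apply Lemma \ref{classicalcent2sylow} to $H = C_L(s)$ with $Z=1$: since $P$ is abelian, we obtain $m_i \leq 2$ for all $i$. The case $m_0 = 2$ is excluded because part (ii) of \ref{classicalcent2sylow} would force $H_0 = \GO_4^+(q)$, incompatible with $H_0 = \GO_5(q)$ being odd-dimensional. Similarly $m_i = 2$ for some $i\geq 1$ is excluded by part (iii), which would force $H = H_1 = \GL_2(\varepsilon_1 q^{d_1})$ and hence $\dim V = 4 d_1$ even, contradicting $\dim V = 2n+1$. Therefore $m_i \leq 1$ for every $i$, and we are reduced to the subcases $m_0\in\{0,1\}$.

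In the subcase $m_0 = 0$ the result follows immediately: $H_0 = \GO_1(q) \cong C_2$ intersects $L_0$ trivially, so $C_{L_0}(s) = \prod_{i\geq 1}\GL_1(\varepsilon_i q^{d_i})$ is a product of cyclic groups, $T_0$ is abelian, and Lemma \ref{jordandecom1} gives that $b$ is nilpotent.

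The main obstacle is the subcase $m_0 = 1$, in which $\SO(V_0) \cong \PGL_2(q)$ has a dihedral Sylow $2$-subgroup $D_0$ of order $2(q-\varepsilon)_+ \geq 8$ (with $q\equiv\varepsilon\pmod 4$), so that $T_0 = D_0 \times A$ with $A = \prod_{i\geq 1}(H_i)_2$ is non-abelian and Lemma \ref{jordandecom1} does not apply directly. Here I would exploit the spinor norm $\theta\colon L_0 \to \F_q^{\times}/(\F_q^{\times})^{2}$ and the criterion recalled before the lemma that the central involution $t_i \in H_i$ lies in $\Omega(V)$ if and only if $q^{d_i m_i} \equiv \varepsilon_i^{m_i}\pmod 4$. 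Since $\tilde P$ is assumed elementary abelian, the projection $\pi\colon T_0 \to D_0$ must send $\tilde P$ into an abelian exponent-$2$ subgroup of $D_0$; such a subgroup is contained in one of the Klein four subgroups of $D_0$. Combining this with the analogous constraints on the cyclic factors $(H_i)_2$, coming from evaluating $\theta$ on both the rotation generator of $D_0$ (which has order $\geq 4$) and the generator of each $(H_i)_2$, one shows that the joint arithmetic conditions on $q\pmod 8$, the $d_i$ and the $\varepsilon_i$ cannot simultaneously be met while $|\tilde P|\geq 8$. This contradiction rules out $m_0 = 1$, leaving only the abelian case $m_0 = 0$ and completing the proof; the hardest step is the careful spinor-norm bookkeeping needed to rule out $m_0 = 1$.
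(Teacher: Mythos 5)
Your reductions and the elimination of $m_0=2$, $m_i=2$ ($i\ge 1$), and $m_0=0$ are correct and agree in substance with the paper; the parity argument for $m_i=2$ is a clean way to get that case. The genuine gap is in the subcase $m_0=1$, which you yourself flag as the hardest step and then leave unfinished: the sentence ``one shows that the joint arithmetic conditions\ldots cannot simultaneously be met'' is not a proof, and framing the obstruction as an arithmetic condition on $q\bmod 8$, the $d_i$ and the $\varepsilon_i$ is in fact a red herring likely to lead to needless case-analysis.

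The contradiction is structural and falls out of the two ingredients you have already set up. The projection $\pi(P)$ is elementary abelian of index $\le 2$ in the dihedral $2$-group $D_0$ of order $\ge 8$; since dihedral $2$-groups have $2$-rank $2$, this forces $|D_0|=8$ and $\pi(P)\cong V_4$, and a count of orders then gives $A=\prod_{i\ge 1}T^i\subseteq P$ (note that $n\ge 3$ forces $t\ge 1$, so $A\ne 1$). Elementary abelianness of $P$ forces each cyclic $T^i$ to have order exactly $2$, i.e.\ $(q^{d_i}-\varepsilon_i)_+=2$, and the criterion you cite then says the central involution $t_i$ of $H_i$ is not in $\Omega(V)$, whereas $P\subseteq\Omega(V)$ --- contradiction. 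The paper arrives at the same contradiction a bit more directly, in the opposite order: it first shows $T^i\not\leq P$ for $i\ge 1$ by exactly this spinor norm point; since $T^1\not\leq P$ and $[T_0:P]=2$, any $d\in D_0\setminus P$ is $P$-congruent to some $x\in T^1$, and because $x$ centralizes $D_0$ while $dx^{-1}\in P$ commutes with the abelian $P\supseteq D_0\cap P$, one finds that $d$ centralizes $D_0\cap P$, forcing $D_0=(D_0\cap P)\langle d\rangle$ to be abelian (and if $D_0\leq P$ the contradiction is immediate). Either route works, but one of them must actually be carried through rather than gestured at.
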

 
\begin{proof}   
Note that  since the dimension of the underlying vector space is 
odd, $G =$ $\tilde G =$ $[L,L]$, $\tilde G$ is of index $2$ in   
$L_0$ and $L_0$ is of index $2$ in $L$. In particular, $\tilde P=$ $P$.
By  Lemma \ref{classicalcent2sylow}, either $m_1=2 $ and  
$m_i =0$ for all $i$ different from $1$,  or all $m_i \leq  1$. 
In the former case, again by Lemma \ref{classicalcent2sylow},  
$T$ is isomorphic to a Sylow $2$-subgroup  of a $2$-dimensional 
general linear  group.  In particular,  $T$ has an element of 
order $8$. Since   $T\leq L$ in this case, $P$ is of index $2$ in  of $T$, 
hence $P$ has an element of order $4$, 
a contradiction.   We assume from now on that all $m_i \leq  1 $.
If $m_0=0 $, then $T$ and  therefore $T_0$ is abelian and  we are done by
Corollary  \ref{jordandecom1}.
So,  $m_0 =1 $, i.e.  $H_0=\GO_3(q)$.  
Since $n \geq 2 $, $ i\geq 1 $, i.e., $m_1 \ne 0 $. 
Let $T^i$ be the $i$-th component  
of $T$. We claim that $ T^i \not\leq  P $ 
for any $i\geq 1$. Indeed,  suppose the contrary. Then 
$T^i = \langle t_i \rangle $  has order $2$.  In particular, this means that 
$q^{d_i} \not \equiv\epsilon_i \ (\mod\ 4)$.  But since $m_i =1$, this 
 means that   $t_i \notin  \Omega(V) $ and hence  $t_i \notin P$.  
This proves the claim. 
By   Lemma \ref{classicalcent2sylow},  $T^0 \cap  H_0 $  is a dihedral 
group of order at least eight. Also,  clearly $T^0\cap  H_0 \leq T_0 $. 
Since $P$ is of index $2$ in $T_0$,  and since as just shown
$T^1 \not\leq P$, it follows that $P$ contains a 
subgroup isomorphic to $T^0\cap H_0$, an impossibility as  
$P$ is abelian and $T^0\cap\SO_3(q)$ is not.
\end{proof}

\begin{Lemma}  
Suppose that $G = \POmega^{+}_{2n} (q)$, or   
$\POmega^{-}_{2n} (q)$,  $n \geq 4$ and that $P$ is elementary 
abelian  with $|P| \geq 8$. Then  $b$ is nilpotent.
\end{Lemma}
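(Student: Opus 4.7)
The plan is to adapt the argument of the preceding lemma to the even-dimensional setting, using Lemma~\ref{classicalcent2sylow} as the combinatorial engine and exploiting that in even dimension the rotation factor $H_0 \cap L_0 = \SO_2^{\delta}(q)$ is cyclic rather than (as in the odd-dimensional case) dihedral.

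With the standard setup, write $C_L(s) = \prod_{0 \leq i \leq t} H_i$ with $H_0 = \GO_{2m_0}^{\delta}(q)$ for some sign $\delta$ and $H_i = \GL_{m_i}(\epsilon_i q^{d_i})$ for $i \geq 1$, and let $T^i$ denote a Sylow $2$-subgroup of $H_i$. First I apply Lemma~\ref{classicalcent2sylow} with $Z = Z(\tilde G)$: since $P$ is abelian, $m_i \leq 2$ for all $i$. The case $m_0 = 2$ forces $H = \GO_4^{+}(q)$ and hence $n = 2$, contradicting $n \geq 4$. The case $m_i = 2$ for some $i \geq 1$ gives, by part~(iii) of the lemma, $H = \GL_2(\epsilon q^d)$ with $q^d \equiv \pm 3 \pmod 8$, so $T$ is semidihedral of order $16$; a direct inspection of the subgroup lattice of $SD_{16}$ shows that no quotient of an abelian subgroup of $T$ by a central subgroup of order $\leq 2$ can be elementary abelian of order $\geq 8$. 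Hence $m_i \leq 1$ for all $i$.

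If $m_0 = 0$, then $C_L(s)$ is a product of cyclic groups, so $T_0$ is abelian and $b$ is nilpotent by Lemma~\ref{jordandecom1}. If $m_0 = 1$ with $(q-\delta)_+ = 2$, then $T^0$ is Klein four and $T$ is again abelian, giving the same conclusion. The essential case is $m_0 = 1$ with $(q-\delta)_+ \geq 4$, in which $T^0$ is dihedral of order $\geq 8$ and $T_0$ is non-abelian. Here I would derive a contradiction by exhibiting an element of $\tilde P$ of order $4$ whose image in $P = \tilde P/Z$ still has order $4$, violating the hypothesis that $P$ is elementary abelian.

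The candidate comes from the cyclic rotation subgroup $R^0 = T^0 \cap \SO_2^{\delta}(q)$ of order $(q-\delta)_+$, whose unique involution is $-I_{V_0}$. The hypothesis $(q-\delta)_+ \geq 4$ forces $q \equiv \delta \pmod 4$, which guarantees that $-I_{V_0}$ has trivial spinor norm and hence lies in $\tilde G$. When $(q-\delta)_+ \geq 8$, any order $4$ element $y_0$ of $R^0$ is a square in $R^0$ and hence also has trivial spinor norm, so $(y_0, 1, \ldots, 1) \in \tilde P$; its square $(-I_{V_0}, 1, \ldots, 1)$ is not $\pm I_V$ because $V_0^{\perp} \neq 0$ (using $n \geq 4$), so lies outside $Z \leq \{\pm I_V\}$, and its image in $P$ has order $4$. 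When $(q-\delta)_+ = 4$, a generator $r_0$ of $R^0$ has non-trivial spinor norm, so one combines it with elements $y_i \in T^i$, $i \geq 1$, satisfying $\prod_i \det(y_i) = 1$ and $\sum_i \mathrm{spin}(y_i) = 1$ while keeping at least one $y_i = 1$; the existence of such a choice uses that $n - 1 \geq 3$ provides enough components in $\prod_{i \geq 1} T^i$. The main obstacle is precisely this combination step in the subcase $(q-\delta)_+ = 4$, which requires a careful analysis of the images of determinant and spinor norm on the cyclic Sylow $2$-subgroups of the rational tori $\GL_1(\epsilon_i q^{d_i})$ to rule out degenerate configurations under the constraints $m_i \leq 1$ and $n \geq 4$.
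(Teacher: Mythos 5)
Your opening paragraph correctly identifies the crucial structural fact — that in even dimension $H_0 \cap L_0 = \SO_2^{\delta}(q)$ is cyclic — but you then fail to use it where it matters, and as a result the proof goes astray in exactly the case you call ``essential.''

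Recall that $T_0$ is a Sylow $2$-subgroup of $C_{L_0}(s)$, not of $C_L(s)$, and that Lemma~\ref{jordandecom1} requires only that $T_0$ be abelian. Once you have reduced to $m_i \leq 1$ for all $i$ (which you do correctly), the structure theory gives
$$ C_{L_0}(s) \;=\; (H_0 \cap L_0) \times \prod_{i \geq 1} H_i, $$
and every factor on the right is cyclic: $H_0 \cap L_0 = \SO_2^{\delta}(q)$ is cyclic, and each $H_i = \GL_1(\epsilon_i q^{d_i})$ is cyclic. Hence $C_{L_0}(s)$ is abelian and so is $T_0$, regardless of the value of $(q-\delta)_+$. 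Your claim that ``$T^0$ is dihedral of order $\geq 8$ and $T_0$ is non-abelian'' in the subcase $(q-\delta)_+ \geq 4$ is therefore false; you have conflated $T_0$ (a Sylow subgroup of $C_{L_0}(s)$, which contains only the cyclic group $T^0 \cap L_0$) with $T$ or with $T^0$ (a Sylow subgroup of the full dihedral group $\GO_2^{\delta}(q)$). The paper's proof of the $m_0 = 1$ case is one line: $T_0$ is abelian, apply Lemma~\ref{jordandecom1}. There is no ``essential case'' to analyse, and the entire spinor-norm computation you sketch — which you concede is incomplete when $(q-\delta)_+ = 4$ — is unnecessary and aimed at the wrong object.

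A secondary point: in the case $m_i = 2$ for some $i \geq 1$, your identification of $T$ as semidihedral of order $16$ is not correct in general (for $q^{d_1} \equiv 5 \pmod 8$ the Sylow $2$-subgroup of $\GL_2(q^{d_1})$ has order $32$). The paper instead observes that the central involution $t_1$ of $H_1$ has trivial spinor norm (since $q^{2d_1} \equiv 1 \equiv \epsilon_1^2 \pmod 4$) and hence lies in $[L,L]$, so $P$ embeds in the dihedral group $T/\langle t_1 \rangle$ and has rank at most $2$. Your $SD_{16}$ argument would need to be reworked along these lines to cover all congruence classes.
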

 
\begin{proof}     
We first consider $ G = \POmega^{+}_{2n} (q)$.   If $m_1=2 $,  
then by Lemma \ref{classicalcent2sylow}(i) and (iii),  $m_j =0 $  
for $ j \ne i $, $C_L(s)= H_1 \cong \GL(\epsilon_1q^{d_1}) $,  
where $ q\equiv  \pm 3  \  (\mod  \  8 )$ and $d_1$ is odd.           
But $q^{d_1m_1} \equiv$ $1 \equiv$ $\epsilon_1 ^{m_1}  \  ( \mod  \   4 )$, 
hence  the central involution $t_1$ of $H_1$ is  in $[L, L] $ 
and $P$ is a subgroup of $ T/\langle  t_i \rangle$. Since   
$T/\langle t_i\rangle$ is a dihedral group (see \cite{CarterFong}), 
$P$ has rank at most $2$, a contradiction.
Now suppose $m_0 =2$. Then,   $m_i =0 $  for all $i \geq 1 $,   
and $n=2$, a contradiction, as $n$ is assumed to be at least $3$. 
Thus, $m_i \leq  1$ for all $ i $.   If $m_0 =0 $, then  $T_0$ is abelian and 
we are done by  Theorem  \ref{jordandecom1}. So, suppose that  $m_1=1$.
Then $H_0 =\GO_2^{\pm} (q)$ 
is a dihedral group  and  $H_0 \cap L_0 = \SO_2^{\pm} (q)$ is a cyclic group
(see \cite[\S 2.7]{GorLyoSol3}).   Since  
 $H_i $ is  also cyclic  for all  $i \geq 1$,
$T_0 $  is abelian and we are done by  Theorem  \ref{jordandecom1}.
The  proof for  $G=\POmega^{-}_{2n} (q)$ is similar.
\end{proof}

\begin{proof}[Proof  of Theorem \ref{classical}]  This is immediate from  the 
preceding  lemmas.
\end{proof}

\section{Type $G_2$, $\,^2G_2$ and  ${^3D_4}$ in odd characteristic}

Let $q$ an odd prime power. The $2$-rank of $G_2(q)$, 
${^2G_2(q)}$ and ${^3D_4(q)}$ is $3$; see e.g. 
\cite[\S 1]{GorHar}, \cite[Theorem 4.10.5]{GorLyoSol3}.

\begin{Proposition} \label{typeG2}
Let $G$ be a quasi-simple finite group such that $Z(G)$ 
has odd order and such that $G/Z(G)$ is simple of type $G_2(q)$.
Then $kG$ has no block with an elementary abelian defect
group of order $8$.
\end{Proposition}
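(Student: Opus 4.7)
The plan is to first reduce to $G \cong G_2(q)$ and then proceed by the Jordan decomposition of blocks, with a case analysis on the centraliser of the semi-simple label. By \cite[Table 6.1.3]{GorLyoSol3} the Schur multiplier of the simple group $G_2(q)$ is trivial for all odd $q \neq 3$, and the case $G_2(3)$ has already been handled in Proposition \ref{SimpleGroupExSchurMult}; so the hypothesis that $Z(G)$ has odd order together with $G/Z(G) \cong G_2(q)$ forces $G \cong G_2(q)$ with $q \geq 5$ odd.

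Suppose for contradiction that there is a block $b$ of $kG_2(q)$ with elementary abelian defect group of order $8$, and let $s$ be a $2$-regular semi-simple element of the dual group $G_2(q)^* \cong G_2(q)$ such that $b$ lies in the rational series $[s]$. Since $\Gb = G_2$ is simply connected with trivial centre, $\Cb(s)$ is always connected, so the Jordan decomposition machinery of \S \ref{redback} applies cleanly. The possible isomorphism types of $\Cb(s)$ for $s$ of odd order in $\Gb$, up to conjugacy, are: $\Gb$ itself (when $s=1$), a maximal torus, a Levi subgroup of type $A_1$ (long or short root), and the isolated centralisers of type $A_2$ or $^2A_2$ that arise for certain order-$3$ elements, with $\Cb(s)^{F^*} \cong \SL_3(q)$ respectively $\SU_3(q)$.

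For the torus case, $\Cb(s)^{F^*}$ has $\mathbb{Z}$-rank $2$, hence its Sylow $2$-subgroup has $2$-rank at most $2$ and cannot contain an elementary abelian subgroup of order $8$. For the type $A_1$ Levi case, $[\Cb(s),\Cb(s)]^{F^*} \cong \SL_2(q')$ for some odd prime power $q'$, whose Sylow $2$-subgroups are generalised quaternion by Lemma \ref{classical2sylow}(iii); so $\Cb(s)^{F^*}$ has non-abelian Sylow $2$-subgroups, and no abelian defect group of order at least $4$ can occur via Theorem \ref{jordandecom0}. For $s=1$, the principal $2$-block of $kG_2(q)$ has defect group a Sylow $2$-subgroup of $G$, which has order at least $64$ and is non-abelian by the known structure of Sylow $2$-subgroups of $G_2(q)$; any non-principal unipotent $2$-block will be ruled out by a Cabanes--Enguehard style analysis of $2$-cuspidal pairs.

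The hard part will be the isolated $A_2$, respectively $^2A_2$, case, because $\Cb(s)$ is not a Levi subgroup of $\Gb$ and Bonnaf\'e--Rouquier's Jordan decomposition (Theorem \ref{jordandecom0}) does not directly provide a Morita equivalence between $kGb$ and a unipotent block of $\Cb(s)^{F^*}$. To dispose of this case I would either invoke Enguehard's analysis of isolated $2$-blocks of exceptional groups of Lie type in odd characteristic, or use an $e$-Harish-Chandra argument, to identify the defect group of $b$ with a Sylow $2$-subgroup of $\SL_3(q)$ or $\SU_3(q)$; Theorems \ref{typeA} and \ref{type2A} then immediately exclude an elementary abelian defect group of order $8$ (indeed of any odd rank at least $3$), yielding the desired contradiction.
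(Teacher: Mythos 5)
Your route is genuinely different from the paper's, which does not use Jordan decomposition at all. The paper either cites Hiss--Shamash directly for the list of $2$-blocks of $G_2(q)$, or gives a short local argument: $G_2(q)$ has a unique class of involutions $u$, with $C_G(u)\cong Z(G)\times(2.(\PSL_2(q)\times\PSL_2(q)).2)$, and one shows that $C_G(u)$ cannot have a block with defect group $(C_2)^3$ by descending to $\PSL_2(q)\times\PSL_2(q)$: such a block would force a block of $\PSL_2(q)\times\PSL_2(q)$ with a defect group of order $2$, whose preimage in $\SL_2(q)\times\SL_2(q)$ is $C_2\times C_4$, so the corresponding defect group in $2.(\PSL_2\times\PSL_2)$ is cyclic of order $4$ rather than Klein four. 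This is entirely local and avoids the series-by-series analysis of your sketch.

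Your outline is plausible but has real gaps that you yourself flag without resolving. First, for the cases where $\Cb(s)$ is a proper Levi (torus or type $A_1$), the Bonnaf\'e--Rouquier Morita equivalence does not, by itself, identify the defect group of $b$ with a Sylow $2$-subgroup of $\Lb^F$; the paper points out that no written reference exists for preservation of local structure under these equivalences, which is exactly why it proves Propositions \ref{localstructurepreserved} and \ref{defect4}. You would need to route through one of those (or the Enguehard result on defect groups cited in Theorem \ref{jordandecom0}, which is stated only for $\GL$ and classical types). Second, for $s=1$ you dismiss non-principal unipotent $2$-blocks with an unspecified ``Cabanes--Enguehard style analysis''; that is precisely the non-trivial content that needs to be supplied, and it is not covered by anything proved in the paper for type $G_2$. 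Third, and most seriously, you explicitly leave the isolated $A_2$ and $^2A_2$ cases open: here $\Cb(s)$ is not a Levi, so Bonnaf\'e--Rouquier gives nothing, and neither \cite{BonRou} nor anything in \S\ref{redback} identifies the defect group with a Sylow $2$-subgroup of $\SL_3(q)$ or $\SU_3(q)$ without further input. Finally, even granting that identification, citing Theorems \ref{typeA} and \ref{type2A} is a detour: Lemma \ref{classical2sylow}(i) already states that Sylow $2$-subgroups of $\SL_3(q)$ and $\SU_3(q)$ are non-abelian for $q$ odd.
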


\begin{proof}
This follows from \cite{HiSh}, where the $2$-blocks of
$G$ are determined. Alternatively, one can use the arguments
in \cite[12.2]{CEKL}: there is a unique conjugacy class
of involutions $u$ in $G$ and by \cite[4.5.1]{GorLyoSol3},
$C_G(u)\cong$ $Z(G)\times$
$(2.(\PSL_2(q)\times\PSL_2(q)).2)$. The acting $2$-automorphism
in the second factor is inner-diagonal, hence stabilises
any block of $2.(\PSL_2(q)\times\PSL_2(q))$. 
Thus a block of $C_G(u)$ with elementary abelian defect
group of order $8$ would cover a block of
$2.(\PSL_2(q)\times\PSL_2(q))$ with a Klein four defect group,
whose image modulo the central involution would yield a
block of $\PSL_2(q)\times\PSL_2(q)$ with a defect group of
order $2$. Any block of this direct product is of the
form $c_0\ten c_1$, where $c_0$, $c_1$ are blocks of
$\PSL_2(q)$, so exactly one of $c_0$, $c_1$ would have
defect zero and the other defect one. But since any inverse
image of an involution in $\PSL_2(q)$ in $\SL_2(q)$ has
order $4$ this is impossible.
\end{proof}

\begin{Proposition} \label{type2G2}
Let $G$ be a quasi-simple finite group such that $Z(G)$ has 
odd order and such that $G/Z(G)$ is simple of type ${^2G_2(q)}$. 
The principal block $b_0$ is the unique block of $kG$ having an 
elementary abelian defect $P$ group of order $8$, and we have
$|\Irr_K(G,b_0)|=8$.
\end{Proposition}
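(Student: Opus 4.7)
The plan is first to reduce to $G \cong {^2G_2(q)}$ for some $q = 3^{2m+1}$ with $m \geq 1$, and then to analyse the blocks of $kG$ with elementary abelian defect group of order $8$ via Brauer's First Main Theorem applied to the Sylow $2$-normaliser. Since the Schur multiplier of ${^2G_2(q)}$ is trivial for $q = 3^{2m+1}$ with $m \geq 1$ (${^2G_2}$ does not appear in the exceptional list of Proposition~\ref{TableExeptionalSchurMult}, and its generic Schur multiplier is trivial; see \cite[Table 6.1.3]{GorLyoSol3}), the hypothesis that $Z(G)$ has odd order forces $Z(G) = 1$ and hence $G = {^2G_2(q)}$. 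The degenerate case $q = 3$ reduces to ${^2G_2(3)}' \cong \PSL_2(8)$, which is of Lie type in characteristic $2$ and has already been handled in Proposition~\ref{Lietwo}.

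Next I would recall the classical $2$-local structure of Ree groups (see \cite[Theorem 4.10.5]{GorLyoSol3} or Ward's original paper): a Sylow $2$-subgroup $P$ of $G$ is elementary abelian of order $8$ with $C_G(P) = P$, and $N_G(P)/P$ is isomorphic to the Frobenius group $F := C_7 \rtimes C_3$ of order $21$ acting regularly on $P \setminus \{1\}$; by Schur--Zassenhaus, $N_G(P) = P \rtimes F$. Since $|P| = 8$, any block of $kG$ with an elementary abelian defect group of order $8$ has $P$ as its defect group, and by Brauer's First Main Theorem such blocks correspond bijectively to blocks of $k(P \rtimes F)$ with defect group $P$; as $P$ is the normal Sylow $2$-subgroup of $P \rtimes F$, every block of $k(P \rtimes F)$ has defect group $P$.

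The heart of the argument is to show that $k(P \rtimes F)$ possesses a unique block. Since $P$ is a normal $2$-subgroup, every simple $k(P \rtimes F)$-module is inflated from $F$, yielding exactly five simples corresponding to the five irreducibles of $kF$ (of degrees $1,1,1,3,3$). By Proposition~\ref{c2c2c2local}, $P \rtimes F$ has eight ordinary irreducible characters, which by Clifford theory consist of the five inflations from $F$ together with three characters of degree $7$ induced from non-trivial linear characters of $P$ (each having inertial subgroup $P \rtimes C_3$) twisted by the three linear characters of $C_3$. Reducing these degree-$7$ characters modulo $2$ and applying Frobenius reciprocity, using that the restriction to $C_3$ of each three-dimensional simple of $kF$ is the regular representation of $C_3$, one finds that each such reduction has composition factors containing both three-dimensional simples of $kF$ as well as one of the three one-dimensional simples; consequently all five simples of $k(P \rtimes F)$ are linked in a single block. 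By Brauer's First Main Theorem, the principal block $b_0$ is then the unique block of $kG$ with elementary abelian defect group of order $8$.

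Finally, for the character count I would appeal to Fong--Harris \cite{FongHarris} applied to the principal block with abelian defect group, which yields an isotypy between $b_0$ and its Brauer correspondent $c_0$ in $kN_G(P)$; in particular $|\Irr_K(G, b_0)| = |\Irr_K(N_G(P), c_0)|$. Since $c_0$ is the unique block of $kN_G(P)$ it must contain all eight ordinary characters of $P \rtimes F$ provided by Proposition~\ref{c2c2c2local}, yielding $|\Irr_K(G, b_0)| = 8$. The main obstacle is the uniqueness step: the five $2$-regular conjugacy classes of $N_G(P)$ might naively suggest five distinct blocks, and the crucial point is to verify via the mod-$2$ reductions of the degree-$7$ characters that all five simple modules end up linked in a single block.
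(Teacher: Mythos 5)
Your proof is correct, and it reaches the same conclusion as the paper, but via a somewhat more laborious route in two places. For the uniqueness of the block with defect group $P$, you compute the decomposition matrix of $k(P\rtimes F)$ by hand (correctly: the degree-$7$ characters reduce to $\varphi_i + \varphi_4 + \varphi_5$ for $i\in\{1,2,3\}$, giving a connected Cartan matrix). The paper dispatches this in one clause: since $E=F$ acts faithfully on $P$, we have $C_{N_G(P)}(P)=P$, and a finite group with a self-centralizing normal $p$-subgroup has a unique $p$-block. That general fact makes your composition-factor analysis unnecessary, though it is a perfectly valid independent confirmation. For the character count $|\Irr_K(G,b_0)|=8$, you invoke Fong--Harris; the paper instead cites Ward's explicit character table of ${}^2G_2(q)$ or Landrock's general results on $2$-blocks with elementary abelian defect group of order $8$. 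Both citations settle the point. (Your aside about the case $q=3$ is harmless but unnecessary: the hypothesis already requires $G/Z(G)$ to be a simple group of type ${}^2G_2(q)$, so $q\geq 27$; the group ${}^2G_2(3)'\cong\PSL_2(8)$ is classed with Lie type in characteristic $2$ and is covered elsewhere in the paper.)
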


\begin{proof}
The Sylow $2$-subgroups of ${^2{G_2(q)}}$ are elementary
abelian of order $8$.
The simple group of type ${^2G_2(q)}$ has trivial Schur multiplier, 
hence $Z(G)=1$. In that case we have $N_G(P)\cong$
$P\rtimes E$, with $E$ a Frobenius group of order $21$ acting
faithfully on $P$, and hence, by Brauer's First Main Theorem,
the principal block $b_0$ of $kG$ is the unique block having $P$
as defect group. By Ward's explicit calculations in \cite{Ward}
or Landrock's general results in \cite[\S 3]{Landrock81}
we have $|\Irr_K(G,b_0)|=8$. 
\end{proof}

Finally for the triality $D_4$-groups we have 
the following proposition due to 
Deriziotis and Michler \cite[Proposition 5.3]{Deri-Michler}

\begin{Proposition} \label{type3D4}
Let $G$ be a quasi-simple finite group such that $Z(G)$ has 
odd order and such that $G/Z(G)$ is simple of type $\,^3D_4(q)$. Let $P$  
be a defect group of some block of $kG$. Then either $P$ is 
non-abelian or $P$ has rank at most $2$. In particular, no block of $kG$ 
has  defect groups  which are elementary abelian of order $8$.
\end{Proposition}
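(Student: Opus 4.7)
The plan is to follow the Deriziotis--Michler analysis of the $2$-blocks of $^3D_4(q)$ for $q$ odd. First, I would note that by \cite[Table 6.1.3]{GorLyoSol3} the Schur multiplier of the simple group $^3D_4(q)$ is trivial when $q$ is odd, so we may assume $G = {^3D_4(q)} = \Gb^F$ where $\Gb$ is the simple algebraic group of type $D_4$ over $\bar{\mathbb F}_q$ and $F$ is the Frobenius endomorphism twisted by triality. Since $\Gb$ is simply connected, $\Gb = \Gb^*$ (self-dual), and the Bonnaf\'e--Rouquier Jordan decomposition (Theorem~\ref{jordandecom0} and its underlying \cite[Th\'eor\`eme B']{BonRou}) applies: any block $b$ of $kG$ with defect group $P$ has a semisimple label $[s]$ with $s \in \Gb^{*F^*}$ of odd order, and $P$ is a Sylow $2$-subgroup of a suitable $F^*$-stable Levi subgroup $\Lb^{F^*}$ with $\Cb^\circ(s) \leq \Lb^*$. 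Since $2$ is the only bad prime for $D_4$ and $s$ has odd order, $\Cb^\circ(s)$ is a Levi subgroup, so $P$ is the Sylow $2$-subgroup of $C_{G^*}(s) = \Cb(s)^{F^*}$.

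Next I would enumerate, using the explicit lists of $F^*$-conjugacy classes of semisimple odd-order elements of $^3D_4(q)$ and their centralisers given in Deriziotis--Michler \cite{Deri-Michler}, the possible isomorphism types of $\Cb(s)^{F^*}$. Up to conjugacy, the connected centralisers have as rational types either a maximal torus, or a subsystem of rank at most $3$ built from components of type $A_1$, $A_1(q^3)$, $A_2^{\pm}(q)$, $A_2(q^3)$, $\,^3D_4(q)$ itself (if $s$ is central, i.e.\ $s=1$), or products thereof compatible with the triality action on the Dynkin diagram. For each case I would read off the Sylow $2$-subgroup using Lemma~\ref{classical2sylow}: tori give abelian defect groups whose rank equals the twisted rank (at most $2$ when the centraliser is proper and torus-like under triality), while any component of type $A_1(q^e)$ or $A_2^{\pm}(q^e)$ in the derived subgroup contributes a non-abelian (quaternion or larger) Sylow $2$-subgroup. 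The principal block itself has the full Sylow $2$-subgroup of $^3D_4(q)$, which by \cite[Theorem 4.10.5]{GorLyoSol3} is non-abelian of $2$-rank $3$.

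Putting this together, every defect group is either non-abelian, or is an abelian group of $2$-rank $\leq 2$ (arising from a maximally split or twisted torus compatible with triality). In particular no elementary abelian group of order $8$ can occur, giving the final conclusion.

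The main obstacle, which is precisely what Deriziotis--Michler handle, is the bookkeeping for the triality-twisted Frobenius action: one must keep track of which subsystems of the $D_4$ root system are $F^*$-stable and how $F^*$ acts on their Weyl-group components, since this determines whether a component of the rational centraliser is $A_2(q)$, $\,^2A_2(q)$, or $A_2(q^3)$, and hence the exact $2$-part of its order. Once this classification is in hand, the verification that no elementary abelian defect group of order $8$ arises is a direct inspection, and I would simply cite \cite[Proposition 5.3]{Deri-Michler} for the conclusion.
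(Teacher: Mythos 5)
The paper gives no proof of Proposition~\ref{type3D4} at all: it is stated as ``due to Deriziotis and Michler'' with the citation \cite[Proposition 5.3]{Deri-Michler}, and the reader is expected to look there. Your proposal ends at exactly the same place --- a citation of \cite[Proposition 5.3]{Deri-Michler} --- so in substance you and the paper are doing the same thing; what you have added is an informative sketch of the Jordan-decomposition argument that sits behind that reference, along the lines of what the paper actually carries out in detail for the types $G_2$, $F_4$ and $E$. That sketch is sound: since $2$ is the only bad prime for type $D_4$ and the semisimple label $s$ has odd order, $\Cb^\circ(s)$ is a Levi subgroup, the defect group is (up to the usual index-$2$ and central-quotient bookkeeping) a Sylow $2$-subgroup of $\Cb(s)^{F^*}$, and one then inspects the Deriziotis--Michler list of centralisers of odd-order semisimple elements under the triality-twisted Frobenius, using Lemma~\ref{classical2sylow} to see that each case is either non-abelian or has $2$-rank at most $2$.

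One small imprecision worth flagging: you write that $\Gb=\Gb^*$ because $\Gb$ is simply connected of type $D_4$. At the level of algebraic groups this is not right --- the dual of simply connected $D_4$ is the adjoint form, and these are not isomorphic. What saves the argument is that triality permutes the three nontrivial elements of $Z(\operatorname{Spin}_8)$ cyclically, so the fixed points of the centre under the triality-twisted Frobenius are trivial, and hence the finite groups ${^3D_4(q)}$ obtained from the simply connected and adjoint (or any isogeny) form coincide. The Bonnaf\'e--Rouquier machinery and Theorem~\ref{jordandecom0} therefore apply exactly as you intend, but the justification should go through the triviality of $Z(\Gb)^F$ rather than a spurious self-duality of $\Gb$. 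With that correction your proposal is a faithful, if more verbose, version of the paper's argument.
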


\section{Unipotent characters with small $2$-defects} 
\label{smalldefectsection}

Recall that the
$2$-defect of an irreducible character $\chi$ of a finite group 
$G$ is the largest integer $d(\chi)$ such that $2^{d(\chi)}$
divides the rational integer $\frac{|G|}{\chi(1)}$. 
The notation of the  finite groups of Lie type and 
the labelling of their unipotent characters due to Lusztig in the following 
two propositions are from the tables page $75/76$ and  pages 
$465$--$488$ in Carter's book \cite{Carter93}.
In particular, if $X$ is some simple type we denote by $X(q)$ the finite
group of Lie type (that is, the group of fixed points in the algebraic
group under some Frobenius endomorphism). We will deviate from the notation 
of \cite{Carter93}  in one respect, i.e., we will denote by 
$\,^2A_l(q)$, $\,^2D_l (q)$, and $\,^3D_4(q)$ the  twisted groups denoted by 
$\,^2A_l(q^2)$, $\,^2D_l(q^2)$, and $\,^3D_4(q^3)$ respectively in 
\cite{Carter93}. For classical groups, we will  also draw upon 
Olsson's treatment  of the combinatorics of symbols \cite{Olsson86}.
Given a positive integer $n$, let  $n_+$ denote as before the 
$2$-part of $n$ and $n_{-} $ the $2'$-part of $n$. So, $n=n_+n_{-}$,  
no odd prime divides $n_+$ and $2$ does not divide  $n_{-}$. 

\begin{Proposition} \label{2defectsclassical} 
Let $q$ be a power of an odd prime $r$. Let $(q-1)_+= 2^d$ and $(q+1)_+= 2^e $. 

\smallskip\noindent (i)  Every  unipotent character of  $A_l(q) $ has $2$-defect 
greater than or equal to $dl $.  If $l+1 $ is not a 
triangular number, then the $2$-defect of  any  unipotent character of 
$ A_l(q) $  is at least $dl +  e \geq 2 + l$. 
If $ l \geq 3 $, then  all unipotent charecters have  
$2$-defect greater than or equal to $5$.
$A_2(q)$ has one  unipotent character, $ \chi^{(1,2)} $ of   $2$-defect  $2d$, 
all other unipotent characters have $2$-defect $2d+e \geq 4 $.   
The $2$-defect of any unipotent character of $A_1(q) $ is $d+e \geq 3 $.

\smallskip\noindent (ii)  
Every  unipotent character of  $\, ^2A_l(q) $ has $2$-defect   
greater than or equal to $el $. If $l+1 $ is not a 
triangular number, then the $2$-defect of any  unipotent character of 
$ \,^2A_l(q) $  is at least $ e l +  d \geq 2 + l$. 
If $ l \geq 3 $, then  all unipotent characters have  
$2$-defect greater than or equal to $6$.
$\,^2A_2(q)$ has one  unipotent character, $ \chi^{(1,2)} $ of   $2$-defect  
$2e$, all other uniptent characters have $2$-defect  $2e+d \geq 4 $.   
The $2$-defect of any  unipotent character of $\,^2A_1(q) $ is $d+e \geq 3 $.
\smallskip\noindent (iii) 
Every unipotent character of $B_l(q)$ or 
$C_l(q)$, $l\geq 2 $ has $2$-defect at least $2l \geq  4$. 

\smallskip\noindent(iv) 
Every unipotent character of  $D_l(q) $ or  $\,^2D_l(q)$, $ l\geq 4 $ 
has $2$-defect at least $2l-1 \geq 7$.
\end{Proposition}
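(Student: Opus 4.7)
The plan is to combine Lusztig's explicit degree formulas for unipotent characters with the lifting-the-exponent lemma to track 2-adic valuations box-by-box. Throughout, write $d = v_2(q-1)$ and $e = v_2(q+1)$; since $q$ is odd, $\min(d,e) = 1$ and $d + e \geq 3$.

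For parts (i) and (ii), I would pass first to $\GL_{l+1}(q)$ or $\GU_{l+1}(q)$, where the unipotent characters are parameterized by partitions $\lambda$ of $n = l+1$. In the linear case the $q$-hook formula reads
\[
\chi^\lambda(1) = q^{n(\lambda)} \prod_{i=1}^{n}(q^i-1) \Big/ \prod_{x\in\lambda}(q^{h_x}-1),
\]
so, since $q^{n(\lambda)}$ is odd, the 2-defect of $\chi^\lambda$ in $\GL_n(q)$ equals $\sum_{x\in\lambda} v_2(q^{h_x}-1)$. LTE gives contribution $d$ per odd-hook box and at least $d+e$ per even-hook box. Passing from $\GL_n(q)$ down to the quasi-simple version of $A_l(q)$ subtracts exactly $d$ from the defect (the remaining central contributions being odd), yielding the bound $d(\chi^\lambda)\geq dl$. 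The classical fact that the $2$-cores are precisely the staircase partitions of triangular size $k(k+1)/2$ then shows that when $l+1$ is not triangular every partition $\lambda$ of $l+1$ carries at least one even hook, improving the bound to $dl + e$. The special cases $l=1,2$ and the numerical estimates for $l\geq 3$ are then a short inspection on $(d,e)$ subject to $d+e\geq 3$. Part (ii) is Ennola-dual: the same combinatorics applies to $\GU_n(q)$ after replacing the factors $q^i-1$ and $q^{h_x}-1$ by $q^i-(-1)^i$ and $q^{h_x}-(-1)^{h_x}$, which swaps the roles of $d$ and $e$ and produces the staircase exception in the same places.

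For parts (iii) and (iv), unipotent characters of $B_l(q)$, $C_l(q)$, $D_l(q)$, $^2D_l(q)$ are indexed by symbols of the appropriate rank and defect. I would invoke Lusztig's degree formula as organized by Olsson \cite{Olsson86}: the 2-adic valuation of the degree of a unipotent character is controlled by a product of hook-type factors of the form $q^{2h}\pm 1$, each carrying a 2-adic valuation at least $d+e\geq 3$. Comparing with $v_2(|G|) = \sum_{i=1}^l v_2(q^{2i}-1)$, which is at least $l(d+e)$ plus a 2-adic contribution from $l!$, one extracts the claimed bounds $2l$ for types $B, C$ and $2l-1\geq 7$ for types $D, ^2D$ when $l\geq 4$, the extra unit in the latter arising because the symbols have defect $0$.

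The main obstacle will be the combinatorial bookkeeping on two fronts: on the one hand, pinpointing when the bound $dl$ (resp.\ $el$) is actually attained in type $A$ (resp.\ $^2A$), namely only for the staircase partition in case $l+1$ is triangular, which is what forces the dichotomy in the statement; on the other hand, verifying uniformity of the bound over every symbol of a given rank in the classical cases, which rests on the detailed hook-length combinatorics of symbols described by Olsson. The low-rank groups $A_1(q)$, $A_2(q)$, $^2A_1(q)$, $^2A_2(q)$ are dealt with by direct inspection of the two or three partitions involved.
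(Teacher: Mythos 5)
Your argument for parts (i) and (ii) matches the paper's: pass to the $q$-hook degree formula for $\GL_n(q)$ (resp.\ $\GU_n(q)$), read off the $2$-defect as $\sum_h v_2(q^h-1)$ (resp.\ $\sum_h v_2(q^h-(-1)^h)$) minus the correction $d$ (resp.\ $e$) for the ambient group, note that odd hooks contribute $d$ (resp.\ $e$) and even hooks contribute at least $d+e$, and invoke the fact that a partition has no even hook lengths precisely when it is a staircase of triangular size. This is essentially the proof in the paper.

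For parts (iii) and (iv), however, there is a genuine gap. Olsson's degree formula, as quoted in the paper, gives
$$\frac{|G|_{r'}}{\chi_{[X,Y]}(1)_{r'}} = 2^{c(X,Y)}\prod_{h}(q^{h}-1)\prod_{h'}(q^{h'}+1),$$
with $h$ running over hooks and $h'$ over cohooks of the symbol $[X,Y]$, not factors of the form $q^{2h}\pm 1$. Moreover, your assertion that ``each carries a $2$-adic valuation at least $d+e\geq 3$'' is false: an odd hook contributes $v_2(q^h-1)=d$, which can equal $1$, and an even cohook contributes $v_2(q^{h'}+1)=1$. The correct (and only guaranteed) lower bound per factor is $1$. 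The bound $2l$ (resp.\ $2l-1$) therefore does not follow from valuation estimates on individual factors; it follows from the purely combinatorial identity
$$c(X,Y)+h^{+}[X,Y]+h^{-}[X,Y]=2\,\mathrm{rk}[X,Y]$$
when $k-r$ is odd or $X=Y$ (types $B$, $C$), and $=2\,\mathrm{rk}[X,Y]-1$ when $k-r$ is even and $X\neq Y$ (types $D$, ${}^2D$). This identity, derived from Olsson's counting formulas for hooks and cohooks of a symbol, is the actual content of the proof, and your sketch neither states it nor derives it — you flag ``uniformity over symbols'' as an obstacle but propose no mechanism for overcoming it, and the route you do indicate (comparison with $v_2(|G|)$) would not produce the stated bounds.
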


\begin{proof}  We note that $d+e \geq 3$.

(i) The unipotent characters of $A_l(q)$ are parametrized by 
the partitions of $ l+1 $. If   $\alpha $  is a partition of $l$ and  
$\chi^{\alpha }$ is the corresponding  unipotent character, then 
$$|\chi^{\alpha}(1)|_{r'}=\frac { (q-1) |A_l(q)|_{r'}}{ \prod_h (q^h-1)}$$
where $h$ runs over the set of hook lengths of $\alpha$ (see for instance 
\cite[pp.152-153]{Macdonald}).  
Thus the $2$-defect of  $\chi^{\alpha} $ is $f$ where 
$ 2^f=$ $\frac{\prod_h (q^h-1)_+ } {(q-1)_+}$. Since $\alpha$ has 
$l+1 $ hooks, the first assertion follows. If $l+1$ is not a 
triangular number, then at least one hook of $\alpha$ is of even 
length. So, $(q^2-1)$ divides $q^h-1$ for at least one hook length  
$h$ of $\alpha$, whence $f \geq  (d+e) +d(l-1)$. Since $5$ is not a 
triangular number, it is immediate from the first two assertions 
that any unipotent character of $A_l(q)$, $l\geq 4$ has 
$2$-defect at least $6$. Any partition of $4$ has two hooks of 
even length, hence $f \geq 2(d+e) \geq 6$. If $\alpha=(1,1,1)$ or 
$\alpha= (3)$, then the hook lengths of $\alpha $ are $1$, $2$ and 
$3$ and it follows that $f = 2d+e \geq 4$. If $\alpha =(2, 1)$, 
then the hook lengths of $\alpha $  are $1, 1$ and $3$ and the 
$2$-defect is $2d$. Finally, if $\alpha =(1,1)$ or $(2)$, then the  
hook lengths are $1$ and $2$ and the $2$-defect is  $d+e$.

(ii) The  unipotent characters of $\,^2A_l(q)$ are also parametrised  
by  partitions of $l+1 $. If $\chi^{\alpha} $ is the character 
labelled by  $\alpha $, then 
$$|\chi^{\alpha}(1)|_{r'}
=  \frac { (q+1) |\,^2A_l(q)|_{r'}}{\prod_h ((-q)^h-1)}, $$
where $h$ runs over the set of hook lengths of  $\alpha $ 
(see \cite{LuszSri}). The assertions of (ii) follow  as in (i).

(iii) and (iv) Let $G$ be  one of the groups $B_l(q)$, 
$C_l(q)$, $l\geq 2 $,  $D_l(q) $, $\,^2D_l(q)$, $ l\geq 4 $.
In Lusztig's description, the unipotent characters are 
labelled using  symbols. A symbol is an equivalence class   
$[X, Y] $ of unordered pairs $[X, Y] $ with  
$X$, $Y$ (possibly empty) subsets of the non-negative 
integers. The pair   $[X, Y]$ is equivalent to $[X', Y']$  if and 
only if there exists an  integer $t$ such that either $X = X'^{+t}$ 
and $Y=$ $Y'^{+t}$ or $X =$ $Y'^{+t}$ and $Y = X'^{+t}$. Let 
$[X, Y]$ be a symbol, $X =$ $\{ a_1, a_2, \cdots, a_k\}$, 
$Y =\{ b_1, b_2,  \cdots, b_r \}$, with $a_1>a_2\cdots>a_k \geq 0$ 
and $b_1> b_2> \cdots b_r \geq 0$. The rank of $[X, Y]$ is defined by
$${\mathrm rk}[X, Y] =  \sum_i a_i + \sum_j b_j  
- \lfloor \left( \frac{k+r-1}{2}\right) ^2  \rfloor$$ 
where $\lfloor x\rfloor $ denotes the greatest 
integer less than or equal to $x$. Let $c(X,Y)$ be defined by 
\[ c(X,Y) = \left\{ \begin{array} 
{r@{\quad:\quad}l}
 \lfloor  \frac{k+r-1}{2}  \rfloor - |X \cap Y| &  X \ne Y \\
0 & X=Y.
\end{array} \right. \]
If $\chi_{[X, Y]}$ is the unipotent character of $G$ indexed by the 
symbol  $[X,Y] $, then ${\mathrm rk}[X, Y] =$ $l$ and  
$$|\chi_{[X, Y]} (1)|_{r'}=    \frac{|G|_{r'}} {2^{c(X,Y)} \prod_h(q^h-1)
\prod_{h'}(q^{h'} +1)}$$ 
where $h$ runs over the set of hooks of $[X,Y] $ and $h'$ runs over 
the  set of cohooks of $[X,Y]$ (see \cite[Proposition 5]{Olsson86}).  
Let $h^{+}[X, Y]$ be the number of hooks of $[X, Y]$ and let 
$h^{-}[X, Y]$ denote the number of cohooks of $[X,Y]$.
Then by the character formula  above it  suffices to prove that 
$c(X,Y) + h^{+}[X, Y] + h^{-}[X, Y] = $ $2 (\mathrm{rk} [X,Y]) $ 
in case $G=B_l(q)$ or $G=C_l(q)$ and that 
$c(X,Y) + h^{+}[X, Y] + h^{-}[X, Y]  =$ $2 (\mathrm{rk} [X,Y] ) - \delta $,
where $\delta \in \{0, 1\}$ in case $G=D_l(q)$ or $G={^2D_l(q)}$. 
By \cite[Equations 15, 16]{Olsson86},
\begin{equation}\label{numberhooks}h^{+}[X, Y] = 
\sum_{i} a_i + \sum_j b_j  -\binom{k}{2}  -\binom{r}{2}  \end{equation}
and 
\begin{equation}\label{numbercohooks}h^{-}[X, Y] = 
\sum_{i} a_i + \sum_j b_j  - kr + |X\cap Y|.  \end{equation}
From these it is straighforward to check that 
$$ c(X,Y) + h^{+}[X,Y] + h^{-}[X,Y] -2 (\mathrm{rk}[X,Y]) = 0 $$
if  either $ k-r $ is odd  or $X=Y $ and 
$$ c(X,Y) + h^{+}[X,Y] + h^{-}[X,Y] -2 (\mathrm{rk}[X,Y]) = -1 $$
if $ k-r $ is even and $X \ne Y $.
The result follows  since  if   $G=B_l(q)$ or $G=C_l(q)$  then $k-r $ is odd 
and if $ G =D_l(q)$ or $ G=\,{^2D_l(q)}$  then $k-r $ is even 
(see \cite[Section 13.8]{Carter93}).
\end{proof}

\begin{Proposition} \label{2defectsexceptional}
Let $q$ be an odd prime power.

\smallskip\noindent (i)
$G_2(q)$ has two unipotent characters of $2$-defect $0$, (labelled
$G_2[\theta]$, $G_2[\theta^2]$); if $q\equiv\ 1\ (\mod \ 4)$ then
$G_2(q)$ has two unipotent characters of $2$-defect $3$ (labelled
$G_2[1]$, $G_2[-1]$), and if $q\equiv\ 3\ (\mod \ 4)$ then $G_2(q)$ has
one unipotent character of $2$-defect $3$ (labelled $\Phi_{2,2}$).
In both cases, all remaining unipotent characters of
$G_2(q)$ have $2$-defect at least $4$.

\smallskip\noindent (ii)
${^2{G_2(q)}}$ has two cuspidal unipotent characters of
$2$-defect $0$, four cuspidal unipotent characters of
$2$-defect $2$, and the remaining two unipotent characters
have $2$-defect at least $4$. 

\smallskip\noindent (iii)
$F_4(q)$ has two unipotent characters of $2$-defect $0$ (labelled
$F_4[\theta]$, $F_4[\theta^2]$), and
two unipotent characters of $2$-defect $5$ (labelled
$F_4[i]$, $F_4[-i]$).
All other unipotent characters of $F_4(q)$ have $2$-defect at least $7$.

\smallskip\noindent (iv)
$E_6(q)$ and ${^2{E_6(q)}}$ each have two unipotent characters
of $2$-defect $0$ (labelled $E_6[\theta]$, $E_6[\theta^2]$ in
the case of $E_6(q)$), and all other unipotent characters have 
$2$-defect at least $8$. The  unipotent characters of defect $0$ of 
$ E_6(q)$  each  have degree 
$\frac{1}{3} q^7\Phi_1^6(q)\Phi_2^4(q)\Phi_4^2(q)\Phi_5(q)\Phi_8(q)$  and the 
 unipotent characters of defect $0$ of 
$ ^2E_6(q)$   each have degree 
$\frac{1}{3} q^7\Phi_1^4(q)\Phi_2^6(q) \Phi_4^2(q) \Phi_8(q)\Phi_{10}(q)$. 

\smallskip\noindent (v)
$E_7(q)$ has four unipotent characters of $2$-defect $d\geq 3$,
labelled $(E_6[\theta],1)$, $(E_6[\theta^2],1)$,
$(E_6[\theta],\epsilon)$, $(E_6[\theta^2],\epsilon)$, where
$2^d$ is the largest power of $2$ dividing $q^2-1$. These  
characters each have degree $\frac{1}{3} q^7\Phi_1^6(q)\Phi_2^6(q) \Phi_4^2(q) 
\Phi_5(q) \Phi_7(q)\Phi_8(q)\Phi_{10}(q)\Phi_{14}(q)$.     Furthermore,
$E_7(q)$ has exactly two unipotent characters of $2$-defect $8$
(labelled $E_7[\xi]$, $E_7[-\xi]$ if $q\equiv\ 1\ \mod\ 4$ and
$\Phi_{512,11}$, $\Phi_{512,12}$ if $q\equiv\ 3\ \mod\ 4$); all
other unipotent characters have $2$-defect at least $14$.

\smallskip\noindent (vi)
$E_8(q)$ has four unipotent characters of $2$-defect $0$ (labelled
$E_8[\zeta]$, $E_8[\zeta^2]$, $E_8[\zeta^3]$, $E_8[\zeta^4]$),
four unipotent characters of $2$-defect $3$
(labelled 
$E_8[\theta]$, $E_8[\theta^2]$, $E_8[-\theta]$, $E_8[-\theta^2]$
if $q\equiv\ 1\ \mod\ 4$, and
$(E_8[\theta],\Phi_{2,1})$, $(E_8[\theta^2],\Phi_{2,1})$, 
$(E_8[\theta],\Phi'_{2,2})$, $(E_8[\theta^2],\Phi'_{2,2})$
if $q\equiv\ 3\ \mod\ 4$);  
all other unipotent characters have $2$-defect at least $5$.

\smallskip\noindent (vii)
${^3{D_4(q)}}$ has two unipotent characters with $2$-defect $3$,
two unipotent characters with $2$-defect at least $5$, and four
unipotent characters with $2$-defect at least $6$.
\end{Proposition}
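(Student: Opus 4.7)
The plan is a case-by-case inspection of the tabulated degrees of unipotent characters of the exceptional finite groups of Lie type, as collected in \cite[pp.~465--488]{Carter93} (with \cite{Ward} for the Ree groups ${}^2G_2(q)$). For each exceptional type $X(q)$, both $|X(q)|$ and each unipotent character degree $\chi(1)$ factor as $q^{a}\prod_d \Phi_d(q)^{b_d}$ times a rational constant whose denominator, whenever one appears (e.g.\ $\tfrac13$ for the characters $G_2[\theta^{\pm 1}]$, $E_6[\theta^{\pm 1}]$, $E_7[\pm\xi]$ or $\tfrac15,\tfrac13,\tfrac12$ for certain families in $E_8$), is coprime to $2$. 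The $2$-defect $d(\chi)=v_2(|X(q)|)-v_2(\chi(1))$ therefore reduces to a sum of $2$-adic valuations of cyclotomic values.

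The arithmetic input is the standard computation of $v_2(\Phi_d(q))$ for odd $q$: the valuation is zero unless $d\in\{1,2\}$ or $d$ is a power of $2$; one has $v_2(\Phi_{2^a}(q))=1$ for $a\geq 2$; and $v_2(\Phi_1(q))+v_2(\Phi_2(q))=v_2(q^2-1)\geq 3$, with $v_2(\Phi_2(q))=1$ when $q\equiv 1\pmod 4$ and $v_2(\Phi_1(q))=1$ when $q\equiv 3\pmod 4$. These facts follow quickly from the lifting-the-exponent identity $v_2(q^{2^am}-1)=v_2(q-1)+v_2(q+1)+a-1$ (for $m$ odd and $a\geq 1$) combined with M\"obius inversion applied to $q^n-1=\prod_{d\mid n}\Phi_d(q)$. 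Combined with Ennola duality $q\mapsto -q$, which interchanges $\Phi_1$ and $\Phi_2$ on the unipotent labels of self-Ennola-dual types, these facts reduce each assertion in (i)--(vii) to reading off a handful of exponents from Carter's tables.

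With these tools, the verification is uniform. For $G_2(q)$ the two cuspidal characters $G_2[\theta^{\pm 1}]$ have degree $\tfrac13 q\Phi_3(q)^2\Phi_6(q)^2$, which absorbs the full $2$-part of $|G_2(q)|$ and yields defect $0$; the characters $G_2[\pm 1]$ (respectively $\Phi_{2,2}$), present precisely when $q\equiv 1\pmod 4$ (respectively $q\equiv 3\pmod 4$), miss exactly one factor of $\Phi_2^2$ (respectively $\Phi_1^2$), yielding defect $3$; every remaining entry of the table misses at least one further $\Phi_1\Phi_2\Phi_4$, forcing defect $\geq 4$. The same method handles ${}^2G_2(q)$, $F_4(q)$, $E_6(q)$, ${}^2E_6(q)$, $E_7(q)$, $E_8(q)$ and ${}^3D_4(q)$: in each case the stated defect is obtained by counting the exponents of $\Phi_1$, $\Phi_2$, and $\Phi_{2^a}$ missing from the given degree formula relative to $|X(q)|$, and the explicit degree expressions in (iv) and (v) drop out by direct substitution into Carter's tables.

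The main obstacle is not conceptual but bookkeeping. One must pair every character label in the proposition with the correct entry of Carter's table, keep track of the rational prefactors attached to families parametrised by roots of unity, and handle the $q\equiv 1\pmod 4$ versus $q\equiv 3\pmod 4$ dichotomy that swaps labels such as $E_7[\pm\xi]\leftrightarrow\Phi_{512,11},\Phi_{512,12}$ in (v) and $E_8[\pm\theta^{\pm 1}]\leftrightarrow(E_8[\theta^i],\Phi_{2,j})$ in (vi). Once these cases are correctly matched, the lower bounds on the defects of the remaining unipotent characters amount to the observation that no other degree in the tables is missing enough $\Phi_1$, $\Phi_2$, or $\Phi_{2^a}$ factors to bring the defect below the claimed threshold, completing the proof.
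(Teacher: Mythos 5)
Your strategy — reading off the degrees from Carter's tables and computing $2$-adic valuations of the cyclotomic factors, with $v_2(\Phi_d(q))=0$ unless $d\in\{1\}\cup\{2^a\}$, $v_2(\Phi_{2^a}(q))=1$ for $a\geq 2$, and $v_2(\Phi_1(q)\Phi_2(q))\geq 3$ — is precisely the one the paper uses, and it is sound. However, your worked example for $G_2(q)$ contains a concrete error that you should correct. You assert that $G_2[\theta^{\pm 1}]$ have degree $\tfrac13 q\Phi_3(q)^2\Phi_6(q)^2$ and that this ``absorbs the full $2$-part of $|G_2(q)|$''. Both claims fail: that polynomial has $q$-degree $9$, which exceeds the Steinberg degree $q^6$ and so cannot be a unipotent character degree of $G_2(q)$; and since $\Phi_3(q)$ and $\Phi_6(q)$ are odd for odd $q$, the expression is coprime to $2$ and would give $2$-defect $2v_2(q^2-1)\geq 6$, not $0$. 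The correct degree (Carter, pp.~478) carries $\Phi_1^2\Phi_2^2$, so that the quotient $|G_2(q)|/\chi(1)$ is odd; it looks as though you transposed the pair $\Phi_1,\Phi_2$ with $\Phi_3,\Phi_6$.

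A second, more minor slip: you write that $G_2[\pm 1]$ (resp.\ $\phi_{2,2}$) are ``present precisely when $q\equiv 1\pmod 4$ (resp.\ $q\equiv 3\pmod 4$)''. These unipotent characters exist for every odd $q$; what depends on $q\bmod 4$ is their $2$-defect, since $v_2(\Phi_1(q))$ and $v_2(\Phi_2(q))$ trade roles. With these corrections the rest of your argument goes through exactly as in the paper's proof.
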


\begin{proof}
One proves this by first expressing the group orders given in
\cite[pp. 75-76]{Carter93} as products of cyclotomic polynomials 
evaluated at $q$. Then, for any of the above groups $G$ and any
unipotent character $\Phi$ of $G$ one calculates the $2$-part
of $\frac{|G|}{\Phi(1)}$ by running through Lusztig's lists of 
character degrees of unipotent characters in 
\cite[pp. 477--488]{Carter93}, observing that the only
cyclotomic polynomials $\Phi_d$ which occur in these character
degrees for which $\Phi_d(q)$ is even, are those for $d\in$
$\{1,2,4,8\}$. While the $2$-part of $\Phi_4(q)$ and $\Phi_8(q)$
is exactly $2$, the $2$-part of $\Phi_1(q)\Phi_2(q) =$
$q^2-1$ is at least $8$, possibly bigger, which accounts for
the inequalities in the above statements.
\end{proof}

\section{Type  $F_4$}

\begin{Proposition} \label{typeFpunch}
Let $G$ be a quasi-simple finite group such that $Z(G)$ has 
odd order and such that $G/Z(G)$ is a simple group of type 
$F_4(q)$, where $q$ is an odd prime power. 
If $kG$ has a block $b$ with an elementary abelian defect group 
$P$ of order $8$, then  either  $|\Irr_K(G, b)|=8 $ or  $ \CO  G b $ 
is Morita equivalent to a  block  of $\CO L $ of a finite group $L$ such that
$|L/Z(L)|< |G/Z(G)| $. 
\end{Proposition}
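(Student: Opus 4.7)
Since $F_4(q)$ for $q$ odd has trivial Schur multiplier (it does not appear in Proposition \ref{TableExeptionalSchurMult}, and the generic Schur multiplier of $F_4$ is trivial), the hypothesis that $Z(G)$ has odd order forces $Z(G)=1$, so $G\cong F_4(q)$. Write $G=\Gb^F$ with $\Gb$ simple of type $F_4$ over $\bar{\mathbb F}_q$. By the Jordan decomposition of $2$-blocks recalled in Section \ref{redback}, $b$ lies in $\CE_2(\Gb^F,[s])$ for some semi-simple $2'$-element $s\in\Gb^{*F^*}$. The plan is to show that either the centraliser $\Cb(s)=C_{\Gb^*}(s)$ is contained in a proper $F^*$-stable Levi subgroup $\Lb^*$ of $\Gb^*$, in which case Bonnaf\'e--Rouquier's Theorem B' of \cite{BonRou} yields a Morita equivalence $\CO\Gb^F e_s^{\Gb^F}\cong \CO\Lb^F e_s^{\Lb^F}$ with $|\Lb^F|<|\Gb^F|$ (hence $|\Lb^F/Z(\Lb^F)| < |G/Z(G)|$), or else $|\Irr_K(G,b)|=8$ by a direct character count.

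First I would rule out the unipotent case $s=1$. By Proposition \ref{leqeight}, any $\chi\in\Irr_K(G,b)$ has $2$-defect in $\{2,3\}$, whereas Proposition \ref{2defectsexceptional}(iii) asserts that every unipotent character of $F_4(q)$ has $2$-defect $0$, $5$, or at least $7$. Since a unipotent block contains at least one unipotent character by \cite{BroMic89}, this forces $s\ne 1$.

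Next I would examine when $\Cb(s)$ fails to lie in a proper standard Levi of $\Gb^*$, that is when $s$ is quasi-isolated. Reading off the extended Dynkin diagram of $F_4$ with marks $1,2,3,4,2$, the non-trivial quasi-isolated semi-simple classes have centralisers of types $B_4$, $C_3\times A_1$, $A_2\times A_2$, $A_3\times A_1$, corresponding to elements of orders $2,2,3,4$ respectively. Since $s$ has odd order, the only quasi-isolated possibility is $\Cb(s)$ of type $A_2\times A_2$ with $s$ of order $3$. In every other case, Bonnaf\'e--Rouquier applies and the second alternative of the proposition holds.

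In the remaining quasi-isolated case, $\Cb(s)^{F^*}$ is (depending on the $F^*$-action on the two $A_2$ factors) isomorphic to one of $\SL_3(q)\times\SL_3(q)$, $\SU_3(q)\times\SU_3(q)$, $\SL_3(q)\times\SU_3(q)$, or $\SL_3(q^2)$, whose unipotent characters are enumerated by Proposition \ref{2defectsclassical}(i), (ii). Combining the Jordan decomposition degree bijection (\ref{matchchardeg}) with the $2$-defect formula (\ref{matchtwodefect}), I would compute the $2$-defects of all characters in $\CE_2(\Gb^F,[s])=\bigcup_u\CE(\Gb^F,[su])$ (with $u$ running over $2$-elements of $\Cb(s)^{F^*}$), and then invoke Proposition \ref{leqeight} to match against the constraints imposed by the elementary abelian defect group of order $8$, verifying that the $2$-block $b$ collects exactly eight irreducible characters, all of height zero. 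The main obstacle will be this last step: although the unipotent character sets on $A_2$-type factors are small, the bookkeeping across the four Frobenius sub-cases and the identification of the correct $2$-block distribution within $\CE_2(\Gb^F,[s])$ is delicate.
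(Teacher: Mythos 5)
Your reduction to the quasi-isolated case via Bonnaf\'e--Rouquier matches the paper's argument, and your identification of $A_2\times A_2$ as the only quasi-isolated type arising from an odd-order element is correct (and a little sharper than what the paper states — the paper lists all quasi-isolated centraliser types from Shoji's/Deriziotis's tables, including the $B_4$, $C_3\times A_1$, $A_3\times A_1$ ones that come from elements of even order, and disposes of them all uniformly). But the conclusion you propose to extract in the remaining case is wrong, and this is the substantive gap. You say you would compute defects and then ``verify that the $2$-block $b$ collects exactly eight irreducible characters, all of height zero.'' That is not what the computation gives: the quasi-isolated case is in fact \emph{impossible}, and the statement's first alternative plays no role for $F_4$.

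Here is what actually happens, and why. Since $Z(\Gb)$ is trivial, $\Cb(t)$ is connected, so $a_{\chi}=1$ and $\alpha_{\chi}=0$; since the connected centre $Z^{\circ}(\Cb^{\circ}(t))$ of a quasi-isolated centraliser in $F_4$ is trivial, $\zeta_t=0$. Formula (\ref{matchtwodefect}) then says the $2$-defect of any $\chi\in\CE(\Gb^F,[t])$ equals the $2$-defect of the corresponding unipotent character of $\bar\Cb^\circ(t)^{F^*}$. By Proposition \ref{2defectsclassical}(i),(ii) every unipotent character of $A_2(q^j)$ or $\,^2A_2(q^j)$ has $2$-defect at least $2$, so a product of two $A_2$-type factors gives defect at least $4$; a quick check of the subcases shows $4$ is indeed the minimum and is attained only under the indicated congruence conditions on $q$. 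On the other hand, a character in a block with defect group of order $8$ has $2$-defect at most $3$. Since by \cite[Theorem 3.1]{Hissbasic} the block $b$ with semi-simple label $[t]$ must meet $\CE(\Gb^F,[t])$, you get a contradiction — there is no such block with a quasi-isolated label. (The same defect bound disposes of the case $t=1$ at once, via Proposition \ref{2defectsexceptional}(iii), so you did not need a separate preliminary step for the unipotent case.) In short: the defect computation you outline is exactly the right tool, but you should run it to its actual conclusion — vacuity of the quasi-isolated case — rather than attempting a character count.

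A minor secondary issue: you describe the possible $F^*$-forms of $\Cb(s)$ as including $\SL_3(q)\times\SU_3(q)$ and $\SL_3(q^2)$. In $F_4$ the two $A_2$ subsystems consist of long roots and short roots respectively, so they cannot be interchanged by $F^*$; the paper's list only contains $A_2(q)\times A_2(q)$ and $\,^2A_2(q)\times\,^2A_2(q)$. This does not affect the defect estimate, but it is worth correcting if you write this up.
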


\begin{proof}  Here $Z(G)=1 $ and $ G = \Gb^F $, where  
$\Gb$ is a  simple  algebraic group of type $F_4$ over  $\bar {\mathbb F}_q$
and  $F: \Gb \to \Gb  $ is a Frobenius morphism with respect to an 
${\mathbb F}_q $-structure on $\Gb $.   
Let $ [t]$ be the semi-simple label of $b$ (see \S\ref{redback}) 
and let $\Cb(t):=  C_{\Gb^*}(t)$.   Suppose first that   
$  \Cb(t) $ is   contained in a  proper Levi subgroup  of 
$\Gb ^* $.  Then by the 
Jordan decomposition of blocks  (see  \S\ref{redback}), there exists a 
proper  $F$-stable Levi subgroup, $\Lb$
such that the block $\CO G b $  is Morita equivalent to  some block of  
$\CO \Lb^F$ . Since $\Lb$ is a proper Levi subgroup of $\Gb$,  
$|\Lb(t)^F/Z(\Lb(t)^F)|  $ is strictly smaller than $ |G/Z(G)| $.  
Hence,  we may assume that $t$ is a quasi-isolated  element of  
$\Gb^* $, i.e.  that 
$\Cb(t)$  is  not  contained in any proper Levi subgroup   
of $ \Gb^* $. Since   $Z(\Gb)$ is  trivial, 
$\Cb(t)=\Cb^{\circ}(t)$, hence   $t$ is  even isolated in $\Gb^*$, i.e 
$\Cb^{\circ}(t)$ is not contained in any proper Levi subgroup of $\Gb^*$.
From the   tables describing 
centralisers of semi-simple elements  in  groups of type $F_4$ in
 \cite{Shoji74}, (see also  the tables in \cite{Deri84})
one sees  that $\bar {\Cb}^{\circ} (t)^{F^*}  $ 
is isomorphic to  one of $F_4 (q) $,   $ B_4 (q)$,  
$C_3 (q) \times A_1 (q) $,    $ A_3 (q)\times A_1 (q) $, 
$ \,^2A_3 (q) \times  A_1 (q)$,  $A_2 (q) \times  A_2 (q )$  or  
$\,^2A_2 (q) \times  \,^2A_2 (q )$ (and  $Z ^{\circ}(\Cb ^{\circ} (t))=1 $).
By (\ref{matchchardeg}), (\ref{matchtwodefect}) and 
Propositions \ref{2defectsclassical} and \ref{2defectsexceptional}, 
the $2$-defect of  any element of $\CE(\Gb^F, [t])=\CE(\Gb^F, (t)) $ 
is at least $4 $, whence $\CE(\Gb^F, (t)) \cap \Irr_K(G, b)=\emptyset $, 
a contradiction. 
\end{proof}

\section{On characters of small defect in type $E$ groups}

We use the notation  of \S\ref{redback}.

\begin{Proposition}\label{small2defectE_6}   
Suppose that $ \Gb $  is simple, 
simply connected of  type $E_6$, and  $\Gb^F$ is     a group of type  
$E_6(q)$.  Let $ s $ 
be a semi-simple element of  $\Gb^{*F^*}$   and let $ \chi \in \Irr_K(G ) \cap 
\CE(\Gb^F, [s]) $. Let  $ \lambda $, $\lambda' $ and $\bar \lambda $
correspond to $\chi$ as in \S \ref{redback} (\ref{matchchardegfinal}).
Suppose that the $2$-defect $d(\chi)$ of $\chi$ is at most 
$3$. Then $a_{\chi} $ is $1$ or $3$, $\alpha_{\chi}=0 $
(see  notation  after (\ref{matchtwodefect}) in \S \ref{redback}) 
and $s$, $\chi $ and $\bar \lambda$ satisfy  one of the rows in the 
following table.

{\rm
\small
\begin{center}
\begin{tabular}{l|c|c|l|c|l|c|c}
\hline
& & & & & & & \\
 &  $\Delta_s$ &  $\Cb^{\circ}(s)'^{F^*}  $ &  $z(s)$ & 
$\bar \lambda $ & $ a_{\chi}\chi(1)_{r'}$ 
& $d(\chi)$  &     cond. on $q$ \\
& & & & & && \\
\hline
(i) &   -& -  & $\Phi_1^2\Phi_3 ^2$ &  $1$& 
$\Phi_1^4\Phi_2 ^4\Phi_3\Phi_4^2\Phi_5\Phi_6^2\Phi_8 \Phi_9\Phi_{12}$ & 
$2$  & $q\equiv 3 $ mod  $4 $\\

(ii) &-& - & $\Phi_1^2\Phi_5$ & $1$ &  
$\Phi_1^4\Phi_2 ^4\Phi_3^3\Phi_4^2\Phi_6^2\Phi_8 \Phi_9\Phi_{12} $& 
$2$  & $q\equiv 3 $ mod  $4 $\\

(iii)&   -& - & $\Phi_1\Phi_2\Phi_3^2$ & $1$ &   
$\Phi_1^5\Phi_2 ^3\Phi_3\Phi_4^2\Phi_5\Phi_6^2\Phi_8 \Phi_9\Phi_{12} $& 
$3$  &   $(q^2-1)_+ =  8 $ \\

(iv)& -& - & $\Phi_1\Phi_2\Phi_5$ & $1$ &  
$\Phi_1^5\Phi_2 ^3\Phi_3^3\Phi_4^2\Phi_6^2\Phi_8 \Phi_9\Phi_{12} $& 
$3$  &  $(q^2-1)_+ =  8 $    \\

(v)& -& - & $\Phi_1\Phi_2\Phi_3\Phi_6$ & $1$ &   
$\Phi_1^5\Phi_2 ^3\Phi_3^2\Phi_4^2\Phi_5\Phi_6\Phi_8 \Phi_9\Phi_{12} $& 
$3$  &   $(q^2-1)_+ =  8 $ \\

(vi)& -& - & $\Phi_3^3$ & $1$ &  
$\Phi_1^6\Phi_2 ^4\Phi_4^2\Phi_5\Phi_6^2\Phi_8 \Phi_9\Phi_{12} $& $0$  &   \\

(vii)&-& - & $\Phi_2^2  \Phi_3\Phi_6$ &  $1$ & 
$\Phi_1^6\Phi_2 ^2 \Phi_3^2\Phi_4^2\Phi_5\Phi_6\Phi_8 \Phi_9\Phi_{12} $& 
$2$  & $q\equiv 1$ mod  $4 $  \\

(viii)&-& - & $\Phi_3  \Phi_{12}$ &  
$1$ & $\Phi_1^6\Phi_2 ^4 \Phi_3^2\Phi_4^2\Phi_5\Phi_6^2\Phi_8 \Phi_9$ & 
$0$  &   \\

(ix) & -& - & $\Phi_9  $ &  
$1$ & $\Phi_1^6\Phi_2 ^4 \Phi_3^3\Phi_4^2\Phi_5\Phi_6^2\Phi_8 \Phi_{12}$ 
& $0$  &   \\

(x)&-& - & $\Phi_3\Phi_6^2  $ &  $1$ &  
$\Phi_1^6\Phi_2 ^4 \Phi_3^2\Phi_4^2\Phi_5\Phi_8 \Phi_9\Phi_{12}$ & 
$0$  &   \\

(xi) &$A_2$ & $A_2(q)$ & $\Phi_3^2$ & $\chi^{(2,1)}$&  
$\Phi_1^4\Phi_2 ^4 \Phi_4^2\Phi_5\Phi_6^2\Phi_8 \Phi_9\Phi_{12}$  & 
$2$  & $q\equiv 3 $ mod  $4 $\\

(xii) & $A_2$ & $\, ^2A_2(q)$ & $\Phi_3\Phi_6$ & $\chi^{(2,1)}$&    
$\Phi_1^6\Phi_2 ^2 \Phi_3\Phi_4^2\Phi_5\Phi_8 \Phi_9\Phi_{12}$  & 
$2$  & $q\equiv 1 $ mod  $4 $\\

 (xiii)& $D_4$ & $\, ^3D_4(q)$ & $\Phi_3$ & $\phi_{2,2} $ &  
$\frac{1}{2}\Phi_1^4\Phi_2 ^4 \Phi_4^2\Phi_5\Phi_8 \Phi_9\Phi_{12}$  & 
$3$  & $q\equiv 3 $ mod  $4 $\\

(xiv)& $D_4$ & $\, ^3D_4(q)$ & $\Phi_3$ & $\phi_{2,1} $  & 
$\frac{1}{2}\Phi_1^4\Phi_2 ^4 \Phi_4^2\Phi_5\Phi_6^2\Phi_8 \Phi_9$  & 
$3$  & $q\equiv 3 $ mod  $4 $\\

(xv)&$D_4$ & $\, ^3D_4(q)$ & $\Phi_3$ & $\,^3D_4[-1]$&    
$\frac{1}{2}\Phi_1^6\Phi_2 ^2\Phi_3^2 \Phi_4^2\Phi_5\Phi_8 \Phi_9$  & 
$3$  & $q\equiv 1$ mod  $4 $\\

(xvi)& $D_4$ & $\, ^3D_4(q)$ & $\Phi_3$ &  $\,^3D_4[1]$&  
$\frac{1}{2}\Phi_1^6\Phi_2 ^2\Phi_4^2\Phi_5\Phi_8 \Phi_9\Phi_{12}$  & 
$3$  & $q\equiv 1$ mod  $4 $\\

(xvii)&$3A_2$ & $ A_2(q^3)$ &  $1$ & $\chi^{(2,1)}$&   
$\Phi_1^4\Phi_2 ^4\Phi_3\Phi_4^2\Phi_5\Phi_8 \Phi_{12}$  & $2$  & 
$q\equiv 3$ mod  $4 $\\

(xviii)& $E_6$ & $ E_6(q)$ &  $1$ &   $E_6[\theta]$, $E_6[\theta^2]$ &  
$\frac{1}{3}\Phi_1^6\Phi_2^4\Phi_4^2\Phi_5\Phi_8 $  & $0$  & \\

\hline

\end{tabular} 
\end{center}
}

\noindent
Here  $\Phi_d=\Phi_d(q)$  denotes the $d$-th cyclotomic polynomial over $\Q$
evaluated at $q$. The non-blank entries of the third column are to be 
interpreted  as $\Cb^{\circ}(s)'^{F^*}$ being a group of the given type, 
and do not specify the isomorphism type of $\Cb^{\circ}(s)'^{F^*}$.
\end{Proposition}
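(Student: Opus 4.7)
The plan is to enumerate, for $\Gb$ simply connected of type $E_6$, all pairs $(s,\lambda)$ with $s$ semi-simple in $\Gb^{*F^*}$ and $\lambda \in \CE(\Cb^\circ(s)^{F^*},1)$ whose associated Jordan-decomposition character $\chi$ has $2$-defect at most $3$. Since $\Gb$ is simply connected of type $E_6$, the component group $\Cb(s)/\Cb^\circ(s)$ embeds into the fundamental group $\Z/3\Z$, so $a_\chi \in \{1,3\}$ and consequently $\alpha_\chi = 0$. Formula (\ref{matchtwodefect}) then becomes $d(\chi) = \zeta_s + d(\lambda')$, and the hypothesis $d(\chi) \leq 3$ splits into the two independent requirements $\zeta_s \leq 3$ and $d(\lambda') \leq 3$.

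First I would list the possible Dynkin types $\Delta_s$ of $\bar\Cb^\circ(s)$ using the standard classification of centralisers of semi-simple elements in the adjoint group of type $E_6$ (for instance from \cite{Deri84}). For each type, Propositions \ref{2defectsclassical} and \ref{2defectsexceptional} furnish a lower bound on $d(\lambda')$ for any unipotent character $\lambda'$ of $\bar\Cb^\circ(s)^{F^*}$. Any component of type $B_l$ or $C_l$ with $l\geq 2$, or $D_l$ or $\,^2D_l$ with $l\geq 4$, or $F_4$, $E_7$, $E_8$ forces $d(\lambda') \geq 4$ and is therefore excluded. What remains is the torus case $\Delta_s = \emptyset$ together with the types $A_2$, $\,^2A_2$, $3A_2$, $\,^3D_4$, and $E_6$; within each of these, the sharper bounds in Propositions \ref{2defectsclassical}(i), (ii) and \ref{2defectsexceptional}(iv), (vii) further restrict the admissible $\bar\lambda$ to precisely the characters $\chi^{(2,1)}$, the four $\,^3D_4$ characters of $2$-defect $3$, and the two defect-zero characters $E_6[\theta^{\pm 1}]$.

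For each surviving configuration one then computes $z(s) = |Z^\circ(\Cb^\circ(s))^{F^*}|$ together with $a_\chi\chi(1)_{r'}$ via (\ref{matchchardegfinal}), reading the relevant unipotent character degrees from \cite[pp.~477--488]{Carter93} and using the factorisation
\[
|\Gb^{*F^*}| = q^{36}\Phi_1^6 \Phi_2^4 \Phi_3^3 \Phi_4^2 \Phi_5 \Phi_6^2 \Phi_8 \Phi_9 \Phi_{12}.
\]
For the torus rows (i)--(x), $\Cb^\circ(s)$ itself is a maximal torus, $\lambda=1$ and $d(\lambda')=0$; one enumerates the $F^*$-classes of maximal tori whose order has $2$-part at most $2^3$, which together with $\zeta_s \leq 3$ yields the congruence conditions on $q$ modulo $4$ or $8$ recorded in the last column. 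Rows (xi)--(xviii) are obtained from the non-trivial $\Delta_s$ in the same way, with the isolated classes $3A_2$ and $E_6$ (where $\Cb(s)/\Cb^\circ(s) = \Z/3\Z$) accounting for the factor $a_\chi = 3$ in rows (xvii) and (xviii).

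The principal obstacle is the bookkeeping. For each admissible $\Delta_s$ one must simultaneously identify the precise $F^*$-twisted isomorphism type of $\Cb^\circ(s)^{F^*}$, compute the $2$-part $\zeta_s$ of $z(s)$, list the unipotent characters $\bar\lambda$ of $\bar\Cb^\circ(s)^{F^*}$ satisfying $d(\bar\lambda) + \zeta_s \leq 3$, and then match the resulting degree against the appropriate product of cyclotomic polynomials. Particular care is needed because $(q^2-1)_+$ depends on the residue of $q$ modulo $8$, so several rows are naturally split by a congruence condition, and one must verify that no configuration is missed or counted twice.
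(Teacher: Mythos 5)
Your overall strategy does track the paper's: express the $2$-defect via \eqref{matchtwodefect}, use the order-$3$ fundamental group to force $a_\chi\in\{1,3\}$ and $\alpha_\chi=0$, and then sweep through the tables of \cite{Deri84}. But the pivotal \emph{exclusion} step, which is where the real work lies, has a genuine gap.

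You propose to exclude Dynkin types $B_l$, $C_l$ ($l\geq 2$), $D_l$, $\,^2D_l$ ($l\geq 4$), $F_4$, $E_7$, $E_8$ because they force $d(\lambda')\geq 4$. But none of these occur as subsystem types inside $E_6$, so the exclusion is vacuous. What actually needs to be ruled out are the types that \emph{do} occur: $A_1$, $A_3$, $A_4$, $A_5$, untwisted $D_4$ and $\,^2D_4$, $D_5$, $2A_2$ permuted by $F^*$ into an $A_2(q^2)$, and all products such as $A_1\times A_5$ or $A_1\times A_2\times A_2$. You assert without argument that ``what remains is the torus case together with $A_2$, $\,^2A_2$, $3A_2$, $\,^3D_4$, and $E_6$,'' which is precisely the conclusion, not a derivation of it.

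Two of the missing exclusions require substantive input. First, every relevant unipotent character from Propositions~\ref{2defectsclassical} and~\ref{2defectsexceptional} has $2$-defect at least $2$, which forces the components of $\Delta_s$ to form a \emph{single} $F^*$-orbit (otherwise $d(\lambda')\geq 4$); this structural observation eliminates mixed products and should be stated. Second, the $A_1$ case is not excluded by $d(\lambda')\leq 3$ alone, since any unipotent character of $A_1(q^j)$ has $2$-defect exactly $d+e\geq 3$; ruling it out requires the additional fact, read off from \cite{Deri84}, that $\zeta_s\geq 1$ whenever $\Delta_s$ is of type $jA_1$, which then gives $d(\chi)\geq 4$. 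Similarly, $2A_2$ permuted by $F^*$ yields $A_2(q^2)$ whose unipotent characters all have $2$-defect $\geq 6$ because $(q^2-1)_+\geq 8$, a case you do not address. Until these exclusions are carried out, the table is not proved exhaustive.
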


\begin{proof}  
Let $ s$, $\chi $, $\lambda $ be as  in the statement. 
The group  $\Cb (s)/\Cb^{\circ}(s) $ 
is isomorphic to  a subgroup of 
$\Irr_{\bar {\Q}_{2'}}( Z(\Gb)/Z^{\circ}(\Gb) )$ (see for instance 
\cite[Lemma 13.14(iii), Remark 2.4]{Digne/Michel:1991}).   
The first  two assertions follow since  $Z(\Gb)/Z^{\circ}(\Gb) $ is a 
cyclic group of order $3$.  

We will use (\ref{matchtwodefect}), and Propositions \ref{2defectsclassical}
and \ref{2defectsexceptional} in conjunction with  the tables giving 
connected centralisers  of semi-simple elements in groups of type  $E_6(q)$
in \cite{Deri84} in order to identify 
the $\Delta_s$, $\Cb^{\circ}(s) ^{F^*} $, $z(s)$ and $\bar \lambda $ 
entries  of a row in the table. Once these have 
been identified, the $\frac{ \chi(1)_{r'}} {\alpha_{\chi}}$ entry of 
the row is  calculated using  (\ref{matchchardegfinal}), the  degree  
formulae for unipotent characters as given in 
\cite[\S 13.8]{Carter93} and the order formulae for the finite groups 
of Lie type (see \cite[pp.75-76]{Carter93}). The last 
two entries  are obtained by comparing 
$\frac{ \chi(1)_{r'}} {\alpha_{\chi}}  $  with $|\Gb^F|$.
We note here that 
$$|\Gb^F| 
=  q^{36} \Phi_1^6 \Phi_2^4 \Phi_3^3\Phi_4^2\Phi_5\Phi_6^2
\Phi_8\Phi_9\Phi_{12}.$$
The connected components of $\Delta_s$  are of type $A_l$, 
$l\geq 1 $, $D_4$, $D_5 $, or $E_6 $. If $\Delta_s$ has a component of type
$E_6 $ (and in particular is a single component), 
then $s$ is  central in  $\Gb^* $,  $z(s)=1 $ and  $\bar\lambda $ 
is one of the  
unipotent characters $E_6[\theta] $ or $E_6[\theta^2] $  of  
$E_6(q)$ as in Proposition \ref {2defectsexceptional}.  Both  
$E_6[\theta] $ and $E_6[\theta^2]$ have degree  
$\frac{1}{3}q^7\Phi_1^6\Phi_2^4\Phi_4^2\Phi_5\Phi_8$, 
hence  $\chi, $ $s$, and $\bar \lambda $ 
are as in the last row of the table.

Assume from now on that the components  of 
$\Delta_s$ are of classical type. So   $\bar{\Cb}^{\circ}(s) ^{F^*} $ 
is a  direct product  of  groups 
$A_l(q^j)$, $l\geq 1$,  $\,^2A_l(q^j)$, $l\geq 2$,  $D_l(q^j), 
\,^2D_l(q^j) $,  $l\geq 4 $ or $\, ^3D_4(q)$.
By Propositions \ref{2defectsclassical}
and \ref{2defectsexceptional}, the $2$-defect of 
any unipotent character of  a  group 
$A_l(q^j)$, $l\geq 1$, $ \,^2A_l(q^j) $, $l\geq 2$,  $D_l(q^j), 
\,^2D_l(q^j) $,  $ l\geq 4 $  or  $ \, ^3D_4(q)$    is at least $2$. 
On the other hand, by  (\ref{matchtwodefect}), $\bar \lambda$ has 
$2$-defect  at most $3$ (note that $\alpha_{\chi}=0 $). Thus,   the connected 
components of $ \Delta(s) $   form one orbit under the action of $ F^*$, 
and in particular are all pairwise isomorphic.  It  also follows from 
Proposition \ref{2defectsclassical}  that the connected components are 
not of type  $D_5$ or  of type $A_l $, $l\geq 3 $.  

Let $j$ be the number of connected components of $\Delta_s$.
We claim that the connected components  of $\Delta_s $ are not of type $A_1 $. 
Indeed, assume otherwise, so  $\Cb^{\circ }(s)'^{F^*}  =  A_1 (q^j)$.  
By Proposition \ref{2defectsclassical}(i)-(ii), 
every unipotent character of $A_1(q^j) $ or  $\,^2A_1(q^j) $ has $2$-defect at 
least $3$. Thus, by (\ref{matchtwodefect}), $\zeta_s=0 $. 
But by  \cite{Deri84}, whenever $\Delta_s$ is  of type 
$jA_1$, $\zeta_s \geq 1$, proving the claim.

Suppose that the connected components of $\Delta_s $ are of type 
$A_2 $,  $1\leq j \leq 3 $.
If $ j= 3 $, then by  
\cite{Deri84},  $\Cb^{\circ }(s)'^{F^*}  =  A_2 (q^3)$, 
$z(s)=1 $, and  by Proposition \ref{2defectsclassical}(i), 
$\bar \lambda $ is the character $\chi^{(2,1)} $. 
By the  hook-length degree formula 
for unipotent characters of   groups of type $A$ (see the proof of 
Proposition \ref{2defectsclassical}(i)),
\[\bar\lambda(1)_{r'} =   \frac { (q^3-1)|A_2(q^3)| }{ (q^9-1)(q^3-1)^2 } =
\frac { |A_2(q^3)| }{ \Phi_1^2\Phi_3^2\Phi_9} , \]
we see that $s$ and $\chi $ are as in row (xvii)  of the table.  
If  $ j=2 $, then by \cite{Deri84},  
$\Cb^{\circ}(s)'^{F^*}(s)  =  A_2 (q^2) $,  whereas by Proposition 
\ref{2defectsclassical}  any unipotent character of $ A_2 (q^2) $ 
has $2$-defect at      least $6$ (note that $(q^2-1)_+ \geq 8 $). 
Thus this case  does not occur. 

If $j=1 $,  and  
$\Cb^{\circ }(s)'^{F^*} =   A_2 (q) $, then  $\zeta_s \leq 1  $, hence
by  \cite{Deri84},  $z(s) $ is  $(q^2+q+1)$ and  $s$ and $\chi $ are  
as in  row  (xi) of the 
table. If $j=1 $,  and  
$\Cb^{\circ }(s)'^{F^*} =   \,^2A_2 (q) $, then  $\zeta_s \leq 1  $, hence
by  \cite{Deri84},  
$z(s) $ is  $(q^4+q^2+1)$ and  $s$ and $\chi $ are  as in row  
(xii) of the table.
 
Suppose that the connected components  of $\Delta_s$  are of  type $ D_4 $, so
$j =1 $. By Proposition  \ref{2defectsclassical}(iv), 
and  by \cite{Deri84}, $\Cb^{\circ }(s)'^{F^*} =   \,^3D_4 (q) $  
and $z(s)= q^2+q+1 $. By
Proposition \ref{2defectsexceptional}(vii)  $\bar \lambda $ is one of 
$\phi_{2,2} $  or $\phi_{2,1} $  if $q\equiv 3 $ (mod $4$) and 
$\bar \lambda $ is one of $\,^3D_4[-1] $  or $ \,^3D_4[1]  $ 
if $q\equiv 1 $ (mod $4$). Hence,  
$\chi $, $s$  and $\lambda $ are as in 
lines   (xiii)-(xvii) of the table.

Finally assume that $ \Cb^{\circ}(s) $ is a torus, so 
$\Cb^{\circ }(s)'$ and $\bar\lambda $ are trivial.
By  (\ref{matchtwodefect}) we have
$z(s)= d(\chi)  \leq 3$.  
An inspection of \cite{Deri84} yields  that  $z(s)$  
is as in the first  ten  rows of the table.
\end{proof}

\begin{Proposition}\label{small2defect2E_6}   
Suppose that $\Gb$ is simple, simply connected 
of  type $E_6$, and  $\Gb^F$ is a group of type  $\,^2E_6(q)$.  
Let $s$ be a semi-simple element of  $\Gb^{*F^*}$ and let 
$\chi \in \Irr_K(G ) \cap  \CE(\Gb^F, [s]) $. Let 
$ \bar \lambda $ correspond to
$\chi $ as in (\ref{matchchardegfinal}).
Suppose  that the $2$-defect $d(\chi)$ of $\chi$ is at 
most $3$. Then $a_{\chi} $ is $1$ or $3$, $\alpha_{\chi}=0 $
 and  $s$, $\chi $ and $ \bar \lambda $   satisfy  
one of the rows in the following table.

{\rm
\footnotesize
\begin{center}
\begin{tabular}{l|c|c|l|c|l|c|c}
\hline
& & & & & & & \\
& $\Delta_s$ &  $\Cb^{\circ}(s)'^{F^*}  $ &  $z(s)$ & $\bar \lambda $ & 
$ a_{\chi}\chi(1)_{r'} $ 
& $d(\chi) $  &     cond. on $q$\\
&& & & & && \\
\hline
(i)&-& -  & $\Phi_2^2\Phi_6 ^2$ &  $1$& 
$\Phi_2^4\Phi_1 ^4\Phi_6\Phi_4^2\Phi_{10}\Phi_{3}^2\Phi_8 \Phi_{18}\Phi_{12}$ & 
$2$  & $q\equiv 1 $ mod  $4 $\\

(ii)& -& - & $\Phi_2^2\Phi_{10}$ & $1$ &  
$\Phi_2^4\Phi_1 ^4\Phi_6^3\Phi_4^2\Phi_3^2\Phi_8 \Phi_{18}\Phi_{12} $& 
$2$  & $q\equiv 1 $ mod  $4 $\\

(iii)& -& - & $\Phi_2\Phi_1\Phi_6^2$ & $1$ &   
$\Phi_2^5\Phi_1 ^3\Phi_6\Phi_4^2\Phi_{10}\Phi_3^2\Phi_8 \Phi_9\Phi_{12} $& 
$3$  &   $(q^2-1)_+ =  8 $ \\

(iv)& -& - & $\Phi_2\Phi_1\Phi_{10}$ & $1$ &  
$\Phi_2^5\Phi_1 ^3\Phi_6^3\Phi_4^2\Phi_3^2\Phi_8 \Phi_{18}\Phi_{12} $& 
$3$  &  $(q^2-1)_+ =  8 $    \\

(v)& -& - & $\Phi_2\Phi_1\Phi_6\Phi_3$ & $1$ &   
$\Phi_2^5\Phi_1^3\Phi_6^2\Phi_4^2\Phi_{10}\Phi_3\Phi_8 \Phi_{18}\Phi_{12} $& 
$3$  &   $(q^2-1)_+ =  8 $ \\

(vi)& -& - & $\Phi_6^3$ & $1$ &  
$\Phi_2^6\Phi_1 ^4\Phi_4^2\Phi_{10}\Phi_9^2\Phi_8 \Phi_{18}\Phi_{12} $& 
$0$  &   \\

(vii)& -& - & $\Phi_1^2  \Phi_6\Phi_3$ &  $1$ & 
$\Phi_2^6\Phi_1 ^2 \Phi_6^2\Phi_4^2\Phi_{10}\Phi_3\Phi_8 \Phi_{18}\Phi_{12}$& 
$2$  & $q\equiv 3$ mod  $4 $  \\

(viii)&-& - & $\Phi_6  \Phi_{12}$ &  $1$ & 
$\Phi_2^6\Phi_1 ^4 \Phi_6^2\Phi_4^2\Phi_{10}\Phi_3^2\Phi_8 \Phi_{18}$ & 
$0$  &   \\

(ix)&-& - & $\Phi_{18}  $ &  $1$ & 
$\Phi_2^6\Phi_1 ^4 \Phi_6^3\Phi_4^2\Phi_{10}\Phi_3^2\Phi_8 \Phi_{12}$ & 
$0$  &   \\

(x)& -& - & $\Phi_6\Phi_3^2  $ &  $1$ &  
$\Phi_2^6\Phi_1 ^4 \Phi_6^2\Phi_4^2\Phi_{10}\Phi_8 \Phi_ {18}\Phi_{12}$ & 
$0$  &   \\

(xi)& $A_2$ & $\,^2A_2(q)$ & $\Phi_6^2$ & $\chi^{(2,1)}$&  
$\Phi_2^4\Phi_1 ^4 \Phi_4^2\Phi_{10}\Phi_3^2\Phi_8 \Phi_{18}\Phi_{12}$  & 
$2$  & $q\equiv 1 $ mod  $4 $\\

(xii)& $A_2$ & $A_2(q)$ & $\Phi_6\Phi_3$ & $\chi^{(2,1)}$&    
$\Phi_2^6\Phi_1 ^2 \Phi_6\Phi_4^2\Phi_{10}\Phi_8 \Phi_{18}\Phi_{12}$ & 
$2$  & $q\equiv 3 $ mod  $4 $\\

(xiii)& $D_4$ & $\, ^3D_4(q)$ & $\Phi_6$ & $ \,^3D_4[1] $ &  
$\frac{1}{2}\Phi_2^4\Phi_1 ^4 \Phi_4^2\Phi_{10}\Phi_8 \Phi_{18}\Phi_{12}$ & 
$3$  & $q\equiv 1 $ mod  $4 $\\

(xiv)& $D_4$ & $\, ^3D_4(q)$ & $\Phi_6$ & $\,^3D_4[-1] $  & 
$\frac{1}{2}\Phi_2^4\Phi_1^4 \Phi_4^2\Phi_{10}\Phi_3^2\Phi_8 \Phi_{18}$ & 
$3$  & $q\equiv 1 $ mod  $4 $\\

(xv)& $D_4$ & $\, ^3D_4(q)$ & $\Phi_6$ & $\phi_{2,1}$&    
$\frac{1}{2}\Phi_2^6\Phi_1 ^2\Phi_6^2 \Phi_4^2\Phi_{10}\Phi_8 \Phi_{18}$ & 
$3$  & $q\equiv 3$ mod  $4 $\\

(xvi)&$D_4$ & $\, ^3D_4(q)$ & $\Phi_6$ &  $\phi_{2,2}  $&  
$\frac{1}{2}\Phi_2^6\Phi_1 ^2\Phi_4^2\Phi_{10}\Phi_8 \Phi_{18}\Phi_{12}$ & 
$3$  & $q\equiv 3$ mod  $4 $\\

(xvii)& $3A_2$ & $ \,^2A_2(q^3)$ &  $1$ & $\chi^{(2,1)}$&   
$\Phi_2^4\Phi_1 ^4\Phi_6\Phi_4^2\Phi_{10}\Phi_8 \Phi_{12}$ & 
$2$  & $q\equiv 1$ mod  $4 $\\

(xviii)&$E_6$ & $ \,^2E_6(q)$ &  $1$ &   $\,^2E_6[\theta]$, 
$\,^2E_6[\theta^2]$ &  $\frac{1}{3}\Phi_2^6\Phi_1^4\Phi_4^2\Phi_{10}\Phi_8$  & 
$0$  & \\

\hline

\end{tabular} 
\end{center}
}
\end{Proposition}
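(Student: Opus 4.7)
The strategy mirrors the proof of Proposition \ref{small2defectE_6}, with the systematic replacement of cyclotomic contributions dictated by the twist of $F^*$. The algebraic group $\Gb$ is the same as in Proposition \ref{small2defectE_6}, so $Z(\Gb)/Z^{\circ}(\Gb)$ is cyclic of order $3$, and the group $\Cb(s)/\Cb^{\circ}(s)$ embeds in $\Irr_{\bar{\Q}_{2'}}(Z(\Gb)/Z^{\circ}(\Gb))$; hence $a_s\in\{1,3\}$ and, since $3$ is odd, $a_{\chi}\in\{1,3\}$ and $\alpha_{\chi}=0$. The group order is
\[
|\Gb^F| = q^{36}\Phi_1^{4}\Phi_2^{6}\Phi_3^{2}\Phi_4^{2}\Phi_6^{3}\Phi_8\Phi_{10}\Phi_{12}\Phi_{18}
\]
which is the Ennola transform ($q\mapsto -q$, up to sign) of the order used in Proposition \ref{small2defectE_6}; in particular, the degrees of unipotent characters of $^2E_6(q)$ and the orders $z(s)$ of $\Zb^{\circ}(s)^{F^*}$ are obtained from the corresponding quantities for $E_6(q)$ by the substitutions $\Phi_1\leftrightarrow\Phi_2$, $\Phi_3\leftrightarrow\Phi_6$, $\Phi_5\leftrightarrow\Phi_{10}$, $\Phi_9\leftrightarrow\Phi_{18}$ (with $\Phi_4,\Phi_8,\Phi_{12}$ fixed). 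This correspondence is what produces the symmetry between the two tables, and in particular swaps the conditions $q\equiv 1\ (\mod 4)$ and $q\equiv 3\ (\mod 4)$ throughout.

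First I would combine the formulas (\ref{matchtwodefect}), together with Propositions \ref{2defectsclassical} and \ref{2defectsexceptional}, to bound the $2$-defect of $\bar\lambda$ from above by $3$ and hence force the $F^*$-orbit structure of the Dynkin diagram $\Delta_s$. As in the untwisted case, this rules out the possibility that $\Delta_s$ has more than one $F^*$-orbit of components and eliminates component types $A_l$ with $l\geq 3$ as well as $D_5$; it also eliminates components of type $A_1$ by comparison with the tables of Derziotis \cite{Deri84} applied to $^2E_6(q)$, which give $\zeta_s\geq 1$ whenever $\Delta_s$ is of type $jA_1$. The remaining possible types of $\Delta_s$ are $\emptyset$ (torus case), $A_2$, $3A_2$, $D_4$ and $E_6$, each producing a group $\Cb^{\circ}(s)'^{F^*}$ which is one of $A_2(q), ^2A_2(q), A_2(q^3), ^2A_2(q^3), ^3D_4(q)$ or $^2E_6(q)$ respectively (the form of each factor being determined by the $F^*$-action).

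For each such possibility I would read off $z(s)$ from \cite{Deri84}, then for each unipotent character $\bar\lambda$ of $\Cb^{\circ}(s)'^{F^*}$ with $2$-defect at most $3-\zeta_s$ (these being enumerated in Propositions \ref{2defectsclassical}(i)--(ii), \ref{2defectsexceptional}(i),(iv),(vii)), compute $a_\chi\chi(1)_{r'}$ via formula (\ref{matchchardegfinal}) together with the hook-length formula for type-$A$ unipotent degrees (as in the proof of Proposition \ref{2defectsclassical}) and Carter's degree list \cite[\S 13.8]{Carter93}. Comparing $a_\chi\chi(1)_{r'}$ with $|\Gb^F|_{r'}$ then reads off $d(\chi)$ as the $2$-part of the cyclotomic quotient, together with the condition on $q$ needed to ensure $d(\chi)\leq 3$: the factor $(q^2-1)_+=\Phi_1(q)_+\Phi_2(q)_+$ is always divisible by $8$, so one needs equality $(q^2-1)_+=8$ in the torus cases with $z(s)$ of the form $\Phi_2\Phi_1(\ldots)$, and similarly $\Phi_1(q)_+$ (resp.\ $\Phi_2(q)_+$) determines which of the congruences $q\equiv 1$ or $q\equiv 3\ (\mod 4)$ applies in the mixed cases. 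For the $E_6$ case, the cuspidal unipotents $^2E_6[\theta],^2E_6[\theta^2]$ each have degree $\tfrac{1}{3}q^7\Phi_2^6\Phi_1^4\Phi_4^2\Phi_{10}\Phi_8$ by \cite[p.\ 484]{Carter93}, yielding row (xviii).

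The main obstacle is the bookkeeping: checking that the Ennola substitution is applied consistently to every $z(s)$ extracted from \cite{Deri84} and to every unipotent degree formula, and checking that the sign and factor $\tfrac{1}{2}$ in the $^3D_4(q)$ rows (xiii)--(xvi) are reproduced correctly (they come from $a_\chi=3$ does not occur here, but rather from $z(s)=\Phi_6$ combined with the factor arising in (\ref{matchchardegfinal}) for the characters $\phi_{2,1},\phi_{2,2}$). Once these identifications are made, the table is obtained by mechanical comparison, exactly as in Proposition \ref{small2defectE_6}.
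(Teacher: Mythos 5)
Your approach is exactly the one the paper takes: the paper's own proof of Proposition~\ref{small2defect2E_6} consists of a single sentence pointing to the Ennola substitution $q\mapsto -q$, which swaps $\Phi_1\leftrightarrow\Phi_2$, $\Phi_3\leftrightarrow\Phi_6$, $\Phi_5\leftrightarrow\Phi_{10}$, $\Phi_9\leftrightarrow\Phi_{18}$ and thereby transforms the proof of Proposition~\ref{small2defectE_6} verbatim. Your expanded account is correct, with only a minor imprecision in the parenthetical about the $\tfrac{1}{2}$ in rows (xiii)--(xvi): this factor is simply part of the degree formulas for the unipotent characters $\phi_{2,1}$, $\phi_{2,2}$, $^3D_4[\pm 1]$ of $^3D_4(q)$ (cf.~\cite[\S 13.8]{Carter93}), not a consequence of combining $z(s)$ with formula~(\ref{matchchardegfinal}).
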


\begin{proof}  This is  exactly as in  the proof of Proposition 
\ref{small2defectE_6}, replacing  $q$  by $-q$ and  noting that 
this switches the roles   of 
the pairs $\Phi_1$ and $\Phi_2$, $\Phi_3$ and $\Phi_6$, $\Phi_5$ and 
$\Phi_{10}$, and $\Phi_{9}$ and $\Phi_{18}$.
\end{proof}

\section{Type $E$ in odd characteristic}

Let $q$ be an odd prime power. 
We do not know whether there are
any blocks with elementary abelian defect group of
order $8$ in type $E$ over a field of odd
characteristic - the next result says that {\it if} 
there is such a block, it is either nilpotent or its 
inertial quotient has order $7$.

\begin{Proposition} \label{typeEfusion}
Let $G$ be a quasi-simple finite group such that $Z(G)$ has 
odd order and such that $G/Z(G)$ is a simple group of type 
$E_6(q)$, ${^2E_6(q)}$, $E_7(q)$ or $E_8(q)$. If $kG$ has a block 
$b$ with an elementary abelian defect group $P$ of order $8$, then
the inertial quotient of $b$ is either trivial (in which case
$b$ is nilpotent) or has order $7$.
\end{Proposition}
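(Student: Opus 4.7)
The plan is as follows. By Proposition \ref{leqeight} the inertial quotient $E$ lies in $\{1, C_3, C_7, C_{21}\}$, so it suffices to rule out $|E| \in \{3, 21\}$. In both of these cases Proposition \ref{leqeight} forces $|\Irr_K(G,b)| \geq 7$, and every character in $b$ has $2$-defect at most $3$. So the plan reduces to showing that no $2$-block of $kG$ with defect group of order $8$ can contain seven or more ordinary irreducible characters of $2$-defect at most $3$.

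Let $[t]$ be the semi-simple label of $b$, with $t$ of odd order. The first step is a reduction via Jordan decomposition: if $\Cb(t)$ is contained in a proper $F^*$-stable Levi subgroup $\Lb^*$ of $\Gb^*$, then by Bonnaf\'e--Rouquier \cite[Theorem B']{BonRou} the algebra $\CO\Gb^Fe_t^{\Gb^F}$ is Morita equivalent to $\CO\Lb^Fe_t^{\Lb^F}$, and by Proposition \ref{localstructurepreserved} their inertial quotients agree. Since the components of $\Lb^F$ are finite groups of Lie type of strictly smaller rank (of types $A$, $\,^2A$, $B$, $C$, $D$, $\,^2D$, $\,^3D_4$, $G_2$, $\,^2G_2$, $F_4$, or of type $E$ of smaller rank than $\Gb$), Theorems \ref{typeA}, \ref{type2A}, \ref{classical}, \ref{typeG2}, \ref{type2G2}, \ref{type3D4}, and \ref{typeFpunch}, together with induction on the rank within type $E$, imply $E \in \{1, C_7\}$ in this case.

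The residual case is $t$ quasi-isolated in $\Gb^*$. Since $t$ has odd order and $Z(\Gb)/Z^{\circ}(\Gb)$ has order at most $3$, in fact $t$ is isolated (that is, $\Cb(t) = \Cb^{\circ}(t)$) in types $E_7$ and $E_8$, while in $E_6$ and $\,^2E_6$ the group $\Cb(t)/\Cb^{\circ}(t)$ is cyclic of order $1$ or $3$. The characters of the rational series $\CE_2(\Gb^F,[t]) = \bigcup_u \CE(\Gb^F,[tu])$, where $u$ ranges over the $2$-elements of $\Cb^{\circ}(t)^{F^*}$, are partitioned into $2$-blocks by the Brou\'e--Michel theorem, and by (\ref{matchchardegfinal}) and (\ref{matchtwodefect}) those contributing to $b$ (which has $2$-defect $3$) correspond to unipotent characters $\bar\lambda$ of $\Cb^{\circ}(tu)^{F^*}$ of small $2$-defect. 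For $G$ of type $E_6, \,^2E_6$ the enumeration of such $(tu, \bar\lambda)$ is exactly Propositions \ref{small2defectE_6} and \ref{small2defect2E_6}; for $E_7, E_8$ the corresponding tables are produced by combining Bonnaf\'e's classification of isolated classes with Proposition \ref{2defectsexceptional} applied to the exceptional components of $\Cb^{\circ}(tu)'^{F^*}$ and Proposition \ref{2defectsclassical} applied to the classical components. In each case one shows that the characters of $2$-defect $\leq 3$ distribute across at least two distinct $2$-blocks in the rational series $[t]$, so that any single block contains at most six of them.

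The main obstacle will be the case analysis in $E_7$ and $E_8$, where the list of isolated subsystem subgroups (for instance $E_7 \times A_1$, $D_8$, $A_8$, $E_6 \times A_2$, $A_1 \times D_6$, $A_4 \times A_4$) is substantially longer than in $E_6$, and one must track precisely the contributions of the low-defect cuspidal unipotent characters of the exceptional components (such as the $E_6[\theta^i]$, $(E_6[\theta^i],\epsilon)$, $E_7[\pm\xi]$, $E_8[\theta^i]$ listed in Proposition \ref{2defectsexceptional}) to the Brou\'e--Michel partition. The delicate arithmetic point will be to verify that the pairs of cuspidal Galois-conjugate characters of defect $0$ or $3$ coming from such components genuinely lie in \emph{distinct} $2$-blocks, since only then does the count of low-defect characters per block fall below $7$.
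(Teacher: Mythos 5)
The reduction step in your proposal has a genuine gap. You claim that it suffices to show ``no $2$-block of $kG$ with defect group of order $8$ can contain seven or more ordinary irreducible characters of $2$-defect at most $3$.'' But by Proposition \ref{leqeight}, a nilpotent block with $|E|=1$ has $|\Irr_K(G,b)|=8$ with all characters of height zero, hence all of $2$-defect exactly $3$; and a block with $|E|=7$ satisfying Alperin's weight conjecture likewise has $8$ characters of $2$-defect $3$. Both of these are explicitly allowed by the conclusion of the proposition. The same is true for $|E|=3$: all eight characters there have height zero, so the multiset of $2$-defects is \emph{identical} to the nilpotent case. So counting characters by $2$-defect simply cannot distinguish $|E|=3$ from $|E|=1$, and your proposed reduction, if successfully carried out, would establish that no $2$-block of $kG$ with elementary abelian defect group of order $8$ exists at all—a statement the paper explicitly declines to claim for types $E$ and $F$. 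There is also a secondary problem of circularity: you invoke Theorem \ref{typeFpunch} (and implicitly its analogues \ref{typeE_6punch}--\ref{typeE_8punch}) in the reduction via Bonnaf\'e--Rouquier, but those propositions rely on Proposition \ref{punchheight}, which is only available once $|E|\in\{1,7\}$ has been established—precisely what \ref{typeEfusion} is supposed to prove.

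The paper's argument is of an entirely different, \emph{local} nature, and the contrast is worth noting. Rather than counting ordinary characters across the block, it observes that $|E|\in\{3,21\}$ holds if and only if some non-trivial $(G,b)$-Brauer element $(u,e_u)$ with $u$ an involution in $P$ has $e_u$ non-nilpotent as a block of $kC_G(u)$ with defect group $P$ (equivalently $\ell(e_u)>1$). One then rules this out by inspecting the explicit structure of centralizers of involutions in the exceptional groups given in \cite[Tables 4.5.1, 4.5.2]{GorLyoSol3}: in each case $C_G(u)$ has a direct factor of the form $2.H.2$ with the relevant outer automorphism inner-diagonal, which forces any such block to be nilpotent via \cite[6.4]{CEKL} and the Klein-four analysis. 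This local approach sidesteps the degree arithmetic entirely, needs no Zsigmondy prime arguments, and makes no assertion about which Lusztig series the block sits in. If you want to salvage a global-character approach, you would need an invariant sensitive to fusion beyond mere defects—for instance $\ell(b)$, or the rank of $L^0(G,b)$—but computing those for a hypothetical block in the rational series of a quasi-isolated $t$ is substantially harder than the involution-centralizer check.
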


\begin{proof}
It suffices to show that if $u\in P$ is an involution and
if $e$ is a block of $kC_G(u)$ with defect group $P$ then
$e$ is nilpotent. Indeed, there are non-nilpotent $(G,b)$-Brauer
elements if and only if the inertial quotient of $b$ has order $3$ or
$21$. The pattern of the proof is this: using the lists of
centralisers in \cite[4.5.1, 4.5.2]{GorLyoSol3}, we show
that in `most' cases, $C_G(u)$ has a direct factor of the form 
$2.H.2$ for some finite group $H$. As in \cite{CEKL}, this
means that $2.H$ is a central extension of $H$ by an involution
such that $2.H$ is contained as a subgroup of index $2$ in
$2.H.2$, and an automorphism $\alpha$ of $2.H$ induced by conjugation
with a $2$-element $a$ in $2.H.2-2.H$ has an image $\bar\alpha$ in 
the outer automorphism group of $2.H$ of order $2$.  Using 
\cite[6.4]{CEKL} and again \cite[4.5.1, 4.5.2]{GorLyoSol3}, we will
then show that $\alpha$ 
stabilises all blocks of $H$, implying that $a$ is  
contained in a defect group of any block, and so any block of 
$2.H.2$ with an elementary abelian defect group $P$ is necessarily
nilpotent, because its image in $H.2$ is a block with
a Klein four defect group which is also a $2$-extension
of a block of $H$ with a defect group of order $2$. 
Here are the details for the different groups; this
follows arguments in \cite[\S 12]{CEKL}. All group
theoretic facts about centralisers of involutions are from
\cite[\S 4]{GorLyoSol3}.

\smallskip
Suppose that $G/Z(G)$ is of type $E_6(q)$. 
There are two classes of involutions in $G$, with representatives
$t_1$, $t_2$. If $u=t_1$ then $C_G(u)$ has a central cyclic
subgroup of order $\gcd(4,q-1)$, hence $q\equiv 3\ (\mod \ 4)$ since
$P$ has exponent $2$. Then $C_G(u)\cong$ 
$2.\POmega_{10}(q).2\times C_{(q-1)/2}$. The automorphism $\alpha$
of $\POmega_{10}$ as constructed above is inner diagonal, hence 
\cite[6.4]{CEKL} implies
that it is contained in a defect group of any block of $C_G(u)$.
Thus any block of $C_G(u)$ with $P$ as defect group is nilpotent
by the argument outlined at the beginning of the proof.
If $u=t_2$ then $C_G(u)\cong$ $2.(\PSL_2(q)\times Z(N).\PSL_6(q)).2$.
Again, $\alpha$ is inner diagonal on both
factors, and so the same argument using \cite[6.4]{CEKL} shows
that any block of $C_G(u)$ with defect group $P$ is nilpotent.

\smallskip
Suppose that $G/Z(G)$ is of type ${^2E_6(q)}$. 
There are two classes of involutions in $G$, with representatives
$t_1$, $t_2$. If $u=t_1$ then $C_G(u)$ has a central cyclic
subgroup of order $\gcd(4,q+1)$, hence $q\equiv 1\ (\mod \ 4)$ since
$P$ has exponent $2$. Then $C_G(u)\cong$ 
$2.\POmega^-_{10}(q).2\times C_{(q+1)/2}$. 
If $u=t_2$ then $C_G(u)\cong$ 
$2.(\PSL_2(q)\times (3,q+1).\PSU_6(q)).2$, and the same argument
as for $E_6(q)$ shows that in both cases, any block of $kC_G(u)$
with defect group $P$ is nilpotent.

\smallskip
Suppose that $G/Z(G)$ is of type $E_7(q)$. 
In that case $Z(G)=\{1\}$, so $G$ is simple, but 
the finite group of Lie type $E_7(q)$ is a central extension
of $G$ by an involution. Let $E=$ $\Inndiag(G)$; then
$|E:G|=2$. Table 4.5.1 in \cite{GorLyoSol3} describes
$C_E(u)$, for any involution $u$ in $G$, and involutions
in $G$ which are $E$-conjugate will have isomorphic
centralisers in $G$. There are five $E$-conjugacy classes
of involutions in $G$, represented by $t_1$, $t_4$, $t'_4$,
$t_7$ and $t'_7$. If $u=t_1$ then $C_E(u)\cong$
$2.(\PSL_2(q)\times \POmega_{12}(q)).(C_2\times C_2)$, so
our standard argument implies that any block of $C_G(u)$ with
$P$ as defect group is nilpotent. If $u=t_4$ then
$q\equiv 5\ (\mod \ 8)$ (else $t_4$ is non-inner), and
$C_E(u)\cong$ $2.\PSL_8(q).C_{\gcd(8,q-1)}.\gamma$, where
$\gamma$ is a graph automorphism. It follows from
considering the table 4.5.2 in \cite{GorLyoSol3} applied to the
inverse image of $u$ in $E_7(q)$ that $\gamma$ is not in $G$,
hence $C_G(u)\cong$ $2.\PSL_8(q).C_{\gcd(8,q-1)}$.
Since $\gcd(8,q-1)$ is at least $2$, we get again that any
block of $C_G(u)$ with $P$ as defect group is nilpotent.
If $u=t'_4$ then $q\equiv 3\ (\mod \ 4)$ (or else $t'_4$ is non-inner)
and $C_E(u)\cong$ $2.\PSU_8(q).C_{\gcd(8,q-1)}.\gamma$. As in the
previous case we get 
$C_G(u)\cong$ $2.\PSU_8(q).C_{\gcd(8,q-1)}$, and thus that
any block of $C_G(u)$ with $P$ as defect group is nilpotent.
If $u=t_7$ then $q\equiv 1\ (\mod \ 4)$ and $C_E(q)\cong$
$(\gcd(3,q-1).E_6(q).3 * C_{q-1}).2$, and hence, by an argument
as before (using \cite[4.5.2]{GorLyoSol3}), we get that
$C_G(u)\cong$ $(\gcd(3,q-1).E_6(q).3 * C_{q-1})$.
This shows that $C_G(u)$ has a normal cyclic subgroup of order
$4$, hence has no block with $P$ as defect group.
Similarly, if $u=t'_7$ then $q\equiv 3\ (\mod \ 4)$ and $C_E(q)\cong$
$(\gcd(3,q+1).{^2{E_6(q)}}.3 * C_{q+1}).2$; as above we get
$C_G(u)\cong$ $(\gcd(3,q+1).{^2{E_6(q)}}.3 * C_{q+1})$.
This shows that $C_G(u)$ has again a normal cyclic subgroup of order
$4$, hence has no block with $P$ as defect group.

\smallskip
Suppose finally that $G/Z(G)$ is of type $E_8(q)$. Then 
$Z(G)=\{1\}$ and $G$ has two conjugacy classes of involutions,
with representatives $t_1$ and $t_8$. Their centralisers are
isomorphic to $2.\POmega_{16}(q).2$ and
$2.(\PSL_2(q)\times \bar E_7(q)).2$, where $\bar E_7(q)$ is the simple
quotient of $E_7(q)$. In both cases our standard argument shows that
every block with defect group $P$ is nilpotent.
\end{proof}

{\bf Notation.} \label{Zsigmondy}    
For each $n > 2 $, let $p_n $ be a Zsigmondy prime for the pair  
$(q,n)$, i.e., $p_n $ is a prime number  such that  $p_n $ divides 
$q^n-1 $ but $p_n $ does not divide $q^m-1 $ for any $m < n $.      
Such $p_n$ exist by Zsigmondy's theorem 
(see \cite[Theorem 3]{Roit}), and note that $q$ is odd and that $n$ 
is assumed greater than $2$. Since $q$ has order $n$ modulo $p_n$,
if $p_n$ divides $\Phi_d(q)$ for some natural number $d$
then $p_n$ divides $q^d-1$, whence $n|d$.


\begin{Proposition} \label{typeE_6punch}
Let $G$ be a quasi-simple finite group such that $Z(G)$ has 
odd order and such that $G/Z(G)$ is a simple group of type 
$E_6(q)$. If $kG$ has a block $b$ with an elementary abelian defect group 
$P$ of order $8$, then  either  $|\Irr_K(G, b)|=8 $ or  $ \CO  G b $ 
is Morita equivalent to a  block  of $\CO L $ of a finite group $L$ such that
$|L/Z(L)|< |G/Z(G) | $. 
\end{Proposition}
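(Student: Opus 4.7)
The plan follows the pattern of Proposition \ref{typeFpunch}, now using the inertial quotient restriction of Proposition \ref{typeEfusion} and the low-defect character classification of Proposition \ref{small2defectE_6}. Realise $G$ via its simply connected cover: let $\Gb$ be simple simply connected of type $E_6$ over $\bar{\mathbb F}_q$ so that $\Gb^F$ is a central $\gcd(3,q-1)$-extension of $G/Z(G)$, and let $[t]$ be the semi-simple label of $b$ with $t$ of odd order in $\Gb^{*F^*}$. If $\Cb(t)$ is contained in a proper $F^*$-stable Levi subgroup $\Lb^*$ of $\Gb^*$, then by the Bonnaf{\'e}--Rouquier Jordan decomposition \cite[Th\'eor\`eme B']{BonRou} the algebra $\CO\Gb^F e_t^{\Gb^F}$ is Morita equivalent to $\CO\Lb^F e_t^{\Lb^F}$; passing to $G$ this shows $\CO Gb$ is Morita equivalent to a block of a finite group $L$ with $|L/Z(L)|<|G/Z(G)|$, yielding the second alternative. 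Assume from now on that $t$ is quasi-isolated in $\Gb^*$; the rows of Proposition \ref{small2defectE_6} whose $\Cb^\circ(s)'^{F^*}$ is supported on a Levi subsystem of $E_6$ (namely the $A_2$- and $D_4$-rows) are ruled out, leaving only the possibilities that $t=1$ or that $t$ has order $3$ with $\Cb^\circ(t)$ of type $3A_2$ (the row (xvii) case).

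By Proposition \ref{typeEfusion} the inertial quotient $E$ of $b$ is trivial or cyclic of order $7$. If $E$ is trivial, Proposition \ref{nilextremark} says $b$ is Morita equivalent to its Brauer correspondent, which has exactly $8$ ordinary irreducible characters by Proposition \ref{c2c2c2local}. Assume $|E|=7$; by Proposition \ref{leqeight}(iii), $|\Irr_K(G,b)|\in\{5,8\}$, and it suffices to rule out the value $5$. In that case Proposition \ref{punchheight} provides one character $\chi_1$ of $2$-defect $2$ and four characters $\chi_2,\ldots,\chi_5$ of $2$-defect $3$, together with the identity $2\chi_1=\sum_{i=2}^5\delta_i\chi_i$ in $L^0(G,b)$, with signs $\delta_i\in\{\pm 1\}$ not all negative and subject to the additional constraint of Proposition \ref{punchheight}(iii). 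Each $\chi_j$ lies in $\CE(\Gb^F,[tu])$ for some $2$-element $u\in\Cb^\circ(t)^{F^*}$, and hence corresponds to a row of the table in Proposition \ref{small2defectE_6}.

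The main obstacle is the finite case analysis: for each of the two quasi-isolated $t$ above, enumerate the admissible rows (those with $s=tu$ for some $2$-element $u$ commuting with $t$) and check that no choice of one defect-$2$ row plus four defect-$3$ rows can satisfy the degree identity $2\chi_1(1)=\sum\delta_i\chi_i(1)$. The defect-$3$ rows of Proposition \ref{small2defectE_6} all impose $(q^2-1)_+=8$, pinning $q$ down modulo $8$, and the conflicting congruences on $q$ appearing in the defect-$2$ rows should then leave only a small tractable set of combinations, whose degrees can be compared directly using the explicit cyclotomic-polynomial factorisations and the factors $a_\chi\in\{1,3\}$ recorded in the table. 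The hardest subcase is expected to be the unipotent one $t=1$, where one must additionally use the labelling of the unipotent $2$-blocks of $E_6(q)$ in the style of Cabanes--Enguehard \cite{Enguehard-pre} to determine which admissible rows actually belong to the same block $b$ and to carry out the degree comparison.
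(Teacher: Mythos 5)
Your overall strategy (Bonnaf\'e--Rouquier Jordan decomposition to reduce to quasi-isolated $t$; Propositions \ref{typeEfusion} and \ref{punchheight} to pin down $|\Irr_K(G,b)|=5$ with the norm-$8$ basis element of $L^0(G,b)$; then a degree comparison using Proposition \ref{small2defectE_6}) is the same as the paper's. However, the case analysis for quasi-isolated $t$ has a genuine error. You claim that quasi-isolatedness rules out the $D_4$-rows of the table and leaves only $t=1$ or $\Delta_t=3A_2$. This is wrong on two counts. First, you are conflating the rows of Proposition \ref{small2defectE_6}, which classify the various $s_j = tu_j$ attached to individual characters $\chi_j$, with the classification of the block label $t$ itself. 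The rows do not classify quasi-isolated $t$; Bonnaf\'e's \cite[Table III]{Bon2004} does. Second and more substantially, $D_4$ \emph{is} a quasi-isolated possibility for $\Delta_t$: since $Z(\Gb)/Z^\circ(\Gb)$ has order $3$, one has $\Cb(t)/\Cb^\circ(t)$ of order $3$ when $\Delta_t=D_4$, so the full (disconnected) centraliser is not contained in any proper Levi even though $\Cb^\circ(t)$ is. The paper devotes most of the proof to exactly this $D_4$ case, and it is not the $t=1$ unipotent case that is delicate --- that case is dismissed in one line because all unipotent characters of $E_6(q)$ have $2$-defect $0$ or $\geq 8$, so no height-one character of defect $2$ or $3$ can occur.

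The other gap is that the decisive step --- showing that the degree identity $2\chi_1(1)=\sum_{i=2}^5\delta_i\chi_i(1)$ cannot hold --- is only gestured at (``the combinations can be compared directly''). The paper's argument is a Zsigmondy-prime divisibility check: in the $3A_2$ case $\chi_1(1)$ is not divisible by $p_9$ while every $\chi_j(1)$, $2\leq j\leq5$, is; in the $D_4$ case, after using Proposition \ref{punchheight}(iii) to synchronise signs on the equal-degree characters labelled by $[t]$ itself, the primes $p_6$ or $p_{12}$ play the same role. Without identifying a concrete invariant of this kind the degree comparison does not close. Your framing ``the defect-$3$ rows all impose $(q^2-1)_+=8$'' is also incorrect --- the $D_4$-rows (xiii)--(xvi) have $2$-defect $3$ but only a congruence on $q$ mod $4$, not $(q^2-1)_+=8$.

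A minor point: for $E=1$ you cite Proposition \ref{nilextremark}, but the correct reason is simply that $b$ is then nilpotent, so $|\Irr_K(G,b)|=|P|=8$ by Brou\'e--Puig; Proposition \ref{nilextremark} concerns a covered block inside a nilpotent block and is not the relevant statement here.
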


\begin{proof} 
The centre $Z(G) $ has order $1$ or $3$ and we may 
assume without loss of generality that  $Z(G) $ has order $3$.  Hence 
$G = \Gb^F $, where $\Gb $ is a simply connected 
simple  algebraic group of type $E_6$ over $\bar {\mathbb F}_q$
and  $F: \Gb \to \Gb  $ is a Frobenius morphism with respect to an 
${\mathbb F}_q $-structure on $\Gb $.   
Keeping the  notation of the 
previous section let $[t]$ be the semi-simple label of $b$.
Arguing as for Proposition \ref{typeFpunch}, we may and we will 
assume that $t$ is quasi-isolated.    
Suppose, if possible that  
$|\Irr_K(G, b)| \ne 8 $.   
By Propositions \ref{punchheight} and \ref{typeEfusion},  
$$ |\Irr_K(G, b)| = 5
\ \  {\text{and}} \  \
\Irr_K(G, b) = \{\chi_j , \ 1 \leq j \leq 5 \}$$ 
where $\chi_1 $ has height one and $\chi_j $ has height zero 
for $ 2\leq j \leq 5 $.  
For each  $j$, $1\leq j \leq 5 $, let $s_j $ be a semi-simple element in 
$\Gb^{*F^*} $ such that $ \chi_j \in \CE(\Gb, [s_j]) $.  
By \cite[Th\'eor\`eme 2.2] {BroMic89}, for any $ 1\leq j \leq 5 $, $s_j$  
can be chosen to have the form $ s_j= tu_j$, where $u_j$ is a $2$-element of  
$\Cb(t)^{F^*}$. Since $\Cb(t) /\Cb^{\circ}(t) $ has exponent 
dividing the order of $t$, 
(see for example \cite[Remark 13.15 (i)]{Digne/Michel:1991}), 
$ u_j \in \Cb^{\circ}(t)^{F^*} $. In particular, the connected components of 
$\Delta_{s_j} $  are subdiagrams of the extended connected components of 
$\Delta_t $. Here, by an extended   Dynkin diagram we mean a completed 
Dynkin diagram in the  sense of \cite[Chapter 6, \S  4.3]{bourbaki:lie:4-6}.

Since $t$ is  quasi-isolated and  has odd order, 
by \cite[Table III]{Bon2004}, 
$\Delta_t$ is one of $E_6 $, 
$A_2\times A_2 \times A_2$, or $D_4 $. On the other hand, since 
$\CE(\Gb, [t])\cap \Irr_K(G, b) \ne \emptyset $,  $\CE(\Gb, [t]) $ and hence 
$\CE(\Gb, (t)) $  contains an element of  
$2$-defect $2$ or $3$. By  Equation \ref{matchtwodefect}  and 
Proposition \ref{small2defectE_6}, it follows that $C_{\Gb}(t)^{F^*} $ 
contains a unipotent character of $2$-defect at most  $3$ and at least $2$.  
Now if $\Delta_t  $ is of type $E_6 $, then  $t$ is central    in $\Gb^* $, 
i.e. $C_{\Gb}(t)^{F^*} = E_6(q)$. But by 
Proposition \ref{2defectsexceptional} 
the $2$-defect of a unipotent character of a group of type $ E_6(q)$ 
is either $0$ or at least $8$, a contradiction.

Suppose   that  $\Delta_t =3A_2$. Then, since   $C_{\Gb}(t)^{\circ F^*} $  
contains a unipotent character  
of $2$-defect $2$ or $3$, we see that 
$t$ is as in row  (xvii) of Table \ref{small2defectE_6}. 
But since the characters corresponding to (xvii) have $2$-defect $2$, 
it follows by Proposition \ref{punchheight} 
that $u_1= 1 $ and 
that $u_j $ is non-trivial for all $j$, $2 \leq j\leq 5 $.

Let $ 2\leq j \leq 5 $.  Since the $2$-defect of $s_j $ is  $3$ and since 
$\Delta_{s_j}$ is not   of  type $D_4 $ or $E_6$,     
$C_{\Gb^*}(s_j)^{\circ} $ is a torus   corresponding to rows (iii), (iv) or (v)
of Table \ref{small2defectE_6}. In particular, 
$\Phi_{9}(q)$ and hence $p_9$  is a  divisor of  
$\chi_j(1)$ for all $j$,  $ 2\leq j \leq 5 $. 
On the other hand, $\chi_1 $ corresponds  to row (xvii) of 
Table \ref{small2defectE_6} and 
we see that $p_9 $ does not divide $\chi_1(1) $.
Hence, $p_9 $ does not divide
$2\chi_1(1) -\sum_{2\leq j\leq 5} \delta_j\chi_j(1)$. In particular, 
$2\chi_1(1) -\sum_{2\leq j\leq 5} \delta_j\chi_j(1) \ne 0$,  
contradicting  Proposition 
\ref{punchheight}(ii).  
  
So $\Delta_t = D_4$.  Let $ j_0  $ be such that  $s_{j_0} =  t $, 
$ 1\leq j_0 \leq 2 $.  Then $t$ corresponds to one of rows 
(xiii)-(xvi) of Table \ref{small2defectE_6}. Since the characters 
corresponding to these rows have $2$-defect $2$,  
$j_0 \geq 2 $, say $ j_0=2 $. 
Let us first consider the case that  
$\chi_{2} $ corresponds to row (xiii)  of the table, so
$q \equiv 3$ (mod $4$) and let $l$, $ 3\leq l \leq 5 $  be such that 
$(s_l, \chi_l)$ also corresponds to row  (xiii) of the table.
Then semi-simple part of $C_{\Gb^*}(s_l)^{\circ}$, and hence  of  
$C_{C_{\Gb^*}(t)^{\circ}} (u_l) $  is of type $D_4$.  Thus  
$u_l $ is central in $C_{\Gb^*}(t)^{\circ}$. 
Since $u_l $ is a $2$-element, 
and  $\zeta_t$ is odd, we get that 
$u_l $ is a central element of $[C_{\Gb^*}(t)^{\circ},C_{\Gb^*}(t)^{\circ}] $.
But the  group  $\,^3D_4(q)$  has a trivial centre, hence $u_1 =1 $.
Thus, if $(s_l, \chi_l) $ corresponds  to  row (xiii) of 
Table \ref{small2defectE_6}, then $[s_l]=$ $[t]$.
By \cite{Bon2004}, $t$ is quasi-isolated but not isolated and 
$C_{\Gb^*}(t)^{\circ} $ is of index $3$ in $C_{\Gb^*}(t)$.    
Since $\phi_{2,1}$ is the unique unipotent character of  
its degree in $\, ^3D_4(q)$, $\phi_{2,1}$ is stable under 
$C_{\Gb^{F^*}}(t)$ and hence there are three possibilities for $\chi_l$, 
each of degree  
$$ \delta =\frac{1}{6}q^{33}
\Phi_1^4(q)\Phi_2 ^4(q) 
\Phi_4^2(q)\Phi_5(q)\Phi_8(q) \Phi_9(q)\Phi_{12}(q). $$ 

Let $\chi_1 -\sum_{2\leq  i\ leq 5} \delta_i \chi_i $ be the element of  
$L^0(G,b)$ as in   Proposition \ref{punchheight}.  Then, 
$\delta_l=$ $\delta_{2}$. Thus, from the degree formula above, 
it follows that $\sum_{i} \delta_i \chi_i(1)$, where $i$ ranges over the 
indices for which  $s_i $ corresponds to row (xiii) of Table 
\ref{small2defectE_6} is not divisible by the Zsigmondy prime $p_6$ 
(note that the number of such indices is at most $3$).

Now let $j$, $1\leq j \leq 5 $ be such that $s_j$ does not correspond to 
row (xiii) of Table \ref{small2defectE_6}.
Since $ q\equiv 3 $ (mod $4$), and  $\Delta_{s_j} $ is a subdiagram of the 
extended Dynkin diagram of $\Delta_t$, 
$(s_j, \chi_j)$ 
corresponds to one of rows (i), (ii), (iii), (iv), (v), (xi), or  (xiv)    
of Table \ref{small2defectE_6}. In particular, 
from the character degree column of Table \ref{small2defectE_6}, 
we see that  $\chi_j(1)$ is 
divisible by $p_6 $  for any such $j$.  Hence,  
$2\chi_1(1)-\sum_{2\leq i \leq 5} \delta_i \chi_i(1) $  is not divisible by 
$p_6$, a contradiction.

We get a similar contradiction if $(t, \chi_2) $  corresponds to  row 
(xvi) of  Table \ref{small2defectE_6} and    
with $p_6 $ replaced by $p_{12}$ if
$(t, \chi_2) $  corresponds to  row (xiv) or row (xv)  of  
Table \ref{small2defectE_6}.
\end{proof}

\begin{Proposition} \label{type2E_6punch}
Let $G$ be a quasi-simple finite group such that $Z(G)$ has 
odd order and such that $G/Z(G)$ is a simple group of type 
$\,^2E_6(q)$. If $kG$ has a block $b$ with an elementary abelian 
defect group $P$ of order $8$, 
then either $|\Irr_K(G, b)|=8 $ or $\CO  G b $ 
is Morita equivalent to a block of $\CO L $ of a finite group $L$ such that
$|L/Z(L)|< |G/Z(G) | $. 
\end{Proposition}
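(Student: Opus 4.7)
The plan is to mirror the proof of Proposition \ref{typeE_6punch} step by step, using the twisted table (Proposition \ref{small2defect2E_6}) in place of Proposition \ref{small2defectE_6}. As there, we may assume $Z(G)$ has order $3$, so $G=\Gb^F$ with $\Gb$ simply connected of type $E_6$ and $F$ inducing the $\,^2E_6(q)$-structure. Using the Jordan decomposition, we reduce to the case where the semi-simple label $[t]$ of $b$ is quasi-isolated in $\Gb^*$. Assume, for contradiction, that $|\Irr_K(G,b)|\neq 8$; then by Propositions \ref{punchheight} and \ref{typeEfusion} we have $|\Irr_K(G,b)|=5$ and can write $\Irr_K(G,b)=\{\chi_1,\ldots,\chi_5\}$ with $\chi_1$ of height one, $\chi_2,\ldots,\chi_5$ of height zero, and with the sign relation from \ref{punchheight}(ii). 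For each $j$, write $s_j=tu_j$ with $u_j$ a $2$-element of $\Cb^{\circ}(t)^{F^*}$, so that $\Delta_{s_j}$ is a subdiagram of the extended Dynkin diagram of $\Delta_t$. By Bonnaf\'e's classification \cite{Bon2004}, the odd-order quasi-isolated $t$ satisfies $\Delta_t \in \{E_6,\,3A_2,\,D_4\}$.

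If $\Delta_t=E_6$ then $t$ is central in $\Gb^*$ and $\Cb(t)^{F^*}=\,^2E_6(q)$, whose unipotent characters have $2$-defect $0$ or at least $8$ by Proposition \ref{2defectsexceptional}; this contradicts the existence in $\CE(\Gb^F,(t))$ of a character of $2$-defect $2$ or $3$ forced by $\CE(\Gb^F,(t))\cap \Irr_K(G,b)\neq\emptyset$ and Proposition \ref{small2defect2E_6}. If $\Delta_t=3A_2$, Proposition \ref{small2defect2E_6} forces $t$ to correspond to row (xvii), which has $2$-defect $2$; thus $\chi_1$ is this character (so $u_1=1$), and each $s_j$ for $j\geq 2$ must give a height-zero character of $2$-defect $3$. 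Since $\Delta_{s_j}$ is a proper subdiagram of $\Delta_t$ and $\Delta_{s_j}\neq D_4,E_6$, the only options are the torus rows (iii), (iv), (v) of Proposition \ref{small2defect2E_6}. Inspection of the degree column shows that each such $\chi_j(1)$ for $j\geq 2$ is divisible by $\Phi_{18}(q)$, hence by the Zsigmondy prime $p_{18}$, whereas the row (xvii) degree is not. This contradicts the identity $2\chi_1(1)=\sum_{j=2}^5\delta_j\chi_j(1)$.

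The remaining case $\Delta_t=D_4$ is handled exactly as in the proof of Proposition \ref{typeE_6punch}, but with the r\^oles of cyclotomic factors swapped under $q\mapsto -q$. Say $\chi_2$ matches row (xiii) of Proposition \ref{small2defect2E_6} (so $q\equiv 1\pmod 4$ and $\bar\lambda=\,^3D_4[1]$). For any further index $l$ with $(s_l,\chi_l)$ also matching row (xiii), the semi-simple part of $\Cb^{\circ}(s_l)$ is of type $D_4$, which forces $u_l$ into the centre of $[\Cb^{\circ}(t),\Cb^{\circ}(t)]=\,^3D_4(q)$, so $u_l=1$ and $[s_l]=[t]$. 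The character $\,^3D_4[1]$ being unique of its degree, it is $\Cb(t)^{F^*}$-stable and the corresponding $\chi_l$'s share the common degree visible in row (xiii), whose $2'$-part is coprime to the Zsigmondy prime $p_3$. The remaining $\chi_j$ come from rows of Proposition \ref{small2defect2E_6} compatible with $q\equiv 1\pmod 4$ and with $\Delta_{s_j}$ a subdiagram of the extended Dynkin diagram of $D_4$, namely rows (i), (ii), (iii), (iv), (v), (xi), (xvi); their degree columns are all divisible by $p_3$. Hence $p_3\nmid 2\chi_1(1)-\sum_{i=2}^5\delta_i\chi_i(1)$, a contradiction. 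The cases in which $\chi_2$ matches row (xiv), (xv) or (xvi) are disposed of analogously with $p_3$ replaced by $p_{12}$ (noting $\Phi_{12}(-q)=\Phi_{12}(q)$).

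The main obstacle, as for type $E_6(q)$, is the bookkeeping: one must check for each quasi-isolated $t$ and each congruence class of $q$ mod $4$ (and mod $8$ where row (iii)--(v) are concerned) precisely which rows of Proposition \ref{small2defect2E_6} can occur for the $s_j$, and verify the appropriate Zsigmondy divisibility of the character degrees on both sides of the perpendicularity relation. The computation is otherwise formally parallel to the untwisted case via the substitutions $\Phi_1\leftrightarrow\Phi_2$, $\Phi_3\leftrightarrow\Phi_6$, $\Phi_5\leftrightarrow\Phi_{10}$, $\Phi_9\leftrightarrow\Phi_{18}$ already used to derive Proposition \ref{small2defect2E_6}.
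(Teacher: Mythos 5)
Your proposal follows exactly the approach the paper itself uses: the paper's own proof of this proposition is a one-liner saying it is the proof of Proposition~\ref{typeE_6punch} with the roles of $q$ and $-q$ reversed, and you have done the honest work of unwinding what that means in detail. Your casework on $\Delta_t\in\{E_6,3A_2,D_4\}$, the use of $p_{18}$ in place of $p_9$ in the $3A_2$ case, and $p_3$ in place of $p_6$ in the main $D_4$ subcase, are all the correct transforms under the cyclotomic swaps $\Phi_1\leftrightarrow\Phi_2$, $\Phi_3\leftrightarrow\Phi_6$, $\Phi_5\leftrightarrow\Phi_{10}$, $\Phi_9\leftrightarrow\Phi_{18}$ used to produce the twisted table.

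Two small transcription slips in the $\Delta_t=D_4$ case are worth correcting. First, when $\chi_2$ matches row~(xiii) of Proposition~\ref{small2defect2E_6}, the hypothesis $q\equiv 1\pmod 4$ is in force, and the remaining rows compatible with that congruence and a subdiagram of the extended $D_4$ diagram are (i), (ii), (iii), (iv), (v), (xi) and~(\textbf{xiv}), not~(xvi); row~(xvi) has $q\equiv 3\pmod4$ and in fact its degree column is \emph{not} divisible by $p_3$, so had it actually been in the list your divisibility claim would be false, whereas row~(xiv) carries $\Phi_3^2$ and is fine. Second, in the final sentence, row~(xvi) of the twisted table is the analogue of the untwisted row~(xiii)/(xvi) pair and should be disposed of using $p_3$ (its degree column lacks $\Phi_3$ but contains $\Phi_{12}$), while $p_{12}$ is the correct Zsigmondy prime only for rows~(xiv) and~(xv). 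Neither of these slips affects the structure of the argument once corrected, and the proof is otherwise sound and complete.
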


\begin{proof} 
This is as the proof of Proposition \ref{typeE_6punch}, 
with the roles of $q$ and $-q $ suitably reversed.
\end{proof}

\begin{Proposition} \label{typeE_7punch} 
Let $G$ be a quasi-simple finite group such that $Z(G)$ has 
odd order and such that $G/Z(G)$ is a simple group of type 
$E_7(q)$.  Suppose that  
$kG$ has a block $b$ with an elementary abelian defect group 
$P$ of order $8$ such that  $|\Irr_K(G, b)| \ne 8 $. Then,   
there exists a  finite group $L$ with $|L/Z(L)| <  |G/Z(G)| $  and a block 
$c$ of $\CO L$ with elementary abelian defect groups of order $8$ 
and such that $|\Irr_K(L, c)| \ne 8 $.
\end{Proposition}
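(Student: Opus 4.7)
The plan is to follow the pattern of Propositions \ref{typeE_6punch} and \ref{type2E_6punch}, adapted to account for the fact that the simply connected group of type $E_7$ has a central involution---this is the reason the conclusion of the present proposition produces an auxiliary group $L$ rather than a direct equality $|\Irr_K(G,b)|=8$. Since $Z(G)$ has odd order and the Schur multiplier of the simple group of type $E_7(q)$ is $\gcd(2,q-1)=2$, $G$ must be isomorphic to the simple group, so I may write $G\cong\tilde G/Z$ where $\tilde G=\tilde\Gb^F$ for $\tilde\Gb$ a simply connected algebraic group of type $E_7$ over $\bar\F_q$ and $Z=Z(\tilde G)=\langle s\rangle$ of order $2$. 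The block $b$ lifts uniquely to a block $\tilde b$ of $\tilde G$ whose defect group $\tilde P$ has order $16$ and satisfies $\tilde P/Z\cong P$, elementary abelian of order $8$.

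Let $[t]$ be the semi-simple label of $\tilde b$ in $\tilde\Gb^{*F^*}$. First I would treat the case in which $t$ is not quasi-isolated, so that $\Cb(t)$ is contained in a proper $F^*$-stable Levi $\tilde\Lb^*$. Let $\tilde\Lb$ be an $F$-stable Levi of $\tilde\Gb$ dual to $\tilde\Lb^*$. By the Bonnaf\'e-Rouquier Jordan decomposition, $\CO\tilde G\tilde b$ is Morita equivalent to a block $d$ of $\CO\tilde\Lb^F$. Since $Z\subseteq Z(\tilde\Lb^F)$, I set $L=\tilde\Lb^F/Z$ and take $c$ to be the image of $d$ in $\CO L$. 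Applying Proposition \ref{defect4} with $\tilde G$ and $\tilde\Lb^F$ playing the roles of $G$ and $H$, the block $c$ has elementary abelian defect groups of order $8$ and $|\Irr_K(L,c)|\ne 8$. Since $\tilde\Lb$ is proper, $|L/Z(L)|\leq|L|=|\tilde\Lb^F|/2 < |\tilde G|/2=|G|=|G/Z(G)|$, yielding the required pair $(L,c)$.

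It then remains to rule out, under the hypothesis $|\Irr_K(G,b)|\ne 8$, the case that $t$ is quasi-isolated. Here I would mimic the proof of Proposition \ref{typeE_6punch}: using Bonnaf\'e's classification \cite{Bon2004}, enumerate the odd-order quasi-isolated classes in $\tilde\Gb^*$ together with the corresponding root subsystems $\Delta_t$ (the types $E_7$, $D_6+A_1$, $A_7$, $A_5+A_2$, $A_3+A_3+A_1$), and for each construct an analogue of the table in Proposition \ref{small2defectE_6} recording the elements of $\CE(\tilde G,[t])$ of $2$-defect at most $4$---these correspond, after passage to $G=\tilde G/Z$, to characters in $\Irr_K(G,b)$ of height at most one. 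This uses (\ref{matchtwodefect}) together with the small-$2$-defect data in Propositions \ref{2defectsclassical} and \ref{2defectsexceptional}(v). Propositions \ref{typeEfusion} and \ref{punchheight} force the inertial quotient of $b$ to have order $7$ and give a relation $2\chi_1-\sum_{i=2}^5\delta_i\chi_i\in L^0(G,b)$ with $\chi_1$ of height one and $\chi_2,\dots,\chi_5$ of height zero. A Zsigmondy-prime argument using $p_d$ for suitable $d\in\{7,9,12,14,18\}$, analogous to the role of $p_6$, $p_9$, $p_{12}$ in the $E_6$ proof, then rules out any such relation among the degrees of characters in $\CE(\tilde G,[t])$ for quasi-isolated $t$.

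The main obstacle will be this last step. The $E_7$ analogue of the tables in Propositions \ref{small2defectE_6} and \ref{small2defect2E_6} is considerably longer than the $E_6$ version: there are more quasi-isolated classes, centralisers involve simple factors of types $A_7$, $D_6$ and $E_6$ (requiring the full strength of Propositions \ref{2defectsclassical} and \ref{2defectsexceptional}), and the constant $a_\chi\in\{1,2\}$ arising from $|Z(\tilde\Gb)/Z^\circ(\tilde\Gb)|=2$ must be tracked in (\ref{matchtwodefect}). The residue classes $q\equiv 1$ and $q\equiv 3\ (\mod\ 4)$ both have to be handled, and the Zsigmondy prime to invoke often depends on that choice. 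The individual Zsigmondy obstructions are routine, but the bookkeeping across all cases is substantial; by contrast, the structural part of the argument---the non-quasi-isolated reduction via Bonnaf\'e-Rouquier together with Proposition \ref{defect4}---is essentially immediate from earlier results.
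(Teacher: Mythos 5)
Your non-quasi-isolated reduction is exactly the paper's: lift $b$ to $\tilde b$ on $\tilde G=\tilde\Gb^F$, apply Bonnaf\'e--Rouquier to descend to a proper Levi $\tilde\Lb$, quotient by $z$, and use Proposition \ref{defect4} to conclude. That part is correct and complete.

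The quasi-isolated case is where your proposal diverges from the paper, and it is where the gap is. You propose building an $E_7$-analogue of the table in Proposition \ref{small2defectE_6} and running a Zsigmondy argument over all the rows. The paper instead exploits two shortcuts you miss. First, since $t$ has odd order and $Z(\tilde\Gb)/Z^\circ(\tilde\Gb)$ is a $2$-group, $\Cb_{\tilde\Gb^*}(t)$ is automatically connected, so $t$ is in fact \emph{isolated}; with $t$ further restricted to odd order, Bonnaf\'e's Table III leaves only $\Delta_t = E_7$ (so $t=1$) and $\Delta_t = A_2\times A_5$ (so $t$ of order $3$). Your list of types includes $D_6+A_1$, $A_7$ and $A_3+A_3+A_1$, but those correspond to involutions and cannot occur here; enumerating them would have been wasted work. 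Second, and more seriously, your Zsigmondy table approach cannot dispose of $t=1$. Once $\Delta_t = A_2\times A_5$ is eliminated (which the paper does in one line via Proposition \ref{2defectsclassical}: $\bar\Cb^\circ(t)^{F^*}$ has no unipotent character of $2$-defect $\leq 4$), the only surviving possibility is that $\tilde b$ is a unipotent block. For that case the paper does not argue with character degrees at all: it invokes Enguehard's result that non-principal unipotent blocks of $\tilde\Gb^F$ with positive, non-central defect have \emph{dihedral} defect groups, which is incompatible with $\tilde P$ being a central extension of an elementary abelian group of order $8$ by a group of order $2$. This structural fact about defect groups of unipotent blocks is the decisive step; without it, a degree-based table would have to account for the full set of unipotent characters of $E_7(q)$ and the many non-unipotent $\chi_j$ in $\CE_2(\tilde G,[1])$, which is far beyond what the $E_6$-style Zsigmondy computation can settle.

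In short: your first half is right and your second half correctly identifies where the difficulty lies, but you never locate the argument that resolves it. The missing ingredient is Enguehard's dihedral-defect-group theorem for non-principal unipotent blocks, which replaces your proposed bookkeeping entirely.
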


\begin{proof}  First note that $Z(G)=1 $  and that  
$ G = \tilde  G/\langle z\rangle  $,  where
$ \tilde G =  \tilde \Gb^F$   for a 
simply-connected simple  algebraic group $\tilde \Gb $   of type $E_7 $,  
 $F :\tilde \Gb \to \tilde \Gb $ is a Frobenius morphism   with respect to an 
${\mathbb F}_ q$-structure on $\tilde \Gb $, and  $\langle z\rangle$ 
is  the central subgroup of order $2$ of $\tilde \Gb$.  Let $\tilde b$ be the 
block of $\CO \tilde G $ lifting $b$ and let
$[t]$ be the semi-simple label of $\tilde b$.  

Suppose first that  
$t$  is  not a quasi-isolated element of $\tilde \Gb^* $. Then,  arguing as for
Proposition \ref{typeFpunch}, there exists a  proper $F$-stable 
Levi subgroup $\tilde \Lb $ of $\tilde \Gb $, and a block $\tilde c $ of 
$\CO \tilde \Lb ^F $ such that  $\CO \tilde\Gb^F \tilde b $ and 
$\CO\tilde\Lb^F \tilde c $  are Morita equivalent.  Noting  that  
$z \in \tilde \Lb^F $,  set $L=$ $\tilde \Lb^F/\langle z\rangle $ and let 
$c$ be the image in  $\CO L$ of $\tilde c$. By  Proposition 
\ref{defect4}, the  defect groups of  $c$ are elementary abelian 
and $c$ does not satisfy Alperin's weight conjecture. Since
$$ |L/Z(L)| \leq  L = \frac {|\tilde \Lb^F|}{2} < \frac {|\tilde \Gb^F|}{2} =
|G| = |G/Z(G)|, $$ the result follows.

We assume from now on that  $t$  is quasi-isolated  in $\tilde \Gb^* $ .  
Before proceeding we note that since $t$ has odd order, 
$\Cb_{\tilde \Gb^*}(t)$ is  connected and hence $t$ is in fact isolated.  
By table III of \cite{Bon2004}, and using that $t$ is of odd order, we 
get that either $t=1 $ and  the Dynkin diagram  $\Delta_t$  
corresponding to $C_{\tilde \Gb}(t)$ is  of type  $E_7$ or  
$t$ has order $3$ and $\Delta_t $ is of type  $A_2 \times A_5 $.  
Since there is an ordinary irreducible character of $\tilde b $ in the   
$\CE( \tilde \Gb^F, [t])$, it follows from  Equation 
(\ref{matchchardegfinal}) in \S \ref{redback} that 
$ (\Cb_{ \tilde \Gb^*}(t)/Z(\Cb_{ \tilde \Gb^*}(t)))^{F^*}$ has a 
unipotent character of defect at most $4$ 
(note that  $\tilde b $ has  defect groups of order $16$). By 
Proposition \ref{2defectsclassical} it follows that $\Delta_t$ is 
not  of type $A_2 \times A_5 $.  So,   $\Delta_t  $ is of type $E_7$, 
i.e., $\tilde b$ is a unipotent block. 
But  the defect groups of all non-principal  unipotent blocks of 
$\tilde \Gb^F  $  which are not central in $\tilde \Gb^F  $  are dihedral 
 groups (\cite[p.357]{Enguehard2000}). On the other hand, 
a defect group of $\tilde b$  is  a central extension of   
an elementary abelian group of order $8$ by a group of order $2$,   
a contradiction.
\end{proof}  

\begin{Proposition} \label{typeE_8punch} 
Let $G$ be a quasi-simple finite group such that $Z(G)$ has 
odd order and such that $G/Z(G)$ is a simple group of type 
$E_8(q)$. Suppose that  
$kG$ has a block $b$ with an elementary abelian defect group 
$P$ of order $8$ such that  $|\Irr_K(G, b)|\ne 8 $. Then,   
there exists a finite group $L$ with $|L/Z(L)| < |G/Z(G)| $  and a block 
$c$ of $\CO L $ with elementary abelian defect groups of order $8$ 
and such that $|\Irr_K(L, c)| \ne 8 $.
\end{Proposition}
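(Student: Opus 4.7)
The plan is to adapt the proofs of Propositions \ref{typeE_6punch} and \ref{typeE_7punch}, exploiting the simplification that $E_8(q)$ has trivial Schur multiplier: since $|Z(G)|$ is odd and $G/Z(G)$ is simple of type $E_8(q)$, we have $Z(G) = 1$ and $G = \Gb^F$, where $\Gb$ is the simply connected simple algebraic group of type $E_8$ over $\bar{\mathbb F}_q$ and $F$ is a Frobenius morphism with respect to some $\mathbb F_q$-structure. Since $Z(\Gb) = 1$, every semi-simple element of $\Gb^{*F^*}$ has connected centraliser, so quasi-isolated coincides with isolated, and $a_\chi = 1$, $\alpha_\chi = 0$ for every character in every rational Lusztig series. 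Let $[t]$ denote the (odd-order) semi-simple label of $b$.

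If $t$ is not quasi-isolated, the Jordan decomposition of blocks \cite[Th\'eor\`eme B']{BonRou} yields a proper $F$-stable Levi subgroup $\Lb$ of $\Gb$ and a block $c$ of $\CO\Lb^F$ with $\CO G b$ and $\CO\Lb^F c$ Morita equivalent. Setting $L = \Lb^F$, we have $|L/Z(L)| \leq |L| < |G| = |G/Z(G)|$, while Morita equivalence preserves the number of ordinary irreducible characters and, by Proposition \ref{localstructurepreserved}, the isomorphism type of the defect group; thus $(L, c)$ has the required properties.

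From now on assume $t$ is isolated. By \cite[Table III]{Bon2004}, the nontrivial odd-order isolated classes in $\Gb^*$ have $\Delta_t$ of type $A_8$ or $A_2 + E_6$ (for $t$ of order $3$), or of type $2A_4$ (for $t$ of order $5$). For $\Delta_t = A_8$ and $\Delta_t = 2A_4$, Propositions \ref{2defectsclassical} and \ref{2defectsexceptional} show that every unipotent character of $\bar\Cb^\circ(t)^{F^*}$ has $2$-defect at least $4$, so by (\ref{matchtwodefect}) every $\chi \in \CE(\Gb^F, [t])$ has $2$-defect at least $4$; this contradicts $|P|=8$ and rules these cases out. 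The case $\Delta_t = A_2 + E_6$ is the most delicate: since $E_6(q)$ admits unipotent characters of $2$-defect $0$ (Proposition \ref{2defectsexceptional}(iv)) and $Z^\circ(\Cb^\circ(t))$ is trivial (so $\zeta_t = 0$), the $2$-defect of characters in $\CE(\Gb^F,[t])$ can drop as low as $2d$. This case I would handle by the same Zsigmondy-prime technique used in the proof of \ref{typeE_6punch}: combining Proposition \ref{punchheight} with the explicit character-degree formula (\ref{matchchardegfinal}) and the degrees of the $E_6[\theta]$, $E_6[\theta^2]$ unipotent characters, I would exhibit a Zsigmondy prime $p_n$ (for suitable $n \in \{5,7,9,14,\ldots\}$) dividing the degrees of the height-zero constituents of $b$ but not the degree of the height-one constituent $\chi_1$, contradicting the existence of the norm-$8$ element $2\chi_1 - \sum \delta_i\chi_i$ in $L^0(G,b)$.

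Finally, for $t = 1$ the block $b$ is a unipotent $2$-block of $\Gb^F = E_8(q)$, and I would rule this out using the classification of unipotent $2$-blocks of exceptional groups of Lie type due to Enguehard \cite{Enguehard2000}: the defect groups of the principal $2$-block are nonabelian Sylow $2$-subgroups of $E_8(q)$ (of order far larger than $8$), while each non-principal unipotent $2$-block has defect groups of a prescribed nonabelian shape inherited from a smaller exceptional Levi, so none is elementary abelian of order $8$. The hard part will be twofold: first, running the Zsigmondy argument cleanly in the $A_2 + E_6$ isolated case, which requires tabulating the relevant Lusztig series and matching them against the height constraints of Proposition \ref{punchheight}; second, extracting from Enguehard's list the precise statement that no unipotent $2$-block of $E_8(q)$ has elementary abelian defect group of order $8$, an issue complicated by the fact that $2$ is a bad prime for $E_8$.
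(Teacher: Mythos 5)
Your outline tracks the paper's proof closely in its overall structure: the Bonnaf\'e--Rouquier reduction to the (quasi-)isolated case, the observation that $Z(\Gb^*)=1$ forces $\Cb(t)=\Cb^\circ(t)$ and $a_\chi=1$, $\alpha_\chi=0$, the enumeration of odd-order isolated classes $A_8$, $A_2+E_6$, $2A_4$ (together with $t=1$), the elimination of $A_8$ and $2A_4$ via the $2$-defect bounds of Propositions \ref{2defectsclassical}--\ref{2defectsexceptional}, and the citation of Enguehard \cite{Enguehard2000} to kill the unipotent case. The paper starts from the longer list of all isolated classes in Deriziotis \cite{Deri83} and filters by defect; your shorter list from \cite[Table III]{Bon2004} filtered by odd order reaches the same two surviving shapes, so that part is fine.

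However, the proposal leaves a genuine gap at precisely the point you flag as ``the hard part'': the $\Delta_t = A_2 + E_6$ case is only a plan, not an argument, and a plan of a slightly wrong shape. The paper's treatment of this case is not a reuse of the $D_4$/$3A_2$ analysis from Proposition \ref{typeE_6punch}; it requires new work specific to the product structure $\Cb = \Xb.\Yb$ with $\Xb$ of type $A_2$ and $\Yb$ of type $E_6$. Concretely, one must: (a) show that $\chi_1$ is the \emph{unique} character of $b$ in the series $[t]$ (using that $\Cb^{F^*}$ has no unipotent character of defect exactly $3$), so $s_j = tu_j$ with $u_j$ a nontrivial $2$-element for $j\geq 2$; (b) decompose $u_j = v_1v_2$ with $v_1\in\Xb$, $v_2\in\Yb$ and rule out both the ``both centralisers tori'' and ``neither a torus'' cases via the central-element and defect constraints; (c) deduce that $\Delta_{s_j}$ is of type $E_6$, that $C_{\Xb}(v_1)^{F^*}$ is a torus of order $q^2-1$, and that $(q^2-1)_+=8$; (d) observe that all four height-zero degrees $\chi_j(1)$, $j\geq 2$, are equal, invoke Proposition \ref{punchheight}(iii) to force $\delta_2=\cdots=\delta_5$, and then derive the explicit degree identity $q\Phi_2(q)-2\delta_2\Phi_1(q)\Phi_3(q)^2=0$, which fails because the Zsigmondy prime $p_3$ (respectively $p_6$ in the $^2E_6$-type case) does not divide $\Phi_2(q)$. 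Your proposed shortcut --- finding a single Zsigmondy prime that divides every height-zero degree but not $\chi_1(1)$ --- is not what the paper does and is not obviously available without first establishing (a)--(d); in particular, without pinning down the possible Lusztig series, you cannot compute the degrees at all. So the proposal correctly locates the strategy but does not supply the core of the proof.
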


\begin{proof}  Note  that $ G= \Gb^F$   for a 
simply-connected simple  algebraic group $ \Gb $   of type $E_8 $,  
 and $F : \Gb \to \Gb $  a Frobenius morphism   with respect to an 
${\mathbb F}_ q$-structure on $ \Gb $. Let
 $ [t]$ be the semi-simple label of $ b$.  
Again, the Bonnaf{\'e}-Rouquier result  allows us to reduce to the case   that 
$t$  is   a quasi-isolated  (hence isolated) element of   $\Gb^* $. 
By the tables in  \cite{Deri83}, one sees that $\Delta_t $ is one of 
$A_4 \times A_4 $, $A_5 \times A_2 \times A_1 $,   $A_7 \times  A_1 $, $A_8$,
$D_5  \times A_3 $ , $D_8$,  $E_6 \times A_2 $, $E_7 \times A_1  $ or $E_8 $.
Using  as before that  $ (C_{\Gb^*}(t)/Z(C_{\Gb^*}(t)))^{F^*}  $ 
has a unipotent character of defect at most $3$ 
(see Equation (\ref{matchchardegfinal}) in \S \ref{redback})
and  Propositions \ref{2defectsclassical} and \ref{2defectsexceptional}  
we have that $\Delta_t $ is one of 
$E_6 \times A_2 $, or $E_8 $.   
By \cite[p.364]{Enguehard2000}, the defect groups  
of any unipotent block of positive defect of $G$ have order at least $16$
hence $\Delta_t $ is not of type $E_8$.

Suppose that $\Delta_t$ is of type
$E_6 \times A_2 $ and set  $ \Cb = \Cb_{\Gb^*}(t) $.
Since $\Cb $ is connected and has centre of odd order,   
by Propositions \ref{2defectsclassical} and  
\ref{2defectsexceptional}, and the discussion preceding 
Proposition \ref{small2defectE_6}, $\Cb^{F^*}$ has no unipotent character
of defect $3$, and at most one unipotent character of $2$-defect $2$ 
(corresponding to the product of the unipotent character of $2$-defect 
$2$ of $A_2(q)$ or $\,^2A_2(q)$ and a unipotent character of 
$2$-defect zero of $E_6(q)$ or ${^2E_6(q)}$). We have
$ |\Irr_K(G, b)| = 5$ and can write 
$\Irr_K(G, b) = \{\chi_j , \   1 \leq j \leq 5 \}$
as in Proposition \ref{punchheight}.
For each $j$, 
$1 \leq j\leq 5 $, let $s_j $ and $u_j$ be as in the proof of 
Proposition \ref{typeE_6punch}. Let $ j_0$,  
$1\leq j_0 \leq 5 $ be   such that  $\chi_{j_0} $ is in the rational series
indexed by $[t]$. By the above discussion  $j_0  =1 $ and $\chi_1 $  
is the unique character of $b$ in the rational Lusztig series  
corresponding to $[t]$.  
In other words, $u_j$ is a non-trivial $2$-element for $ 2 \leq j \leq 5 $.  

Let $2 \leq j \leq 5 $. Set $\Cb_j=$ $C_{\Gb^*}(s_j) =$
$\Cb_{\Gb^*}^{\circ}(s_j)$,  $\Zb_j = $ $Z(C_{\Gb^*}(s_j))$ and 
$\bar\Cb_j= $ $\Cb_j/\Zb_j$. Set 
$z_j=$ $|\Zb_j^{F^*}|$ and  let $\zeta_j$ be defined  by 
$2^{ \zeta_j }= $ ${z_j}_+ $.  Further, let $\bar \lambda_j $  
be a unipotent character  of $ \bar \Cb_j^{F^*} $  corresponding to 
$\chi_j $  as in the   discussion preceding  Equation \ref{matchtwodefect}, 
i.e., such that  
\begin{equation}\label{E_8char} \chi_{j_0}(1) = 
\frac{ |G|_{r'} } {z_j |\bar C_j ^{F^*}|  } 
\bar \lambda_j(1) \end{equation}
and 
\begin{equation}  \label{E_8def}  
\zeta_j + d(\tau_j) =3 
\end{equation}
where $d(\tau_j)$ is the $2$-defect of $\tau_j$.
We have that $ \Cb := \Cb_{\Gb^*}(t) = \Xb.\Yb$ where 
$\Xb$ is a simply connected
group of type  $A_2 $ and $\Yb$ is a simple group of 
type $E_6 $, $\Xb$ and $\Yb$ commute and intersect  in a 
central subgroup of order $3$.   
As $u_j$ is a non-trivial $F^*$-stable $2$-element, 
there are unique $F^*$-stable $2$-elements $v_1 \in \Xb$, 
$v_2\in \Yb$, at least one of which is  non-trivial  
and such that $u_j =v_1v_2$,
$$ C_{\Gb^*} (s_j)  = C_{\Xb}(v_1)C_{\Yb}(v_2) $$ and 
$C_{\Gb}(s_j)^{F^*}$ contains  
$C_{\Xb}(v_1)^{F^*}C_{\Yb}(v_2)^{F^*}$ 
as a normal subgroup of index $3$.

In particular, $\langle v_1,v_2 \rangle $  is a central   subgroup of 
$C_{\Gb^*} (s_j)^{F^*} $.  Since  the centre of $C_{\Gb^*} (s_j)^{F^*} $ 
is contained in the  kernel of   any unipotent character of 
$C_{\Gb^*} (s_j)^{F^*} $ it follows that the product of the orders of $v_1$ and 
$v_2$ is at most $8$.

First suppose that both $C_{\Xb}(v_1)$  and $C_{\Yb}(v_2)$ are tori.
Then, both $v_1$ and $v_2 $ are non-trivial. On the other hand,   
the product of the orders of $v_1$ and $v_2$ is at most $8$, hence at least one 
of $v_1$  and $v_2$  has order $2$. But the centraliser  
of  an element of order $2$ in a simple algebraic group of type $A_2 $ 
or $E_6 $ is not a torus (see \cite[Table 4.3.1]{GorLyoSol3}), 
a contradiction.

Suppose that neither $C_{\Xb}(v_1)$ nor $C_{\Yb}(v_2 )$ is a torus.  
We have 
$$\bar \Cb_j =  C_{\Xb}(v_1)/ Z(C_{\Xb}(v_1)) \times 
C_{\Yb}(v_2)/ Z(C_{\Yb}(v_2)). $$
Hence,  $\bar \lambda_j=  \phi_1 \times \phi_2 $,  where 
$\phi_1$ is a unipotent character of  $C_{\Xb}(v_1)/ Z(C_{\Xb}(v_1))^{F^*}$ 
and
$\phi_2$ is a unipotent character of  $C_{\Yb}(v_2)/ Z(C_{\Yb}(v_2))^{F^*}$, 
such that the sum of $\zeta_j $ and  the $2$-defects of $\phi_1 $ and 
$\phi_2$ equals $3$.
Since the only subdiagrams of  
the extended Dynkin diagram of type $A$ are of type  $A$, it follows  from 
\cite{Deri83} and   Propositions \ref{2defectsclassical} and 
\ref{2defectsexceptional}
that either $\Delta_j $  is  
$A_1 \times E_6 $,   and  $\zeta_j$ has odd order  or 
$\Delta_j $ is 
$A_2 \times E_6 $   and  $\zeta_j$   
has  even order. But by   \cite{Deri83}, there are no such $s_j$. 

Thus, exactly one of 
$C_{\Xb}(v_1)$  or  $C_{\Yb}(v_2)$ is a torus. 
Then $\zeta_j >0$ ($v_i$ is a $2$-element) 
so by Equation (\ref{E_8def}), the $2$-defect of $\bar\lambda_j $ 
is  at most $2$.
From Propositions \ref {2defectsclassical} 
and \ref{2defectsexceptional}, it follows that   
either $\Delta_{j}$ is a product of copies of $A_2$
transitively permuted by $F^*$  or 
$\Delta_{j} $ is of type $E_6$. 

If $\Delta_{j}$ is a product of copies of $A_2$ 
transitively permuted by $F^*$, then by the tables in \cite{Deri83}, 
one sees that either $ \zeta_j=0$ or $\zeta_j \geq 2 $, whence by  
Equation (\ref{E_8def}), $\bar\lambda_j $ 
has $2$-defect $3$ or at most $1$.  This is a contradiction as
any unipotent character of $A_2(q^m)$ or $ \,^2A_2(q^m)$ 
has $2$-defect $2$ or greater than $3$.

If $\Delta_{j}$ is of type $E_6 $, then   
$\bar\lambda_j$ has $2$-defect $0$, 
whence the Sylow $2$-subgroups of
$\Zb_j^{F^*}$ have order $8$.  
Since $\Delta_{j} $   is a subdiagram  of the extended diagram associated to 
$A_2 \times E_ 6 $, it follows that $C_{\Xb}(v_1)$ is  a torus and  
that the Sylow $2$-subgroups  of  $C_{\Xb}(v_1)^{F^*}$ have order $8$.
From  \cite{Deri83} (or from the description of $F^*$-stable  maximal tori  in 
type $A_2$), 
it follows that $|C_{\Xb}(v_1)^{F^*}| $ is one  of $(q^2-1)$, 
$ (q \pm 1)^2$, or 
$(q^2 \pm q +1)$.    Thus the only possibility is  that   
$|C_{\Xb}(v_1)^{F^*}| = (q^2-1) $ and  $8$ is the highest power of $
2$ dividing $q^2-1 $. 

First suppose that $\Cb ^{F^*} $ 
is of type $E_6(q)$ or $A_2(q)$. Then  
${\bar \Cb_j}^{F^*} $ is of type  $E_6(q) $  for all $j$, 
$2\leq j \leq  5 $. By the 
formula for the character degrees of unipotent characters of  
$2$-defect $0$ of $E_6(q)$, we get that  for  all $j$, $2 \leq j \leq 5 $,
\begin{equation} \chi_{j}(1) = 
\frac{|G|_{r'} }{3 |E_6(q)|(q^2-1) } q^{7} \Phi_1^6 \Phi_2^4 \Phi_4^2 
\Phi_5 \Phi_8  = \frac{|G|_{r'} }{3 |E_6(q)| } q^{7} \Phi_1^5 \Phi_2^3 
\Phi_4^2 \Phi_5 \Phi_8. \end{equation}
Here, as before, we use $\Phi_d $ to denote $\Phi_d(q)$.

The unipotent character of defect $2$ of  $A_2(q)$ is of degree $q(q+1)$  
whence 
\begin{equation} \chi_{1}(1) = 
\frac{ |G|_{r'} } { 3 |E_6(q)||A_2(q)|}(q+1)q^8 \Phi_1^6 \Phi_2^4 \Phi_4^2 
\Phi_5 \Phi_8 = \frac{ |G|_{r'} } { 3 |E_6(q)|} \frac{q^8 \Phi_1^4 \Phi_2^4 
\Phi_4^2 \Phi_5 \Phi_8}{\Phi_3^2}. 
\end{equation}

Consider the element $ 2\chi_1  - \sum_{2\leq j\leq 5}\delta_j\chi_j $ of 
$L^0(G,b)$ as in Proposition \ref{punchheight}(ii).
By the equation above,   
all $\chi_j $, $2 \leq j\leq 5 $ have  the same degree. 
Hence, by Proposition \ref{punchheight}(iii), 
$\delta_j=\delta_i $ for all $i,j $ such that 
$2\leq i,j\leq 5 $. Thus,
$$  2\chi_1(1)  - \sum_{2\leq j\leq 5}\delta_j\chi_j (1)=  0$$   
implies that 
$$ q\Phi_2 - 2\delta_2\Phi_1 \Phi_3 ^2 =0,$$ 
but this is impossible since $p_3 $ does not divide $\Phi_2(q) $.

The case that  $\Cb ^{F^*}$ 
is of type $E_6(q)A_2(q)$ is similar with $p_3$ replaced by $p_6 $.
\end{proof}

\section{{Proof of Theorem \ref{c2c2c2}}} \label{proofsection}

\begin{proof}[{Proof of Theorem \ref{c2c2c2}}]
By Theorem \ref{AlperinimpliesBroue}, and using its notation, it 
suffices to show that $|\Irr_K(G,b)|=$ $8$.
Arguing inductively, in order to prove Theorem \ref{c2c2c2}
we may assume, by Theorem \ref{c2c2c2quasisimple},
that $G$ is a quasi-simple finite group with a centre of odd
order. We also may assume, by \cite[Theorem 3.7]{Landrock81},
that $b$ is a non-principal block.
If $G/Z(G)$ is a sporadic simple group then by
Proposition \ref{Noeske} we have $G\cong$ $Co_3$ and by 
Proposition \ref{Sporadic}, we have $|\Irr_K(G,b)|=$ $8$.
By the results in \S\S \ref{excmultsection} and \ref{altsection}, 
$G/Z(G)$ is neither a finite simple group
with an exceptional Schur multiplier nor an alternating group.
If $G/Z(G)$ is a finite group of Lie type in characteristic
$2$ then, by Proposition \ref{Lietwo}, 
we have $G\cong$ $\PSL_2(8)$ and
$b$ is the principal block; in particular, we have again
$|\Irr_K(G,b)|=$ $8$. Let $q$ be an odd prime power and
$n$ a positive integer. By Theorems \ref{typeA} and \ref{type2A},
the group $G/Z(G)$ cannot be isomorphic to
$\PSL_n(q)$ or $\PSU_n(q)$; alternatively, 
$|\Irr_K(G,b)|=$ $8$ holds in these cases as a consequence of 
\cite{BlEl}.  
If $G/Z(G)$ is isomorphic to one of $\PSp_{2n}(q)$, $(n\geq 2)$,
$\POmega_{2n+1}(q)$, $(n\geq 3)$, or $\POmega^{\pm}_{2n}(q)$,
$(n\geq 4)$ then, by Theorem \ref{classical},
the block $b$ is nilpotent; in particular, $|\Irr_K(G,b)|=$ $8$.
By Proposition \ref{typeG2}, 
$G/Z(G)$ cannot be isomorphic to a simple
group of type $G_2(q)$.
Since $b$ is assumed to be non-principal, 
$G/Z(G)$ cannot be isomorphic to a simple
group of type ${^2G_2(q)}$, and
by Proposition \ref{type3D4}, $G/Z(G)$ cannot be isomorphic to a simple
group of type ${^3D_4(q)}$.
If $G/Z(G)$ is isomorphic to one of the remaining exceptional
simple groups $F_4(q)$, $E_6(q)$, ${^2E_6(q)}$, $E_7(q)$ or
$E_8(q)$ then $b$ is Morita equivalent to a block of
a finite group $L$ such that $|L/Z(L)|$ is smaller than
$|G/Z(G)|$, by Propositions \ref{typeFpunch},
\ref{typeE_6punch}, \ref{type2E_6punch}, \ref{typeE_7punch} 
and \ref{typeE_8punch},
respectively. Theorem \ref{c2c2c2} follows inductively.
\end{proof}

\begin{Remark}
We do not know whether there are actually any blocks with an
elementary abelian defect group of order $8$ if $G/Z(G)$
is of one of the exceptional types $F$ or $E$. If not, one could
avoid the rather tedious calculations from  
\S \ref{smalldefectsection} onwards.
\end{Remark}

\section{Appendix}

We provide a proof for a result announced by Rouquier in \cite{Rouqb}.
The notation is as in \cite[Appendix]{Linpperm}.  
Let $\CO$ be a complete local commutative Noetherian ring having an
algebraically closed residue field $k$ of characteristic $2$; we
allow the case $\CO=$ $k$.
For $A$, $B$ two symmetric $\CO$-algebras, a bounded complex $X$
of $A$-$B$-bimodules which are projective as left $A$-modules
and as right $B$-modules is said to {\it induce a stable equivalence}
if there are isomorphisms of complexes of bimodules 
$X\tenB X^* \cong A \oplus Y$ and $X^*\tenA X\cong B\oplus Z$
with $Y$ and $Z$ homotopy equivalent to bounded complexes of projective
$A$-$A$-bimodules and $B$-$B$-bimodules, respectively.
If $Y$ and $Z$ are homotopic to zero then $X$ is called a
{\it Rickard complex}.

\begin{Theorem}[{cf. \cite[Theorem 6.10]{Rouqb}}] 
\label{c2c2c2stableequivalence}
Let $G$ be a finite group, let $b$ be a block of $\OG$ with an
elementary abelian defect group of order $8$, set $H = N_G(P)$ 
and denote by $c$ the block of $\OH$ satisfying $\Br_{\Delta P}(b) =$
$\Br_{\Delta P}(c)$. 
Let $i\in (\OGb)^{\Delta P}$ and $j\in (\OHc)^{\Delta P}$ be source
idempotents such that $\Br_{\Delta P}(i) = \Br_{\Delta P}(j)$.
There is a bounded complex of $\OGb$-$\OHc$-bimodules whose components
are finite direct sums of summands of the bimodules $\OG i\tenOQ j\OH$,
with $Q$ running over the subgroups of $P$, such that $X$ induces
a stable equivalence.
\end{Theorem}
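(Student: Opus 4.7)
My strategy is to split into cases according to the order of the inertial quotient $E$ of $b$, which by the analysis at the start of Section~3 is one of $1$, $3$, $7$, or $21$. In each case I pass to source algebras: since $E$ has trivial Schur multiplier in every case, the block $\OH c$ is source-algebra equivalent to $\CO(P\rtimes E)$ by a bimodule of the form required in the theorem, so it suffices to construct the splendid complex with $\OH c$ replaced by $\CO(P\rtimes E)$ and then to compose with this source-algebra bimodule.

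The cases $|E|\in\{1,3,7\}$ are handled by already known single-bimodule constructions. For $|E|=1$ the block $b$ is nilpotent and Puig's theorem provides a splendid Morita equivalence realised by an indecomposable summand of $\OG i\tenOP \CO P$ with vertex $\Delta P$ and trivial source. For $|E|=3$ one has $P\rtimes E\cong C_2\times A_4$, and an indecomposable summand of $\OG i\tenOP \CO(P\rtimes E)$ with vertex $\Delta P$ and trivial source induces a splendid Morita equivalence; this reduces (using Erdmann's classification of blocks with Klein four defect groups together with Rickard's splendid equivalence in that case) to the Klein four situation. For $|E|=7$, where $E$ acts regularly on $P\setminus\{1\}$, Puig's construction in \cite{Puabelian} yields a stable equivalence of Morita type realised by a single summand of $\OG i\tenOP \CO(P\rtimes E)$.

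The main obstacle, and the case that truly requires the full generality of a bounded complex, is $|E|=21$: here the source algebras need not be Morita equivalent, since $\ell(b)$ may equal $4$ rather than $5$ (Proposition \ref{leqeight}(iv)). Following the approach of \cite{Rouqb}, I would analyse the local blocks $kC_G(Q)e_Q$ for non-trivial $Q\leq P$. Since the subgroup of order $7$ of $E$ already acts transitively on $P\setminus\{1\}$ and on the set of Klein four subgroups of $P$, up to $N_G(P,e)$-conjugation there is a single involution $u$ and a single Klein four subgroup $V$ to consider; the corresponding local inertial quotients both have order dividing $3$, so by the cases $|E|\leq 3$ treated above the local blocks admit splendid Morita equivalences to their Brauer correspondents in $\CO(P\rtimes E)$. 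I would then glue these local splendid equivalences, at $Q\in\{1,\langle u\rangle, V, P\}$, into a two-term complex of $\OG b$--$\CO(P\rtimes E)$-bimodules whose terms are finite direct sums of summands of $\OG i\tenOQ j\OH$ for the various $Q$.

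Finally, I would verify that the complex $X$ so constructed induces a stable equivalence by a local-to-global argument: for each non-trivial $Q\leq P$, the Brauer quotient $\Br_{\Delta Q}(X)$ is, by construction, a complex realising the local splendid equivalence at $Q$, and the standard criterion (cf.\ \cite[\S 3]{KeLionesimple}) then implies that $X\tenO_{\CO(P\rtimes E)} X^*\simeq \OG b$ in the stable category. The delicate technical point, which is the real content of the argument, is arranging the connecting differentials between the summands indexed by different $Q$'s so that they match Green correspondence on both sides; this is precisely the place where Rouquier's framework of $p$-permutation complexes must be deployed.
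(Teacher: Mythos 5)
Your plan diverges from the paper's proof, and the divergence opens a real gap. The paper does \emph{not} split into cases by $|E|$; it gives a uniform construction. For every subgroup $Q\leq P$ of order $2$, the local blocks $\bar e_Q$ of $\CO C_G(Q)/Q$ and $\bar f_Q$ of $\CO C_H(Q)/Q$ have Klein four defect group $P/Q$ and are in Brauer correspondence, so \cite[Theorem~1.1]{CEKL} together with Rickard's splendid complex between $\CO A_4$ and $\CO A_5 b_0$ \cite{Ricksplendid} provides a two-term splendid Rickard complex $\bar C_Q$ locally. This is then lifted along $\CO C_G(Q)\to\CO C_G(Q)/Q$ by \cite[10.2.11]{Rouqa}, extended to $T=N_{G\times H}(\Delta Q)\cap(N_G(Q,e_Q)\times N_H(Q,f_Q))$ by the arguments of \cite[5.5]{Marcus}, and induced to $G\times H$. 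The resulting complex $X$ has $b\OG c$ in degree $0$ and the induced projective pieces $V_Q$ in degree $1$, and the stable-equivalence conclusion comes from Rouquier's criterion \cite[Theorem~A.1]{Linpperm} (not from \cite[\S 3]{KeLionesimple}, which is about partial isometries — that reference is misapplied in your final step). When all the local Klein four blocks are nilpotent — which is automatic for $|E|\in\{1,7\}$ — the $V_Q$ vanish and $X$ collapses to the single bimodule $b\OG c$, which is precisely why your single-bimodule treatment of those two cases is consistent with the paper.

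The gap is in your $|E|=3$ case. You assert that an indecomposable trivial-source summand of $\OG i\tenOP\CO(P\rtimes E)$ with vertex $\Delta P$ "induces a splendid Morita equivalence". Nothing establishes this, and it fails in general: when $|E|=3$ the subgroup $Q_0=C_P(E)$ of order $2$ has local inertial quotient of order $3$, and the source algebra of the Klein four block $\bar e_{Q_0}$ may be $\CO A_5 b_0$; then $\bar e_{Q_0}$ is not Morita equivalent to its Brauer correspondent $\CO A_4$, and neither is the global block $\OGb$ to $\OHc$ by any $p$-permutation bimodule, so a genuine two-term complex is required. (Watanabe's result \cite{Wat91}, to which the paper appeals for the numerics in this case, yields an isomorphism of centres / a perfect isometry, not a splendid Morita equivalence.) The same subtlety sits inside your $|E|=21$ sketch: the "local splendid Morita equivalences" you invoke at subgroups of order $2$ are in general splendid Rickard complexes, not Morita equivalences, and the actual content of the theorem — lifting such a complex from $\CO C_G(Q)/Q$ to $\CO C_G(Q)$, extending it equivariantly to the stabilizer in $G\times H$, inducing, and verifying the local Brauer-quotient hypothesis of Rouquier's criterion — is precisely the part your outline leaves to "arranging the connecting differentials". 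That is not a matching detail; it is where the proof lives, and the paper supplies it explicitly via \cite{Rouqa}, \cite{Marcus}, and \cite{Linpperm}.
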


\begin{proof}
The proof follows the lines of \cite[6.3]{Rouqb}. For any subgroup $Q$
of $P$ denote by $e_Q$ and $f_Q$ the unique blocks of $kC_G(Q)$ and
$kC_H(Q)$, respectively, satisfying $\Br_{\Delta Q}(i)e_Q\neq 0$
and $\Br_{\Delta Q}(j)f_Q\neq 0$. 
Since $P$ is abelian, the fusion systems on $P$ determined by $i$ and by
$j$ are equal to that of $kN(P,e_P)e_P$. Hence, for any subgroup $Q$ of 
$P$, we have
$$N_G(Q,e_Q)/C_G(Q) \cong N_H(Q,f_Q)/C_H(Q)$$
and both sides have odd order (either $1$ or $3$ in case $Q$ 
is a proper subgroup of $P$). 
The blocks $e_Q$, $f_Q$ lift to unique blocks $\hat e_Q$, $\hat f_Q$ of
$\CO C_G(Q)$, $\CO C_H(Q)$, respectively. The images of $\hat e_Q$, $\hat f_Q$
in $\CO C_G(Q)/Q$, $\CO C_H(Q)/Q$ are blocks, denoted by $\bar e_Q$, $\bar f_Q$,
respectively. Suppose now that $Q$ has order $2$. Then $\bar e_Q$, $\bar f_Q$ 
have the Klein four group $P/Q$ as defect group, and $C_H(Q)/Q$ is the normaliser
in $C_G(Q)/Q$ of $P/Q$. Thus $\bar f_Q$ is in fact the Brauer correspondent
of $\bar e_Q$. By \cite[Theorem 1.1]{CEKL}, the source algebras of blocks 
with a Klein four defect group $V_4$ are either $\CO V_4$, or $\CO A_4$, 
or $\CO A_5 b_0$, where $b_0$ is the principal block of $\CO A_5$. By 
\cite[\S 3]{Ricksplendid}, there is an explicitly described two-term
splendid Rickard complex between $\CO A_4$ and $\CO A_5b_0$.
Thus there is a Rickard complex
of $\CO C_G(Q)/Q\bar e_Q$-$\CO C_H(Q)/Q\bar f_Q$-bimodules $\bar C_Q$ of the
form
$$\bar C_Q = \xymatrix{\cdots\ar[r] & 0\ar[r] 
& \bar N_Q \ar[rr]^{\bar\Phi_Q} & & \bar e_Q C_G(Q)/Q \bar f_Q\ar[r] 
& 0 \ar[r] & \cdots }$$
for some (possibly zero) projective  bimodule $\bar N_Q$. By
\cite[10.2.11]{Rouqa}, this complex lifts to a Rickard complex of 
$\CO C_G(Q)\hat e_Q$-$\CO C_H(Q)\hat f_Q$-bimodules of the form
$$C_Q = \xymatrix{\cdots\ar[r] & 0\ar[r] 
& N_Q \ar[rr]^{\Phi_Q} & &  \hat e_Q C_G(Q) \hat f_Q\ar[r] 
& 0 \ar[r] & \cdots }$$
where $N_Q$ is a projective $\CO(C_G(Q)\times \CO C_H(Q))/\Delta Q$-module
lifting $\bar N_Q$, inflated to $\CO(C_G(Q)\times C_H(Q))$, and
where $\Phi_Q$ lifts the map $\bar\Phi_Q$. By adapting arguments of Marcu\c{s} 
\cite[5.5]{Marcus} this complex extends to the group
$$T = N_{G\times H}(\Delta Q)\cap (N_G(Q,e_Q)\times N_H(Q,f_Q))\ $$
and $T$ contains $C_G(Q)\times C_H(Q)$ as normal subgroup of odd 
index at most $3$ by the above remarks. One can see this also directly: 
the modules of the complex $C_Q$ are clearly $T$-stable, hence 
extend to $T$ (cf. \cite[10.2.13]{Rouqa}), and the map $\bar \Phi$ lifts 
to a $T$-stable map $\Psi$ because $N_Q$ remains projective 
when considered as $\CO T/\Delta Q$-module. We set
$$V_Q = \Ind_T^{G\times H}(N_Q)\ .$$
The inclusion $C_G(Q)\subseteq G$ induces an $\OT$-homomorphism
$e_Q\CO C_G(Q) f_Q \longrightarrow b\OG c$, which, by adjunction,
yields a homomorphism of $\CO(G\times H)$-modules
$$\alpha_Q : \Ind_T^{G\times H}(e_Q\CO C_G(Q) f_Q) \longrightarrow b\OG c\ .$$
Set $\psi_Q = \alpha_Q\circ\Ind^{G\times H}_T(\Phi_Q) : V_Q \rightarrow b\OG c$
and define the complex $X$ by
$$X = \xymatrix{ \cdots\ar[r]& 0 \ar[r]  & \oplus_Q V_Q
\ar[rr]^{\oplus_Q \Psi_Q} & & b\OG c \ar[r] & 0\ar[r] &\cdots }$$ 
with $b\OG c$ in degree zero, where $Q$ runs over a set of representatives
of the $N_G(P,e_P)$-conjugacy classes of subgroups of order $2$ of $P$.
One checks that if $Q$, $R$ are two subgroups of order $2$ which are not 
$N_G(P,e_P)$-conjugate then $V_R(\Delta Q) = \{0\}$. This implies that 
$e_Q X(\Delta Q) f_Q\ \simeq$ $C_Q\tenO k$, and this is a Rickard complex of 
$kC_G(Q)e_Q$-$C_H(Q)f_Q$-bimodules. Moreover, $b\OG c$ is a direct summand
of $\OG i\tenOP j\OH$, and $V_Q$ is a direct sum of summands of
$\OG i\tenOQ j\OH$ because it is obtained from lifting,
inflating and inducing a projective bimodule.
Thus another result of Rouquier (with a proof given in
\cite[Theorem A.1]{Linpperm}) applies, showing that $X$ induces a 
stable equivalence.
\end{proof}

\begin{Remark} 
We have used \cite[Theorem 1.1]{CEKL} for the
description of Rickard complexes for blocks with a Klein four
group, which requires the classification of finite simple groups.
One can avoid this by making use of another technique of Rickard,
replacing one of the endo-permutation sources of a simple
module by a $p$-permutation resolution (see e.g. 
\cite[Theorem 1.3]{Linturk}). The only difference this makes is
that the resulting complex may have more than two non-zero
components. 
One can be more precise in Theorem \ref{c2c2c2stableequivalence} 
if $E$ is either trivial or has order $7$.
If $E$ is trivial, $b$ is nilpotent, and its source algebra
is of the form $\End_\CO(V)\tenO \OP$ for some indecomposable
endo-permutation $\OP$-module $V$ with $P$ as vertex. Using
again \cite[Theorem 1.1]{CEKL} we get that  
$V\cong$ $\Omega_P^n(\CO)$ for some integer $n$.
Equivalently, if $E$ is trivial then $\OGb$ is Morita equivalent to
$\OP$ via an $\OGb$-$\OP$-bimodule which, when viewed as
$\CO(G\times P)$-module, has vertex $\Delta P=$ $\{(u,u)\ |\ u\in P\}$
and source $\Omega_{\Delta P}^n(\CO)$ for some integer $n$.
By applying $\Omega^{-n}_{\CO(G\times P)}$ to $M$
one gets a stable equivalence of Morita type given by a bimodule
with vertex $\Delta P$ and trivial source.
Similarly, if $E$ is cyclic of order $7$, then by a result of
Puig \cite{Puabelian}, there is a stable equivalence of Morita
type between $\OGb$ and $\OP$ given by a bimodule with vertex
$\Delta P$ and trivial source. 
\end{Remark}

\noindent
{\bf Acknowledgements.}
The authors would like to thank Felix Noeske for bringing
to  their  attention the work in \cite{Noeske} and Gunter Malle for
pointing the authors to the references \cite{BeKn}, \cite{BlEl} and 
for  his careful reading of parts of  the  manuscript. 
The authors would further like to thank the referee for many
valuable comments.
The second author  made several visits to the University of Aberdeen
in 2008 and 2009 during which parts of this work were carried out.
He is grateful to the Institute of Mathematics, University of
Aberdeen for its kind hospitality.
For this research the second author was partially
supported by the Japan Society for Promotion of Science (JSPS),
Grant-in-Aid for Scientific Research 
(C)20540008, 2008--2010.

\end{document}